\numberwithin{equation}{section}
\begin{document}
 
\newcommand\N{{\mathbb N}}
\newcommand\R{{\mathbb R}}
\newcommand\Z{{\mathbb Z}}
\newcommand\C{{\mathbb C}}
 \newtheorem{theorem}{Theorem}
 \newtheorem{remark}{Remark}[section]
 \newtheorem{lemma}[remark]{Lemma}
 \newtheorem{cor}[remark]{Corollary}
 \newtheorem{proposition}[remark]{Proposition}
 \newtheorem{definition}[remark]{Definition}
\newcommand\etf{{e^{{\tau} \phi}}}
\newcommand\tnf{\tau \nabla \phi}
\def\f12{{\frac{1}{2}}}
\newcommand\ty{\tilde{y}}
\newcommand\rar{\rightarrow}
\newcommand\e{\varepsilon}
\newcommand\g{\gamma}
\newcommand\rt{\tilde{r}}
\newcommand\lip{\text{Lip}}
\newcommand\id{I_n}
\newcommand\supp{{ \text{supp }}}
\newcommand\jy{{\langle y \rangle}}
\newcommand\tsgn{\chi}
\newcommand\bb{\beta}
\newcommand\dd{\delta}
\def\d{\partial}
\def\aa{\alpha}
\def\l{\lambda}
\def\th{\tilde{H}}
\def\eSUCP{{\bf ESUCP}\ }
\def\SUCPv{{\bf SUCP(II)}\ }
\def\SUCP{{\bf SUCP(I)}\ }
\def\UCP{{\bf UCP}\ } 
\def\nt{\tilde{\nabla}}
\def\pt{\tilde{P}}
\def\td{\tilde{\d}}
\def\tX{\tilde{X}}
\def\tY{\tilde{Y}}
\def\dt{P_0}
\def\Pt{\tilde{P}}
\def\Ptpsi{\tilde{P}_{\psi}}
\newcommand\dtpsio{{P}_{0,\psi}}
\newcommand\dtpsi{{P}_{\psi}}
\renewcommand\div{{\text{div }}}
\def\oops{{\bf !!!!!}}

\title{Carleman estimates and unique continuation for second order
parabolic equations with nonsmooth coefficients}

\author{Herbert Koch} 

\address{Mathematisches Institut der Universit\"at Bonn, Beringstr.1,
  53115 Bonn, Germany}

\email{koch@math.uni-bonn.de} 

\author{ Daniel Tataru}

\address{Department of Mathematics, University of California,
  Berkeley, CA 94720} \email{tataru@math.berkeley.edu} 

\thanks{The first author was supported in part by the DFG grant
  KO1307/5-3.  The second author was supported in part by the NSF
  grant DMS-0301122. Part of the work was done while the first author
  was supported by the Miller Institute for Basic Research in
  Science.}

\begin{abstract}
  In this work we obtain strong unique continuation results for
  variable coefficient second order parabolic equations. The
  coefficients in the principal part are assumed to satisfy a
  Lipschitz condition in $x$ and a H\"older $C^{\frac13}$ condition in
  time.  The coefficients in the lower order terms, i.e. the potential
  and the gradient potential, are allowed to be unbounded and required
  only to satisfy mixed norm bounds in scale invariant $L^p_t L^q_x$
  spaces.
\end{abstract}

\maketitle
\section{Introduction}

The evolution of the understanding of the strong unique
continuation problem for second order parabolic equations 
mirrors and is closely related to the corresponding strong unique
continuation problem for second order elliptic equations.
Consequently, we begin with a brief overview of the latter
problem.

To a second order elliptic operator $\Delta_g = \partial_i g^{ij}
\partial_j$ and potentials $V$, $W$ in $\R^n$ we associate the
elliptic equation
\begin{equation}
  \label{e}
  -\Delta_g u = W \nabla u + V u
\end{equation}
Given a function $u \in L^2_{loc}(\R^n)$ and $x_0 \in \R^n$
we say that $u$ vanishes of infinite order at $x_0$ if there
exists $R$ so that for each integer $N$ we have
\begin{equation}\label{ewvanishing}
\int_{B(x_0,r)} |u|^2 \, dx \le c_N^2 r^{2N}, \ \ \
  \ \ \ r < R
\end{equation}
The elliptic strong unique continuation property \eSUCP has the form
\[
\parbox[c]{4.1in}{\it Let $u$ be a solution to \eqref{e} which vanishes
  of infinite order at $x_0$. Then $u(x)=0$ for $x$ in a
  neighborhood of $x_0$.} \hspace{.5in} \eSUCP
\]

\eSUCP type results go back to the pioneering work of Carleman \cite{MR0000334}
in dimension $n=2$, later extended to higher dimension in by Aronszajn
and collaborators \cite{MR0092067}, \cite{MR0140031}. Their results apply to Lipschitz
metrics $g$ but only mildly unbounded potentials $V$ and $W$.
A key ingredient in their approach was to obtain a class of weighted
$L^2$ estimates which were later called Carleman estimates. The
simplest Carleman estimate has the form
\[
\| |x|^{-\tau} u\|_{L^2} \lesssim \| |x|^{2-\tau} \Delta u\|_{L^2}
\]
and holds uniformly for $\tau$ away from $\pm(\frac{n-2}{2} + N)$.
This restriction is related to the spectrum of the spherical
Laplacian.

Adding some extra convexity to the $|x|^{-\tau}$ weight makes the
above estimate more robust and allows one to also use it in the
variable coefficient case. The role played by the convexity was
further clarified and explained by H\"ormander \cite{MR686819}, \cite{MR1481433}, who
introduced the {\em pseudoconvexity} condition for weights as an
almost necessary and sufficient condition in order for the Carleman
estimates to hold.

The problem becomes more difficult if one seeks to work with unbounded
potentials $V$ in or near the scale invariant $L^\frac{n}2$ space.
There the $L^2$ Carleman estimates are insufficient. Instead the key
breakthrough was achieved in Jerison-Kenig~\cite{MR87a:35058}, where
the $L^2$ Carleman estimates are replaced by $L^p$ estimates of the
form
\[
\| |x|^{-\tau} u\|_{L^\frac{2n}{n-2}} \lesssim \| |x|^{-\tau} \Delta u\|_{L^\frac{2n}{n+2}}
\] 
Relevant to the present paper is also the alternative  proof of this
result which was given by Jerison~\cite{MR865834}, taking advantage of Sogge's
\cite{MR930395} spectral projection bounds for the spherical Laplacian.  In
the case of operators with smooth variable coefficients $L^p$ Carleman
estimates were first obtained by Sogge~\cite{MR91d:35037},
\cite{MR91k:35068}.

Working with gradient potentials in the scale invariant space $W \in L^n$ 
introduces an added layer of difficulty. There not even the $L^p$
Carleman estimates can hold. Wolff's solution to this in
\cite{780.35015} is a weight osculation argument, which allows one to
taylor the weight in the Carleman estimate to the solution $u$,
producing estimates of the form
\[
\| e^{-\tau \phi(x)} u\|_{L^\frac{2n}{n-2}} 
+ \| e^{-\tau \phi(x)} W \nabla u\|_{L^\frac{2n}{n+2}} 
\lesssim \| e^{\tau \phi(x)} \Delta u\|_{L^\frac{2n}{n+2}}
\] 
where the choice of $\phi$ depends on both $u$, $W$ and $\tau$.
 
Finally, the authors's article \cite{MR2001m:35075} combines the ideas
above into a nearly optimal scale invariant \eSUCP result for the
elliptic problem, with (i) a Lipschitz metric $g$, (ii) an
$L^\frac{n}2$ potential $V$, and (iii) an almost $L^n$ gradient
potential $W$. The present paper is the counterpart of
\cite{MR2001m:35075} for the parabolic strong unique continuation
problem.

We consider the second order backwards parabolic operator
\begin{equation}
P = \d_t+ \d_k g^{kl}(t,x) \d_l
\label{p}\end{equation}
in $\R \times \R^n$ and potentials $V,W_1,W_2$. To these we associate the
parabolic equation
\begin{equation}
  \label{eq} 
  Pu =  Vu+W_1 \nabla_x u + \nabla_x (W_2 u) 
\end{equation}
Given a function $u \in L^2_{loc}$ and $(t_0,x_0) \in \R \times \R^n$
we say that $u$ vanishes of infinite order at $(t_0,x_0)$ if there
exists $R$ so that for each integer $N$ we have
\begin{equation}\label{wvanishing}
  \int_0^{r^2} \int_{B(x_0,r)} |u|^2 \, dx dt \le c_N^2 r^{2N}, \ \ \ \ \ \ r < R\end{equation}
Alternatively we may only require that $x \to u(t_0,x)$ vanishes of infinite 
order at $x_0$, i.e.
\begin{equation}\label{vanishing}
  \int_{B(x_0,r)} |u(t_0,x)|^2 \, dx  \le c_N^2 r^{2N}, \ \ \ \ \ \ r < R
\end{equation}
The two conditions \eqref{wvanishing} and \eqref{vanishing} are
largely equivalent provided that the coefficients $g^{kl}$ have some 
uniform regularity as $t \to t_0$. However, our assumptions in this
article are not strong enough to guarantee this, therefore we consider
the two separate cases.

Now we can define the strong unique continuation property \SUCP:
\[
\parbox[c]{4.2in}{\it Let $u$ be a solution to (\ref{eq}) which vanishes
  of infinite order at $(t_0,x_0)$. Then $u(t_0,x)=0$ for $x$ in a
  neighborhood of $x_0$.} \hspace{.4in} \SUCP
\]
and the slightly stronger variant
\[
\parbox[c]{4.6in}{\it Let $u$ be a solution to (\ref{eq}) so that $x\!
  \to \!u(t_0,x)$ vanishes of in\-fi\-nite order at $x_0$. Then $u(t_0,x)=0$
  for $x$ in a neighborhood of $x_0$.} \hspace{.1in} \SUCPv
\]

The study of unique continuation for parabolic equations 
began with early work of Mizohata~\cite{MR0106347} and
Yamabe~\cite{MR0101395}, followed by 
Saut-Scheurer~\cite{MR871574}; $L^p$ Carleman estimates 
were first obtained by Sogge~\cite{MR91m:35051}.

The study of the parabolic strong unique continuation problem began
with work of Lin~\cite{MR1024191} who considered \SUCPv for the heat
equation with $W=0$ and $V$ bounded and time independent. 
This continued with work of Chen~\cite{MR1637972} and Poon
\cite{MR1387458}.  Fern\'andez \cite{MR2001174}, and 
Escauriaza, Fern\'andez and
Vessella \cite{MR2198840} considered \SUCPv under various assumptions
on the coefficients and pointwise bounds for $W=0$ and $V$.
It is a consequence of Alessandrini and Vessella \cite{MR946281} that \SUCP and\SUCPv are equivalent under weak assumptions on the coefficients, and
they derived \SUCPv in \cite{MR2022375} for bounded $W$ and $V$.

The article of Poon \cite{MR1387458} contributed to clarifying 
the correct form of the $L^2$ Carleman estimates for
the parabolic strong unique continuation problem in  
Escauriaza and Fern\'andez's work  \cite{MR1971939}. In the simplest
form, these have the form
\[
\| t^{-\tau-\frac12} e^{-\frac{x^2}{8t}} u\|_{L^2}
\lesssim \| t^{-\tau+\frac12} e^{-\frac{x^2}{8t}} (\d_t + \Delta)  u\|_{L^2}
\]
and hold uniformly with respect to $\tau$ away from $(2n+\N)/4$.
This restriction is connected with the spectral properties 
of the Hermite operator.

The $L^p$ spectral projection bounds for the Hermite operator were
independently obtained by Thangavelu~\cite{MR958904} and
Kharazdhov~\cite{MR1319486}; see also the simplified proof in the
authors's paper \cite{MR2140267}. These bounds were essential in the
proof of $L^p$ Carleman inequalities for the heat operator of
Escauriaza \cite{MR1769727} and Escauriaza and Vega \cite{MR1871351}
which yield \SUCP when $g=I_n$, $W = 0$ and $V \in L^1L^{\infty} +
L^\infty L^{n/2}$.

Our aim is to prove that \SUCP respectively, \SUCPv hold under sharp
scale invariant assumptions on the metric $g$ and $L^p$ conditions on
the potentials $V$ and $W_1, W_2$.  The contribution of this work is
comparable to \cite{MR2001m:35075} for the elliptic problem: We study
almost optimal conditions on
\begin{enumerate}
\item the coefficients $g$
\item the potential $V$
\item the gradient potentials $W_j$
\end{enumerate}
The combination of rough variable coefficients and $L^p$
conditions on the potential seems to be new. Also, to the best of our
knowledge this is the first result on unique continuation for
parabolic problems under $L^p$ conditions on the coefficients of the
gradient term.
 
For simplicity we always assume that $t_0=0, \ x_0=0$. For \SUCP it is
natural\footnote{This becomes clearer later on after a change of
  coordinates and conjugation with respect to a Gaussian weight} to
consider a larger class of operators $P$ which have the form
\begin{equation}
P = \frac{\d}{\d t} + \d_k g^{kl} \d_l + \frac{x_k}{t} d^{kl} \d_l
+  \d_l d^{kl} \frac{x_k}{t} + \frac{x_k}{t} e^{kl}\frac{x_l}{t}
\label{pgen}\end{equation}
where $(g^{kl})$, $(d^{kl})$ and  $(e^{kl})$ are real valued and
$(g^{kl})$ and $(e^{kl})$ are symmetric.

Then simple scale invariant assumptions for the coefficients would be
\begin{equation} \label{g0} \Vert d \Vert_{L^\infty} + \| (t+x^2)^\frac12 \d_x
  d\|_{L^\infty} + \| t \d_t d\|_{L^\infty}\ll 1 
\end{equation}  
Here and in the sequel $d$ stands for a generic coefficient of the
form $ g^{kl}-\delta^{kl}$, $d^{kl}$ and $ e^{kl}$. For $V$ and $W_{1,2}$ we
could consider conditions of the form
\begin{equation}\label{v0}
  \Vert V \Vert_{L^1L^{\infty} + L^\infty L^{n/2}} \ll 1,
\end{equation}
\begin{equation}\label{W0}
  \Vert  W_{1,2} \Vert_{L^2L^\infty + L^\infty L^n} \ll 1 .
\end{equation} 
Here and in the sequel we use the notation $L^p L^q= L^p_t L^q_x$.

The situation is however more complex and we may take \eqref{g0} to
\eqref{W0} only as guidelines. We will have to strengthen \eqref{g0}
to include some dyadic summability.  on the other hand we are able to
weaken the time differentiability to a $C^{\frac13}$ Holder condition
on small time scales. 

We are also able to slightly weaken \eqref{v0} almost to uniform
bounds on dyadic sets. However, we are unable to use mixed norms for
$W_{1}$ and $W_2$, and we restrict ourselves to a summable $L^{n+2}$
norm in dyadic sets.

To state our assumptions on $g,V$, $W_{1}$ and $W_2$ we consider a double
infinite dyadic partition of the space,
\begin{equation}\label{aij1} 
\R^+ \times \R^n = \bigcup_{i=-\infty}^\infty \bigcup_{j=0}^\infty
A_{ij}
\end{equation} 
where
\begin{equation}\label{aij2}  
A_{ij} = \{(t,x) \in \R^+ \times \R^n\ |\ e^{-4i-4} \leq t \leq
e^{-4i}, \ e^j \leq 1+ 2|x| t^{-\frac12} \leq e^{j+1}\}.
\end{equation} 
Consider the subset of indices
\begin{equation}\label{matA}
  \mathcal{A} = \{ (i,j): j \le 2i +2\} 
\end{equation} 
defining a partition of the cylinder 
\[
Q = [0,1] \times B(0,1)
\]
  Define
\begin{equation} \label{matAtau} \mathcal{A}(\tau) = \{ (i,j)\in
  \mathcal{A} : 4i \ge \ln \tau+1, j \le \frac12 \ln \tau +2\}
\end{equation} 
which corresponds to a partition of the cut parabola
\[
Q_\tau  = \{ (t,x): |x|^2 \le \tau t \le 1 \}.
\]

\begin{figure}[here]
  \begin{tikzpicture}[yscale=.15]
    \draw[->] (-5,0)--(5,0) node[anchor=north west]{$x$}; \draw[->]
    (0,0)--(0,18) node[anchor=south east]{$t$};
    \fill[blue!20](-3.5,12.25) parabola bend (0,0) (3.5,12.25) --
    cycle; \draw(-4,16) parabola bend (0,0) (4,16); \draw(-3.5,12.25)
    -- (3.5,12.25); \draw[color=gray,very thin] (3.5,12.25) --
    (4,12.25) node[anchor=west,color=black]{$t =
      \tau^{-1}$};\draw(3.5,.9)--(3.5,-.9)
    node[anchor=north]{$|x|=1$};
  \end{tikzpicture}
\caption{The cut parabola}
\end{figure}
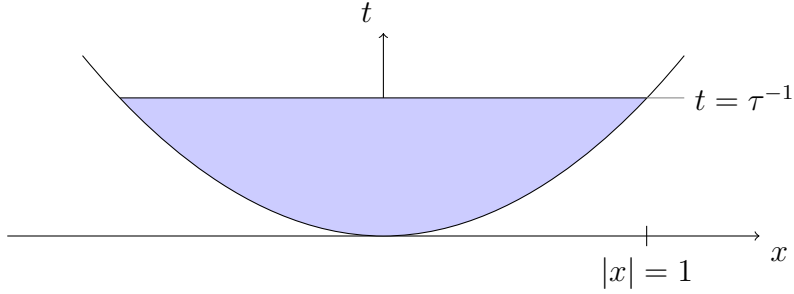

We also consider a decomposition of $Q$ into dyadic time slices
\[
A_i =  [e^{-4i-4}, e^{-4i}] \times B(0,1)
\]
and a similar partition of the cut parabola $Q_\tau$ into the sets
\[
A_i^\tau =  A_i  \cap Q_\tau
\]

Given a function space $X$ and $1 \le q < \infty$ we introduce the
Banach spaces $l^q(\mathcal{A}, X)$ with norms
\[
\|V\|_{l^q(\mathcal{A}, X)}^q = \sum_{i,j\in \mathcal{A}}
\|V\|_{X(A_{ij})}^q.
\]
In a similar manner we define the spaces $l^{\infty}(\mathcal{A},X)$.

Within the sets $A_{ij}$ we define the modulus of continuity  $(m_{ij})$
in time
\[
m_{ij} (\rho) = e^{4i} \rho + e^{\frac{2}3(2i-j)}  \rho^\frac13
\]
and denote by  $C^{m_{ij}}_t$  the space of continuous functions with 
finite seminorm
\[
\| u\|_{C^{m_{ij}}_t} = \sup_{t_1,t_2,x} \frac{|u(t_1,x) -u(t_2,x)|}{m^{ij}(|t_1-t_2|)}
\]
For the reader's convenience we note that within $A_{ij}$ we have
$ e^{4i} \approx t^{-1}$ and $e^{\frac{2}3(2i-j)} \approx (t+|x|^2)^{-\frac13}$. 

For the coefficients of the operator $P$ in \eqref{pgen} we
change the condition \eqref{g0} to 
\begin{equation}
  \label{g}
  \sup_\tau 
  \sum_{\mathcal{A}(\tau)}    \Vert d \Vert_{L^\infty(A_{ij})} +
  e^{j-2i}   \Vert d \Vert_{\lip_x(A_{ij})}
  + \|d\|_{C^{m_{ij}}_t(A_{ij})}
  \ll 1. 
 \end{equation}
where we note that $ e^{j-2i} \approx (t+x^2)^\frac12$ in $A_{ij}$.

The pointwise bound for $g-\id$ in \eqref{g}, namely
\begin{equation}
  \sup_\tau   \Vert g-\id \Vert_{l^1(\mathcal{A}(\tau), L^\infty)} \ll 1 
\label{g1}\end{equation}
is not really needed for our results. It can be always obtained from
the other bounds after  a  change of coordinates. This is
discussed in the appendix.

The assertion \eqref{g} is satisfied for $g\in \lip_x \cap
C^\frac13_t$ provided that $g(0,0)= \id$. Indeed by scaling we may
assume that the $ \lip_x \cap C^\frac13_t$ norm is small therefore it
suffices to compute
\[
\Vert (t+|x|^2)^\frac13 \Vert_{l^1(\mathcal{A} (\tau),L^\infty)} \le \sum_{i \ge
  \ln \tau} \sum_{j \le \ln(\tau)/2+2} e^{\frac13(j-2i)} \lesssim 1.
\]

For the potential $V$ we consider:
\begin{equation}
  \label{V}
  \begin{split} 
    \Vert V \Vert_{l^\infty(\mathcal{A}, L^1 L^\infty+ L^\infty L^{n/2})} & \ll
    1\quad \text{ for } n > 2
    \\
    \Vert V \Vert_{l^\infty(\mathcal{A}, L^1 L^\infty+ L^p L^{p'})} & \ll 1\quad
    \text{ for } n > 2, 1 \le p < \infty,
    \\
    \Vert V \Vert_{l^\infty(\mathcal{A},L^1 L^\infty+ L^2 L^{1})} & \ll 1\quad
    \text{ for } n =1
  \end{split}
\end{equation}
where $p'$ in the second line is the dual exponent. In addition 
we require that
\begin{equation}\label{V2}
  \sup_{i\geq 0} 
\Vert \chi_i   V \Vert_{L^1L^\infty+L^\infty L^{n/2} } \ll 1 \qquad n > 2  
\end{equation} 
with the obvious modifications for $n=1,2$, where 
$\chi_i$ is the characteristic function of the
set
\[
\{(t,x): e^{-4i-4} \le t \le e^{-4i},\  t^{-1/2} |x| \le i \}.
\]
Both \eqref{V} and \eqref{V2} are fulfilled if $V \in
L^1L^\infty+L^\infty L^{n/2}$ with small norm.

Finally for the gradient potentials $W_{1,2}$ we introduce the summability
condition with respect to time slices
\begin{equation}
  \label{W} 
 \sup_\tau \sum_i \Vert W_{1,2} \Vert_{L^{n+2}(A_i^\tau)} \ll 1.
\end{equation}
As a consequence of this we note the uniform bound
\begin{equation}
  \label{W2}
   \sup_i  \Vert W_{1,2} \Vert_{L^{n+2}(A_i)} \ll 1.
\end{equation}

Now we can state our main results.

\begin{theorem}
  Let $P$ be as in \eqref{pgen} with coefficients satisfying \eqref{g}.
  Assume that the potentials $V$ and $W_{1,2} $ satisfy \eqref{V}, \eqref{V2}
  and \eqref{W}.  Then \SUCP holds at $(0,0)$ for $H^1$ functions $u$
  satisfying \eqref{eq}.
  \label{tmain}\end{theorem}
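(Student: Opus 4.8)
The plan is to derive \SUCP from an $L^p$ Carleman estimate adapted to the cut parabola $Q_\tau$, following the strategy of \cite{MR2001m:35075} in the elliptic case, with the Hermite operator playing the role of the spherical Laplacian. First I would perform the standard reductions: translate so that $(t_0,x_0)=(0,0)$, pass to self-similar (parabolic) coordinates $s = -\log t$, $y = x/\sqrt t$, and conjugate by the Gaussian weight $e^{-|y|^2/8}$. This change of variables is precisely what motivates the general class \eqref{pgen}: the terms $\frac{x_k}{t} d^{kl}\d_l$ etc. are exactly what the conjugation produces, and the scale invariant bounds \eqref{g} transcribe into bounds that are uniform on the dyadic pieces $A_{ij}$ of $Q_\tau$. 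After this reduction, the operator becomes a perturbation of $\d_s + \Delta_y - \frac14 y\cdot\nabla_y$, i.e.\ essentially the (backwards) Ornstein--Uhlenbeck / Hermite semigroup generator, and $u$ vanishing of infinite order at the origin becomes exponential decay of $u$ as $s\to\infty$ (in the weighted $L^2$-in-$y$ sense supplied by \eqref{wvanishing}).

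The core of the argument is the Carleman estimate. I would establish an inequality of the schematic form
\[
\| e^{\tau\phi} u \|_{X_\tau} \lesssim \| e^{\tau\phi} Pu \|_{X_\tau'} ,
\]
where $\phi$ is (a perturbation of) the logarithmic weight $\phi = -\frac14|y|^2 - c s$ pulled back from $t^{-\tau-1/2}e^{-x^2/8t}$, and $X_\tau$ is a mixed-norm space built by gluing the Hermite spectral-projection $L^p$ bounds of Thangavelu--Kharazdhov (as in \cite{MR2140267}) on each dyadic slice $A_i^\tau$, together with the parabolic $L^p_tL^q_x$ scaling on each $A_{ij}$. The point of the space $l^q(\mathcal A(\tau),X)$ machinery in the statement is exactly to organize this gluing so that: (i) the rough-coefficient perturbation $\d_k(g^{kl}-\delta^{kl})\d_l u$ is absorbed using the $\lip_x$ and $C^{m_{ij}}_t$ bounds in \eqref{g}, each factor of $d$ costing a gain $e^{j-2i}\approx(t+x^2)^{1/2}$ or a Hölder modulus that beats the loss in the estimate; (ii) the potential term $Vu$ is absorbed via Hölder in the mixed norm using \eqref{V} and the uniform-on-dyadic-sets bound \eqref{V2} (the latter handles the region $|x|t^{-1/2}\lesssim i$ where the $l^\infty$ bound is not by itself summable against the $\tau$-loss); and (iii) the gradient terms $W_1\nabla u + \nabla(W_2 u)$ are absorbed using the $L^{n+2}$ summability \eqref{W}, which is the scale-invariant exponent for $\nabla u$ relative to the Sobolev/Hermite embedding in $n+1$ parabolic dimensions, after an integration by parts on the $W_2$ term. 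The weight $\phi$ may need a Wolff-type osculation adjustment, chosen depending on $u$, $W_{1,2}$ and $\tau$, to handle the gradient potential at the endpoint — this is where the "depends on $u$" feature of \eqref{g} (the $\sup_\tau$ over $\mathcal A(\tau)$) is used.

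Once the Carleman estimate holds uniformly for $\tau\to\infty$ away from the forbidden set $(2n+\N)/4$, the conclusion is routine: cut off $u$ to the region $Q_\tau$ with a parabolic cutoff, plug into the estimate, use \eqref{eq} to replace $Pu$ by the lower-order terms and absorb them on the left (here the smallness $\ll 1$ in \eqref{g}, \eqref{V}, \eqref{V2}, \eqref{W} is essential), and let $\tau\to\infty$. The infinite-order vanishing \eqref{wvanishing} kills the contribution of the cutoff's inner boundary, while the weight $e^{\tau\phi}$ is comparable to a fixed positive constant on a fixed neighborhood of $0$ and superexponentially small elsewhere, forcing $u\equiv 0$ near the origin. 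The main obstacle — and the bulk of the work — is unquestionably step (ii): proving the Carleman estimate itself in the glued mixed-norm space, in particular showing that the rough metric perturbation and the unbounded potentials can all be absorbed with the losses dictated by the Hermite spectral gaps, uniformly in $\tau$. The case analysis between the "elliptic-type" dyadic region $\{|x|^2\gtrsim t\}$ (away from the parabola's axis, where one expects to recover the elliptic estimates of \cite{MR2001m:35075}) and the "parabolic" region near the axis, and the gluing across the interface, is the delicate technical heart of the paper.
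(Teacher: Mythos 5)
You have correctly identified the overall architecture of the paper — self-similar coordinates, conjugation to the Hermite operator, mixed-norm gluing over the dyadic pieces $A_{ij}$, Wolff's osculation for the gradient term — but most of that is the content of the separate Carleman estimate, Theorem~\ref{carleman}. The proof of Theorem~\ref{tmain} \emph{from} that Carleman estimate is what your last paragraph addresses, and there is a genuine gap there.

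Your closing claim is that ``the weight $e^{\tau\phi}$ is comparable to a fixed positive constant on a fixed neighborhood of $0$ and superexponentially small elsewhere, forcing $u\equiv 0$ near the origin.'' That is the right heuristic for the elliptic weight $|x|^{-\tau}$, but it fails for the parabolic weight $e^\phi \approx t^{-\tau}e^{-x^2/8t}$. That function is not comparable to a constant on any fixed neighborhood of $(0,0)$ with $t>0$: it blows up as $t\to 0^+$ for every $x$, and its level sets are paraboloids collapsing onto the $t$-axis, not small balls. This is exactly the point the paper flags (``in the parabolic situation the argument is less direct due to the more complex geometry of the level sets''), and it is where your argument breaks down. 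Moreover the conclusion of \SUCP is a statement about the trace $u(0,\cdot)$ near $x=0$; your final step never actually produces that trace.

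What the paper does instead is more delicate and genuinely different. It fixes $\tau$ and introduces three $\tau$-dependent regions: an inner ring $E_\delta$ at scale $\delta$ (to be sent to $0$), an outer ring $F^{ext}_\tau$ at time-height $\sim\tau^{-1}$, and an intermediate slab $F^{int}_\tau = [1/32\tau,1/16\tau]\times B(0,1/4)$ where the estimate is read off. After truncating $u$ between $E_\delta$ and $F^{ext}_\tau$ and applying Theorem~\ref{carleman}, the structure \eqref{hprime}--\eqref{weight} of $\phi$ gives $\inf_{F^{int}_\tau}e^\phi \ge e^{\tau/2}\sup_{F^{ext}_\tau}e^\phi$, so that
\[
\Vert u \Vert_{L^\infty L^2(F^{int}_\tau)} \lesssim \tau^{1/2}e^{-\tau/2}\Vert u\Vert_{L^2(F^{ext}_\tau)} + c(\tau)\,\delta^{-4\tau-2}\Vert u\Vert_{L^2(E_\delta)} .
\]
The $E_\delta$ term vanishes as $\delta\to 0$ by \eqref{wvanishing}. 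One does \emph{not} then let $\tau\to\infty$ in one stroke; rather, for each small $t>0$ one chooses $\tau = 1/(16t)$ so that $t$ lies in the time band of $F^{int}_\tau$, obtaining $\Vert u(t,\cdot)\Vert_{L^2(B(0,1/4))}\lesssim t^{-1/2}e^{-1/(32t)}$. It is this quantitative exponential decay in $t$ that shows the trace at $t=0$ exists and vanishes near the origin, which is the content of \SUCP. A secondary omission: the commutator terms from the inner cutoff are supported in $E_\delta$ and carry a $\delta^{-4\tau-2}$ prefactor, so absorbing them requires the localized energy estimate (Proposition~\ref{energy} and Corollary~\ref{vanish}) on the $E_\delta$ scale before one can invoke infinite-order vanishing; this step is absent from your sketch.
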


It is part of the conclusion that the trace of $u$ at $t=0$ exists
near $x=0$.  The assumptions on the operator seem to be too weak to
imply existence of a trace in general. More precisely shall prove
\[
\Vert u(t,. ) \Vert_{L^2(B(0,1/8))} \lesssim e^{-\frac\delta{t}}
\]
for some $\delta >0$.  

The $C^{\frac13}$ H\"older regularity in time for the metric $g$ seems
so be new, improving the $C^\frac12$ H\"older regularity in
\cite{MR1971939}. It is not clear to the authors whether this
condition is optimal or not.

$ \sim \approx $

We may replace the assumptions by stronger translation invariant
assumptions,  
\begin{equation}
\| g \|_{Lip_x} + \|g\|_{C^\frac13_t} \lesssim 1
\label{gb}\end{equation}
\begin{equation}
\| V\|_{L^1L^\infty + L^\infty L^{n/2}} \ll 1
\label{Vb}\end{equation}
\begin{equation}
\sum_i \|W_{1,2} \|_{L^{n+2}(A_i)} \lesssim 1
\label{Wb}\end{equation}
  Then we also obtain a stronger conclusion.
\begin{theorem}\label{tmain2}
  Let $P$ be as in \eqref{p} with coefficients as in \eqref{gb}.
  Assume that the potentials $V$ and $W$ satisfy \eqref{Vb}
  respectively \eqref{Wb}.
 Then  \SUCPv holds at $(0,0)$ for $H^1$ functions $u$
  satisfying \eqref{eq}.
\end{theorem}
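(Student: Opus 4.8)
The plan is to deduce Theorem~\ref{tmain2} from Theorem~\ref{tmain}: the hypotheses \eqref{gb}--\eqref{Wb} are a special, translation invariant instance of \eqref{g}, \eqref{V}, \eqref{V2}, \eqref{W}, so the only genuine work is to replace the space-time vanishing \eqref{wvanishing} used in Theorem~\ref{tmain} by the weaker one-slice vanishing \eqref{vanishing}. This last point is the sole reason the strengthened assumptions are imposed.

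\emph{Step 1: the hypotheses of Theorem~\ref{tmain} hold.} After the linear change of coordinates of the appendix we may assume $g(0,0)=\id$, and after a parabolic rescaling — which preserves the form of \eqref{gb}--\eqref{Wb} — all the relevant seminorms become small. Then \eqref{gb} implies \eqref{g} by the summation performed after \eqref{g1}; \eqref{Vb} implies \eqref{V} and \eqref{V2} as already noted after \eqref{V2}, since a translation invariant $L^1L^\infty+L^\infty L^{n/2}$ bound dominates both the norm in \eqref{V} and each localized norm in \eqref{V2}; and \eqref{Wb} implies \eqref{W} because $A_i^\tau\subset A_i$ while the $L^{n+2}(A_i)$ norm is parabolic scale invariant, so that restricting to a sufficiently small neighbourhood of $(0,0)$ makes the convergent sum in \eqref{Wb} as small as we wish. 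Thus Theorem~\ref{tmain} applies to $P$ and \eqref{eq}: any $H^1$ solution satisfying \eqref{wvanishing} vanishes near $x=0$ at $t=0$, and in fact its proof delivers the quantitative bound $\|u(t,\cdot)\|_{L^2(B(0,1/8))}\lesssim e^{-\delta/t}$.

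\emph{Step 2: \eqref{vanishing} implies \eqref{wvanishing} under \eqref{gb}.} Under \eqref{gb} the metric is Lipschitz in $x$ and uniformly H\"older in $t$ up to $t=0$, so $u$ admits a trace $u(0,\cdot)\in L^2_{loc}$ near $x=0$ and we are in the setting of the local quantitative estimates of Alessandrini and Vessella \cite{MR946281}, which give exactly the equivalence of \eqref{vanishing} and \eqref{wvanishing} for solutions of \eqref{eq}. A self-contained route is to cut $u$ off by $\chi$ with $\chi=1$ on $B(0,2r)$ and $\supp\chi\subset B(0,4r)$; then $v=\chi u$ satisfies $Pv=\chi(Vu+W_1\nabla u+\nabla(W_2 u))+[\partial_k g^{kl}\partial_l,\chi]u$ with the commutator supported in $B(0,4r)\setminus B(0,2r)$, and one runs the parabolic frequency function of Poon and Escauriaza--Fern\'andez \cite{MR1971939} for $v$, centred at $(0,0)$. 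Its almost monotonicity — valid for $\lip_x\cap C^\frac13_t$ coefficients, which is the essential use of \eqref{gb} — together with the infinite order vanishing of $u(0,\cdot)$ at $0$ forces $\int e^{-x^2/8t}|v|^2\,dx$, hence $\int_{B(0,r)}|u|^2\,dx$, to decay faster than any power of $r$ uniformly for $t\in(0,r^2)$; the commutator term contributes nothing since it is supported where $|x|\ge 2r$, and there $e^{-x^2/8t}$ is exponentially small for $t\le r^2$. Integrating in $t$ over $(0,r^2)$ yields \eqref{wvanishing}.

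Combining the two steps, every $H^1$ solution of \eqref{eq} for which $x\mapsto u(0,x)$ vanishes of infinite order at $0$ also satisfies \eqref{wvanishing}, hence vanishes near $x=0$ at $t=0$ by Theorem~\ref{tmain}; this is \SUCPv. The main obstacle is Step~2: the hypothesis lives on the single slice $t=0$ and has to be transported into a full space-time neighbourhood, and a naive energy or logarithmic convexity argument breaks down near the top $t\sim r^2$ of the time interval, where $u$ is merely of background size. One genuinely needs the monotone parabolic frequency function (or, equivalently, the $L^2$ Carleman estimates built in the body of the paper), and establishing its monotonicity with only $C^\frac13_t$ regularity of $g$ — through commutator estimates for the rough metric as $t\to0$ — is precisely the threshold that allows the weaker vanishing condition here while it is unavailable in Theorem~\ref{tmain}.
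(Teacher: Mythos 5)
Your proof diverges from the paper's, and Step~2 has a genuine gap. The paper does \emph{not} attempt to upgrade the one-slice vanishing \eqref{vanishing} to space-time vanishing \eqref{wvanishing} for $t>0$. Instead it \emph{extends $u$ to negative time} by solving
\[
u_t + \d_k g^{kl}(0,x)\d_l u = 0 \quad\text{for } t<0,\ |x|\le 2,\qquad u\big|_{|x|=2}=0,
\]
with data $u(0,\cdot)$. This is the \emph{well-posed} direction for the operator $P=\d_t+\d_k g^{kl}\d_l$ (it is a forward heat flow in decreasing $t$), so Gaussian kernel bounds convert \eqref{vanishing} directly into space-time vanishing
\[
\int_{-r^2}^0\int_{B_r(0)}|u|^2\,dx\,dt\lesssim c_N^2 r^{2N}
\]
for $t<0$, with no Carleman estimate or frequency function needed. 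The proof then applies the cutoff-and-Carleman argument from the proof of Theorem~\ref{tmain} to a time-shifted copy of $u$ and lets the shift go to zero.

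Your Step~2, by contrast, asserts that \eqref{vanishing}$\Rightarrow$\eqref{wvanishing} for $t>0$, i.e.\ on the side where the equation is ill-posed. For $P$, knowing the trace $u(0,\cdot)$ does not control $u(t,\cdot)$ for $t>0$: there is no Gaussian representation in that direction, and the best trivial bound is $\int_0^{r^2}\int_{B_r}|u|^2\lesssim r^{n+2}$, only polynomial. Upgrading this to infinite order is a quantitative backward uniqueness statement and is essentially as deep as \SUCPv itself; it is not an independent lemma one can invoke. Neither citation closes it: Alessandrini--Vessella \cite{MR946281} establish the equivalence of \SUCP and \SUCPv under regularity that is strictly stronger than the $C^{1/3}_t$ hypothesis \eqref{gb} — the paper explicitly flags that its assumptions are "not strong enough" for that equivalence — and the Escauriaza--Fern\'andez frequency function \cite{MR1971939} requires $C^{1/2}_t$ regularity (as well as $W=0$ and bounded $V$), which is exactly what this paper is improving past. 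Moreover, your key estimate inside the frequency-function sketch is false: at $|x|=2r$, $t=r^2$, one has $e^{-x^2/8t}=e^{-1/2}$, which is $O(1)$, not exponentially small; the commutator term does contribute near the top $t\sim r^2$ of the time interval, which is precisely the obstruction you identify but do not overcome. The paper's negative-time extension sidesteps all of this by working in the direction where Gaussian estimates are available for free.
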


We remark that the condition \eqref{gb} is really too strong, and that
with some additional work (see Remark~\ref{goodg}) one can bring it 
almost to the level of \eqref{g}. Precisely, it suffices to replace
\eqref{g} by
\begin{equation}
  \label{gtwo}
  \sup_\tau 
  \sum_{\mathcal{A}(\tau)}    \Vert d \Vert_{L^\infty(A_{ij})} +
  e^{j-2i}   \Vert d \Vert_{\lip_x(A_{ij})}
  + \|d\|_{C^{m_{ij}^2}_t(A_{ij})}
  \ll 1. 
 \end{equation}
where the slightly stronger time continuity modulus  $m_{ij}^2$ is
given by
\[
m_{ij}^2 (\rho) =  e^{4i-2j}  \rho + 
e^{\frac{2}3(2i-j)}  \rho^\frac13
\]

However, we cannot  keep the additional terms in \eqref{pgen},
because we need to be able to meaningfully talk about the trace of the
solution at time $t=0$. 

Both theorems are consequences of quantitative estimates, which also imply {\sl
  weak unique continuation} under the assumptions of Theorem \ref{tmain2}: 
\[
\parbox[c]{4in}{\it Let $u$ be a solution to (\ref{eq}) for which 
$u(t_0,.)$ vanishes in the closure of an open set.
Then $(u(t_0,.)$  vanishes in a neighborhood of the closure. } \hspace{.5in} \UCP
\]

If
$u$ satisfies the assumptions and vanishes in an open set $U$, then it
vanishes in the time slices $t=t_0$ in an open neighborhood of the closure of
$U_{t_0} = \{ (x: (t,x) \in U\}$.

Theorems \ref{tmain} and \ref{tmain2} are  nontrivial consequences of a
Carleman inequality.  To state a first version of the Carleman inequality we
introduce an additional family $\mathcal{B}(\tau)$ of sets which is a
partition of the cylinder $[0,\tau^{-1}) \times B(0,1)$, consisting of
\begin{equation} \label{parB1} A_{ij}, \qquad \ln \tau \le 4i \le
  \tau^{1/2}, 0 \le j \le \ln \tau/2+2,
\end{equation} 
\begin{equation} \label{log} [e^{-4i-4}, e^{-4i}] \times B(0, e^{-2i}
  \tau^{1/2}), \qquad 4i >\tau^\frac12,
\end{equation} 
\begin{equation} \label{parB2} A_{ij}, \qquad \ln \tau \le 4i,\ \ \ln
  \tau/2 \le j \le 2i.
\end{equation}   

This partition is coarser than the partition of the same cylinder into
the sets $A_{ij}$. This is the reason why we need the assumption
\eqref{V2}.  More precisely Assumptions \eqref{V} and \eqref{V2} imply
\begin{equation}\label{V3}
  \Vert V \Vert_{l^\infty ( \mathcal{B}(\tau), 
    L^1 L^\infty + L^\infty L^{n/2})} \ll 1.
\end{equation}

\begin{theorem} \label{carleman} Let $\tau_0 \gg 1$, $\varepsilon
  >0$ and $P$ as in \eqref{pgen} with coefficients satisfying
  \eqref{g}. Suppose that $W_{1,2}$ satisfY \eqref{W} 
   with constants depending on $\varepsilon$ and $\tau_0$.  Then
  there exists $C>0$ such that for all $\tau \ge \tau_0$ the following
  is true: Suppose that $v \in L^2(H^1)$ is compactly supported in $
  [0,8\tau^{-1})\times B(0,2)$ and that it vanishes of infinite order near $(0,0)$.
  Then we can find a function $\phi \in C^\infty([0,8\tau^{-1}] \times
  B(0,1) \backslash \{ 0,0\})$ and $h \in C^\infty(\R^+)$ which satisfy
  \begin{equation}\label{hprime}
    \tau \le h^\prime \le (1+\e) \tau 
  \end{equation}
  \begin{equation}\label{weight}
\left | \phi(x,t) - \left(h(-\ln t) -\frac{x^2}{8t}\right)\right|
\leq \e \left( \tau+ \frac{x^2}t\right)
  \end{equation}
  such that
  \[ \Vert e^\phi v \Vert_{l^2(\mathcal{B}(\tau),L^\infty L^2\cap L^2
    L^{\frac{2n}{n-2}}) )} \le C \Vert e^\phi ( P  + W_1 \nabla + \nabla W_2) v
  \Vert_{l^2(\mathcal{B}(\tau), L^{1} L^{2}+ L^2
    L^{\frac{2n}{n+2}})},
  \]
for $n \geq 3$, respectively
  \[ \Vert e^\phi v \Vert_{l^2(\mathcal{B}(\tau),L^\infty L^2\cap
    L^{p'} L^{q'} )} \le C \Vert e^\phi ( P  + W_1 \nabla +\nabla W_2) v
  \Vert_{l^2(\mathcal{B}(\tau), L^{1} L^{2}+ L^{p} L^{q})},
  \]
  for $n=2$, $\frac1p + \frac1q = \frac12$, $2<p$, and
  \[ \Vert e^\phi v \Vert_{l^2(\mathcal{B}(\tau),L^\infty L^2\cap L^4
    L^\infty )} \le C \Vert e^\phi ( P  + W_1 \nabla + \nabla W_2 )v
  \Vert_{l^2(\mathcal{B}(\tau), L^{1} L^{\infty}+ L^{4/3} L^1)}
  \]
for $n=1$.
\end{theorem}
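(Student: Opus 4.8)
The plan is to follow the strategy of the elliptic paper \cite{MR2001m:35075}, adapted to the parabolic setting with the parabolic/Hermite weight $h(-\ln t)-x^2/(8t)$ in place of the elliptic $-\tau\ln|x|$. The starting point is a fixed-weight Carleman estimate with the model weight $\phi_0 = h(-\ln t)-x^2/(8t)$, where $h$ satisfies \eqref{hprime}; this is where the Hermite spectral projection bounds of Thangavelu/Kharazdhov (see \cite{MR2140267}) enter, exactly as in the heat-operator estimates of Escauriaza \cite{MR1769727} and Escauriaza--Vega \cite{MR1871351}. The point of the convexity built into $h'$ being strictly increasing is to gain a positive commutator, so that after conjugation $e^{\phi_0} P e^{-\phi_0}$ one obtains the mixed-norm bound on the model operator $\d_t + \Delta$ (or its variable-coefficient perturbation) with the $L^\infty L^2 \cap L^2 L^{2n/(n-2)}$ left side and $L^1 L^2 + L^2 L^{2n/(n+2)}$ right side on each piece of the partition $\mathcal{B}(\tau)$, the exponents on $A_i^\tau$ being dictated by the parabolic scaling $x \sim t^{1/2}$, $\d_t \sim \d_x^2$. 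For $n=1,2$ one uses the corresponding endpoint/non-endpoint Strichartz-type pairs as in the statement.

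Second, I would absorb the variable-coefficient part. Write $P = \d_t + \Delta + (\d_k(g^{kl}-\delta^{kl})\d_l + \text{first-order-in-}x \text{ terms from } d^{kl}, e^{kl})$. After conjugation the perturbation is $e^{\phi_0}(P-\d_t-\Delta)e^{-\phi_0}$; using \eqref{weight}, $|\nabla\phi_0| \lesssim \tau^{1/2} t^{-1/2} + |x|/t$, so the perturbation is controlled, on each $A_{ij}$, by the weighted norms $\|d\|_{L^\infty}$, $(t+x^2)^{1/2}\|d\|_{\lip_x}$, $\|d\|_{C^{m_{ij}}_t}$ — precisely the quantities summed in \eqref{g}. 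The dyadic $l^1$-summability in \eqref{g} over $\mathcal{A}(\tau)$ (hence over the coarser $\mathcal{B}(\tau)$) lets one sum these perturbative contributions against the $l^2(\mathcal{B}(\tau))$ norms and absorb them into the left side, using the smallness "$\ll 1$". The $C^{1/3}_t$ (rather than $C^{1/2}_t$) modulus is exactly what a parabolic rescaling on the sets $A_{ij}$ tolerates: on $A_{ij}$ a time-oscillation of size $\rho$ in $d$ contributes at scale $(t+x^2)^{-1/3}\rho^{1/3}$ after the natural normalization, matching the second term of $m_{ij}$; this is the step I expect to be the most delicate and is presumably the technical heart of the paper.

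Third, the potential terms. The term $Vu$ is moved to the right and estimated by duality on each $A_{ij}$: $\|e^{\phi_0}Vv\|_{L^1L^2+L^2L^{2n/(n+2)}} \lesssim \|V\|_{L^1L^\infty + L^\infty L^{n/2}} \|e^{\phi_0}v\|_{L^\infty L^2 \cap L^2 L^{2n/(n-2)}}$, and \eqref{V3} (which follows from \eqref{V} and \eqref{V2}) gives the needed $l^\infty(\mathcal{B}(\tau))$ smallness to absorb it. The gradient potentials are where the weight must actually be bent. With the fixed weight $\phi_0$ one cannot handle $W_{1,2} \in L^{n+2}$ — the scaling is critical and there is a genuine obstruction, as in Wolff's elliptic work \cite{780.35015}. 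So the final, and conceptually hardest, step is Wolff's osculation/weight-modification argument: given $v$ and $\tau$, one perturbs $h$ (staying within \eqref{hprime}) and adds a lower-order correction to produce $\phi$ satisfying \eqref{weight}, chosen so that $W_{1,2}\nabla v$ and $\nabla(W_{2}v)$ are, after conjugation by the modified weight and in the $l^2(\mathcal{B}(\tau), L^1L^2 + L^2L^{2n/(n+2)})$ norm, dominated by $\sum_i\|W_{1,2}\|_{L^{n+2}(A_i^\tau)}$ times the left-hand side; the dyadic-in-$i$ summability in \eqref{W} is what makes the pigeonholing in Wolff's argument converge. Combining the fixed-weight estimate, the variable-coefficient absorption, the $V$-absorption, and the $W$-osculation yields the stated inequality for all $\tau \ge \tau_0$.
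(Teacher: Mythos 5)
Your outline captures the broad architecture — Hermite spectral projection bounds for the flat/model estimate, a perturbation step for the variable coefficients, and Wolff's osculation for the gradient potentials — but it misses two ingredients that are essential to making the argument work, and conflates two different weight modifications.

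First, the weight in the paper is \emph{not} just $h(-\ln t)-x^2/(8t)$ plus a Wolff perturbation. Even before $W$ enters, the paper works in the coordinates $(s,y)=(-\tfrac14\ln t, 2xt^{-1/2})$ and replaces the conjugated model weight $h(s)$ by $\psi(s,y)=h(s)+\delta_2\phi(s,y)$, where $\phi$ is a carefully constructed spherically symmetric function of $y$ (Lemma~\ref{phi}) that is convex in $|y|\ll\sqrt\tau$ and compensates for the fact that the Hermite Hamilton flow is periodic. This additional \emph{spatial} pseudoconvexity is what makes the $L^2$ Carleman estimate localizable to the sets $B_{ij}^k$ and stable under variable-coefficient perturbations; the convexity of $h$ alone — constrained by the infinite-order vanishing to grow at most linearly — is too weak for either. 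Your statement that the convexity of $h'$ gives the positive commutator is true for the flat weight, but the bulk of the commutator analysis in Section~\ref{convex} is devoted to $\phi$. The freedom allowed by \eqref{weight} is there precisely to accommodate this $\phi$ (plus Wolff's $k$), not merely the Wolff perturbation.

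Second, your step "absorb the variable-coefficient part" as a small $l^1$-summable perturbation of the $L^pL^q$ estimate does not go through. Mixed-norm Strichartz-type estimates do not perturb under coefficient errors of the size allowed by \eqref{g}; the paper instead builds a parametrix $T$ by gluing local parametrices on the small sets $B_{ij}^k$, freezing coefficients on each, and controls the parametrix \emph{error} in an $X_2^*$ norm. The key structural point — which your sketch has no analogue of — is the strengthened $L^2$ Carleman estimate (Theorem~\ref{core}) in the $X_2$ norms, which include localized $L^2$, angular $H^{1/2}$, and elliptic-region components. These are precisely what let one tolerate the $L^2$-sized commutator and coefficient-freezing errors that arise in the parametrix construction and then bootstrap to the $L^pL^q$ bounds by a $TT^*$/duality argument. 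Without the $X_2$ machinery, the variable-coefficient step collapses. (A minor point: the $V$ term does not appear in Theorem~\ref{carleman}; the $V$-absorption via \eqref{V3} belongs to the derivation of Theorem~\ref{tmain} from Theorem~\ref{carleman}, not to the proof of the Carleman estimate itself.)

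Your instinct about Wolff's osculation and the role of \eqref{W} is correct in spirit, though the paper's implementation decomposes $W$ into low/high Hermite frequencies and pigeonholes over time intervals $I_i$ rather than working directly with a global perturbation of $h$. But the two omissions above — the additional spatial convexity $\phi(s,y)$ and the parametrix/$X_2$ structure — are genuine gaps: the proof as you have sketched it would fail at the variable-coefficient stage.
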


The statement of the Carleman inequality is involved for several
reasons. The weight $t^{-\tau} e^{-|x|^2/8t}$, which works for the
constant coefficient case, has to be modified so that it has more
convexity in order to handle variable coefficients, spatial
localization and the gradient potential. However the polynomial growth
in time (imposed by the assumption of vanishing of infinite order)
limits the available amount of convexity; this is the origin of the $l^1$
summability in \eqref{g1}, \eqref{W2}, and to a lesser extend of
\eqref{V2}.
 
If $W=0$ then the Carleman inequality holds for a large explicit class
of weights $e^\phi$. This cannot be possibly true for general gradient
potentials.  Instead, we are only able to prove that there exists some
weight function $\phi$, which now depends on $\tau$, $u$ and $W$, for
which the uniform Carleman inequality holds. This strategy goes back
to the seminal work of T. Wolff \cite{780.35015} and has been used by
the authors for the elliptic problem \cite{MR2001m:35075}.

The partition $A_{ij}$ is much finer than the dyadic decomposition in
$t$ only, which would correspond to the dyadic decomposition in the
elliptic case. We are only able to localize the estimates to the sets
$A_{ij}$ if we make the weight function sufficiently convex. We can do
this for many $A_{ij}$, but not for all of them.  The sets \eqref{log}
correspond directly to the assumption \eqref{V2}. We need to have
control of the $L^1 L^\infty+ L^\infty L^{n/2}$ norm of $V$ in sets
which are not smaller than those of the partition in \eqref{parB1},
\eqref{log} and \eqref{parB2}.

We have stated Theorem \ref{carleman} in a simpler form which suffices
to derive Theorem \ref{tmain} and Theorem \ref{tmain2}.  However the
full estimate we prove is stronger in that it also contains precise
$L^2$ bounds. These are essential for the localization and
perturbations techniques we use.

The strategy of the proof is the same as in \cite{MR2001m:35075}:

\begin{enumerate}
\item We construct families of pseudoconvex weights and derive $L^2$
  Carleman inequalities. The convexity of weights determines the
  space-time localization scales and the admissible size of
  perturbations.
\item We enhance the above $L^2$ Carleman inequalities to include
  $L^p$ estimates. Due to the $L^2$ localization it suffices to do
  this in small sets.  This allows us to use perturbation arguments
  starting from the case of the heat equation with the weight
  $t^{-\tau}e^{-|x|^2/8t}$.
\item $L^p$ estimates for the spectral projections to spherical
  harmonics imply the $L^p$ Carleman inequalities in the elliptic
  case. Here spectral projection for the Hermite operator play a
  similar role.
\item Finally we include Wolff's osculating argument into the scheme
  in order to handle the gradient potentials. The efficiency of this
  part depends on the flexibility in the choice of the weight
  functions.
\end{enumerate}

The complexity of the weights and the $L^2$ Carleman estimates comes
mainly from the geometry of the classical harmonic oscillator.  Orbits
are contained in a sphere in $\R^{2n}$. The projection down in the $x$
space is a ball, where frequency variables have a different behavior
in radial and angular directions and near the boundary of the the
ball. It turns out that our analytic estimates reflect these features.

\section{Proof of Theorem \ref{tmain} and \ref{tmain2} }

In this section we prove Theorem \ref{tmain} and \ref{tmain2} assuming
Theorem \ref{carleman}.  The relation between Carleman estimates and
unique continuation is fairly straightforward in the elliptic case. In
the parabolic situation the argument is less direct due to the more
complex geometry of the level sets of the weight functions.

It is a standard consequence of a localized energy inequality that for
the parabolic equation \eqref{eq} $u(t) $ and its gradient can be
controlled by $L^2$ norms of $u$.

\begin{proposition}\label{energy}
  Let $n \ge 3$ and suppose that $v$ solve the parabolic equation
  \[ v_t + \d_k a^{kl} \d_l v = W_1 \nabla v+ \nabla (W_2 v)  + V v \] on the space-time
  cylinder $Q=[0,2] \times B(0,2)$ with $a \in \lip$ uniformly
  elliptic and
  \[
  \Vert W_{1,2} \Vert_{L^{n+2}(Q)} + \Vert V \Vert_{L^1 L^\infty + L^\infty
    L^{n/2} } \ll 1
  \]
  Then
  \[ \sup_{0\le t \le 1} \Vert v(t) \Vert_{L^2(B(0,1))} + \Vert
  \nabla_x v \Vert_{L^2([0,1] \times B(0,1))}\lesssim \Vert v
  \Vert_{L^2(Q)}.
  \]
  If $n=2$ then the same statement is true with $L^\infty L^{n/2}$
  replaced by $L^pL^q$ with $1\le p < \infty$, $1<q \le \infty$ and
  $\frac1p + \frac1q = 1$.  Similarly, if $n=1$ we have to replace it
  by $L^4 L^\infty$.
\end{proposition}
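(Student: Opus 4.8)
The plan is to derive this from a localized energy (Caccioppoli) estimate, following the standard pattern but paying attention to the scale‑critical potentials. First I would fix a cutoff $\psi\in C^\infty_c$ with $\psi\equiv 1$ on $[0,1]\times B(0,1)$ and $\supp\psi\subset[0,2)\times B(0,2)$, so that $\psi$ vanishes near $t=2$ and near $\partial B(0,2)$ while $\psi_t,\nabla_x\psi$ are bounded and supported in $Q$. Since only an a priori bound is sought, a routine mollification argument lets me assume $v$ is smooth, so that $M:=\sup_{0\le t\le 2}\|\psi(t)v(t)\|_{L^2_x}^2$ is finite. For each $t_*\in[0,2]$ I would test the equation against $\psi^2 v$, integrate over $[t_*,2]\times B(0,2)$, and integrate by parts in $x$; the spatial boundary term vanishes and the temporal one at $t=2$ vanishes because $\psi(2,\cdot)=0$, leaving
\[
\tfrac12\|\psi(t_*)v(t_*)\|_{L^2_x}^2+\int_{t_*}^2\!\!\int a^{kl}\partial_k v\,\partial_l v\,\psi^2=-\int_{t_*}^2\!\!\int\psi\psi_t v^2-2\int_{t_*}^2\!\!\int\psi v\,a^{kl}\partial_k\psi\,\partial_l v-\int_{t_*}^2\!\!\int\bigl(W_1\nabla v+\nabla(W_2v)+Vv\bigr)\psi^2 v.
\]
Uniform ellipticity bounds the left integral below by $\lambda\,G(t_*)$ with $G(t_*):=\|\psi\nabla v\|_{L^2([t_*,2]\times B)}^2$, and the first two terms on the right are at most $\tfrac\lambda4 G(t_*)+C\|v\|_{L^2(Q)}^2$ by Young's inequality, using that $\psi_t,\nabla\psi$ live in $Q$.

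The heart of the matter is the potential term. The $L^1L^\infty$ part of $V$ is handled by Gronwall, $\int|V_1|\psi^2v^2\le\|V_1\|_{L^1L^\infty}M$. For the $L^\infty L^{n/2}$ part, Hölder in $x$ and Sobolev, $\|\psi v(t)\|_{L^{2n/(n-2)}_x}\lesssim\|\nabla(\psi v)(t)\|_{L^2_x}$, give a bound by $C\|V_2\|_{L^\infty L^{n/2}}\bigl(\|\psi\nabla v\|_{L^2}^2+\|v\|_{L^2(Q)}^2\bigr)$. For the gradient potentials I would use the space--time Hölder inequality with exponents $\bigl(n+2,2,\tfrac{2(n+2)}{n}\bigr)$ together with the parabolic energy embedding
\[
\|\psi v\|_{L^{2(n+2)/n}_{t,x}}\lesssim\|\psi v\|_{L^\infty_tL^2_x}^{\,2/(n+2)}\bigl(\|\psi\nabla v\|_{L^2_{t,x}}+\|v\nabla\psi\|_{L^2_{t,x}}\bigr)^{\,n/(n+2)},
\]
so that $\bigl|\int W_1\nabla v\,\psi^2v\bigr|\lesssim\|W_1\|_{L^{n+2}(Q)}\|\psi\nabla v\|_{L^2}M^{1/(n+2)}\bigl(\|\psi\nabla v\|_{L^2}+\|v\|_{L^2(Q)}\bigr)^{n/(n+2)}$; the term from $\nabla(W_2v)$ splits, after one more integration by parts, into a piece of exactly this form and a lower‑order piece $\int|W_2|\,\psi v\,v$, treated the same way. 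Since the exponent $1+\tfrac{n}{n+2}$ is strictly below $2$, Young's inequality absorbs the powers of $\|\psi\nabla v\|_{L^2}$ into an arbitrarily small multiple of $G(t_*)$ at the cost of $C\|W\|_{L^{n+2}}^{n+2}M+C\|v\|_{L^2(Q)}^2$. Here the smallness of $\|W_{1,2}\|_{L^{n+2}(Q)}$ and of $\|V\|_{L^1L^\infty+L^\infty L^{n/2}}$ is precisely what converts all of these into $c\,G(t_*)^{\text{small}}+\delta M+C\|v\|_{L^2(Q)}^2$ with $\delta\ll1$, rather than merely bounding them.

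Collecting everything gives, for every $t_*\in[0,2]$, an inequality of the form $\tfrac12\|\psi(t_*)v(t_*)\|_{L^2_x}^2+c\,G(t_*)\le\delta M+C\|v\|_{L^2(Q)}^2$ with $c>0$ depending only on $\lambda$ and $\delta\ll1$. Taking the supremum over $t_*$ in the first term yields $M\le2\delta M+2C\|v\|_{L^2(Q)}^2$, hence $M\lesssim\|v\|_{L^2(Q)}^2$; feeding this back with $t_*=0$ gives $\|\psi\nabla v\|_{L^2(Q)}^2\lesssim\|v\|_{L^2(Q)}^2$, and since $\psi\equiv1$ on $[0,1]\times B(0,1)$ the claim follows after taking square roots. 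For $n=2$ one replaces the Sobolev step by Gagliardo--Nirenberg, $\|\psi v(t)\|_{L^{2q'}_x}\lesssim\|\psi v(t)\|_{L^2_x}^{1/q'}\|\nabla(\psi v)(t)\|_{L^2_x}^{1/q}$, and closes the $V$‑estimate with Hölder in time using $\tfrac1p+\tfrac1q=1$; for $n=1$ one uses $H^1(\R)\hookrightarrow L^\infty$ and Hölder in time on the bounded interval. I expect the main obstacle to be exactly the scale‑critical gradient potential term: it cannot be closed by Gronwall and forces one to interpolate inside the full parabolic energy space $L^\infty_tL^2_x\cap L^2_t\dot H^1_x$ and to use the smallness of $\|W\|_{L^{n+2}}$ to absorb; the only other delicate point, the a priori finiteness of $M$, is sidestepped by the approximation step.
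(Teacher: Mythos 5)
The paper does not actually provide a proof of Proposition~\ref{energy}; it asserts it as ``a standard consequence of a localized energy inequality.'' Your Caccioppoli-type argument is precisely the standard argument intended: test with $\psi^2 v$ over $[t_*,2]$ (the direction that is favorable for the \emph{backward} operator, since $\psi(2,\cdot)=0$ makes the boundary term sign-definite), absorb the Lipschitz metric and cutoff commutators via Young, use Sobolev/H\"older in $x$ for the $L^\infty L^{n/2}$ part of $V$, and — crucially — use the parabolic Ladyzhenskaya embedding $L^\infty_t L^2_x\cap L^2_t\dot H^1_x\hookrightarrow L^{2(n+2)/n}_{t,x}$ so that the scale-critical $L^{n+2}$ gradient potential closes via Young with the subcritical exponent $1+\frac{n}{n+2}<2$, with smallness used to absorb into the energy. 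The computations and the $n=2,1$ modifications check out; the only blemish is notational (the lower-order piece of $\nabla(W_2v)$ is $\int W_2\,v\,\psi\nabla\psi\cdot v$, not $\int|W_2|\psi v\,v$, and is controlled by H\"older with exponents $(n+2,\tfrac{2(n+2)}{n},2)$ rather than literally ``the same way''; also ``Gronwall'' is a slight misnomer for what is simply absorption by smallness).
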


Given our assumptions \eqref{g}, \eqref{V}, \eqref{V2}, \eqref{W} and
\eqref{W2} we can apply Proposition \ref{energy} rescaled in sets of
the form $[t_0,2t_0] \times B(x_0,t_0^{1/2})$, which are subsets of
the $A_{ij}$.  Summing up with respect to such sets contained in
a parabolic cube
\[
Q_r = [0,r^2] \times B(0,r)
\]
we obtain the following consequence.

\begin{cor} \label{vanish} The following estimate holds under the
  assumptions of Theorem \ref{tmain}
  \[
\| t^\frac12 u\|_{L^\infty L^2 (Q_r) } +  \| t^\frac12 \nabla u\|_{L^2 (Q_r) }
\lesssim  \| u\|_{L^2(Q_{2r})}
  \]
\end{cor}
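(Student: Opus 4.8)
The plan is to derive Corollary~\ref{vanish} from Proposition~\ref{energy} by a rescaling-and-summation argument over a suitable sub-family of the dyadic sets $A_{ij}$. First I would fix $(t_0,x_0)$ with $t_0 \in [0,r^2]$ and $|x_0| \le r$, and consider the parabolic cube $\tilde Q = [t_0, 2t_0] \times B(x_0, t_0^{1/2})$; after the parabolic rescaling $(t,x) \mapsto (t_0^{-1}(t-t_0), t_0^{-1/2}(x-x_0))$ this becomes the unit cube $[0,1]\times B(0,1)$ sitting inside $[0,2]\times B(0,2)$. The point is that $\tilde Q$ is contained in one of the sets $A_{ij}$ (up to a fixed dilation factor), and the rescaling is exactly the one under which all the scale-invariant quantities in \eqref{g}, \eqref{V}, \eqref{V2}, \eqref{W}, \eqref{W2} are invariant. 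So the rescaled coefficients $a^{kl}$ are uniformly Lipschitz and uniformly elliptic (using the smallness in \eqref{g}), and the rescaled potentials obey $\|W_{1,2}\|_{L^{n+2}} + \|V\|_{L^1L^\infty + L^\infty L^{n/2}} \ll 1$ on the unit scale, with the smallness coming from the $l^\infty$ bounds that are consequences of the summable assumptions. Hence Proposition~\ref{energy} applies on each such rescaled cube, and undoing the scaling gives, on each original cube $\tilde Q$,
\[
t_0 \Bigl( \sup_{t_0 \le t \le 2t_0} \|u(t)\|_{L^2(B(x_0,t_0^{1/2}/2))}^2 + \int_{t_0}^{2t_0}\!\!\int_{B(x_0,t_0^{1/2}/2)} |\nabla u|^2 \Bigr) \lesssim \int_{\tilde Q_{*}} |u|^2,
\]
where $\tilde Q_*$ is the slightly dilated cube; the factor $t_0$ accounts for the weight $t^{1/2}$ (since $t\approx t_0$ on $\tilde Q$) after squaring both sides.

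Next I would choose a Whitney-type covering of the open parabola-region $Q_r \setminus \{t=0\}$ by countably many such cubes $\tilde Q_k = [t_k, 2t_k]\times B(x_k, t_k^{1/2})$ with finite overlap, arranged so that: (i) every point $(t,x)\in Q_r$ with $t>0$ lies in some $\tilde Q_k$; (ii) the dilated cubes $\tilde Q_{k,*}$ still have bounded overlap and are all contained in $Q_{2r}$ — this is where the factor $2$ in $Q_{2r}$ is used, to give room near the lateral and bottom boundary of $Q_r$. For the $L^2$ of the gradient this is immediate: sum the gradient bound over $k$ and use bounded overlap on both sides to get $\|t^{1/2}\nabla u\|_{L^2(Q_r)}^2 \lesssim \|u\|_{L^2(Q_{2r})}^2$. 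For the $L^\infty L^2$ piece one has to be a little more careful, because $\sup$ does not sum; instead, for a fixed time $t$ one covers the slice $\{t\}\times B(0,r)$ by the $O(1)$ cubes $\tilde Q_k$ whose time interval contains $t$, applies the sup bound on each, and adds — giving $t\,\|u(t)\|_{L^2(B(0,r))}^2 \lesssim \sum_{k:\, t\in[t_k,2t_k]} \int_{\tilde Q_{k,*}}|u|^2 \le \|u\|_{L^2(Q_{2r})}^2$ uniformly in $t$, hence the claimed $\|t^{1/2}u\|_{L^\infty L^2(Q_r)}$ bound.

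The main obstacle I anticipate is purely geometric-combinatorial rather than analytic: making the Whitney decomposition precise so that the dilated cubes genuinely stay inside $Q_{2r}$ and have controlled overlap all the way down to $t=0$, including near the corner where the lateral boundary $|x|=r$ meets $\{t=0\}$. One must check that a cube $\tilde Q_k$ meeting $Q_r$ near this corner has $t_k \lesssim r^2$ and $\mathrm{dist}(x_k,0) \lesssim r$, so that $\tilde Q_{k,*} \subset Q_{2r}$; this forces the Whitney cubes to shrink parabolically (size $\sim t_k^{1/2}$ in space, $\sim t_k$ in time) as $t_k \to 0$, which is exactly the shape of the $A_{ij}$, so the construction is compatible. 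A secondary point to verify is that the smallness hypothesis of Proposition~\ref{energy} on each unit-scale cube really follows from \eqref{V2} and \eqref{W2} (and \eqref{g}) uniformly in $(i,j)$ ranging over the indices relevant to $Q_r$ — this is automatic since those assumptions are stated precisely as uniform-in-dyadic-set bounds. Everything else (the invariance of the norms under parabolic scaling, the translation of the weight $t^{1/2}$ into the factor $t_k$) is routine bookkeeping.
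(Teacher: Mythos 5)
Your proposal is correct and follows essentially the same route as the paper, which only records the one-line remark that Proposition~\ref{energy}, after parabolic rescaling to cubes of the form $[t_0,2t_0]\times B(x_0,t_0^{1/2})$ (subsets of the $A_{ij}$), yields the corollary upon summation over such cubes inside $Q_r$; you have simply unpacked the rescaling, the scale invariance of the hypotheses, and the bookkeeping. One small slip: for a fixed time $t$ the number of cubes whose time interval $[t_k,2t_k]$ contains $t$ is not $O(1)$ but rather $\sim (r/\sqrt{t})^n$, since they tile a ball of radius $r$ at spatial scale $\sqrt{t}$; what you actually use (and what does hold) is that these cubes and their dilates have bounded overlap, so the sum $\sum_k\int_{\tilde Q_{k,*}}|u|^2 \lesssim \|u\|_{L^2(Q_{2r})}^2$ — the argument goes through unchanged with that wording fixed.
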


\begin{proof}[Proof of Theorem \ref{tmain}] 
  We choose $\tau \ge \tau_0$ and $0 < \delta \ll
  \tau^{-1/2}$ and introduce the sets
 \[
  \begin{split}
    E_\delta = & \big( [0,2\delta^2] \times B(0,2\delta)\big)
    \backslash \big([0,\delta^2]
    \times B(0,\delta)\big) \\
    F^{ext}_\tau = & \big([0, 2/\tau ] \times B(0,2)\big)
    \backslash \big([0,1/\tau] \times B(0,1)\big) \\
    F^{int}_\tau = & [ 1/32\tau , 1/16\tau] \times B(0,1/4)
  \end{split}
  \]

  \begin{figure}%[here]
    \begin{tikzpicture}[scale=1.5]
      \fill[green!25] (-2.3,0)--(-2.3,2.3) -- (2.3,2.3)-- (2.3,0) --
      (1.8,0)--(1.8,1.9)-- (0,1.9) node[anchor=south
      east,color=black]{$F_\tau^{ext}$}--(-1.8,1.9)-- (-1.8,0) --
      cycle; \fill[red!30]
      (-.4,0)--(-.4,.2)--(.4,.2)--(.4,0)--(.15,0)--(.15,.08)--(-.15,.08)--(-.15,0)
      -- (-0.275,0) node[anchor=north,color=black]{$E_\delta$}
      --cycle; \fill[blue!35]
      (-1.0,0.5)--(-1.0,1.2)--(1.0,1.2)--(1.0,0.5) --
      (0,0.5)node[color=black,anchor=south east]{$F^{int}_\tau$} --
      cycle; \draw[->] (-2.7,0)--(2.7,0) node[anchor=north west]{$x$};
      \draw[color=gray!30,very thin] (2.3,2.3)--(2.6,2.3)
      node[anchor=west,color=black]{$2/\tau$};
      \draw[color=gray!30,very thin] (2.3,1.9)--(2.6,1.9)
      node[anchor=west,color=black]{$1/\tau$}; \draw[color=gray!30,
      very thin] (0,1.5)--(2.6,1.5)
      node[anchor=west,color=black]{$t_0$}; \draw[color=gray!30, very
      thin] (1.0,0.5)--(2.6,0.5)
      node[anchor=west,color=black]{$1/8\tau$}; \draw[color=gray!30,
      very thin] (1.0,1.2)--(2.6,1.2)
      node[anchor=west,color=black]{$1/16\tau$}; \draw
      (2.3,0.05)--(2.3,-0.05) node[anchor=north,color=black]{$2$};
      \draw (1.8,0.05)--(1.8,-0.05)
      node[anchor=north,color=black]{$1$}; \draw
      (1.0,.05)--(1.0,-0.05) node[anchor=north,color=black]{$1/4$};
      \draw (0.4,.05)--(0.4,-0.05)
      node[anchor=north,color=black]{$2\delta$}; \draw[->]
      (0,0)--(0,2.5) node[anchor=south east]{$t$};
    \end{tikzpicture}
\caption{The sets $E_\delta$, $F_\tau^{ext}$ and $F_\tau^{int}$}
  \end{figure}
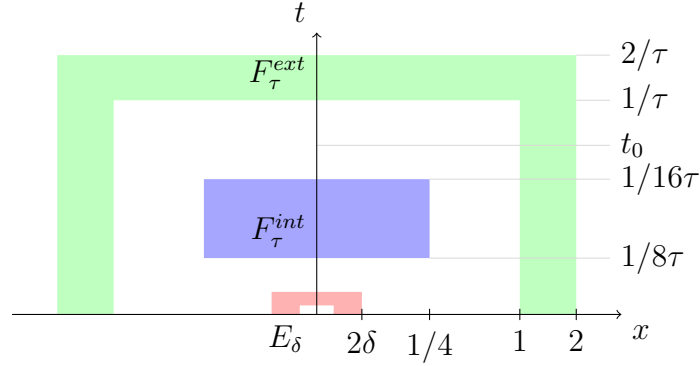

  Our strategy will be to truncate $u$ in $E_\delta$ and
  $F^{ext}_\tau$ and to apply Theorem \ref{carleman} to the truncated
  function in order to obtain a good bound on $u$ in $F_\tau^{int}$.

  Let $\eta$ be a cutoff function supported in $ [0,2)\times B(0,2)$
  and identically $1$ in $[0,1] \times B(0,1) $. For $\delta \ll
  \tau^{-1/2}$ we define
  \[
  v_\delta(t,x) = (1-\eta(t/\delta^2,x/\delta)) \eta(\tau t/8, x)u(x,t)
  \]
  which satisfies
  \[
  (P+W\nabla) v_\delta = V v_\delta - [P+W\nabla,
  \eta(t/\delta^2,x/\delta)] u + [P+W\nabla, \eta(\tau t/8,x)] u.
  \]
  The second term on the right hand side is supported in $E_\delta$
  and the third one in $F_\tau^{ext}$. 

  We apply Theorem \ref{carleman} to $v_\delta$. One should keep in
  mind that the corresponding weight $\phi$ depends on $\delta$ but
  that the bounds we prove are uniform with respect to $\delta$.  We
  normalize the function $h$ by $h(0)=0$.  We have to control the size
  of $\phi$ in the sets $E_\delta$, $F^{ext}_\tau$ and $
  F^{int}_\tau$.  Due to \eqref{hprime} we have
\[
\tau s \leq h(s) \leq 2 \tau s, \qquad s \geq 0
\]
By \eqref{weight} we obtain a rough polynomial bound in $\delta$
  \begin{equation} \label{Edelta} 
e^\phi \le t^{-2\tau}
    e^{-\frac{|x|^2}{8 t} + \e (\tau+\frac{x^2}t) } \le t^{1/2} c(\tau)
    \delta^{-4\tau-1} \qquad \text{ in } E_\delta.
  \end{equation} 
  Let
  \[
  M = \sup \{ e^{h(-\ln(t))} e^{-\frac{|x|^2}{8t} + \e (\tau+\frac{x^2}t)}:(t,x) \in
  F_\tau^{ext}\}
  \]
  By \eqref{hprime} the supremum is attained at a point $(t_0,x_0)$
  with $ \frac12 \le 8\tau t_0 \le 1$ and $|x_0| = 1$.  A simple
  computation also shows that
  \[
  \sup \{ t^{-1/2} e^{h(-\ln(t))} e^{-\frac{|x|^2}{8t} + \e(\tau+\frac{x^2}t)
    }:(t,x) \in F_\tau^{ext}\} \lesssim \tau^{1/2} M.
  \]
  Then $M$ dominates $ e^{\phi}$ in $F^{ext}_\tau$:
  \begin{equation} \label{Fext} e^{\phi} \le M, \qquad t^{-1/2}
    e^{\phi} \lesssim \tau^{1/2} M \qquad \text{ in } F_\tau^{ext}.
  \end{equation} 

  Next we need to bound $e^\phi$ from below in $F^{int}_\tau$ in terms
  of $M$,
  \begin{equation} \label{compare} \inf_{F^{int}_\tau} e^\phi \ge
    e^{\frac12\tau} M
  \end{equation} 
  To see this we compute for $(t,x) \in F^{int}_\tau$ and sufficiently
  small $\varepsilon$:
  \begin{eqnarray*}
  {\phi(t,x)-\phi(t_0,x_0)} & \ge&  h(-\ln t) - h(-\ln t_0) +
    \frac1{8t_0} - \frac1{32 t_0} - 2\e( \tau +\frac{1}{t_0})  
\\ &\ge& {( \frac3{4} -20 \e)
    \tau } \ge {\frac12{\tau}}
  \end{eqnarray*}
  and use \eqref{Edelta}
  and \eqref{Fext}.

 Theorem \ref{carleman} applied to $v_\delta$ yields
  \begin{equation} \label{carlem}
    \begin{split} 
      \Vert e^\phi v_\delta \Vert_{l^2(\mathcal{B}(\tau),
        L^2L^\frac{2n}{n-2}\cap L^\infty L^2)} \lesssim & \Vert e^\phi
      Vv_\delta \Vert_{l^2(\mathcal{B}(\tau),
        L^2L^{\frac{2n}{n+2}}+L^1L^2)} \\ & + \Vert e^\phi [P+W\nabla,
      \eta(t/\delta^2,x/\delta)] u \Vert_{l^2(\mathcal{B}(\tau),
        L^2L^{\frac{2n}{n+2}}+L^1L^2)} \\ & + \Vert e^\phi
      [P+W\nabla, \eta(\tau t,x)] u \Vert_{l^2(\mathcal{B}(\tau),
        L^2L^{\frac{2n}{n+2}}+L^1L^2)}
    \end{split}
  \end{equation}

  By H\"older's inequality we have
  \[
  \! \Vert e^\phi Vv_\delta \Vert_{l^2(\mathcal{B}(\tau),
    L^2L^{\frac{2n}{n+2}} +L^1L^2)} \! \lesssim \!  \Vert V
  \Vert_{l^\infty(\mathcal{B}(\tau), L^1 L^\infty + L^\infty L^{n/2})}
  \Vert e^\phi v_\delta \Vert_{l^2(\mathcal{B}(\tau),
    L^2L^{\frac{2n}{n-2}}\cap L^\infty L^2)}.\!
  \]
  Due to the smallness in \eqref{V3} we can absorb this term on the
  left hand side of the inequality.

  We calculate the first commutator
  \[
\begin{split}
f_\delta& =  [P+W\nabla, \eta(t/\delta^2,x/\delta)] u \\ &=  (
\frac{\d}{\d t} + \d_k g^{kl} \d_l + 2 \frac{x_k}{t} d^{kl} \d_l +W
  \nabla)\eta(t/\delta^2,x/\delta) 
\\ & + 2 \delta^{-1} g^{kl} (\d_k
  \eta)(t/\delta^2,x/\delta) \d_l u
\end{split}  
\]
By \eqref{Edelta} we have
\[
\| e^\phi f_\delta \|_{ l^2(\mathcal{B}(\tau),
        L^2L^{\frac{2n}{n+2}}+L^1L^2)}\lesssim c(\tau) \delta^{-4\tau-1} 
\| t^\frac12   f_\delta \|_{ l^2(\mathcal{B}(\tau),
        L^2L^{\frac{2n}{n+2}}+L^1L^2)}
\]
For the $W$ term we use \eqref{W} and Holder's inequality. For term
involving $d^{kl}$ we bound the $L^1 L^2$ norm in terms of an
$L^\infty L^2$ norm, using \eqref{g} which implies that the pointwise
bound for $d^{kl}$ is summable with respect to dyadic time regions.
For the remaining terms we simply bound the $L^1L^2$ norm in terms of
the $L^2$ norm.  This yields
\[
\begin{split}
\| e^\phi f_\delta \|_{ l^2(\mathcal{B}(\tau),
        L^2L^{\frac{2n}{n+2}}+L^1L^2)} \lesssim&  c(\tau)
      \delta^{-4\tau-2} 
(   \|  u \|_{L^2(E_\delta)} +  
\|(\partial_x \eta) (t/\delta^2,x/\delta)  t^\frac12 \nabla u\|_{L^2} 
\\ &+  \| (\partial_x \eta) (t/\delta^2,x/\delta)  t^{\frac12} 
u\|_{L^\infty L^2})
\end{split}
\]
Then we can apply a straightforward modification of
Corollary~\ref{vanish} on the $E_\delta$ scale to finally obtain
\[
  \Vert e^\phi [P+W\nabla, \eta(t/\delta^2,x/\delta)] u
  \Vert_{l^2(\mathcal{B}(\tau),
        L^2L^{\frac{2n}{n+2}}+L^1L^2)}
  \lesssim c(\tau) \delta^{-4\tau-2} \Vert u \Vert_{L^2(E_\delta)}.
  \]
    Similarly we can estimate the second commutator
  \[
  \Vert e^\phi [P+W\nabla, \eta(\tau t,x)] u
  \Vert_{l^2(\mathcal{B}(\tau), L^2L^\frac{2n}{n+2}+ L^1 L^2)}
  \lesssim M \tau^{1/2} \Vert u \Vert_{L^2(F^{ext}_\tau)}.
  \]
  Hence by inequality \eqref{carlem} we get 
  \begin{equation}
 \Vert e^\phi v_\delta
    \Vert_{l^2(\mathcal{B}(\tau), L^2L^\frac{2n}{n-2}\cap L^\infty
      L^2)} \lesssim M \tau^{1/2}\Vert u \Vert_{L^2(F^{ext}_{\tau})} +
  c(\tau) \delta^{-4\tau-2} \Vert u \Vert_{L^2(E_\delta)}  .
  \end{equation} 
  Within $F_\tau^{int}$ we have $v_\delta = u$. Then by \eqref{compare} 
we obtain
  \begin{equation} \label{weighted} 
\Vert u \Vert_{L^\infty
      L^2(F^{int}_\tau)} \lesssim \tau^{1/2} e^{-\frac12 \tau} \Vert u
    \Vert_{L^2(F^{ext}_\tau)} + c(\tau) \delta^{-4\tau-2} \Vert u
    \Vert_{L^2(E_\delta)}.
  \end{equation}  
  Also by the vanishing of infinite order the second term tends to
  zero as $\delta \to 0$.  Hence as $\delta \to 0$ we obtain
\begin{equation} \label{weighted1} \Vert u \Vert_{L^\infty
      L^2(F^{int}_\tau)} \lesssim \tau^{1/2} e^{-\frac12 \tau} \Vert u
    \Vert_{L^2(F^{ext}_\tau)} 
  \end{equation}  
  For $0<t\ll 1$ we choose $\tau = \frac1{16 t}$ to obtain
  \[ \Vert u(t,.) \Vert_{L^2(B(0,1/4))} \lesssim t^{-1/2} e^{-
    \frac1{32 t}}. \] This completes the proof of Theorem \ref{tmain}.
\end{proof}

\begin{proof}[Proof of Theorem \ref{tmain2}] 
  We extend the potentials $V$ and $W$ by zero to negative time, and
  $g^{kl}(t,x)= g^{kl}(0,x)$ for $t < 0$. By definition, possibly
  after rescaling, we have $ u(0,.) \in L^2(B(0,2))$. We now solve the
  mixed problem
  \begin{equation}
    \label{extend}  u_t + \d_k g^{kl} \d_l u = 0 \qquad \text{ for } t < 0 
    \text{ and } |x| \le 2 
  \end{equation} 
  with the boundary condition
  \[ u(t,x) = 0 \qquad \text{ if } |x| = 2 \text{ and } t < 0 \] and
  the obvious initial condition to obtain an extension of $u$ to
  negative. The heat kernel for \eqref{extend} satisfies Gaussian
  estimates.  In particular we obtain from \eqref{vanishing} for all
  positive integers $N$ with a constant $c_N$ possibly differing from
  \eqref{vanishing}
  \begin{equation}
    \int_{-r^2}^0 \int_{B_r(0)} |u|^2 dx dt \lesssim c_N^2 r^{2N}. 
 \label{vanish+} \end{equation} 

We seek to prove that the bound \eqref{weighted1} still holds in this
context. The difficulty is that we only know that $u$ vanishes of
infinite order at $(0,0)$ for negative time. To account for this we
shift the time up, $t \to t+2\delta$.  Arguing as in the previous
proof we obtain \eqref{weighted} with $u$ replaced by
$u(t+2\delta,x)$. Letting $\delta \to 0$ by \eqref{vanish+} we 
obtain \eqref{weighted1} and conclude as above.  

\begin{remark}
If one wants to prove Theorem \ref{tmain2} under the weaker 
assumptions on $g$ in \eqref{gtwo} then the origin needs to be avoided
in the above argument. Hence the time translation needs to be
accompanied by a spatial translation, namely
\[
u_\delta(t,x) = u( t+ 2\delta^2, x -  8 \tau \delta e_1)
\]
This translation places the image of the origin, or better of the
cube $[0,\tau \delta^2] \times B(0,  4 \tau \delta)$, within
the region $\{ \tau t < x^2 \}$. But in this region the conjugated
operator $P_\psi$, introduced later, is elliptic so only pointwise bounds
for $g$ are needed for the Carleman estimates.
\label{goodg}\end{remark}

\end{proof}

\section{ $L^2$ bounds in the flat case and the Hermite operator}
\label{sl2flat}

 In this section we prove the simplest possible $L^2$ Carleman
 estimate for the constant coefficient  backward parabolic equation
\[
 \d_t u + \Delta_x u= f
\]
This serves as a good pretext to introduce the class of weight
functions which is later modified for the variable coefficient case.

We also describe the change of coordinates which turns the 
backward parabolic operator  into a forward parabolic equation
for the Hermite operator $H$. In this way we are able to relate the 
$L^2$ Carleman estimates for the heat operator to spectral information 
for $H$.

\begin{proposition}
Let $u \in L^2$ with compact support away from $0$. Then
\begin{equation}
\| t^{-\tau-\frac12} e^{-\frac{x^2}{8t}} u\|_{L^2}
\leq \| t^{-\tau+\frac12} e^{-\frac{x^2}{8t}} (\d_t + \Delta)  u\|_{L^2}
\label{l2flat}\end{equation}
uniformly with respect to $\tau$ away from $(2n+\N)/4$.
\end{proposition}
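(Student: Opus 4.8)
The plan is to reduce the weighted estimate \eqref{l2flat} to a spectral statement about the Hermite operator $H = -\Delta + |x|^2$ on $\R^n$ by means of an explicit change of variables. First I would perform the substitution that trades the backward heat operator $\d_t + \Delta$ for a forward heat-type flow governed by $H$: writing $t = e^{-s}$ (so $s \to +\infty$ as $t \to 0^+$) and $x = e^{-s/2} y$, and conjugating by the Gaussian $e^{-x^2/8t} = e^{-y^2/8}$, one finds after a routine computation that
\[
t^{-\tau+\frac12} e^{-\frac{x^2}{8t}} (\d_t + \Delta) u
= e^{(\tau - \tfrac{n}{2} + c)s} e^{-y^2/8}\big(\d_s + \tfrac14 H - \mu\big) w,
\]
where $w(s,y)$ is the transform of $u$, and $\mu$ is an explicit affine function of $\tau$ and $n$ (the precise constants I would pin down by carrying out the differentiation). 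The point is that under this change of variables the $L^2(dx\,dt)$ norm on the left of \eqref{l2flat} becomes, up to the same exponential prefactor, the $L^2(dy\,ds)$ norm of $w$ against the Gaussian weight, and the $\tau$-dependent exponential prefactors on the two sides match. So \eqref{l2flat} is equivalent to
\[
\| w \|_{L^2_{s,y}} \le \| (\d_s + \tfrac14 H - \mu) w \|_{L^2_{s,y}},
\]
with the Gaussian weight absorbed into the measure, i.e.\ $L^2$ taken with respect to $e^{-y^2/4}\,dy\,ds$ — equivalently, after a further unitary conjugation, plain $L^2(dy\,ds)$ with $H$ replaced by the standard harmonic oscillator. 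The support condition (compact support away from $t=0$) guarantees $w$ is Schwartz-class in $s$ with compact support, so all manipulations are legitimate and there are no boundary terms.

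Next I would prove the operator inequality $\| w \| \le \| (\d_s + A) w\|$, where $A = \tfrac14 H - \mu$ is self-adjoint on $L^2_y$ with discrete spectrum: expanding $y \mapsto w(s,y)$ in Hermite eigenfunctions reduces everything to the scalar one-dimensional estimate $\| f \|_{L^2(\R_s)} \le \| (\d_s + a) f \|_{L^2(\R_s)}$ for $f$ compactly supported and each $a = \tfrac14(2k+n) - \mu$ ranging over the shifted spectrum of $H$. Taking the Fourier transform in $s$ this becomes the pointwise bound $1 \le |i\sigma + a|$ for all real $\sigma$, which holds precisely when $|a| \ge 1$. Translating back, $|a|\ge1$ for every eigenvalue exactly corresponds to $\tau$ staying a bounded distance from the forbidden set, and after matching constants the forbidden set is $(2n+\N)/4$, as claimed. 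Summing the Plancherel identities over the eigenspaces recovers the full estimate.

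The main obstacle is purely bookkeeping rather than conceptual: getting the change of variables exactly right so that (i) the weight $e^{-x^2/8t}$ conjugates the Laplacian into a clean multiple of $H$ with no stray first-order or time-dependent terms, and (ii) the powers of $t$ on both sides of \eqref{l2flat} convert into the \emph{same} $s$-exponential, which is what makes the estimate $\tau$-uniform away from the exceptional set. I expect the delicate point is tracking the half-integer shifts (the $\tfrac12$'s and the $n/2$) through the Jacobian of $x = e^{-s/2}y$ and through the conjugation, since these are exactly what determine that the exceptional values are $(2n+\N)/4$ and not some other arithmetic progression; everything else is the elementary Fourier-analytic lemma on the real line plus Hermite expansion.
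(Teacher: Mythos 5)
Your proposal is correct and follows essentially the same route as the paper: the change of variables $t=e^{-\alpha s}$, $x \sim t^{1/2}y$ turns the weighted backward heat operator into the translation-invariant evolution $\partial_s + H$ up to a Gaussian conjugation, and after conjugating by $e^{c\tau s}$ the estimate reduces to a lower bound for $\partial_s - H + c\tau$ acting on $L^2(ds\,dy)$. Your final step (Fourier transform in $s$ plus Hermite expansion, then the pointwise inequality $|i\sigma+a|\ge 1$ on each mode) is exactly the Plancherel realization of what the paper does at the operator level, namely expanding $\|(-\partial_s-H+4\tau)w\|^2 = \|\partial_s w\|^2 + \|(H-4\tau)w\|^2$ using that the skew-adjoint part $\partial_s$ commutes with the self-adjoint part $H-4\tau$; the two are identical in content. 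The only thing you leave undone is the normalization of constants in the substitution — with your scaling $t=e^{-s}$, $x=e^{-s/2}y$ the conjugated operator is $-\partial_s+\Delta_y-\tfrac{y^2}{16}+\tfrac n4$, which needs a further rescaling $y\mapsto 2y$ to become a multiple of $H=-\Delta+y^2$, and it is exactly this bookkeeping (together with tracking how the $t^{\pm1/2}$ factors and the Jacobian combine so that both sides acquire the \emph{same} $e^{c\tau s}$ prefactor) that pins down the arithmetic progression of exceptional $\tau$; you correctly identify this as the delicate point but do not carry it out.
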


\begin{proof}
In $\R^+ \times \R^n$ we introduce new coordinates $(s,y) \in \R\times
\R^n$ defined by
\begin{equation}
  \label{coord}
\left\{  \begin{array}[l]{l}
t=  e^{-4s} \cr
x= \frac{1}{2} e^{-2s} y
  \end{array} \right.
\end{equation}
Then 
\[
\frac{\d}{\d s} = -4e^{-4s} \frac{\d}{\d t} - e^{-2s} y 
\frac{\d}{\d x}
, \ \ \ 
\frac{\d}{\d y}= \frac{1}{2} e^{-2s} \frac{\d}{\d x}
\]
Hence in the new coordinates our operator becomes
\[
4 t (\d_t + \Delta_x) =  - \frac{\d}{\d s} - 2 y\frac{\d}{\d y}
+  \Delta_y
\]
If we conjugate it by $t^{n/4}e^{-\frac{x^2}{8t}} = e^{-ns} e^{-\frac{y^2}{2}}$
we obtain
\[
4 t^{1+n/4}  e^{-\frac{x^2}{8t}} (\d_t+\Delta_x)t^{-n/4}e^{ \frac{x^2}{8t}} =
 - \frac{\d}{\d s} 
+  \Delta_y - y^2 =: -\d_s -H =: -P_0 
\]
where $H$ is the Hermite operator
\[
H = -\Delta_y + y^2
\]
Then it is natural to define the new functions  
\[
v(s,y) = e^{-ns} e^{-\frac{y^2}2} u(e^{4s}, \frac{1}{2} e^{2s} y), \qquad 
g(s,y) = e^{(-n-4)s} e^{-\frac{y^2}2} f(e^{4s}, \frac{1}{2} e^{2s} y)
\]
which are related by
\[
P_0  v = g \Longleftrightarrow   (\d_t + \Delta )  u = f   
\]
In the new coordinates, the bound \eqref{l2flat} becomes 
\begin{equation}
\| e^{4 \tau   s} v\|_{L^2} \lesssim \| e^{4 \tau s} P_0 v \|_{L^2}. 
\label{piy}\end{equation}

Denoting $w = e^{4 \tau s} v$, we conjugate
\[
 e^{4 \tau s} P_0 v  =  e^{4 \tau s} P_0 e^{-4 \tau s} w = 
(- \d_s - H + 4\tau) w
\]
and the above bound becomes
\begin{equation}\label{ttau}
\| w \|_{L^2} \lesssim \| (- \d_s - H + 4\tau) w\|_{L^2}. 
\end{equation}
Since $\d_s$ and $H-4\tau$ commute we expand 
\[ 
\| (- \d_s - H + 4\tau) w\|_{L^2}^2 = \| \d_s w \|_{L^2}^2 +
 \| (H-4\tau)w \|_{L^2}^2  \ge d(4\tau, n+\mathbb{N})^2  \Vert w \Vert_{L^2}^2 .
\] 
 Note the spectral gap, which is essential in
order to obtain strong unique continuation results.
\end{proof}

For later use we also record the following slight generalization 
of the above result. For expediency this is stated in the $(y,s)$
coordinates, i.e. in the form of an analogue of \eqref{piy}. 

\begin{proposition}
Let $h$ be an increasing, convex,  twice  differentiable function 
so that 
\[
 d(h^\prime, \mathbb{N}) + h^{\prime\prime} \ge \frac14. 
\]
Then
\begin{equation}
\| (1+h'')^{1/2} e^{h(s)} v\|_{L^2} +  \left\|
\min\Big\{1, \frac{(1+h'')^{1/2}}{1+h^\prime} \Big\}
 e^{h(s)} H v\right\|_{L^2}
\lesssim  \|  e^{h(s)} (\d_s - H)  v \|_{L^2}
\label{l2flatp}\end{equation}
for all compactly supported $v \in L^2$.
\label{bell}\end{proposition}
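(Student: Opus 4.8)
The plan is to reduce \eqref{l2flatp} to a spectral inequality for the operator $-\d_s + H - h'(s)$ acting on $L^2(\R_s \times \R^n_y)$, in the same spirit as the proof of Proposition~\ref{bell}'s predecessor. First I would set $w = e^{h(s)} v$ and conjugate, using that $e^{h(s)}(\d_s - H) e^{-h(s)} = \d_s - h'(s) - H$, so that the right-hand side becomes $\|(\d_s - h'(s) - H) w\|_{L^2}$ and the left-hand side becomes $\|(1+h'')^{1/2} w\|_{L^2} + \|\min\{1,(1+h'')^{1/2}/(1+h')\}\, H w\|_{L^2}$. Since $h$ is increasing this exponential conjugation makes sense, and all the weights $h',h''$ are evaluated at $s$ only, hence commute with $H$. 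Writing $Q = \d_s - h'(s) - H$, I expand $\|Qw\|_{L^2}^2$ by splitting into the skew-adjoint part $\d_s$ and the self-adjoint part $-h'(s) - H$; the cross term contributes a commutator $[\d_s, h'(s)] = h''(s)$ with a favorable sign because $h$ is convex. This already produces a term $\int h''(s) |w|^2\,ds\,dy$ on the right, which handles the part of the left-hand side where $(1+h'')^{1/2}$ is dominated by $(h'')^{1/2}$.

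Next, to handle the remaining piece of the left-hand side I would diagonalize $H$: write $H = \sum_k \lambda_k P_k$ with $\lambda_k \in n + 2\N$ (eigenvalues of the Hermite operator, here $n + \mathbb{N}$ as in the first Proposition after rescaling) and $P_k$ the spectral projections, and decompose $w = \sum_k w_k$ with $w_k = P_k w$. Then $\|Qw\|_{L^2}^2 = \sum_k \|\d_s w_k - (h'(s) - \lambda_k) w_k\|_{L^2}^2$, and on each spectral piece the scalar inequality to prove is of the form
\[
\int (1 + h''(s))|w_k|^2 + \min\Big\{1, \frac{1+h''(s)}{(1+h'(s))^2}\Big\} \lambda_k^2 |w_k|^2 \, ds
\lesssim \int |\d_s w_k - (h'(s)-\lambda_k) w_k|^2\, ds.
\]
Expanding the right-hand side, the cross term gives $\int (h''(s) + \text{(boundary)}) |w_k|^2$, and one is left to show
\[
\int \Big(1 + \min\{1,\tfrac{1+h''}{(1+h')^2}\}\lambda_k^2\Big) |w_k|^2 \lesssim \int |\d_s w_k|^2 + (h'(s)-\lambda_k)^2 |w_k|^2.
\]
This is a one-dimensional ODE estimate in $s$ that I would prove by a region decomposition: away from the turning set $\{ h'(s) \approx \lambda_k\}$ one has $|h'(s) - \lambda_k| \gtrsim 1 + \text{dist}$ because $d(h',\N) + h'' \geq \frac14$ forces $h'$ to cross each half-integer-spaced level with at least unit speed or to avoid it by a unit gap, so the potential term $(h'-\lambda_k)^2$ already absorbs everything; near the turning set, $h'' \gtrsim 1$ there (again by the hypothesis), $h'$ is strictly increasing at speed $\gtrsim 1$, and a Hardy/Airy-type estimate for $\d_s^2 + (h'(s)-\lambda_k)$-type operators gives the $\lambda_k^2 \cdot \frac{1+h''}{(1+h')^2}$ gain, which at the turning point is $\lesssim \lambda_k^2/\lambda_k^2 = O(1)$, matching the claimed weight.

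The main obstacle I expect is precisely this uniform-in-$k$ turning-point analysis: one must show the gain $\min\{1, (1+h'')/(1+h')^2\}\lambda_k^2$ is sharp and uniform over all eigenvalues $\lambda_k$ and all admissible $h$, which requires a careful quantitative ODE lemma (essentially an $L^2$ Carleman estimate for the operator $\d_s - (h'(s) - \lambda)$ with the correct weights), and then summing the $k$-pieces back up via orthogonality of the $P_k$. A secondary technical point is justifying the integration by parts in $s$ producing the $h''$ term and the vanishing of boundary contributions; this is routine given the compact support of $v$ and the smoothness/monotonicity of $h$, but must be done with some care since $h$ itself need not be bounded. Once the scalar inequality is established uniformly in $\lambda_k$, reassembling via $\sum_k$ and undoing the conjugation $w = e^{h(s)} v$ yields \eqref{l2flatp}.
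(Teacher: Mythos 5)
Your first two steps coincide with the paper's: conjugate by $e^{h(s)}$, split $\d_s - H + h'(s)$ into its skewadjoint part $\d_s$ and selfadjoint part $-H + h'(s)$, expand the square, and note that the cross term produces $\int h'' |w|^2$ with a good sign, which combined with the spectral gap $\|(-H+h')w(s)\|^2 \ge d(h'(s),\mathrm{spec}\,H)^2 \|w(s)\|^2$ and the hypothesis $d(h',\N)+h'' \ge \frac14$ gives the $(1+h'')^{1/2}$ term on the left. That part is correct and is exactly what the paper does.

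Where you part ways is the second term, and this is where your plan is overengineered to the point of introducing a phantom obstacle. The paper does not diagonalize $H$ and does not do any ODE or turning-point analysis. It simply writes, for each fixed $s$,
\[
\|Hw(s)\|_{L^2} \le \|(-H+h'(s))w(s)\|_{L^2} + h'(s)\,\|w(s)\|_{L^2},
\]
a triangle inequality. Multiplying by $\min\{1,(1+h'')^{1/2}/(1+h')\}$ and using that this factor times $h'$ is always $\le (1+h'')^{1/2}$, the second piece is absorbed by the $(1+h'')^{1/2}$ bound already proved, and the first piece is $\le \|(-H+h')w\|$, which is one of the three orthogonal pieces of the expanded right side. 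Done. Equivalently, on each Hermite eigenspace this is the pointwise algebraic inequality $\min\{1,\frac{1+h''}{(1+h')^2}\}\lambda_k^2 \le 2(1+h'') + 2(h'-\lambda_k)^2$, which follows from $\lambda_k^2 \le 2(h')^2 + 2(h'-\lambda_k)^2$ and $\frac{(h')^2}{(1+h')^2}(1+h'') \le 1+h''$. No Hardy/Airy estimate, no region decomposition around turning sets, no use of the $|\d_s w_k|^2$ term, and no uniform-in-$k$ quantitative ODE lemma is required; the $\min$ weight was chosen precisely so that the estimate degenerates to a pointwise bound. What you flag as the ``main obstacle'' is therefore not an obstacle, and if you actually tried to run the Airy argument you would find yourself proving a much more delicate statement than what is asked. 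Your spectral decomposition is not wrong as a framework, but it buys you nothing here and obscures the fact that the inequality is elementary once the commutator term has been harvested.
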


\begin{proof}
After the substitution $w=e^{h(s)} v$ the bound \eqref{l2flatp}
becomes 
\begin{equation}
\|  (1+h'')^{1/2} w \|^2_{L^2} +  \left\|\min\Big\{1, \frac{(1+h^{\prime\prime})^{1/2}}{1+h^\prime}\Big\}       H w\right\|^2_{L^2}
\lesssim  \|  (\d_s -H + h'(s)) w \|^2_{L^2}.
\label{l2flatp1}\end{equation}
and we obtain the $L^2$ estimate through expanding the term on the right hand side with respect to its selfadjoint and skewadjoint part:  
\[ 
\begin{split} 
 \|  (\d_s -H + h'(s)) w \|^2_{L^2} =& \Vert \d_s w \Vert_{L^2}^2 
+ \Vert (-H + h^\prime) w \Vert^2_{L^2} + 
\Vert (h^{\prime\prime})^{1/2}  w  \Vert_{L^2}^2 
\\ \ge & \int   (d(h^\prime, \mathbb{N})^2 + h^{\prime\prime}) \|w(s)\|^2_{L^2}ds 
\end{split} 
\]
To complete the proof we observe that for each $s$ we have
\[ 
 \Vert H w(s)  \Vert_{L^2} \lesssim 
\Vert (-H + h^\prime)w(s)  \Vert_{L^2} +
 h^\prime(s) \Vert w(s) \Vert_{L^2}   
\]

\end{proof}

\section{Resolvent bounds  for the Hermite operator }
\label{resolvent} 
As seen in the previous section, the spectral properties of the 
Hermite operator play an essential role even in the simplest $L^2$ 
Carleman estimates for the heat equation. In this section 
we take a look at $L^2$ and $L^p$ bounds for its spectral projectors 
and its resolvent.

The spectrum of $H$ is $n+2\N$, and its eigenfunctions are the Hermite 
functions defined by
\[
u_\alpha = c_\alpha (\partial_y -y)^\alpha e^{-\frac{x^2}2}, \qquad
H u_\alpha = (n+2|\alpha|) u_\alpha
\]
As $|\alpha|$ increases, so does the multiplicity of the eigenvalues.
We denote the spectral projectors by $\Pi_\lambda$ for $\lambda \in
n+2\N$. We consider both the spectral projectors and the resolvent of
$H$ and obtain both $L^p$ and localized $L^2$ bounds.  

\subsection{Weighted $L^2$ bounds} 

  We consider two parameters
\[
1 \leq d, R \lesssim \lambda^{\frac12}
\]
We denote
\[
B_R = \{ y : |y| < R\}, \qquad, B^j_d=  \{ y : |y_j| < d\}, \ \ j= 1,
...,n
\]
By $\chi_R$, respectively $\chi^i_d$ we denote bump functions in
$B_R$, respectively $B^i_d$ which are smooth on the corresponding
scales.

\begin{proposition}
 The spectral projectors $\Pi_\lambda$  satisfy the localized $L^2$ bounds
\begin{equation}
  R^{-\frac12} \lambda^{\frac14}  \|\chi_R \Pi_\lambda f \|_{L^2} + 
R^{-\frac12}\l^{-\frac14} \|\chi_R  \nabla \Pi_\lambda f
  \|_{L^2} \lesssim \|
  f\|_{L^2}^2,
\label{gr}\end{equation}
respectively
\begin{equation}
d^{-\frac12}  \||D_j|^\frac12 ( \chi^j_{d} \Pi_\lambda f)\|_{L^2} 
\lesssim  \| f\|_{L^2}
\label{gd}\end{equation}
\label{pl2proj}\end{proposition}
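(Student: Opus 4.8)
The plan is to deduce both bounds \eqref{gr} and \eqref{gd} from known $L^p$ and $L^2$ estimates for the Hermite spectral projectors, exploiting the localization to balls of radius $R \lesssim \lambda^{1/2}$ (respectively slabs of width $d$) to gain the claimed powers of $\lambda$. The starting point is the classical bound $\|\Pi_\lambda\|_{L^2 \to L^2} \le 1$ together with the Thangavelu--Karadzhov $L^p$ bounds recalled in the introduction, which in the relevant range give $\|\Pi_\lambda f\|_{L^q} \lesssim \lambda^{\sigma(q)} \|f\|_{L^2}$ for suitable $q$ and exponent $\sigma(q)$. Interpolating a Hölder estimate on $B_R$ (of volume $\sim R^n$) against these projector bounds will produce a factor $R^{n/2 - n/q} \lambda^{\sigma(q)}$; the point is that for $R$ up to $\lambda^{1/2}$ the optimal choice of $q$ makes this scale like $R^{1/2}\lambda^{-1/4}$, matching \eqref{gr}.

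More concretely, for the first term in \eqref{gr} I would write $\chi_R \Pi_\lambda f = \chi_R \Pi_\lambda \Pi_\lambda f$ and, using that $\Pi_\lambda$ is an orthogonal projector, reduce to showing $\|\chi_R g\|_{L^2} \lesssim R^{1/2}\lambda^{-1/4}\|g\|_{L^2}$ for $g$ in the range of $\Pi_\lambda$ — but this is false uniformly, so instead I pass through $L^q$: $\|\chi_R \Pi_\lambda f\|_{L^2} \le \|\chi_R\|_{L^r}\|\Pi_\lambda f\|_{L^q}$ with $\tfrac1r + \tfrac1q = \tfrac12$, and $\|\chi_R\|_{L^r} \sim R^{n/r}$. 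Choosing $q$ at the endpoint where the Hermite projector bound is $\lambda^{1/4 - n/(2q) + \text{(correction)}}$ and balancing gives the stated gain; the gradient term is handled by the identity $\nabla \Pi_\lambda = \Pi_\lambda \nabla + [\nabla,\Pi_\lambda]$ and the a priori bound $\|\nabla \Pi_\lambda f\|_{L^2}^2 + \|y\,\Pi_\lambda f\|_{L^2}^2 = \langle H\Pi_\lambda f,\Pi_\lambda f\rangle \le \lambda\|f\|_{L^2}^2$, which loses exactly $\lambda^{1/2}$ relative to $\Pi_\lambda f$ and accounts for the shift from $\lambda^{1/4}$ to $\lambda^{-1/4}$. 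For \eqref{gd} the same scheme is applied one variable at a time: the one-dimensional Hermite operator in the $y_j$ direction gives $|D_j|^{1/2}$-smoothing on the projector, and localizing to $|y_j| < d$ costs $d^{1/2}$ by the same Hölder/interpolation step in the single variable $y_j$, the other $n-1$ variables being untouched.

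The main obstacle I anticipate is getting the endpoint exponents exactly right and verifying that the Thangavelu--Karadzhov bounds are available precisely in the range of $q$ and $\lambda$ needed — the Hermite $L^p$ bounds have several regimes (a "Stein--Tomas" range and a "Sogge" range separated by a critical exponent), and the correct gain $R^{1/2}\lambda^{-1/4}$ should come out of the regime where the projector norm saturates at the rate $\lambda^{(n-1)/4 - \cdots}$; one must check that the localization radius constraint $R \lesssim \lambda^{1/2}$ is exactly what keeps us in the favorable regime. A secondary technical point is that $\chi_R$ is a smooth bump rather than a sharp cutoff, so commutator terms $[\chi_R, H]$ or $[\chi_R,\nabla]$ appear; these are lower order because $\chi_R$ is smooth on scale $R$ and should be absorbed, but this requires a short argument. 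I would organize the write-up as: (i) recall the $L^2$ and $L^p$ projector bounds and the energy identity for $H\Pi_\lambda$; (ii) prove \eqref{gr} by Hölder plus interpolation, treating the gradient term via the energy identity; (iii) prove \eqref{gd} by reducing to the one-dimensional factor and repeating step (ii); (iv) clean up the smooth-cutoff commutators.
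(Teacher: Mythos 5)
Your central idea — to get the localized bound \eqref{gr} by writing $\|\chi_R \Pi_\lambda f\|_{L^2}\le\|\chi_R\|_{L^r}\|\Pi_\lambda f\|_{L^q}$ and invoking the Thangavelu--Karadzhov bounds — does not close. The problem is structural, not a matter of picking the right endpoint. In $n=1$ one can test against a single Hermite function $h_\lambda$. In the interior $|y|\lesssim\lambda^{1/2}$ one has $|h_\lambda|\sim\lambda^{-1/4}$, over an interval of length $\sim\lambda^{1/2}$, so $\|h_\lambda\|_{L^q}\sim\lambda^{1/(2q)-1/4}$ for $q<4$; for $q\ge 4$ the Airy concentration near the turning point $|y|\sim\lambda^{1/2}$ dominates and $\|h_\lambda\|_{L^q}\sim\lambda^{-1/(6q)-1/12}$. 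Plugging either into H\"older against $\|\chi_R\|_{L^r}\sim R^{1/2-1/q}$ and comparing with the target $R^{1/2}\lambda^{-1/4}$, the inequality you need reduces, in both regimes, to $\lambda^{1/2}\lesssim R$ — i.e. it holds only at the endpoint $R\sim\lambda^{1/2}$, which is exactly the regime where \eqref{gr} is trivial (there the right side $R^{1/2}\lambda^{-1/4}$ is $O(1)$ and the bound follows from $\|\Pi_\lambda\|_{L^2\to L^2}\le 1$). For $R\ll\lambda^{1/2}$, H\"older throws away precisely the localization information you need: the $L^q$ norm of $\Pi_\lambda f$ encodes the global distribution (dominated by the turning-point peak for $q\ge 4$), whereas \eqref{gr} asserts equidistribution of the $L^2$ mass over the interior, a strictly finer statement. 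The paper bypasses this entirely by using the \emph{pointwise} interior estimate $|h_\lambda(x)|\lesssim\lambda^{-1/4}\|h_\lambda\|_{L^2}$ for $|x|\le\tfrac12\lambda^{1/2}$, which directly integrates to $\|\chi_R h_\lambda\|_{L^2}\lesssim R^{1/2}\lambda^{-1/4}$. That pointwise bound is what is genuinely local; no $L^p$ norm sees it.

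Your treatment of the gradient term has the same defect: the energy identity $\|\nabla\Pi_\lambda f\|^2+\|y\Pi_\lambda f\|^2\le\lambda\|f\|^2$ is a \emph{global} bound. The claim in \eqref{gr} is the \emph{local} bound $\|\chi_R\nabla\Pi_\lambda f\|\lesssim R^{1/2}\lambda^{1/4}\|f\|$, which for $R\ll\lambda^{1/2}$ gains an extra factor $R^{1/2}\lambda^{-1/4}\ll1$ over the global one, and the commutator trick $\nabla\Pi_\lambda=\Pi_\lambda\nabla+[\nabla,\Pi_\lambda]$ does not supply that gain. The paper instead proves a localized elliptic inequality (its estimate \eqref{ellp}) controlling $\lambda^{1/4}\|v\|+\lambda^{-1/4}\|\nabla v\|$ by $\||D|^{1/2}v\|+\||y|^{1/2}v\|+\|(H-\lambda)v\|_{yL^2+\nabla L^2+\lambda^{1/2}L^2}$, and applies it to $v=R^{-1/2}\chi_R\Pi_\lambda f$, feeding in \eqref{gd} (summed over coordinates with $d=R$) for the half-derivative term and computing the commutator $(H-\lambda)v$ explicitly. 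The overall logical order in the paper is: 1D \eqref{gr} from pointwise Hermite asymptotics; 1D \eqref{gd} by interpolating between the $L^2$ and gradient halves of 1D \eqref{gr}; $n$D \eqref{gd} by tensoring, since $\Pi_\lambda$ decomposes as a sum of orthogonal tensor products of 1D projectors; and finally $n$D \eqref{gr} from $n$D \eqref{gd} via the elliptic estimate. Your plan inverts this (attempting \eqref{gr} first, from $L^p$ machinery) and the first step already fails. If you want to keep an $L^p$-flavored approach, you would need an $L^p$ input that is itself localized — e.g. weighted or local $L^p$ bounds inside $\{|y|\le\tfrac12\lambda^{1/2}\}$ — but at that point you are essentially reproving the pointwise interior estimate the paper starts from.
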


\begin{proof}
  The inequality \eqref{gr} is trivial unless $ R \ll
  \lambda^\frac12$.  To prove it in dimension $n=1$ we only
  need to consider the case when $f$ is a Hermite function,
\[
f = \Pi_\lambda f = h_\lambda
\]
in which case it follows from the pointwise bound
\[ 
\lambda^{-\frac14} | h'_\lambda(x) | + \lambda^{\frac14} |h_\lambda(x)|
 \lesssim   \Vert h_\lambda \Vert_{L^2}, \qquad |x| \leq \frac12 \lambda^{\frac12}
\] 

In dimension $n=1$ \eqref{gd} follows by interpolation from \eqref{gr} with $R=d$.
This extends trivially to higher dimension by separation of variables.

It remains to prove \eqref{gr} in higher dimensions. Summing up 
\eqref{gd} with $d = R$ over $j$ we obtain  the bound
\[
R^{-\frac12} \||D|^\frac12 \chi_R \Pi_\lambda f \|_{L^2} \lesssim \|f\|_{L^2}^2
\]
For $|x| \lesssim R \ll \lambda^{\frac12}$ we have $|\xi|^2 \approx
\lambda$ in the characteristic set of $H -\Re z$, therefore the above
norm should essentially control the left hand side of \eqref{gr}.
For later use we prove a slightly more general result, which
in particular concludes the proof of \eqref{gr}.
\begin{equation}
 \lambda^{\frac14}  \| v \|_{L^2} \! + \l^{-\frac14} \|  \nabla v  \|_{L^2}\!
\lesssim  \||D|^\frac12  v \|_{L^2} 
+\||y|^\frac12 v\|_{L^2}+ \| (H-\lambda) v\|_{y L^2 + \nabla L^2 +
  \lambda^\frac12 L^2}\!
\label{ellp}\end{equation}
Indeed the norm on the right is equivalent to 
\[
\| H^{\frac14} v\|_{L^2} + \|(H+\lambda)^{-\frac12}  (H-\lambda) v\|_{L^2}
\gtrsim  \lambda^{\frac14}  \| v \|_{L^2} + \l^{-\frac14} \|H^\frac12 v\|_{L^2}
\]
In our case we apply \eqref{ellp} to $v = R^{-\frac12} \chi_R
\Pi_\lambda f$. Then
\[
\||y|^\frac12 v\|_{L^2} \lesssim \|\Pi_\lambda f\|_{L^2}
\]
while
\[
(H-\lambda) v = 2 R^{-\frac12} \nabla (\nabla \chi_R \Pi_\lambda f) +
R^{-\frac12} \Delta \chi_R \Pi_\lambda f
\]
which yields
\[
\|(H-\lambda) v \|_{y L^2 + \nabla L^2 +\lambda^\frac12 L^2} \lesssim R^{-\frac32} \|f\|_{L^2}
\]
 \end{proof}

To state the corresponding resolvent bounds we define the  spaces $\tilde X_2(z)$ by  
\[
\| u \|_{\tilde X_2(z)} = (1+|\Im z|)^\frac12 \| u \|_{L^2} +  \|
(H-z) u\|_{y L^2 + \nabla L^2+|z|^\frac12 L^2}
+ \sup_{j,d} d^{-\frac12}  \||D_j|^\frac12  \chi^j_{d} u\|_{L^2} 
\]
and the corresponding dual spaces $\tilde X_2^*(z)$.
These spaces are larger than the corresponding ``elliptic'' spaces,
\begin{equation}
\|v\|_{X_2(z)} \lesssim |z|^\frac12 \|v\|_{L^2} + \| y v\|_{L^2} +
\|\nabla v\|_{L^2}
\label{smthe}\end{equation}
On the other hand by  extending  the bound \eqref{ellp} 
to complex $\lambda$ we obtain a counterpart of \eqref{gr},
namely
\begin{equation}
  R^{-\frac12} \lambda^{\frac14}  \|\chi_R u\|_{L^2} + 
R^{-\frac12}\l^{-\frac14} \|\chi_R  \nabla u
  \|_{L^2} \lesssim    \| u\|_{ X_2(z)}
\label{grz}\end{equation}
Finally, the result of \eqref{gd}
can be written in the following dual forms
\begin{equation}
  \| \Pi_\lambda f\|_{ X_2(\lambda)} \lesssim \|f\|_{L^2},
  \qquad 
\| \Pi_\lambda f\|_{L^2} \lesssim \|f\|_{ X_2^*(\lambda)}
\label{x2lrdual}\end{equation}

The  localized $L^2$ resolvent bounds have the form

\begin{proposition}
 Let $n \geq 2$, $z \in \C$ with $dist(z, n+2\N) \gtrsim 1$, and $1 \leq d \leq
 R \ll \Re z$. Then 
\begin{equation}
\| u\|_{\tilde X_2(z)} \lesssim \|(H-z) f\|_{\tilde X_2^*(z)}
\label{l2res}\end{equation}
where the $d$ component of norms is omitted in dimension $n=1$.
\label{pl2res}\end{proposition}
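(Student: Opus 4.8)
We must prove the operator bound underlying \eqref{l2res}, namely $\|(H-z)^{-1}g\|_{\tilde X_2(z)}\lesssim\|g\|_{\tilde X_2^*(z)}$ with $g=(H-z)u$. The plan is to split the spectral resolution $u=\sum_\lambda(\lambda-z)^{-1}\Pi_\lambda g$ into a \emph{resonant} part, the clusters with $\lambda$ in a fixed-size neighbourhood of $\Re z$, and an \emph{elliptic} part, the clusters with $|\lambda-\Re z|\gtrsim\Re z$, and to treat the two separately. Since $\|\cdot\|_{\tilde X_2(z)}$ is a sum of three seminorms and $\|\cdot\|_{\tilde X_2^*(z)}$ is, up to a constant, the corresponding infimum over decompositions $g=g_A+g_B+g_C$, it suffices to bound each of the three seminorms of $u$ by the dual norm of each piece of $g$; self-adjointness of $H$ collapses this finite list to a handful of genuinely distinct estimates. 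Throughout one uses that the hypotheses $\mathrm{dist}(z,n+2\N)\gtrsim1$ and $R\ll\Re z$ force $\Re z\gg1$ and $\mathrm{dist}(z,n+2\N)\gtrsim1+|\Im z|$, so that comparisons such as $|z|^{-1/2}\lesssim(1+|\Im z|)^{-1/2}$ are automatic.

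For the elliptic part the operator $H-z$ is non-characteristic: there $|\lambda-z|\approx\lambda+|z|$, so $(H-z)^{-1}$ gains two full derivatives. Its contribution to all three seminorms is then read off directly from the elliptic bounds \eqref{ellp}, \eqref{smthe}, \eqref{grz}; in particular the localized smoothing seminorm costs nothing, since on this frequency range $\||D_j|^{1/2}\chi^j_d(H-z)^{-1}\|$ is dominated by $\|H^{1/4}(H-z)^{-1}\|$, which is small. No orthogonality subtleties arise.

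The heart of the matter is the resonant part $u_{\mathrm{res}}=\sum_{\lambda\approx\Re z}(\lambda-z)^{-1}\Pi_\lambda g$. I would estimate each of its seminorms by testing against a dual function and invoking the sharp single-cluster bounds of Proposition~\ref{pl2proj}: the ball estimate \eqref{gr} (and its dual \eqref{grz}) for the cutoffs $\chi_R$, and the slab estimate \eqref{gd} (and the dual forms \eqref{x2lrdual}) for the coordinate cutoffs $\chi^j_d$. Coordinate slabs $B^j_d$ rather than balls are forced on us here: near the classical boundary layer $|y|\approx\lambda^{1/2}$ the cluster $\Pi_\lambda$ concentrates anisotropically, which is exactly why \eqref{gr} and \eqref{gd} carry different powers of $\lambda$. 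After the single-cluster bounds one is left to sum over the label $\lambda$, using spectral orthogonality of the $\Pi_\lambda$, and over the dyadic scales $R$ and $d$, using a Schur/Cotlar--Stein bound for the almost-diagonal interaction of distinct scales, the factors $(\lambda-z)^{-1}$ supplying the decay needed for both summations.

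The main obstacle, as for all such resolvent estimates, is precisely this summation over near-resonant clusters. A crude use of the single-cluster bounds followed by the triangle inequality in $\lambda$ produces the logarithmically divergent sum $\sum_{\lambda\approx\Re z}|\lambda-z|^{-1}$; one must instead set up a genuine $TT^*$ argument, in which the off-diagonal decay in $|\lambda-\mu|$ of the composed operators $\Pi_\lambda\chi^j_d|D_j|\chi^j_d\Pi_\mu$ (and their ball analogues) is played against $(\lambda-z)^{-1}(\mu-z)^{-1}$, and the gains $R^{-1/2}$, $d^{-1/2}$ and the powers of $\lambda$ built into \eqref{gr}, \eqref{gd} are exactly those that close the estimate with no logarithmic loss. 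This is the Hermite counterpart of the uniform limiting-absorption resolvent bound $\||D|^{1/2}\langle y\rangle^{-1/2-}(-\Delta-z)^{-1}\langle y\rangle^{-1/2-}|D|^{1/2}\|\lesssim1$, the anisotropic geometry of the Hermite clusters --- a ball in the $y$-variables carrying a distinguished boundary layer at $|y|\approx\lambda^{1/2}$, inside which radial and angular frequencies behave differently --- accounting for the heavier bookkeeping. For $n=1$ there is no transverse direction, the $d$-component of the norms disappears, and the estimate reduces to a one-dimensional bound already contained in the pointwise Hermite asymptotics underlying Proposition~\ref{pl2proj}.
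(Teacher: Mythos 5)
Your proposal is genuinely different in approach from the paper. The paper does not perform a spectral decomposition of the resolvent into resonant and elliptic clusters and sum; instead it first reduces (via \eqref{x2lrdual}, \eqref{pl2resk}, \eqref{2rd}) to the single estimate $\|u\|_{\tilde X_2(z)}\lesssim(1+|\Im z|)^{1/2}\|u\|_{L^2}+\|(H-z)u\|_{\tilde X_2^*(z)}$, and then establishes this by a positive commutator argument: a concrete one-dimensional skewadjoint pseudodifferential operator $Q_r=i\,Op^w(\tsgn(yr^{-1})\tsgn(\xi|y|^{-1}))$ is shown to be $L^p$- and $\tilde X_2(z)$-bounded, the commutator $[H,Q_r]$ is shown to be elliptic on the region $\{|y|\lesssim r\}$ by G\aa{}rding's inequality, and the estimate follows by Cauchy--Schwartz after a separation of variables argument. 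No summation over spectral clusters ever occurs, which is precisely how the paper avoids the logarithmic loss.

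The gap in your argument is the $TT^*$/Cotlar--Stein step. You correctly identify that crude summation of single-cluster bounds over $\lambda$ near $\Re z$ gives $\sum_\lambda|\lambda-z|^{-1}\sim\log\Re z$ and must be circumvented, and you assert that this is done by playing the ``off-diagonal decay in $|\lambda-\mu|$ of the composed operators $\Pi_\lambda\chi^j_d|D_j|\chi^j_d\Pi_\mu$'' against $(\lambda-z)^{-1}(\mu-z)^{-1}$. But this off-diagonal decay is the entire content of the theorem and you neither prove it nor indicate what decay rate it should have. It is not contained in Proposition~\ref{pl2proj}, which bounds only the diagonal blocks $\Pi_\lambda\chi^j_d|D_j|^{1/2}$. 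The naive commutator bound $\|\Pi_\lambda\chi^j_d\Pi_\mu\|\lesssim\|[\chi^j_d,H]\|\,/\,|\lambda-\mu|$ gives only a $|\lambda-\mu|^{-1}$ gain, which still produces a logarithm when inserted into $\sum_{\lambda,\mu}(\lambda-z)^{-1}(\mu-z)^{-1}\cdot|\lambda-\mu|^{-1}$; a genuinely summable rate would require a finer analysis of the Hermite clusters near the turning point $|y|\approx\lambda^{1/2}$, which is a nontrivial piece of harmonic analysis and is precisely what the paper's positive-commutator device is built to avoid. Absent that estimate, the argument for the resonant part does not close.

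A secondary, more cosmetic remark: your treatment of the elliptic part is fine in spirit, but the claim that the cutoff seminorm ``costs nothing'' because it is ``dominated by $\|H^{1/4}(H-z)^{-1}\|$'' needs the extra derivative gain to be compatible with the localization --- this is what \eqref{ellp} is for, and you should cite it as the paper does rather than argue by analogy with the free Laplacian.
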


\begin{proof}

  We first note that the bounds \eqref{x2lrdual} almost imply
  \eqref{l2res} up to a logarithmic divergence. They do imply easily a
  bound for higher powers of the resolvent for $z$ away from
  the spectrum of $H$,
\begin{equation}
\| (H-z)^{-1-k}f\|_{\tilde X_2(z)} \lesssim (1+|\Im z|)^{-k}
\|f\|_{\tilde X_2^*(z)}, \qquad k \geq 1.
\label{pl2resk}\end{equation}
as well as 
\begin{equation}
\| u \|_{L^2} \lesssim (1+|\Im z|)^{-\frac12} \|(H-z)u\|_{\tilde
  X_2^*(z)}.
\label{2rd}\end{equation}
Hence it remains to show that
\begin{equation}
  \| u \|_{\tilde X_2( z)} \lesssim 
(1+|\Im z|)^{\frac12}
  \|u\|_{L^2} + \|(H-z) u \|_{\tilde X_2^*(z)}.
\label{eeeqa}\end{equation}

Using a positive commutator technique we first prove
a one dimensional estimate. For this we define the 
one dimensional skewadjoint pseudodifferential operator\footnote{As defined
  the symbol of $Q$ is not smooth at $0$. However, any smooth 
modification in the ball $\{ x^2+\xi^2 < r^2\}$ will do.}
\[
Q_r = i Op^w(\tsgn(y r^{-1}) \tsgn(\xi |y|^{-1}))
\]
where $\tsgn$ is a mollified
signum function which satisfies
\[
\tsgn'(x)  = \frac14, \qquad |x| \leq 2
\]
Its properties are summarized in the following

\begin{lemma}
  a) $Q_r$ is bounded in $L^p$ for $1 \leq p \leq \infty$
  uniformly for $r \geq 1$.

b) $Q_r$ is also bounded in $ \tilde X_2( z)$ uniformly with respect to
$z \in \C$ and $r \geq 1$.

c) $Q_r$ satisfies the commutator estimate
\begin{equation}\label{1dv}
r^{-1} \| |D|^\frac12 \chi_r u \|_{L^2}^2 \lesssim 
  (1+|\Im z|) \|u\|_{L^2}^2 + \langle (H-z)u,Qu\rangle
\end{equation} 
\end{lemma}

\begin{proof}
a) The $L^p$ boundedness is straightforward and is left for the reader.

b) For the $\tilde X_2( z)$ boundedness we consider first the $d$
terms, which without any loss in generality we can write in the form
\[
d^{-\frac12}  \|(d^2+D^2)^\frac14  \chi_{d} u\|_{L^2} 
\]
 Since $Q$ is bounded in $L^2$ it suffices to prove the
commutator bound
\[
\| [Q_r,(d^2+D^2)^\frac14  \chi_{d}] u\|_{L^2} \lesssim \|u\|_{L^2} 
\]
But this is easily verified using the pdo calculus.

Next we consider the term 
\[
 \| (H-z) u\|_{y L^2 + \nabla L^2+ |z|^\frac12 L^2}
\]
for which it suffices  to prove the
commutator bound
\[
\| [Q_r, H] u\|_{y L^2 + \nabla L^2+ |z|^\frac12 L^2} \lesssim \|u\|_{L^2} 
\]
or equivalently, by duality,
\[
\| [Q_r, H] u\|_{L^2} \lesssim \|y u\|_{L^2} + \|\nabla u\|_{L^2} + \||z|^\frac12 u\|_{L^2} 
\]
This follows again from the pseudodifferential calculus.

c) Since $Q_r$ is skewadjoint we have the identity
\[
\langle (H-z) u, Q_r u \rangle = \langle [H,Q_r] u, u \rangle + \Im z
\langle iQ_r u, u\rangle
\]
therefore it suffices to insure that
\begin{equation}
 \langle [H,Q_r] u, u \rangle \gtrsim \| u\|_{ \bar X_2(r)}^2 + O(\|u\|_{L^2}^2).
\label{poscom}\end{equation}
For this  we compute the commutator $[H,Q]$,
\[
\begin{split}
[H,Q] &= Op^w (\{ \xi_1^2+y_1^2, \tsgn(y_1 r^{-1}) \tsgn(\xi_1
|y_1|^{-1})\}) + O(1)_{L^2 \to L^2}
\\ & = Op^w (2 r^{-1} \tsgn'(y_1 r^{-1}) \xi_1   \tsgn(\xi_1
|y_1|^{-1})) + O(1)_{L^2 \to L^2}
\end{split}
\]
Now
\[ 
r^{-1} (\chi_r^1)^2 (\xi_1^2+r^2)^\frac12 
\lesssim  r^{-1} \tsgn'(y_1 r^{-1}) \xi_1   \tsgn(\xi_1
|y_1|^{-1}) + 1 
\]
and the conclusion follows by Garding's inequality.
\end{proof}

We return to the proof of the proposition. 
  By separation of variables
the bound \eqref{1dv} extends to higher dimensions and gives
\[
r^{-1} \| |D_j|^\frac12 \chi^j_r u \|_{L^2}^2 \lesssim 
  (1+|\Im z|) \|u\|_{L^2}^2 + 2 \Re \langle (H-z)u,Q_r^j u\rangle
\]
where $Q_r^j$ the higher dimensional analogue of $Q_r$ with respect to
the $j$ variable.

The $ X_2(z)$ boundedness of $Q_r$ also extends easily to higher
dimension. Hence by Cauchy-Schwartz we obtain
\[
r^{-1} \| |D_j|^\frac12 \chi^j_r u \|_{L^2}^2 \lesssim 
  (1+|\Im z|) \|u\|_{L^2}^2 + 
  \|(H-z)u\|_{ X^*_2(z)} \| u\|_{ X_2(z)}
\]
To conclude the proof of the  estimate \eqref{eeeqa}
it remains to show that
\[
\| (H-z) u\|_{|z|^\frac12 L^2+ yL^2+ \nabla L^2} \lesssim 
\| (H-z) u\|_{X_2^*(z)} 
\]
which follows by duality from \eqref{smthe}.

\end{proof}

The final step in the $L^2$ resolvent bounds is to replace the $y'$
derivatives by angular derivatives.  Let $\nabla_\perp=
\frac{y}{|y|}\wedge \nabla$ be the angular derivative and
$|D_\perp|^\frac12$ be the corresponding fractional derivative. 

We split the coordinates into $y=(y_1,y')$ and use the notation ${}'$
for coordinates and derivatives in the obvious sense.  For $1 \le d\le
R \le \sqrt{\lambda}$ we define the sector
\[
B_{R,d} = \{ R < |y_1| < 2R, \ \  |y'| \leq d\}
\]
and $\chi_{R,d}$ a bump function in $B_{R,d}$.
Then we define the function space $X_2(\lambda,R,d)$  by
\[
\begin{split} 
 \Vert u \Vert_{X_2(\lambda,R,d)}^2 = &\ \Vert u \Vert_{L^2}^2 + 
R^{-1/2} \lambda^{-1/4} \Vert \nabla (\chi_{R,d} u) \Vert_{L^2}^2
\\ & + R^{-1/2} \lambda^{1/4} \Vert \chi_{R,d} u \Vert_{L^2}^2 
+ d^{-1/2} \Vert |D_\perp|^{\frac12} \chi_{R,d} u \Vert_{L^2}^2 
\end{split} 
\]
and $X_2^*(\lambda,R,d)$ as its dual. 

\begin{lemma} 
Suppose that $n \geq 2$ and $1 \le d \le R $. Then
\[
\|u\|_{X_2(\lambda,R,d)} \approx R^{-\frac12} \lambda^{\frac14}  \|\chi_{R,d} u\|_{L^2} + 
R^{-\frac12}\l^{-\frac14} \| \nabla \chi_{R,d} u
  \|_{L^2} + d^{-\frac12}  \||D'|^\frac12 \chi_{R,d}  u \|_{L^2} 
\]
\end{lemma}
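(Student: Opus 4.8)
\emph{Proof plan.} First I would strip away everything that is not the real point. On the right-hand side the quantity is a sum of three nonnegative terms, which up to constants equals the square root of the sum of their squares, so it suffices to compare $\|u\|_{X_2(\l,R,d)}^2$ with that sum of squares. The $R^{-1/2}\l^{-1/4}\|\nabla(\chi_{R,d}u)\|_{L^2}^2$ and $R^{-1/2}\l^{1/4}\|\chi_{R,d}u\|_{L^2}^2$ entries of $\|u\|_{X_2(\l,R,d)}^2$ also occur on the right, and the $\|u\|_{L^2}^2$ entry is lower order in every application (there $u$ is localized to $B_{R,d}$, and $R^{-1/2}\l^{1/4}\ge 1$ since $R\le\sqrt\l$). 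Hence the entire content of the lemma is the two-sided comparison
\[
d^{-1/2}\bigl\||D_\perp|^{1/2}\chi_{R,d}u\bigr\|_{L^2}^2 \ \approx\ d^{-1/2}\bigl\||D'|^{1/2}\chi_{R,d}u\bigr\|_{L^2}^2 ,
\]
understood modulo errors controlled by the two common weighted terms above.

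The comparison rests on the geometry of the sector $B_{R,d}=\{R<|y_1|<2R,\ |y'|\le d\}$, on which $|y|\approx|y_1|\approx R$ and $|y'|/|y|\lesssim d/R\le 1$, so that $\hat y:=y/|y|$ is within $O(d/R)$ of $\pm e_1$. Writing $\nabla_\perp f=\nabla f-\hat y(\hat y\cdot\nabla f)$ and expanding in the small quantity $\hat y'=y'/|y|$, one finds on $B_{R,d}$ that $\nabla_\perp f=(0,\nabla'f)+Ef$, where $E$ is a first-order differential operator with smooth coefficients of size $O(d/R)$. Thus, as operators applied to functions supported in $B_{R,d}$, $|D_\perp|$ and $|D'|$ — which I would take microlocalized over $B_{R,d}$, as is legitimate since they act on the already cut-off $\chi_{R,d}u$ — have symbols agreeing up to a first-order symbol of size $O(d/R)$ supported there. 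By the pseudodifferential calculus,
\[
\bigl|\langle (|D_\perp|-|D'|)\chi_{R,d}u,\ \chi_{R,d}u\rangle\bigr| \ \lesssim\ (d/R)\,\|\nabla(\chi_{R,d}u)\|_{L^2}\,\|\chi_{R,d}u\|_{L^2},
\]
and after dividing by $d^{1/2}$ and splitting the product by a weighted Cauchy--Schwarz with weight $R^{-1/2}\l^{1/4}d^{1/2}$, the gain $d/R\le 1$ bounds this by $R^{-1/2}\l^{-1/4}\|\nabla(\chi_{R,d}u)\|_{L^2}^2+R^{-1/2}\l^{1/4}\|\chi_{R,d}u\|_{L^2}^2$, which is exactly what is available. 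An alternative, perhaps cleaner, route is to rescale $B_{R,d}$ anisotropically by $y_1=R\zeta_1$, $y'=d\zeta'$ onto a fixed unit box; there both $|D_\perp|^{1/2}$ and $|D'|^{1/2}$ become the standard transverse half-derivative modulo lower-order operators with bounded coefficients, one concludes by a commutator/Garding estimate on the fixed box, and the powers $R^{-1/2}$, $\l^{\pm1/4}$, $d^{-1/2}$ in the $X_2(\l,R,d)$ norm are precisely those produced by scaling back.

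The one genuine obstacle is the nonlocality of $|D_\perp|^{1/2}$ and $|D'|^{1/2}$: a naive comparison produces errors involving $\nabla u$ and $u$ globally, which the right-hand side does not control. The resolution is the point already used above — both operators act on $\chi_{R,d}u$, which is supported in $B_{R,d}$, so one may replace them by versions microlocalized over that sector, after which the calculus errors genuinely involve only $\chi_{R,d}u$ and $\nabla(\chi_{R,d}u)$. The secondary point needing care is the anisotropy $d\le R$: the rescaling, equivalently the bookkeeping of coefficient sizes, is non-isotropic, and one must track that the transverse half-derivative carries the weight $d^{-1/2}$ while the full gradient and the $\l^{1/4}$-weighted $L^2$ norm carry $R^{-1/2}$ — which is exactly why the error terms land on the correct entries of the $X_2(\l,R,d)$ norm and the identity closes.
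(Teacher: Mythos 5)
Your proposal rests on exactly the same geometric observation as the paper's proof: on the sector $B_{R,d}$ one has $\hat y = y/|y|$ within $O(d/R)$ of $\pm e_1$, so $\nabla_\perp$ and $\nabla'$ agree up to a first-order error of size $O(d/R)$, and the scale invariance $d\le R$ is what makes the resulting cross-term absorbable. Your weighted Cauchy--Schwarz bookkeeping at the end, showing that $d^{-1/2}(d/R)\,\|\nabla(\chi_{R,d}u)\|\,\|\chi_{R,d}u\|$ is dominated by the sum $R^{-1/2}\lambda^{-1/4}\|\nabla(\chi_{R,d}u)\|^2 + R^{-1/2}\lambda^{1/4}\|\chi_{R,d}u\|^2$ precisely because $d\le R$, is the correct and essential quantitative point, and it is implicit in the paper as well.

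Where your route genuinely departs from the paper's is in how one passes from the comparison of the first-order operators $\nabla_\perp$ and $\nabla'$ to the half-order operators $|D_\perp|^{1/2}$ and $|D'|^{1/2}$. The paper records the \emph{pointwise} bounds $|D_\perp u|\lesssim |D'u|+\tfrac{d}{R}|\nabla u|$ (and the reverse), observes these pass to $L^2$, and then invokes interpolation — in effect, operator monotonicity of the square root applied to the forms $-\Delta_\perp$ and $-\Delta'$. You instead argue at the level of symbols of $|D_\perp|$ and $|D'|$ and cite the pseudodifferential calculus. That step has a genuine technical gap: the symbols $|\xi_\perp|$ and $|\xi'|$ are homogeneous of degree one but \emph{not smooth} on the cones $\xi_\perp=0$, $\xi'=0$, so $|D_\perp|$ and $|D'|$ are not classical $\Psi\mathrm{DO}$s of order $1$ on any microlocal neighborhood of the relevant region, and the inner-product estimate you display does not follow from the standard calculus (Calder\'on--Vaillancourt) as written. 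The pointwise symbol bound $\bigl||\xi_\perp|-|\xi'|\bigr|\lesssim(d/R)|\xi|$ is true — one sees it from $|\xi_\perp|^2-|\xi'|^2=\xi_1^2-(\hat y\cdot\xi)^2$ — but the derivatives of the difference are not uniformly of lower order near the singular cones, so you cannot conclude the operator bound from the symbol bound alone. To close this, you need either the paper's interpolation/Löwner--Heinz route, a resolvent-integral representation of $\sqrt A-\sqrt B$, or the anisotropic rescaling you sketch as an alternative. Of the options you mention, the rescaling is the one I would actually push through: after $y_1=R\zeta_1$, $y'=d\zeta'$ both operators become the flat transverse half-derivative in $\zeta'$ modulo genuinely lower-order terms on a fixed box, and the $R^{-1/2}$, $\lambda^{\pm 1/4}$, $d^{-1/2}$ weights are exactly the scaling Jacobians, so the bookkeeping is transparent. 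Finally, the nonlocality concern you raise is real but mild — since all the errors you produce are already measured in weighted $L^2$ norms and not in support-sensitive quantities, the spread of $|D_\perp|^{1/2}\chi_{R,d}u$ and $|D'|^{1/2}\chi_{R,d}u$ outside the sector costs nothing.
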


\begin{proof} 
Within $B_{R,d}$ the angular derivatives are close to the $y'$
derivatives, namely
\[
| D_\perp u| \lesssim |D' u| + \frac{d}{R} |\nabla u|, \qquad | D' u| \lesssim |D_\perp u| + \frac{d}{R} |\nabla u|.
\]
This implies the corresponding bounds for $L^2$ norms, and the
conclusion follows by interpolation.
\end{proof} 

>From the above lemma we obtain 
\[ 
\Vert u \Vert_{X_2(\lambda,R,d)} \lesssim \Vert u 
\Vert_{\tilde X_2(\lambda,R,d) } 
\]
Hence, we may replace $\tilde X_2$ by $X_2$ in 
\eqref{l2res} and \eqref{x2lrdual}:

\begin{cor}
a) For $\lambda$ in the spectrum of $H$ we have
\begin{equation}
\| \Pi_\lambda f \|_{X_2(\lambda,R,d)} \lesssim \|f\|_{L^2}, \qquad
\| \Pi_\lambda f \|_{L^2} \lesssim \|f\|_{X_2^*(\lambda,R,d)}
\label{x2lrd}\end{equation}

b) For $z$ away from the spectrum of $H$ and $1 \leq d \leq R \lesssim
\Re z$ we have
\begin{equation}
\| (H-z)^{-1-k} f\|_{X_2(\lambda,R,d)} \lesssim
(1+|\Im z|)^{-k} \|f\|_{X_2^*(\lambda,R,d)}, \qquad k \geq 0    
\label{l2resb}\end{equation}
\end{cor}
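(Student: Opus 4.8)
The plan is to obtain both parts as immediate consequences of the bounds already established for the space $\tilde X_2$, transported to the sectorial space $X_2(\lambda,R,d)$. The key input is the comparison $\|u\|_{X_2(\lambda,R,d)}\lesssim\|u\|_{\tilde X_2}$ recorded just above (following the preceding lemma): the sectorial norm with the angular derivative $|D_\perp|^{1/2}$ is, by that lemma, comparable to the one with the flat derivative $|D'|^{1/2}$ within the sector $B_{R,d}$, and the resulting term is a single entry of (hence controlled by) the supremum over $(j,d)$ in the definition of $\tilde X_2$, all the other terms being common to both norms or controlled via \eqref{grz}. Dualizing this inclusion gives the companion embedding $\|f\|_{\tilde X_2^*}\lesssim\|f\|_{X_2^*(\lambda,R,d)}$, uniformly in $\lambda,R,d,z$.

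For part (a) I would apply the comparison to $u=\Pi_\lambda f$. Since $(H-\lambda)\Pi_\lambda f=0$ and $\Im\lambda=0$, the $\tilde X_2(\lambda)$-norm of $\Pi_\lambda f$ collapses to $\|\Pi_\lambda f\|_{L^2}+\sup_{j,d}d^{-1/2}\||D_j|^{1/2}\chi^j_d\Pi_\lambda f\|_{L^2}$, which is $\lesssim\|f\|_{L^2}$ by \eqref{gd} together with the fact that $\Pi_\lambda$ is an orthogonal projection; hence $\|\Pi_\lambda f\|_{X_2(\lambda,R,d)}\lesssim\|f\|_{L^2}$, the first half of \eqref{x2lrd}. (Equivalently one transports \eqref{x2lrdual} through the comparison.) The second, dual half then follows by the standard $TT^*$ argument: using $\Pi_\lambda=\Pi_\lambda^*=\Pi_\lambda^2$ and then the bound just proved applied to $\Pi_\lambda f$,
\[
\|\Pi_\lambda f\|_{L^2}^2=\langle f,\Pi_\lambda f\rangle\le\|f\|_{X_2^*(\lambda,R,d)}\|\Pi_\lambda f\|_{X_2(\lambda,R,d)}\lesssim\|f\|_{X_2^*(\lambda,R,d)}\|\Pi_\lambda f\|_{L^2},
\]
and one divides.

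For part (b) I would apply the comparison to $u=(H-z)^{-1-k}f$, the required estimate on the $\tilde X_2$ side being Proposition \ref{pl2res} when $k=0$ and \eqref{pl2resk} when $k\ge1$, the latter supplying the decay $(1+|\Im z|)^{-k}$. Combining $\|(H-z)^{-1-k}f\|_{X_2(\lambda,R,d)}\lesssim\|(H-z)^{-1-k}f\|_{\tilde X_2}$ on the left with the dual embedding $\|f\|_{\tilde X_2^*}\lesssim\|f\|_{X_2^*(\lambda,R,d)}$ on the right produces \eqref{l2resb}; the hypothesis $1\le d\le R\lesssim\Re z$ is exactly what makes the comparison lemma (and Proposition \ref{pl2res}) applicable.

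Since essentially all of the analysis has already gone into Propositions \ref{pl2proj} and \ref{pl2res} and into the angular-derivative lemma, I do not anticipate a genuine obstacle here; the only thing requiring care is the bookkeeping with the dual spaces — confirming that the single sectorial term of $X_2(\lambda,R,d)$ is dominated by the supremum defining $\tilde X_2$ with a constant independent of $\lambda,R,d,z$ (so that both the primal inclusion and its dual hold uniformly), and keeping track of the powers of $|\Im z|$ when splicing Proposition \ref{pl2res} and \eqref{pl2resk} into the single statement \eqref{l2resb}.
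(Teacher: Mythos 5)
Your proof is correct and follows essentially the same route as the paper: the comparison $\|u\|_{X_2(\lambda,R,d)}\lesssim\|u\|_{\tilde X_2}$ (plus its dual) obtained from the preceding lemma is combined with \eqref{x2lrdual} for part (a) and with \eqref{l2res}/\eqref{pl2resk} for part (b), which is exactly what the paper indicates by "we may replace $\tilde X_2$ by $X_2$." The $TT^*$ step you give for the second half of \eqref{x2lrd} is an equivalent (and explicit) way of phrasing the dual transport the paper performs implicitly.
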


\subsection{The $L^p$ bounds of the resolvent}

The $L^p$ bounds for the spectral projectors and the resolvent were
proved in \cite{MR1319486}, \cite{MR958904} (see also
\cite{MR1871351}).  For the sake of completeness we also present them
here in a simpler manner following the approach in \cite{MR2140267}.
We refer the reader to the same paper for further results.  We
consider pairs of exponents satisfying
\begin{equation} 
\frac2p + \frac{n}q = \frac{n}2
\label{pqscale}\end{equation}
where the range for $p$ is
\begin{equation}
 p \geq 4\ \text{ for } n=1, \qquad  p > 2\ \text{ for } n=2,
\qquad  p \geq 2\ \text{ for } n\geq 3.
\label{prange}\end{equation}
This leads to the following range\footnote{The exponent $q=\infty$ is
  actually allowed in the spectral projection bounds in dimension
  $n=2$.  However, it is not allowed in any of the resolvent bounds.}
for $q$:
\begin{equation}
 q \in [2,\infty] \text{ for } n=1, \quad   q \in [2, \infty) \text{ for } n=2,
\quad  q \in [2,\frac{2n}{n-2}] \text{ for } n\geq 3.
\label{qrange}\end{equation}
The dual exponents are denoted by $p^\prime$ and $q^\prime$
as usual.

\begin{proposition}
  Let $q$ be as in \eqref{qrange}. Then 

a) The spectral projectors
  $\Pi_\lambda$ satisfy
\begin{equation}\label{pilp}
\begin{split}
  \|  \Pi_\lambda\|_{L^{q'} \to L^2}  \lesssim 1, &\qquad 
\| \Pi_\lambda\|_{L^2 \to L^q} \lesssim 1,  \qquad
  \qquad n \geq 2
\\
  \| \Pi_\lambda\|_{L^{q'} \to L^2} 
\lesssim \lambda^{-\frac1p}, 
&\qquad \| \Pi_\lambda\|_{L^2 \to L^q} \lesssim 
\lambda^{-\frac1p}, 
 \qquad
  n  = 1
\end{split}
\end{equation}

b) For $z$ away from $n+\N$ the resolvent $(H-z)^{-1}$
satisfies
\begin{equation}\begin{split}
\|(H-z)^{-1} \|_{L^{q'} \to L^q} \lesssim (1+|\Im z|)^{\frac{1}{p}
  -\frac1{p'}},  \qquad n \geq 1
\end{split}
\label{resbd}\end{equation}
\label{plp}\end{proposition}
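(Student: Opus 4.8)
The plan is to derive everything from the localized $L^2$ bounds already established (Proposition \ref{pl2proj} and its corollary \eqref{x2lrd}) together with a dyadic decomposition of the ball $B_{\sqrt\lambda}$ and a single one-dimensional Sobolev trace/interpolation inequality on each piece. The key point is that the spectral-projection bounds \eqref{pilp} for $n\ge 2$ follow by a $TT^*$ argument from the ``single'' estimate $\|\Pi_\lambda\|_{L^{q'}\to L^2}\lesssim 1$, which in turn follows from $\|\Pi_\lambda f\|_{L^q}\lesssim\|f\|_{L^2}$ by duality; and the latter is exactly what the weighted $L^2$ bounds buy us after one interpolates in the radial and angular variables. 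Concretely, I would decompose $\R^n=\bigcup_{R\ \text{dyadic}} B_R$ (plus the exterior region $|y|\gtrsim\sqrt\lambda$ where $\Pi_\lambda f$ is exponentially small by the Hermite-function pointwise bounds recalled in the proof of Proposition \ref{pl2proj}), and on each dyadic shell further decompose into angular sectors $B_{R,d}$ with $d$ running dyadically from $1$ to $R$. On each such box, \eqref{x2lrd} controls $\lambda^{1/4}R^{-1/2}\|\chi_{R,d}\Pi_\lambda f\|_{L^2}$, $\lambda^{-1/4}R^{-1/2}\|\nabla\chi_{R,d}\Pi_\lambda f\|_{L^2}$ and $d^{-1/2}\||D'|^{1/2}\chi_{R,d}\Pi_\lambda f\|_{L^2}$, all by $\|f\|_{L^2}$; a Sobolev embedding on the box of dimensions $R\times d^{n-1}$ then upgrades these $L^2$ bounds to an $L^q$ bound with the scale-invariant gain dictated by \eqref{pqscale}, and summing the geometric series over $R$ and $d$ (this is where the strict inequalities in \eqref{prange} enter, to keep the sum convergent at the endpoints $n=1,2$) gives $\|\Pi_\lambda\|_{L^2\to L^q}\lesssim1$.

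Next, for part (b), I would write the resolvent as a sum over the spectrum,
\[
(H-z)^{-1}=\sum_{\mu\in n+2\N}\frac{\Pi_\mu}{\mu-z},
\]
and estimate $\|(H-z)^{-1}\|_{L^{q'}\to L^q}\le\sum_\mu|\mu-z|^{-1}\|\Pi_\mu\|_{L^{q'}\to L^q}$. By part (a) the operator norm $\|\Pi_\mu\|_{L^{q'}\to L^q}$ is $O(1)$ (for $n\ge2$) or $O(\mu^{-1/p})$ (for $n=1$), uniformly in $\mu$; the eigenvalue spacing is $2$, so the sum behaves like $\sum_{k}(k+|\Im z|)^{-1}$ over a range of length $O(\Re z)$, which is logarithmically divergent and must be tamed. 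The standard fix is to split the sum into the block $|\mu-\Re z|\lesssim 1+|\Im z|$ and the far block: on the near block one has $O(1+|\Im z|)$ terms each of size $\lesssim(1+|\Im z|)^{-1}$, giving $O(1)$, while on the far block one pays a power of $\mu/\lambda$ coming from a sharper form of the spectral-projection bound localized away from the ``light cone'' $|\xi|^2+|y|^2=\mu$. To make the bookkeeping clean I would instead exploit the extra decay built into the localized estimates: the $X_2^*$ bound \eqref{l2resb} with $k\ge1$ already encodes the resolvent decay $(1+|\Im z|)^{-k}$, and interpolating the $L^{q'}\to L^q$ bound for $\Pi_\mu$ against the off-diagonal $L^2$ decay of $\Pi_\mu$ in $|\mu-z|$ yields exactly the gain $(1+|\Im z|)^{1/p-1/p'}$ after summation. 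One can also simply appeal to Stein's analytic-interpolation / oscillatory-integral argument as in \cite{MR1871351, MR2140267}, analytically continuing the projector bound in a complex power.

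The main obstacle I anticipate is the endpoint behavior, i.e. the logarithmic loss in the naive resolvent sum and, correspondingly, the convergence of the dyadic sums over $(R,d)$ at $p=4$ (for $n=1$) and $p=2$ (for $n=2$). Handling this requires a genuinely sharp version of the spectral-projection estimate — one that sees not just the $L^2$ mass of $\Pi_\mu f$ on each box $B_{R,d}$ but its decay as the box moves away from the classically allowed region, so that the summation over dyadic scales produces a convergent (rather than logarithmically divergent) series with the correct power of $1+|\Im z|$ at the end. This is precisely the reason $q=\infty$ is excluded in the resolvent bounds even though it is admissible for the projectors in dimension $2$, and it is where the argument of \cite{MR2140267} — bounding a suitable kernel via stationary phase on each dyadic piece and interpolating — does the real work; I would follow that route, reducing matters to the already-proved localized $L^2$ estimates plus a scale-by-scale Sobolev inequality.
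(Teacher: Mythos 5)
Your proposal takes a genuinely different route from the paper's. The paper derives both parts from the dispersive bound $\|e^{itH}\|_{L^1\to L^\infty}\lesssim(\sin t)^{-n/2}$ and the resulting Strichartz estimates \eqref{sehermite}: part (a) follows by applying Strichartz to $v=e^{-i\lambda t}\Pi_\lambda u$, and part (b) by applying Strichartz to $v(x,t)=\chi(t)e^{-izt}u(x)$ where $\chi$ is a bump of width $(1+|\Im z|)^{-1}$, so that the $|\Im z|$--dependence falls out directly from the length of the time interval. You instead try to bootstrap the $L^p$ bounds from the localized $L^2$ bounds of Proposition \ref{pl2proj} via Sobolev embeddings on the sectors $B_{R,d}$ and dyadic summation; that is closer in spirit to the original Thangavelu/Karadzhov arguments than to the simplified Strichartz route in \cite{MR2140267} which the paper explicitly follows. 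For part (a) your sketch is plausible but undeveloped: the anisotropic Sobolev embedding on a sector of dimensions $R\times d^{n-1}$ is not written down, convergence of the dyadic $(R,d)$-sums at the endpoints $p=4$ ($n=1$) and $p\to2$ ($n=2$) is asserted rather than checked, and the exponential decay of $\Pi_\lambda f$ for $|y|\gtrsim\sqrt\lambda$ is invoked but not derived from what is available.

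For part (b) there is a genuine gap. You correctly diagnose the logarithmic divergence of $\sum_\mu |\mu-z|^{-1}\|\Pi_\mu\|_{L^{q'}\to L^q}$, but your near-block/far-block split only yields $O(1)$, whereas the claim is a \emph{decaying} bound $(1+|\Im z|)^{1/p-1/p'}$ with $1/p-1/p'=2/p-1\le 0$. The next step --- ``interpolating the $L^{q'}\to L^q$ bound for $\Pi_\mu$ against the off-diagonal $L^2$ decay of $\Pi_\mu$ in $|\mu-z|$'' --- is not a well-formed interpolation: the projectors $\Pi_\mu$ are uniformly bounded with no decay in $|\mu-z|$; the decay sits entirely in the scalar coefficient $(\mu-z)^{-1}$, and converting it into the stated $|\Im z|$-power while keeping the sum summable near $p=2$ is precisely the point at which a naive summation argument fails. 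The appeal to ``Stein's analytic interpolation'' or to \cite{MR1871351,MR2140267} is an appeal to exactly the machinery you are supposed to be supplying. The paper avoids the issue entirely: interpolating the \emph{higher-power} bounds $\|(H-z)^{-1-k}\|_{L^{q'}\to L^q}$ ($k\ge1$, where the coefficient sum converges) against the $L^2$ bound gives \eqref{resbd1}--\eqref{resbd2}, and then the Strichartz estimate applied to the short time-localized exponential $\chi(t)e^{-izt}u(x)$ closes the argument for the resolvent itself. You should either adopt that mechanism or provide the missing sharp, $\mu$-dependent spectral-projection bound that would make your summation close.
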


\begin{proof}[Outline] 
To revisit the $L^p$ bounds associated to the spectral projectors
we recall the approach in \cite{MR2140267}.  
The first step there is to
establish  pointwise bounds for the Schr\"odinger
evolution\footnote{These bounds are very robust and are in effect
  established in \cite{MR2140267} for a much larger class of operators}
\begin{equation}
\| e^{itH}\|_{L^1 \to L^\infty} \lesssim (\sin t)^{-\frac{n}2}
\end{equation}
This immediately (see also \cite{MR2094851})
leads to Strichartz estimates for the solution to the
inhomogeneous equation
\[
i v_t - H v = g, \qquad v(0) = v_0
\]
namely
\begin{equation}
\| v\|_{L^p([0,2\pi]; L^q)} \lesssim \|v_0\|_{L^2} + \|g\|_{L^{p'}([0,2\pi]; L^{q'})}
\label{sehermite}\end{equation}
where $(p,q)$ are as described in \eqref{pqscale}, \eqref{prange}.

To obtain \eqref{pilp} we apply \eqref{sehermite} to $v = e^{-i
  \lambda t} \Pi_\lambda u$, which yields $L^2 \to L^p$ bounds, and
hence by duality and selfadjointness all estimates of \eqref{pilp} for
$n \geq 2$. The case $n=1$ can be dealt with directly using the 
pointwise bounds for the Hermite functions.

We  note a  consequence of the bounds \eqref{pilp},
namely
\begin{equation}
\|(H-z)^{-1-k} \|_{L^{q'} \to L^q} \lesssim (1+|\Im z|)^{\frac{1}{p}
  -\frac1{p'}-k},  \qquad n \geq 2, \ k \geq 1
\label{resbd1}\end{equation}
which is obtained by interpolating between $q=2$ and $q =
\frac{2n}{n-2}$.

Similarly  we get
\begin{equation}
  \|(H-z)^{-1} \|_{L^{q'} \to L^2} \lesssim (1+|\Im z|)^{-\frac1{p'}},
  \qquad n \geq 2.
\label{resbd2}\end{equation}

Then  we apply \eqref{sehermite} to 
\[
v(x,t) = \chi(t) e^{-i z t} u(x), \qquad 
g = \chi'(t) e^{-izt}  u(x) + \chi(t) e^{-i z t} (H-z) u
\]
where $\chi$ is a unit bump function on an interval of size $(1+|\Im z|)^{-1}$.
This yields
\[
\begin{split}
\| u\|_{L^q} \lesssim (1+|\Im z|)^{\frac1p} \|u\|_{L^2} + (1+|\Im
z|)^{\frac{1}{p} -\frac1{p'}}
\|(H-z)
u\|_{L^{q'}} \lesssim  \|(H-z)u\|_{L^{q'}}.
\end{split}
\]
concluding the proof of \eqref{resbd} for $n \geq 2$. The case
$n=1$ is a variation on the same theme.
\end{proof}

\subsection{Combining the estimates}

Here we  combine the $L^2$ and the $L^p$ components in the 
resolvent bounds:

\begin{proposition}
 For $z$ away from $n+2\N$ the resolvent $(H-z)^{-1}$
satisfies
\begin{equation}
\|(H-z)^{-1} \|_{L^{q'} \to X_2(\Re z,R,d)} \lesssim (1+|\Im z|)^{\frac{1}{p}
  -\frac1{2}},  \qquad n \geq 2, \qquad (n,q) \neq (2,\infty)
\label{resbdz}\end{equation}
with the obvious modification for $n=1$.
\label{pl2p}\end{proposition}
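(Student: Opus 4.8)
The plan is to combine the two families of estimates just proved for $(H-z)^{-1}$: the localized $L^2$ resolvent bound (the $X_2(\lambda,R,d)$ part of \eqref{l2resb}, valid for $k=0$), and the $L^{q'}\to L^q$ mapping bound \eqref{resbd}, together with its variants \eqref{resbd1}, \eqref{resbd2}. Since $X_2(\Re z,R,d)$ is built from an $L^2$ component plus the three localized pieces (the $\nabla(\chi_{R,d}u)$, the $\chi_{R,d}u$, and the $|D_\perp|^{1/2}\chi_{R,d}u$ terms), it suffices to estimate $\|(H-z)^{-1}g\|$ in each of these norms separately by $(1+|\Im z|)^{\frac1p-\frac12}\|g\|_{L^{q'}}$.

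First I would dispose of the plain $L^2$ component. Factor $(H-z)^{-1} = (H-z)^{-1/2}\cdot(H-z)^{-1/2}$ in spirit, or more concretely use \eqref{resbd2}, which gives $\|(H-z)^{-1}\|_{L^{q'}\to L^2}\lesssim (1+|\Im z|)^{-1/p'}$; since we need the exponent $\frac1p-\frac12 = -\frac1{p'}+\frac12-\frac12+\dots$, I should instead interpolate or factor more carefully. The clean route: write $(H-z)^{-1} = (H-z)^{-1/2}(H-z)^{-1/2}$ is not available directly, so instead use the identity $(H-z)^{-1} = [(H-z)^{-1}]$ and split the frequency/spatial behaviour. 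Actually the efficient approach is the one already used in the proof of Proposition~\ref{plp}: apply the Schrödinger/Strichartz estimate \eqref{sehermite} to $v(x,t) = \chi(t)e^{-izt}u(x)$ with $u = (H-z)^{-1}g$, on a time interval of length $(1+|\Im z|)^{-1}$. This produces $\|u\|_{X}\lesssim (1+|\Im z|)^{1/p}\|u\|_{L^2} + (1+|\Im z|)^{1/p - 1/p'}\|g\|_{L^{q'}}$ for the relevant norm $X$, and then one absorbs the first term using \eqref{2rd}, i.e. $\|u\|_{L^2}\lesssim (1+|\Im z|)^{-1/2}\|g\|_{\tilde X_2^*}$, combined with the embedding $L^{q'}\hookrightarrow \tilde X_2^*(z)$ (which holds up to an $(1+|\Im z|)$ power matching the scaling). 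The bookkeeping of the $\Im z$ powers is where I expect to spend the most care: one needs $1/p - 1/2$ on the nose, and the two contributions must each come in at or below that level.

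The genuinely new input is controlling the localized pieces of $X_2(\Re z,R,d)$, i.e.\ the terms with the cutoff $\chi_{R,d}$ and a half-derivative or full gradient, by an $L^{q'}$ norm of the source. Here I would interpolate between the pure $L^2$-localized resolvent bound \eqref{l2resb} (with $k=0$, so $\|(H-z)^{-1}g\|_{X_2(\lambda,R,d)}\lesssim \|g\|_{X_2^*(\lambda,R,d)}$, and $L^{q'}\hookrightarrow X_2^*$ after accounting for the weights) and the $L^p$ resolvent bound. The point is that $X_2(\lambda,R,d)$ is an $L^2$-type space with weights tuned to the characteristic geometry of $H-\Re z$ on $B_{R,d}$, so on that fixed region the half-derivative in the angular direction plus the $\lambda^{1/4}$ weight is comparable to applying $(1+|D|)^{1/2}$-type operators whose $L^2\to L^2$ norm is controlled, and these commute with $\chi_{R,d}$ up to acceptable errors by the pdo calculus (exactly as in the proof of Proposition~\ref{pl2res}). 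Combining the $L^{q'}\to L^q$ bound with the localized $L^2$ bound via real interpolation, using that all the spaces scale correctly under \eqref{pqscale}, should yield the claimed $(1+|\Im z|)^{1/p-1/2}$.

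The main obstacle, as I see it, is getting the power of $1+|\Im z|$ exactly right while juggling three different weighted norms: the plain $L^2$ bound gives $(1+|\Im z|)^{-1/2}$, the $L^{q'}\to L^q$ bound gives $(1+|\Im z|)^{1/p-1/p'}$, and the target $(1+|\Im z|)^{1/p-1/2}$ sits strictly between these (one has $1/p - 1/p' \le 1/p - 1/2 \le -1/2$ only in the degenerate case $p=2$; in general $1/p-1/2 \le 0$ and $1/p - 1/p' = 2/p - 1 \le 1/p - 1/2$ iff $1/p \le 1/2$, which holds), so interpolation at the right angle is forced and I must verify the interpolation identities for the $X_2$-scale are the ones claimed in the preceding lemmas. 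The excluded endpoint $(n,q)=(2,\infty)$ enters precisely because the $L^p$ resolvent bound fails there (as noted in the footnote to \eqref{qrange}), and I would simply exclude it. The $n=1$ modification amounts to dropping the $d$-component and inserting the extra $\lambda^{-1/p}$ factors from the $n=1$ line of \eqref{pilp}.
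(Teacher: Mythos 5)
Your plan for the genuinely new part -- bounding the localized derivative pieces of the $X_2(\Re z,R,d)$ norm by $\|g\|_{L^{q'}}$ -- is to interpolate between the $L^{q'}\to L^q$ resolvent bound \eqref{resbd} and the $X_2^*\to X_2$ bound \eqref{l2resb}. This does not close. Interpolating an $A\to B$ map with a $C\to D$ map gives a map $[A,C]_\theta\to[B,D]_\theta$, and matching the exponent $(1+|\Im z|)^{1/p-1/2}$ forces $\theta=\tfrac12$; but then the source space is $[L^{q'},X_2^*]_{1/2}$ and the target is $[L^q,X_2]_{1/2}$, neither of which is the $L^{q'}$ and $X_2(\Re z,R,d)$ you need. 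You would have to supply the embeddings $L^{q'}\hookrightarrow[L^{q'},X_2^*]_{1/2}$ and $[L^q,X_2]_{1/2}\hookrightarrow X_2$, and those are false in general (they go the wrong way). So as written the key step is missing.

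The paper's actual mechanism is different and worth internalizing: it re-runs the positive commutator identity from Proposition~\ref{pl2res},
\[
\langle (H-z)u,Qu\rangle=\langle[H,Q]u,u\rangle+\Im z\,\langle iQu,u\rangle,
\]
and the G{\aa}rding bound \eqref{poscom} still gives $\|u\|^2_{\tilde X_2}\lesssim|\langle(H-z)u,Qu\rangle|+(1+|\Im z|)\|u\|^2_{L^2}$. The only new ingredient is that $Q$ is bounded on $L^q$ (part (a) of the commutator lemma), so the inner product is estimated by $\|(H-z)u\|_{L^{q'}}\,\|u\|_{L^q}$ rather than by $\|\cdot\|_{X_2^*}\,\|\cdot\|_{X_2}$. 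Feeding in the already-proved resolvent bounds $\|u\|_{L^q}\lesssim(1+|\Im z|)^{1/p-1/p'}\|g\|_{L^{q'}}$ (from \eqref{resbd}) and $\|u\|_{L^2}\lesssim(1+|\Im z|)^{-1/p'}\|g\|_{L^{q'}}$ (from \eqref{resbd2}), and taking a square root, produces exactly $(1+|\Im z|)^{(1/p-1/p')/2}=(1+|\Im z|)^{1/p-1/2}$: the bilinear structure of the commutator estimate automatically delivers the geometric mean of exponents, with no interpolation of $X_2$-type spaces needed. You did notice that $L^q$-boundedness of $Q$ is relevant and that the proof of Proposition~\ref{pl2res} should be mimicked, so you were circling the right idea; what's missing is the observation that you should pair against $Qu$ in $L^q\times L^{q'}$ rather than in $X_2\times X_2^*$, which replaces the interpolation step entirely.
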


\begin{proof}[Proof of Proposition~\ref{pl2p}]
Taking into account the bounds   \eqref{resbd} and \eqref{resbd2},
it remains to prove the estimate
\[
\begin{split} 
\|u\|_{ \tilde X_2(\Re z,R,d)} \lesssim & (1+|\Im z|)^{\frac12-\frac{1}{p'}}
\| (H-z) u\|_{L^{q'}} \\ &  + (1+|\Im z|)^{\frac12-\frac1p} \|u\|_{L^q}
+ (1+|\Im z|)^\frac12 \| u\|_{L^2}
\end{split} 
\]
But this follows from \eqref{poscom} in the same way as for
Proposition \ref{pl2res} since the operator $Q$ is bounded in $L^q$.
\end{proof}

\section{$L^p$ estimates in the flat case and parametrix bounds}
\label{slpflat}
In this section we begin with the mixed norm $L^pL^q $ Carleman
estimates in the simplest case, i.e. with constant coefficients and a
polynomial weight.  These were proved in \cite{MR1769727} except for 
the endpoint which was obtained later in \cite{MR1871351} .

After a conformal change of coordinates and conjugation with respect
to the exponential weight the Carleman estimates reduce to proving
$L^pL^q $ estimates for a parametrix $K$ for $\partial_t -H +\tau$. In
this article we need a stronger version of these bounds, where we 
add in localized $L^2$ norms.

In a simplified form, Escauriaza-Vega's result in \cite{MR1871351}  has the form:

\begin{theorem}\cite{MR1871351} Let $p$ and $q$ be as above. Then
\[
\| t^{-\tau} e^{-\frac{x^2}{8t}} u\|_{L^\infty(L^2)\cap L^p(L^q)}
\leq \| t^{-\tau} e^{-\frac{x^2}{8t}} (\d_t + \Delta ) u\|_{L^1(L^2)+L^{p'}(L^{q'})}, 
\]
 for all $u$ with compact support in $\R^n \times [0,\infty)$ 
 vanishing of infinite order at $(0,0)$ uniformly
with respect to $4\tau$ with a positive distance  from integers.
\label{plpflat} \end{theorem}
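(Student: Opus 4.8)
The strategy is to transfer the flat parabolic Carleman estimate of Theorem~\ref{plpflat} into the $(s,y)$ coordinates and conjugated setting introduced in Section~\ref{sl2flat}, where it becomes an $L^pL^q$ mapping property of a parametrix for $\partial_s - H + 4\tau$. Recall from the proof of the first proposition in Section~\ref{sl2flat} that under the change of variables \eqref{coord} and conjugation by $t^{n/4}e^{-x^2/8t}$, the operator $4t(\partial_t+\Delta_x)$ becomes $-\partial_s - H$, and that multiplication by $t^{-\tau}$ corresponds to conjugation by $e^{4\tau s}$. Tracking how the weighted mixed Lebesgue norms transform under \eqref{coord} (a routine Jacobian and exponent bookkeeping, using that $dx\,dt$ picks up powers of $e^{-s}$ and that the $L^q_x$ norm picks up $e^{-2ns/q}$ while $L^p_t$ picks up a power from $dt = -4e^{-4s}ds$), the estimate of Theorem~\ref{plpflat} is equivalent to the statement that, for $w$ compactly supported and vanishing appropriately at $s=+\infty$,
\[
\| w\|_{L^\infty_s L^2_y \cap L^p_s L^q_y} \lesssim \|(-\partial_s - H + 4\tau)w\|_{L^1_s L^2_y + L^{p'}_s L^{q'}_y},
\]
uniformly for $4\tau$ at positive distance from $n+2\N$. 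So it suffices to prove this last inequality.

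\textbf{Main steps.}
First I would reduce to constructing a forward parametrix: because $-\partial_s - H + 4\tau$ has constant (in $s$) coefficients and $H$ is positive with discrete spectrum $n+2\N$, one can solve $(-\partial_s - H + 4\tau)w = g$ explicitly by $w(s) = \int_{s}^\infty e^{(s-\sigma)(H-4\tau)}g(\sigma)\,d\sigma$, the decaying choice dictated by the support/vanishing condition; denote this operator $K$. The $L^\infty_s L^2_y$ and the $L^p_s L^q_y$ bounds on $K$ are then what must be established. Second, I would split the kernel $e^{(s-\sigma)(H-4\tau)}$ into a low-energy piece (spectral parameters $\lambda$ near $4\tau$, say $|\lambda - 4\tau|\lesssim 1$) and a high-energy piece. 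The low-energy piece is essentially the resolvent of $H$ near $z = 4\tau$ after a Fourier/Laplace transform in $s$: integrating in $s$ turns the evolution into the resolvent $(H-z)^{-1}$ with $\Re z \approx 4\tau$ and $\Im z$ the dual variable, and here one invokes the $L^{q'}\to L^q$ resolvent bound \eqref{resbd}, together with the $L^2$ localized bounds \eqref{resbdz} and \eqref{l2resb}, to get the mixed-norm control after a $TT^*$ / Plancherel-in-$s$ argument; the distance of $4\tau$ from $n+2\N$ is exactly what makes the resolvent bounded on the relevant line. The high-energy piece has a kernel that decays exponentially in $|s-\sigma|$ uniformly (since there $H - 4\tau \gtrsim |\lambda - 4\tau| + 1$), so it is handled by a crude Schur-type / Young's inequality estimate in $s$ combined with the fixed-energy spectral projection bounds \eqref{pilp}. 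Third, I would assemble the $L^\infty_s L^2_y$ endpoint separately: this follows from the $L^2$ theory, essentially from \eqref{ttau}/\eqref{l2flatp1} and the spectral gap, using that $\|w(s)\|_{L^2_y}$ is controlled by the $L^1_s L^2_y + L^{p'}_s L^{q'}_y$ norm of $g$ via energy estimates in $s$ and the resolvent bound \eqref{resbd2}.

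\textbf{Where the difficulty lies.}
The routine parts are the coordinate change and the high-energy tail. The genuine content is the low-energy piece: one is really proving a uniform (in $\tau$) mixed-norm bound for a truncated resolvent of the Hermite operator on a horizontal line $\{\Re z = 4\tau\}$, and the uniformity as $\tau \to \infty$ together with the sharp range of $(p,q)$ in \eqref{qrange} is where Escauriaza--Vega's and Thangavelu--Karadzhov's Hermite spectral estimates are essential. I expect the main obstacle to be organizing the interpolation/summation over the dyadic blocks $|\lambda - 4\tau|\approx 2^k$ so that the powers of $(1+|\Im z|)$ in \eqref{resbd} and \eqref{resbd1} combine to an $s$-variable estimate of Strichartz type with no loss — in other words, converting the fixed-$z$ resolvent bounds into the time-integrated (in $s$) statement cleanly, which is exactly the place where the endpoint $q = \tfrac{2n}{n-2}$ (excluded only at $(n,q)=(2,\infty)$) is delicate and where one must be careful that the $L^2$-localized norms in $X_2$ are propagated correctly. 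Since the problem statement says this theorem was already proved in \cite{MR1769727} and \cite{MR1871351}, in practice I would cite those for the pure $L^pL^q$ part and only carry out in detail the addition of the localized $L^2$ norms, which is the new ingredient needed later in the paper.
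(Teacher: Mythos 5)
Your plan matches the paper's approach closely: the same change of coordinates and conjugation to reduce to $\partial_s - H + 4\tau$, the same decaying forward parametrix $K$, and the same reliance on the Hermite spectral projection bounds \eqref{pilp} and resolvent bounds \eqref{resbd}--\eqref{resbd2} built in Section~\ref{resolvent}. Indeed the paper does not give a separate proof of Theorem~\ref{plpflat} itself (it is cited from \cite{MR1871351}); what is actually proved is the stronger estimate \eqref{flatlp2h} with the added $L^2 X_2(\tau,R,d)$ norms, which contains Theorem~\ref{plpflat} as a special case, and your closing remark --- cite the pure $L^pL^q$ bound and carry out only the $L^2$-localized addition --- is exactly the spirit of the paper's organization.

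The only real divergence is one of bookkeeping rather than substance. You propose a split in the \emph{spectral} parameter of $H$ (low energy $|\lambda - 4\tau|\lesssim 1$ via a Laplace/Fourier-in-$s$ reduction to the resolvent on the line $\Re z = 4\tau$, high energy via exponential decay). The paper instead enumerates the mixed-norm endpoints A--E, handling $L^1L^2\to L^\infty L^2$ and $L^1L^2\to L^pL^q$ by direct spectral expansion of $K(t)f$ with a Cauchy--Schwarz summation over $|\lambda-\tau|\approx 2^i$, handling $L^{p'}L^{q'}\to L^pL^q$ by pointwise kernel bounds $\|K(s)\|\lesssim s^{-2/p}$ plus Hardy--Littlewood--Sobolev for $p>2$, and reserving the Fourier-in-$s$ (Littlewood--Paley in $D_s$) reduction to the resolvent \emph{only} for the delicate $L^2_s\to L^2_s$ endpoint $p=2$. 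Both routes hinge on the same resolvent bound \eqref{resbd} and the same distance-to-spectrum condition; the paper's endpoint-by-endpoint scheme is somewhat more explicit about where HLS applies and where Littlewood--Paley is indispensable, while your spectral split is conceptually cleaner but would need the same care at $p=2$ that you correctly flag as the main obstacle. Either way the argument closes, so the proposal is sound.
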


One can write the estimate in the $(s,y)$ coordinates
using the same transformation as in Section~\ref{sl2flat}:
\begin{equation}
\| e^{\tau s} v\|_{L^\infty(L^2)\cap L^p(L^q)} \lesssim \| e^{\tau s}
(\partial_s +H) v\|_{L^1(L^2)+L^{p'}(L^{q'})}
\end{equation}
Setting $w = e^{\tau s} v$ this becomes
\begin{equation}
\| w\|_{L^\infty(L^2)\cap L^p(L^q)} \lesssim \| 
(\partial_s + H- \tau) w\|_{L^1(L^2)+L^{p'}(L^{q'})}
\label{flatlph}\end{equation}

Denoting by $\Pi_\lambda$ the spectral projection onto the $\lambda$
eigenspace of $H$ we obtain a parametrix $K$ for $(\partial_t - H +\tau)$,
\[
K(\partial_t + H -\tau) = I
\]
where the $s$-translation invariant kernel of $K$ is 
\[
K(s) = \sum_{\lambda \in \N} \Pi_{\lambda} e^{s(\tau-\lambda)}
1_{s(\tau-\lambda) < 0}
\]
Since $w$ decays at $\pm \infty$ we have 
\[
w = K (\partial_s + H- \tau) w
\]
therefore \eqref{flatlph} can be rewritten in the form 
\begin{equation}
\| Kf \|_{L^\infty(L^2)\cap L^p(L^q)} \lesssim \| f \|_{L^1(L^2)+L^{p'}(L^{q'})}
\end{equation}

The main result of this section is an improvement of 
\eqref{flatlph}, namely

\begin{proposition}
Assume that $\tau$ is away from $n+\N$ and that
\[
1 \leq d \leq R \lesssim \tau
\]
Then
\begin{equation}
\| Kf \|_{L^\infty(L^2)\cap L^p(L^q)\cap L^2 X_2(\tau,R,d)} \lesssim 
\| f \|_{L^1(L^2)+L^{p'}(L^{q'})+L^2 X_2^*(\tau,R,d)}
\label{flatlp2h}\end{equation}

\end{proposition}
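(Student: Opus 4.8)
The strategy is to interpolate the already-established bounds, combining the pure $L^pL^q$ parametrix estimate \eqref{flatlph} with the localized $L^2$ resolvent bounds from Corollary after Proposition~\ref{pl2res} (estimates \eqref{x2lrd}, \eqref{l2resb}) and the mixed $L^{q'}\to X_2$ resolvent bound of Proposition~\ref{pl2p}. The natural way to bring the $s$-variable into play is to take the Fourier transform in $s$: since $K$ is $s$-translation invariant, $\widehat{Kf}(\sigma) = (H - \tau - i\sigma)^{-1} \hat f(\sigma)$, so that everything reduces to uniform-in-$\sigma$ bounds for the resolvent of $H$ at the complex point $z = \tau + i\sigma$, which has $\Re z = \tau$ and $|\Im z| = |\sigma|$. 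The point of the $X_2(\tau,R,d)$ spaces is precisely that they were designed to make such resolvent bounds scale correctly.

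First I would set up the reduction: write $Kf = \sum_\lambda \Pi_\lambda (e^{s(\tau-\lambda)} 1_{s(\tau-\lambda)<0}) * f$, decompose into the part where $\lambda$ is within distance $O(1)$ of $\tau$ and the rest. Away from $\tau$ the kernel decays exponentially in $s$ and the operator behaves like a sum of resolvents $(H-z)^{-1-k}$ for $z$ a bounded distance from $n+2\N$; here \eqref{l2resb} with the $X_2$-valued target and \eqref{resbd1} give the $L^2 X_2 \to L^2 X_2^*$ and $L^{q'}\to L^q$ control directly (after an $L^1_s$-in-$\lambda$ summation/Schur-type argument, using the $1+|\Im z|$ gains). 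For the near-diagonal part, where the single offending eigenvalue $\lambda_0$ nearest $\tau$ dominates, I would split off the rank-one-in-$\lambda$ piece $\Pi_{\lambda_0}$ times the scalar convolution kernel $e^{s(\tau-\lambda_0)}1_{\cdot<0}$; the scalar kernel is bounded in $L^1_s$ with norm $|\tau-\lambda_0|^{-1} = O(1)$ since $\tau$ is away from $n+\N$, and $\Pi_{\lambda_0}$ maps $L^{q'}\to L^2$ and $L^2\to X_2(\tau,R,d)$ (or into $L^q$) by \eqref{pilp}, \eqref{x2lrd}; Young's inequality in $s$ then closes this piece.

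The more delicate part is the genuinely mixed estimates, e.g. $L^2 X_2^* \to L^\infty L^2$, $L^1 L^2 \to L^2 X_2$, and $L^{q'}\,(=L^{p'}L^{q'})\to L^2 X_2$, which cross between the $L^2$-in-time and $X_2$-in-space structures and the $L^p L^q$ structure. For these I would use the Fourier-in-$s$ representation together with Proposition~\ref{pl2p}: the bound $\|(H-z)^{-1}\|_{L^{q'}\to X_2(\Re z,R,d)} \lesssim (1+|\Im z|)^{1/p - 1/2}$ is exactly of Hardy-Littlewood-Sobolev/Christ-Kiselev type when integrated against $\sigma = \Im z$, since $\frac1p - \frac12 = -\frac1{p'} + (\text{something})$ matches the exponent needed for $L^{p'}_s \to L^2_\sigma$ after Plancherel, when combined with the Strichartz-type gain. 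Concretely, I would interpolate/combine: (i) $L^2_s X_2^* \to L^2_s X_2$ from \eqref{l2resb}, (ii) $L^2_s X_2^* \to L^\infty_s L^2$ and $L^1_s L^2 \to L^2_s X_2$ from the $k=0$ resolvent bound plus the $L^2$ smoothing, and (iii) $L^{p'}_s L^{q'} \to L^2_s X_2$ from Proposition~\ref{pl2p} via Plancherel in $s$, exactly as the pure $L^pL^q$ bound \eqref{flatlph} is obtained from \eqref{resbd}. The bilinear/real-interpolation bookkeeping to assemble all six mapping properties into the single display \eqref{flatlp2h} — in particular handling the endpoint where the time integrability is $L^2$ and a Christ–Kiselev argument is needed to restrict the retarded kernel $1_{s(\tau-\lambda)<0}$ — is where the real work lies, and is the step I expect to be the main obstacle; everything else is a repackaging of results already proved in Sections~\ref{resolvent} and the present section.
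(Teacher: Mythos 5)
Your outline differs substantially from the paper's proof, and the place you yourself flag as ``where the real work lies'' is precisely where the plan does not close. The paper does not run the whole argument through the Fourier transform in $s$; instead it decomposes the required bound into endpoints and treats each by direct kernel estimates in the physical $s$-variable: (A) $L^1L^2\to L^\infty L^2$ from $L^2$-boundedness of $\Pi_\lambda$; (B,C) $L^1 L^2 \to L^pL^q$ and $L^1L^2 \to L^2 X_2(\tau,R,d)$ by estimating $K(s)$ as an operator $L^2\to L^q$ (resp. $L^2\to X_2$) using a dyadic discrete Schur argument in $|\lambda-\tau|$; (D) the dual bound into $L^\infty L^2$ by reflecting time; and (E) the off-diagonal bound $L^{p'}L^{q'}+L^2 X_2^* \to L^pL^q\cap L^2 X_2$ from the operator-norm kernel bound $\|K(s)\|_{L^{q'}+X_2^*\to L^q\cap X_2}\lesssim s^{-1}e^{-cs}$ plus Hardy--Littlewood--Sobolev when $p>2$. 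Only for the genuinely $L^2$-in-time endpoint does the paper take a Fourier transform in $s$, and there it uses a Littlewood--Paley decomposition $S_j(D_s)K$ (following Escauriaza--Vega), proving a uniform kernel bound $\|S_jK(t)\|\lesssim 2^j/(1+2^{2j}t^2)$ that is integrable in $t$.

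The concrete gap in your sketch is at the endpoint $p=2$, $q=\frac{2n}{n-2}$ ($n\geq 3$), which the proposition includes. Your plan for the mixed estimates is to apply the resolvent bound $\|(H-z)^{-1}\|_{L^{q'}\to X_2}\lesssim (1+|\Im z|)^{1/p-1/2}$ at $z=\tau+i\sigma$ after Fourier transforming in $s$ and then invoking Plancherel. But Plancherel is only available for the $X_2$ side (which sits in $L^2_x$); to go from $\|\hat f(\sigma)\|_{L^2_\sigma L^{q'}_x}$ back to $\|f\|_{L^{p'}_s L^{q'}_x}$ is not Plancherel when $q'\neq 2$, and at $p=2$ the weight $(1+|\sigma|)^{1/p-1/2}$ is identically $1$, so there is no room for a Sobolev/Hausdorff--Young gain. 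There simply is no mixed-norm Plancherel inequality $\|\hat f\|_{L^2_\sigma L^{q'}_x}\lesssim \|f\|_{L^2_s L^{q'}_x}$ for $q'<2$. This is exactly the difficulty the Littlewood--Paley decomposition is introduced to circumvent: one works with frequency-localized pieces $S_j K$, uses the \emph{second-order} resolvent bound \eqref{resbd1} to get a kernel that is integrable in $t$ uniformly over $j$, and then sums by orthogonality. Your mention of Christ--Kiselev also points in a slightly wrong direction: $K$ as defined is not a retarded truncation of a two-sided convolution, it is a genuinely two-sided kernel (retarded for $\lambda>\tau$, advanced for $\lambda<\tau$), so the issue is not about truncating a known bounded operator but about the degenerate HLS exponent at $p=2$. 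If you want to repair the proposal, replace the Plancherel step for the mixed estimates by the paper's argument: direct kernel bounds in $s$ plus HLS for $p>2$, and the Littlewood--Paley-in-$D_s$ argument for the $L^2$-in-$s$ endpoint.
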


\begin{proof}
  We work in dimension $n \geq 2$; some obvious adjustments are needed
  in dimension $n=1$, which is slightly easier.   We consider four
  endpoints:

\noindent
{\bf A: The $L^1L^2 \to L^\infty L^2$ bound} follows easily since
the projectors $\Pi_\lambda$ are $L^2$ bounded.

\medskip 
\noindent
 {\bf B: The $L^1L^2 \to L^p L^q $ bound.}
Here it suffices to prove 
\[ \|K(.)f \|_{L^p_tL^q_x} \lesssim \| f \|_{L^2} \]
Splitting $f$ into spectral projections and using \eqref{pilp}
we obtain
\[
\| K(t) f\|_{L^q} 
\lesssim \sum_{\lambda}  e^{-|(\lambda -\tau)t|} \|\Pi_\lambda f\|_{L^2}
\]
For $|t| \geq 1$ we can use Cauchy-Schwartz to obtain
\[
\| K(t) f\|_{L^q \cap X_2(R,d)} \lesssim e^{-c|t|} \|f\|_{L^2}
\]
which suffices for all $q$. For $|t| \leq 1$ we consider the most difficult
case $p =2$  and compute
\[
\begin{split}
  \| K(t) f\|_{L^2([-1,1],L^q)}^2 \hspace{-25pt} & \hspace{25pt} \lesssim \int_{-1}^1 \left (\sum
    e^{-|(\lambda -\tau)t|} \|\Pi_\lambda f\|_{L^2} \right)^2 \\ =
  &\sum_{\lambda,\mu} \frac{1}{|\lambda -\tau|+|\mu-\tau|}
  \|\Pi_\lambda f\|_{L^2} \|\Pi_\mu f\|_{L^2} \\ \approx &\sum_{0 \leq
    i, j} 2^{-i-j} \left(\sum_{|\lambda-\tau| \approx 2^i}
    \|\Pi_\lambda f\|_{L^2}\right)\left( \sum_{|\mu-\tau|
      \approx 2^{i+j}} \|\Pi_\mu f\|_{L^2}\right) \\ \lesssim& \sum_{0
    \leq j} 2^{-\frac{j}2} \sum_{0 \leq i}
  \left(\sum_{|\lambda-\tau| \approx 2^i} \|\Pi_\lambda
    f\|_{L^2}^2\right)^\frac12 \left(\sum_{|\mu-\tau| \approx
      2^{i+j}} \|\Pi_\mu f\|_{L^2}^2\right)^\frac12
\\ \lesssim & \|f\|_{L^2}^2
\end{split}
\]

\smallskip 

\noindent{\bf C: The $L^1L^2 \to X_2(\tau,R,d)$ bound} for $K$ follows
in the same way from
\[ \Vert \Pi u \Vert_{X_2(\tau,R,d)} \lesssim   \Vert u \Vert_{L^2}. \]

\smallskip 

\noindent{\bf D: The $L^{p'} L^{q'} + L^2 X_2^*(\tau,R,d) \to L^\infty
  L^2$ bound} for $K$ is equivalent to the $L^1L^2 \to L^p L^q\cap
X_2(\tau,R,d)$ bound for $K^*$.  By reversing time this is seen to be
the same as the $L^1L^2 \to L^p L^q$ bound for $K$.

\smallskip 

\noindent{\bf E: The $L^{p'} L^{q'} + L^2(X_2^*(R,d)) \to L^p L^q \cap L^2
  X_2(R,d)$  bound}. Using \eqref{pilp} and \eqref{x2lrd} directly
yields
 \[ 
\Vert K(s) \Vert_{L^{\frac{2n}{n+2}}+X_2^*(R,d)\to
  L^\frac{2n}{n-2}\cap  X_2(R,d)} \lesssim 
  \sum_{\lambda \in \N} e^{|s(\tau-\lambda)|}
\lesssim s^{-1} e^{-cs} 
\]
Similarly we obtain
 \[ 
\Vert K(s) \Vert_{L^2 \to
  L^\frac{2n}{n-2}\cap  X_2(R,d)} 
\lesssim s^{-\frac12} e^{-cs}, \qquad \Vert K(s) \Vert_{L^{\frac{2n}{n+2}+X_2^*(R,d)}\to
 L^2} \lesssim  s^{-\frac12} e^{-cs}
\]

Interpolation with the $L^2$ estimate gives
\[  
\Vert K(s) \Vert_{L^{q^\prime}\to L^q} 
\lesssim  s^{-\frac{2}p}
\]
and
\[
 \Vert K(s) \Vert_{L^{q^\prime}\to X_2(R,d)} 
\lesssim  s^{-\frac{1}p-\frac12}, \qquad \Vert K(s) \Vert_{ X_2^*(R,d)\to L^q} 
\lesssim s^{-\frac{1}p-\frac12}.
\]
If $p > 2$ then the Hardy-Littlewood Sobolev inequality implies 
\[   
\Vert K* f \Vert_{L^pL^q} \lesssim \Vert f \Vert_{L^{p'}L^{q'}}. 
\]
and
\[
\Vert K* f \Vert_{L^2 X_2(R,d)} \lesssim \Vert f \Vert_{L^{p'}L^{q'}},
\qquad \Vert K* f \Vert_{L^pL^q} \lesssim \Vert f \Vert_{L^{2} X_2^*(R,d)}. 
\]
With obvious changes the analysis is similar if $n=1,2$.

It remains to prove the $L^2 \to L^2$ type bounds, namely
\[
\| Kf\|_{L^2 X_2(R,d)} \lesssim  \Vert f \Vert_{L^{2} X_2^*(R,d)}
\qquad (n=1,2)
\]
respectively 
\[
\| Kf\|_{L^2 (X_2(R,d)\cap L^{\frac{2n}{n-2}})} \lesssim  
\Vert f \Vert_{L^{2} (X_2^*(R,d)+L^{\frac{2n}{n+2}})}
\qquad (n>2)
\]

For this, following an idea in \cite{MR1871351}, we consider a dyadic
frequency decomposition in time.  By the Littlewood-Paley theory it
suffices to prove the bound for a single dyadic piece at frequency
$2^j$, namely
\begin{equation}
\|S_j(D_s) Kf\|_{L^2 (X_2(R,d)\cap L^{\frac{2n}{n-2}})} \lesssim  
\Vert f \Vert_{L^{2} (X_2^*(R,d)+L^{\frac{2n}{n+2}})}
\qquad (n>2)
\label{plks}\end{equation}
and its one and two dimensional counterpart.

Taking a time Fourier transform we can write (for $f \in \mathcal{S}(\R^n)$)
\[
\widehat{S_j K}(\sigma) f = s(2^{-j} \sigma) \sum_\lambda
\frac{1}{\lambda -\tau -i \sigma} \Pi_\lambda f  = s(2^{-j} \sigma)
(H -\tau - i\sigma)^{-1} f
\]
therefore by the inversion formula
\[
\begin{split}
(S_j K)(t) f = & \int e^{it \sigma} s(2^{-j} \sigma)(H -\tau -
i\sigma)^{-1} f d\sigma \\=&- t^{-2} 
 \int e^{it \sigma} \frac{d^2}{d\sigma^2} (s(2^{-j} \sigma)(H -\tau -
i\sigma)^{-1}) f d\sigma
\end{split}
\]
Hence using the resolvent bounds \eqref{resbd} and \eqref{resbd1}
and the first line for $|t|\le 2^{-j}$ and the second line for $|t|\ge 2^{-j}$
we obtain
\[
\| S_j K(t)\|_{L^{\frac{2n}{n+2} + X_2^*(R,d)} \to
  L^\frac{2n}{n-2}\cap X_2(R,d)} 
\lesssim \frac{2^{j}}{1+2^{2j} t^2}
\]
and the similar estimate in one and two dimensions.  The bound on the right is
integrable in $t$, therefore \eqref{plks} follows.
\end{proof}

\section{ Modified weights and pseudoconvexity}
\label{convex}

 The main result of this section, Theorem \ref{core}  is a considerable improvement of Section 
\ref{sl2flat}. 
The weights $t^{-\tau}$ in Section \ref{sl2flat}, while easy to use,
satisfy merely a degenerate pseudoconvexity condition, in the sense
that the selfadjoint and the skewadjoint parts of the operator in
\eqref{ttau} commute. This is in contrast to strong pseudoconvexity
where one obtains better $L^2$ bounds from the positivity of the
commutator.  A perturbation argument easily implies an $L^2$ Carleman
estimate for variable coefficients as soon as $ g = \id + O(t)$.
However, even arbitrarily small perturbations of $g$ from $\id$ at
$t=0$ destroy the pseudoconvexity.

To obtain results for general variable coefficients we need a more
robust weight with additional convexity. A good way of doing this is
by adding convexity in $t$ and by using a weight of the form
\[
e^{h(-\ln t)}
\]
with a convex function $h$. Then we obtain for the heat operator the
strengthened $L^2$ estimates of Proposition \ref{bell}.  The
assumption of vanishing of infinite order forces us to work with
functions $h$ with at most linear growth at infinity.  This in turn
limits the convexity of $h$, and hence the gain from the convexity in
the $L^2$ bounds.

These Carleman inequalities with the weight $e^{-h(\ln t)}$ are more
stable with respect to perturbations.  They can be obtained for
coefficients satisfying
\begin{equation} \label{hold} |g-\id| + (t+ |x|^2) |\d_t g | +
  (t+|x|^2)^{1/2} |\nabla g| \lesssim (t+|x|^2)^\varepsilon.
\end{equation} 
with suitable functions $h$.  It is not difficult to weaken
\eqref{hold} almost to our conditions \eqref{g} and \eqref{g1}. This
venue was pursued by Escauriaza and Fern\'andez \cite{MR1971939}.

In this paper we seek to obtain $L^p$ Carleman inequalities and also
to handle $L^p$ gradient potentials. Both require good spatial and
temporal localization, which depends on the strength of the $L^2$
estimates.  The weights $e^{h(-\ln t)}$ seem to be insufficient for
this purpose.

Consequently we consider a larger class of weights of the form
\[
e^{h(-\ln t) + \phi( x t^{-1/2}, -\ln(t) ) }
\]
having some additional convexity in $y=x t^{-1/2}$.  Here we think of
$\phi$ essentially as a function of $y$ with a milder dependence on
$s=-\ln t$. Obtaining pseudoconvexity is not entirely straightforward
because the Hamilton flow for the Hermite operator $H$ is periodic so
no nonconstant function of $y$ can be convex along its orbits.  We
note that the projection of the orbits to the $y$ space are ellipses
of size $O( \sqrt{\tau})$ where $\tau$ is the energy, centered at $0$.
Hence we can choose $\phi$ to be convex in $y$ for $|y| \ll \sqrt
\tau$. We compensate the lack of convexity of $ \phi$ when $|y|
\approx \sqrt \tau$ by the $s$ convexity of $h$.  To elaborate this
idea we explain the precise setup.

Let $\delta_1$ be small positive constant.  We begin with constants
$\{\alpha_{ij}\}_{\mathcal{A}}$ (see \eqref{matA} and \eqref{matAtau}
for the notation) which control the regularity of the
coefficients\footnote{denoted generically by $d$ here and later}
$g^{kl}-\delta^{kl}$, $d^{kl}$ and $e^{kl}$ of $P$ given by
\eqref{pgen} as in \eqref{g}.
\begin{equation}\label{gbound}
  \delta_1 \alpha_{ij} =  \Vert d \Vert_{L^\infty(A_{ij})} + 
  e^{j-2i} \Vert  d \Vert_{\lip_x(A_{ij})} +  \|d\|_{C^{m_{ij}}_t(A_{ij})}.
\end{equation} 
The condition \eqref{g}  guarantee that for all $\tau \ge
1$
\begin{equation} \label{alphaij} \Vert \alpha_{ij}
  \Vert_{l^1(\mathcal{A}(\tau))} \le 1. \qquad \Vert \alpha_{ij}
  \Vert_{l^\infty(\mathcal{A})} \le 1.
\end{equation}

We first adjust the $\alpha_{ij}$'s upward so that they vary slowly
and do not concentrate in irrelevant regions. This readjustment
depends on the choice of the parameter $\tau$.

\begin{lemma}
  Let $\alpha_{ij}$ be a sequence satisfying \eqref{alphaij}.  Then
  for each $\tau \gg 1$ there exists a double sequence
  $(\e_{ij})_{\mathcal{A}(\tau)}$ with the following properties:
  \begin{enumerate}
  \item \label{e1} For each $(i,j) \in \mathcal{A}(\tau)$
    \[
    \alpha_{ij} \le \e_{ij}
    \]
  \item \label{e2} We have $\e_{ij} \in l^1(\mathcal{A}(\tau))$,
    \[
    \Vert \e_{ij} \Vert_{l^1( \mathcal{A}(\tau))} \lesssim 1 .
    \]
  \item \label{e3} The sequence $\e_{ij}$ is slowly varying,
    \[
    |\ln \e_{i_1j_1} - \ln \e_{i_2j_2}| \le \frac12 ( |i_1-i_2| +
    |j_1-j_2|), \qquad (i_1,j_1), (i_2,j_2) \in \mathcal{A}(\tau).
    \]
  \item \label{e4} The sequence $(\e_i)$ defined by
    \[
    \e_i = \sum_{j: (i,j) \in \mathcal{A}(\tau) } \e_{ij}, \qquad i
    \ge \ln \tau.
    \]
    has the following properties
    \begin{equation}\label{locbound}
      |\ln \e_{i_1} - \ln \e_{i_2}| \le \frac12  |i_2-i_1|,\quad   
      \e_{ij} \lesssim \e_i, \quad \e_{i  [ \ln \tau/2]+2} \approx \e_i
    \end{equation}
  \item \label{e5} For each $i\ge \ln \tau$ there exists an unique $ 0
    \le j(i) \le [ \ln \tau/2]+2$ with the following properties:
    \begin{equation} \label{ij0} \e_{ij(i)} \approx \e_i.
    \end{equation} 
    and
    \begin{equation} \label{jnot}
      \begin{split} 
        \e_{ij} \leq  e^{-j} \tau^{-1/2}  & \qquad \text{ if } 0 \le j \leq  j(i),  j(i) \ne 0  \\
        \e_{ij} > e^{-j} \tau^{-1/2} & \qquad \text{ if } j(i) < j
        \leq [ \ln \tau/2]+2.
      \end{split}
    \end{equation} 
  \end{enumerate}
  \label{lepsilon}\end{lemma}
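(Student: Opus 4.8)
This is a pure combinatorics/real-analysis lemma about two-dimensional nonnegative sequences: given $\alpha_{ij}$ with $\|\alpha\|_{l^1(\mathcal{A}(\tau))}\le 1$ and $\|\alpha\|_{l^\infty}\le 1$, we must manufacture a dominating sequence $\e_{ij}$ which is still $l^1$-bounded but is slowly varying in both indices, and whose $j$-sums $\e_i$ are slowly varying in $i$ and have the stated threshold structure relative to $e^{-j}\tau^{-1/2}$. The natural construction is a \emph{maximal-function regularization}: set
\[
\e_{ij}^0 = \sum_{(k,l)\in\mathcal{A}(\tau)} 2^{-(|i-k|+|j-l|)}\,\alpha_{kl},
\]
i.e. convolve the sequence $\alpha$ against a geometrically decaying kernel on $\Z^2$ (restricted to $\mathcal{A}(\tau)$). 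This immediately gives property \eqref{e1} (the $k=i,l=j$ term), property \eqref{e2} since convolution with an $l^1$ kernel is bounded on $l^1$ and the kernel has sum $O(1)$, and the slow-variation property \eqref{e3} because replacing $(i,j)$ by a neighbour changes each kernel weight by at most a bounded factor, so $\ln\e^0$ is Lipschitz with a small constant after we tune the kernel decay rate (replace the ratio $2$ by something close to $1$, e.g. $e^{-c}$ with $c$ small, to get the constant $\frac12$).

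\smallskip

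\textbf{From \eqref{e3} to \eqref{e4}.} Defining $\e_i=\sum_j \e_{ij}$ over the admissible range $0\le j\le[\ln\tau/2]+2$, the bound $\|\e_i\|_{l^1}\lesssim 1$ is immediate from \eqref{e2}. Slow variation of $\ln\e_i$ in $i$ follows from slow variation of $\ln\e_{ij}$ in $i$: since each $\e_{i_1 j}$ and $\e_{i_2 j}$ differ by a factor $e^{\pm|i_1-i_2|/2}$, so do their sums. The bound $\e_{ij}\lesssim\e_i$ is trivial. The only mildly delicate point is $\e_{i,[\ln\tau/2]+2}\approx\e_i$: this says the $j$-sum is dominated by its largest-$j$ term up to a constant. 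This need \emph{not} hold for $\e^0$ as constructed, so here I would make a second adjustment: replace $\e^0_{ij}$ by $\tilde\e_{ij}=\max_l\big(e^{-(j-l)_+}\e^0_{il}\big)$ or, more simply, multiply in a factor that is monotone increasing in $j$ at a controlled rate—e.g. take $\e_{ij}=\e^0_{ij}+e^{j-[\ln\tau/2]-2}\e_i$... but one must be careful this does not destroy the $l^1$ bound. The cleaner route: after the convolution step, define $\e_i$ by the sum and then \emph{redistribute} within each $i$-slice, setting $\e_{ij}:=\max_{l\le j}\e^0_{il}\cdot$ (something) so that the sequence in $j$ is quasi-increasing up to the endpoint; since there are only $O(\ln\tau)$ values of $j$ but we only need the $l^1$ bound in $(i,j)$ and the endpoint comparison, a logarithmic loss is unacceptable, so instead I would take $\e_{ij}=\sum_{l}e^{-|j-l|/2}\e^0_{il}$ \emph{and} separately enlarge the endpoint term; the summability is preserved because the enlargement $e^{j-[\ln\tau/2]-2}\e_i$ summed over $j$ is a geometric series $\lesssim\e_i$, and summed over $i$ is $\lesssim 1$.

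\smallskip

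\textbf{The threshold index $j(i)$, property \eqref{e5}.} Once \eqref{e3}--\eqref{e4} hold, define
\[
j(i)=\max\{\,j:\ 0\le j\le[\ln\tau/2]+2,\ \e_{il}\le e^{-l}\tau^{-1/2}\ \text{for all }l\le j\,\}
\]
(with $j(i)=0$ if even $\e_{i0}>\tau^{-1/2}$ fails the condition, interpreted appropriately). Then the second line of \eqref{jnot} holds for $j>j(i)$ by maximality, \emph{provided} we know that once $\e_{ij}$ exceeds the threshold $e^{-j}\tau^{-1/2}$ it stays above it for larger $j$; this monotone-crossing property is exactly where slow variation is used: $\e_{i,j+1}\ge e^{-1/2}\e_{ij}$ while the threshold $e^{-(j+1)}\tau^{-1/2}=e^{-1}\cdot e^{-j}\tau^{-1/2}$ decreases by a \emph{larger} factor $e^{-1}<e^{-1/2}$, so the ratio $\e_{ij}/(e^{-j}\tau^{-1/2})$ is quasi-increasing in $j$ and crosses $1$ at most once (up to constants). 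This is why the slow-variation constant $\frac12$ in \eqref{e3} matters: we need $e^{-1/2}>e^{-1}$, i.e. the kernel decay must be slower than the threshold decay. Finally $\e_{ij(i)}\approx\e_i$ follows because at the crossing index the term is the largest one (terms for $l<j(i)$ are below threshold hence geometrically small, terms for $l>j(i)$ are above threshold but grow only geometrically while staying $\lesssim\e_i$, and by \eqref{locbound} $\e_{i,[\ln\tau/2]+2}\approx\e_i$), so $\e_i\approx$ the dominant term $\approx\e_{ij(i)}$.

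\smallskip

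\textbf{Main obstacle.} The real work is \emph{bookkeeping the constants consistently}: the kernel decay rate must be chosen small enough that \eqref{e3} holds with constant $\frac12$, yet the endpoint-enlargement and the threshold analysis both need that same rate to beat the geometric decays $e^{-j}$ appearing in \eqref{jnot} and \eqref{locbound}. One must also verify that the set $\mathcal{A}(\tau)$ being a \emph{truncated} triangular region (rather than all of $\Z^2$) does not spoil the convolution estimates—near the boundary of $\mathcal{A}(\tau)$ the kernel mass may be smaller, which only helps the $l^1$ bound, but one must check \eqref{e3} still holds up to the boundary, which it does since the Lipschitz bound on $\ln\e^0$ comes from comparing kernel weights at neighbouring points and does not see the domain shape. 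I expect no conceptual difficulty, only the need to fix the constants in the right order; the whole argument is elementary once the convolution/maximal-function viewpoint is adopted.
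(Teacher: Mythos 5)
Your construction of $\e^0_{ij}$ by convolution against an exponential kernel, the verification of properties \eqref{e1}--\eqref{e3}, the endpoint adjustment for \eqref{e4}, and the unique-crossing argument for the threshold in \eqref{e5} all match the paper's strategy (the paper uses a max over $(k,l)$ rather than a sum, but this is cosmetic). However, there is a genuine gap in your argument for the key relation $\e_{ij(i)}\approx\e_i$: you assert that the term at the crossing index is the dominant one, but this does not follow from the other properties. Slow variation plus the endpoint condition $\e_{i,[\ln\tau/2]+2}\approx\e_i$ force the sequence to peak near the \emph{endpoint} $j_{\max}=[\ln\tau/2]+2$, not near $j(i)$, and by the threshold equation one always has $\e_{ij(i)}\approx e^{-j(i)}\tau^{-1/2}$, which is $\approx\e_i$ only when $j(i)$ happens to land at the specific depth $j_0$ with $e^{-j_0}\tau^{-1/2}\approx\e_i$.

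A concrete counterexample to your reasoning: take $\alpha$ supported at a single point $(i_0,j_{\max})$ with value $\eta=\tau^{-1/2}$. Then $\e_{i_0 j}\approx\eta\,e^{-(j_{\max}-j)/2}$ and $\e_{i_0}\approx\eta$, yet the crossing equation $\eta\,e^{-(j_{\max}-j)/2}=e^{-j}\tau^{-1/2}$ places $j(i_0)\approx -\tfrac23\ln\eta-\tfrac16\ln\tau=\tfrac16\ln\tau$, giving $\e_{i_0,j(i_0)}\approx\eta^{2/3}\tau^{-1/3}=\tau^{-2/3}\ll\tau^{-1/2}\approx\e_{i_0}$. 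The relation $\e_{ij(i)}\approx\e_i$ must therefore be \emph{engineered}, not deduced: the paper first defines a target index $j_0(i)$ from $\tilde\e_i$ (roughly by solving $e^{-j_0}\tau^{-1/2}=\tilde\e_i$, capped at $0$ and $[\ln\tau/2]$), and then redefines $\e_{ij}:=\max\{\tilde\e_{ij},\,2e^{-\frac12|j-j_0(i)|}\tilde\e_i\}$ to force a bump of height $\approx\e_i$ at $j_0(i)$. Only after this bump insertion does one verify that the threshold-defined $j(i)$ lies within $O(1)$ of $j_0(i)$, which together with slow variation yields $\e_{ij(i)}\approx\e_{ij_0(i)}\approx\e_i$. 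This bump construction is the missing idea in your proof, and it is also where the case analysis on $\tilde\e_i\lessgtr\tau^{-1},\tau^{-1/2}$ enters.
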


We shall see that $j(i)$ is an important threshold.  If $j \ge j(i)$
then we can localize our estimates to the corresponding $A_{ij}$ and
even to smaller sets. On the other hand, we cannot localize to sets
smaller than
\begin{equation}
  B_{i0} = \bigcup_{j \le j(i)} A_{ij}.  
\end{equation}

\begin{proof}
  To fulfill the conditions (\ref{e1})-(\ref{e4}) we simply mollify
  the $\alpha_{ij}$,
  \[
  \tilde \e_{ij} = \max_{(k,l)\in \mathcal{A}(\tau)} \alpha_{kl}
  e^{-\frac12(|i-k|+|j-l|)}, \quad \tilde \e_i = \sum_{j=0}^{[\ln
    \tau/2]+2} \tilde \e_{ij}.
  \]
For the last part of \eqref{e4} we redefine 
\[
\tilde{\e}_{ij} := \tilde{\e}_{ij} + e^{-\frac12 |j-([\ln \tau/2]
  +2)|}  \tilde{\e}_i
\]
This also increases $ \tilde{\e}_i$ by a fixed factor.

  For \eqref{e5} we begin with a preliminary guess for $j(i)$ which we
  call $j_0(i) \in \mathbb{R}^+$.  We consider three cases.
  \[ j_0(i)= \left\{
    \begin{array}{clc}
      \ln \tau/2  & \text{ if }&  \tilde \e_i < \tau^{-1}, \\ 
      - \ln  ( \tilde\e_i \tau^{1/2}) & \text{ if } & \tau^{-1} \le  \tilde\e_i < \tau^{-1/2}, \\
      0 & \text{ if }&  \tau^{-1/2} \le  \tilde\e_i. 
    \end{array} 
  \right.
  \]
  We define
  \begin{equation}\label{eijnew}
    \e_{ij}:= \max\{ \tilde\e_{ij}, 2e^{-\frac12|j -j_0(i)|} \tilde\e_i \}, 
  \end{equation} 
  which is still slowly varying because $\tilde\e_i$ is slowly
  varying.

  We define $j(i)$ according to \eqref{jnot}.  It is uniquely
  determined since the sequence $\e_{ij}$ is slowly varying compared
  to $e^{-j}\tau^{\frac12}$.  Since $\tilde \e_{ij}$ is slowly varying
  we must have $\tilde\e_{ij} \le \tilde \e_i/2$. This allows us to
  conclude that for $j$ close to $j_0(i)$, the second term in
  \eqref{eijnew} is larger than the first one,
  \[ \e_{ij} = 2e^{-\frac12|j -j_0(i)|} \tilde\e_i\qquad \text{ for }
  |j-j_0(i)| \le 2. \] If $j_0(i)=0$ then $ \e_{i0}= 2 \tilde \e_i \ge
  2 \tau^{-1/2}$ and hence $j(i)=0$. If $0 < j_0(i) < \ln\tau/2$ then
  for $|j-j_0(i)|\le 2$ we have
  \[ \e_{ij} = 2e^{-\frac12|j -j_0(i)|} e^{-j_0(i)} \tau^{-1/2} \]
  therefore $j_0(i) -2 < j(i) \le j_0(i)$.  If $j_0(i) = \ln \tau /2$
  then for $|j-j_0(i)|\le 2$ we have
  \[ \e_{ij} \le 2e^{-\frac12|j -j_0(i)|} e^{-j_0(i)} \tau^{-1/2} \]
  and we arrive again at $j_0(i)-2 \le j(i)$.  In all three cases we
  have $|j_0(i)-j(i)|\le 2$ therefore \eqref{ij0} holds. We observe
  that $\e_i\lesssim \tilde \e_i$.
\end{proof}

The sequence $(\e_{ij})_{\mathcal{A}(\tau)}$ is used to describe the
amount of spatial convexity needed in the region $A_{ij}$, which will
be reflected in the construction of $\phi$ below.  The partial sums
$\e_i$ measure the amount of $s$-convexity needed in $ [i,i+1]$.  The
purpose of part \eqref{e5} above is to correlate the two amounts in a
region where they have the same strength (where $j$ is close to
$j(i)$).

Our weights have the form
\begin{equation}
  \psi(s,y) = h(s) + \phi(s,y)
  \label{psi}\end{equation}
Their choice is described in the next two lemmas:

\begin{lemma}
  Let $\tau$ and $(\e_{i})$ be as in Lemma~\ref{lepsilon}.  Then there
  is a convex function $h$ with the following properties:
  \begin{enumerate}
  \item $h' \in [\tau,2\tau]$.
  \item $ h''(s) + dist(h'(s),\N) > \frac14 $.
  \item $\e_i \tau \lesssim h''(s) \lesssim \e_i \tau+1$ for $s \in
    [i,i+1]$.
  \item $|h'''| \lesssim h''$.
  \end{enumerate}
  \label{h}\end{lemma}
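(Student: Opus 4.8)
The plan is to prescribe the convexity $h''$ and recover $h$ by integrating twice; the subtlety is that $h''$ cannot be taken to depend on $s$ alone, since the gap condition $h''+\mathrm{dist}(h',\N)>\tfrac14$ — which is precisely the hypothesis of Proposition~\ref{bell} — allows $h''$ to be small only where $h'$ keeps a definite distance from $\N$. I would therefore set $h''=b(s)+\beta\bigl(h'(s)\bigr)$, where the \emph{base} $b(s)\approx c_0\,\e_{\lfloor s\rfloor}\,\tau$ carries exactly the convexity demanded by the two-sided bound $\e_i\tau\lesssim h''\lesssim\e_i\tau+1$, and the \emph{tent} term $\beta$, supported near the integer values of $h'$, repairs the gap; here $c_0$ is a small absolute constant to be fixed last. (If all $\e_i=0$ then $g\equiv\id$ and $h'\equiv\lceil\tau\rceil+\tfrac12$ already works, so by the log-Lipschitz property in Lemma~\ref{lepsilon} we may assume $\e_i>0$.)

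Concretely, invoking Lemma~\ref{lepsilon} ($\sum_i\e_i\lesssim1$, $|\ln\e_{i+1}-\ln\e_i|\le\tfrac12$), choose a smooth $\tilde\e(s)\approx\e_{\lfloor s\rfloor}$ with $|(\ln\tilde\e)'|\le1$, set $b(s)=c_0\tilde\e(s)\tau$ (so $b>0$ and $\int b\,ds\le c_0C\tau$ with $C$ absolute), take $\beta(y)=\bigl(\tfrac13-\mathrm{dist}(y,\N)\bigr)_+$, and let $h'$ solve
\[
h''(s)=b(s)+\beta\bigl(h'(s)\bigr),\qquad h'(s_0)=\lceil\tau\rceil+\tfrac12 ,
\]
on the relevant range of $s$, extended linearly where the $\e_i$ are not defined. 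Since $\beta$ is bounded and $1$-Lipschitz, Picard--Lindel\"of gives a global $C^1$ solution, and mollifying the corners of $\beta$ afterwards makes $h$ smooth at the cost of absolute constants. The inequalities are then read off directly: $b\le h''\le b+\tfrac13$ gives the two-sided bound; on $\{\mathrm{dist}(h',\N)\le\tfrac13\}$ one has $h''+\mathrm{dist}(h',\N)=b+\tfrac13>\tfrac14$, and elsewhere $\mathrm{dist}(h',\N)>\tfrac13>\tfrac14$, so the gap condition holds; and since $\partial_s\mathrm{dist}(h'(s),\N)=\pm h''$, $|h'''|\le|b'|+h''\le b+h''\le2h''$.

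What remains, $h'\in[\tau,2\tau]$, is the heart of the matter. Writing $\Delta=\int h''\,ds$ for the total increase of $h'$ and changing variables $u=h'(s)$ (legitimate since $h''\ge b>0$), and using that $\{\mathrm{dist}(u,\N)\le\tfrac13\}$ has density $\tfrac23$,
\[
\Delta=\int_{\{\mathrm{dist}(h',\N)>1/3\}} b\,ds+\int_{\{\mathrm{dist}(h',\N)\le1/3\}} h''\,ds\le c_0C\tau+\Bigl(\tfrac23\Delta+1\Bigr),
\]
whence $\Delta\le3+3c_0C\tau$, which is $<\tau-1$ for $c_0\le 1/(6C)$ and $\tau$ large, so $h'(\infty)<2\tau$. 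One also checks $h''\to0$: once $\lfloor s\rfloor$ is large enough that the tail $c_0\tau\sum_{i\ge\lfloor s\rfloor}\e_i$ is below $\tfrac13$, after $h'$ next exits a tent region — which occurs at an integer plus $\tfrac13$ — it can creep up by less than $\tfrac13$, so $\mathrm{dist}(h',\N)$ stays $\ge\tfrac13$, $\beta$ switches off for good, and $h''=b\to0$. I expect this budget accounting — fitting the convexity forced near the integers of $h'$ into the fixed allowance $h'(\infty)-h'(s_0)<\tau$, which is exactly what the lemma asserts is possible — to be the only genuine obstacle; once the tent profile $\beta(y)=(\tfrac13-\mathrm{dist}(y,\N))_+$ is in hand, the gap condition, the derivative bounds, and the existence of $h$ are all routine.
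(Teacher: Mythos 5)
Your proof is correct. The paper itself gives no argument here (only the remark that the proof ``is fairly straightforward and uses only the fact that $(\e_i)$ is slowly varying and summable''), and your ODE construction $h''=b(s)+\beta(h'(s))$ --- base term $b\approx c_0\e_{\lfloor s\rfloor}\tau$ for condition (3), tent $\beta$ for the spectral-gap condition (2), the change of variables $u=h'$ with the $2/3$-density of $\{\mathrm{dist}(u,\N)\le\tfrac13\}$ together with $\sum\e_i\lesssim1$ budgeting $\int h''\,ds<\tau$ for condition (1), and log-Lipschitz control of $\e_i$ giving $|b'|\lesssim b$ hence (4) --- supplies exactly the intended details, with mollification of $\beta$ the standard fix for smoothness.
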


The proof of the lemma is fairly straightforward and uses only the
fact that $(\e_i)$ is slowly varying and summable. The second part is
needed in order to avoid the eigenvalues of the Hermite operator.

\begin{lemma}
  Let $\tau$, $(\e_{ij})$ and $(\e_i)$ be as in Lemma~\ref{lepsilon}.
  Then there exists a smooth spherically symmetric function
  \[
  \phi: \R \times \R^n \to \R
  \]
  with the following properties:
  \begin{enumerate}
  \item (Bounds) The function $\phi$ is supported in $|y|\le 2
    \tau^{1/2}$ and satisfies
    \begin{equation} \label{phitime} 0 \le \phi(s,y) \lesssim \e_i
      \tau, \qquad |\d_s \phi(s,y))|+|\d_s^2 \phi(s,y)| \lesssim \e_i
      \tau
    \end{equation}
    and
    \begin{equation} \label{phibound} \sum_{l,k=0}^3 (1+|y|)^{k}
      |D_y^{1+k} \d_s^l \phi| \lesssim \epsilon_i \tau^{1/2} \qquad
      \text{ for } i \le s \le i+1,
    \end{equation}
  \item (Monotonicity)
    \begin{equation} \label{phimon} \d_r \phi (s,y) \approx \e_i
      \tau^\frac12 \quad \text{ for } (s,y) \in A_{ij}, \quad (i,j)
      \in \mathcal{A}(\tau), \quad j \ge j(i)+1
    \end{equation}
  \item (Convexity)
    \begin{equation} \label{phicon} (1+|y|) \partial^2_r \phi(s,y)
      \approx \e_{ij} \tau^{\frac12} \qquad \text{ in } A_{ij},\quad
      (i,j) \in \mathcal{A}(\tau).
    \end{equation} 
  \end{enumerate}
  \label{phi}\end{lemma}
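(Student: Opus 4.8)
The plan is to prescribe the radial second derivative of $\phi$ and then recover $\phi$ by integrating twice in the radial variable $r=|y|$, taking $\phi$ to be a smooth function of $s$ and $|y|^2$ so that spherical symmetry and smoothness across the origin are built in. First I would manufacture a smooth positive \emph{convexity density} $b=b(s,r)$ with $b(s,r)\approx\e_{ij}$ on (a mild enlargement of) each region $A_{ij}$ with $(i,j)\in\mathcal A(\tau)$, obeying Leibniz-type bounds $|\d_s^l\d_r^k b|\lesssim(1+r)^{-k}b$ and, by \eqref{locbound}, $b\lesssim\e_i$ for $i\le s\le i+1$. Such a $b$ comes from patching the constants $\e_{ij}$ with a partition of unity adapted to the dyadic annuli $1+r\approx e^j$ and the unit time intervals $s\approx i$; the slowly varying property \eqref{e3} guarantees that neighbouring $\e_{ij}$ are comparable, so the patching preserves the two–sided bound on $A_{ij}$.

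Next I would set $\d_r^2\phi(s,r)=b(s,r)\,\tau^{1/2}/(1+r)$ on the bulk range $r\lesssim\tau^{1/2}$ (dropping the harmless factor $(1+r)^{-1}$ near $r=0$, where the profile is taken smooth in $r^2$), and then define $\d_r\phi(s,r)=\int_0^r\d_\rho^2\phi\,d\rho$ and $\phi(s,r)=\int_0^r\d_\rho\phi\,d\rho$, so that $\phi(s,0)=0$ and $\phi\ge0$. The convexity bound \eqref{phicon} is then immediate, since on $A_{ij}$ one has $1+|y|\approx e^j$ and $(1+|y|)\d_r^2\phi=b(s,|y|)\tau^{1/2}\approx\e_{ij}\tau^{1/2}$. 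For the monotonicity \eqref{phimon} I would use
\[
\d_r\phi(s,r)=\tau^{1/2}\int_0^r\frac{b(s,\rho)}{1+\rho}\,d\rho\approx\tau^{1/2}\sum_{j'\le j}\e_{ij'}\qquad(r\approx e^j),
\]
together with the observation that, because $\e_i=\sum_{j'}\e_{ij'}$ by definition while $\e_{ij(i)}\approx\e_i$ by \eqref{ij0}, the partial sum is always $\le\e_i$ and becomes $\approx\e_i$ as soon as $j\ge j(i)$; hence $\d_r\phi\approx\e_i\tau^{1/2}$ for $j\ge j(i)+1$ and $\d_r\phi\lesssim\e_i\tau^{1/2}$ for all $r$. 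The threshold relations \eqref{jnot} are precisely what make this saturation sharp at $j(i)$. The remaining bounds \eqref{phitime}, \eqref{phibound} then follow by differentiating under the integral sign: $0\le\phi\lesssim|y|\,\e_i\tau^{1/2}\lesssim\e_i\tau$, $|\d_s^l\phi|\lesssim\e_i\tau$ from $|\d_s^l b|\lesssim b\lesssim\e_i$, and $(1+|y|)^k|\d_r^{1+k}\d_s^l\phi|\lesssim\tau^{1/2}b\lesssim\e_i\tau^{1/2}$ since each extra radial derivative gains a factor $(1+r)^{-1}$.

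The step I expect to be the main obstacle, beyond this bookkeeping, is reconciling the compact support of $\phi$ with the prescribed convexity: a function genuinely convex in $r$ on $[0,\infty)$ cannot have compact support, so near the outer edge $r\sim\tau^{1/2}$ I must insert a negative contribution to $\d_r^2\phi$, of size $O(\e_i\tau^{-1/2}(1+r)^{-1})$, which brings $\d_r\phi$ smoothly back to $0$ and $\phi$ smoothly down to $0$ over $O(1)$ logarithmic scales — this is possible because, by the previous step, $\d_r\phi\approx\e_i\tau^{1/2}$ and $\phi\approx\e_i\tau$ there. This correction must be placed outside all $A_{ij}$ with $(i,j)\in\mathcal A(\tau)$, where \eqref{phicon} is required, so getting the absolute constants in the support radius and in $\mathcal A(\tau)$ to cooperate (together with the modification of the profile near $r=0$ needed for smoothness at $y=0$) is the delicate part of the argument.
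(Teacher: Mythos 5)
Your construction is in essence the same as the paper's: in both cases the radial second derivative is engineered to be $\approx \e_{ij}\tau^{1/2}/(1+r)$ on $A_{ij}$, the monotonicity follows from the cumulative sum $\tau^{1/2}\sum_{j'\le j}\e_{ij'}$ together with \eqref{ij0}, and the remaining bounds are derivative bookkeeping using the slowly varying property of $\e_{ij}$. The paper simply realizes this profile in closed form: it sets $\phi_j(y)=\sqrt{e^{2j}+|y|^2}$, smoothly interpolates $a_j(s)\approx\e_{ij}$ on $i\le s\le i+1$ via $\ln a_j(s)=\sum_i\eta(s-i)\ln\e_{ij}$, and puts
\[
\phi(s,y)=\tau^{1/2}\chi(|y|\tau^{-1/2})\sum_{j=0}^{[\ln\tau/2]+2}a_j(s)\,\phi_j(|y|).
\]
This choice of building blocks sidesteps the two points you flag as delicate: each $\phi_j$ is a smooth function of $|y|^2$, so smoothness at the origin is automatic; and the outer cutoff $\chi$ — equal to $1$ on $|y|\le\tfrac32\tau^{1/2}$, supported in $|y|\le2\tau^{1/2}$ — provides exactly the compactly supported modification you describe, with the negative contribution to $\d_r^2\phi$ confined to the annulus where $\chi$ varies, outside the region where \eqref{phicon} is used in the $L^2$ estimates. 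Your worry about reconciling the support radius with the outermost $A_{ij}$ in $\mathcal{A}(\tau)$ is legitimate at the level of absolute constants, but it is present in the paper's statement as written and is harmless: the convexity bound is only invoked on test functions supported in $\{|y|^2\le 9\tau\}$ after further truncation, and the conjugated operator is elliptic where $\phi$ is cut off, so the precise constant in the support radius plays no role. So your plan works, but the paper's explicit $\phi_j$'s make the two ``main obstacles'' you anticipate disappear rather than having to be overcome.
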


\begin{proof}
 
  Let
  \[
  \phi_j(y) = \sqrt{e^{2j} + |y|^2}, \qquad j \geq 0.
  \]
  We fix a smooth partition of unity $1 = \sum \eta(s-i)$ and define
  \[
  \ln a_j(s) =\sum_i \eta(s-i) \ln \e_{ij}.
  \]
  These functions satisfy the bounds
  \begin{equation}\label{atime}
    a_j(s) \approx  \varepsilon_{ij}, \quad i \le s \le i+1, 
    \qquad | a_j^\prime|, |a_j^{\prime \prime}|, |a'''_j| \lesssim    a_j.  
  \end{equation} 
  Their sum satisfies
  \[
  a(s):= \sum_{j=0}^{[\ln \tau/2]+2} a_j(s) \approx \e_i, \quad i\le s
  \le i+1.
  \]
  We define
  \[
  \phi(s,y) = \tau^\frac12 \chi(|y|\tau^{-1/2} ) \sum_{j=0}^{[\ln
    \tau/2]+2} a_j(s) \phi_j(|y|)
  \]
  where $\chi$ is a smooth function supported in $[-2,2]$ and
  identically $1$ in $[-\frac32,\frac32]$.  We verify the properties:
  \[ 0 \le \phi(s,y) \lesssim a(s) \tau \lesssim \e_i \tau, \quad i
  \le s \le i+1. \] The remaining part of \eqref{phitime} follows from
  \eqref{atime}.  Estimate \eqref{phibound} is a consequence of
  \eqref{atime} and
  \[ |(1+|y|)^k D^{k+1} \phi_j(y)| \lesssim 1, \quad 0 \le k \le 3. \]
  The upper bound from \eqref{phimon} is covered by \eqref{phibound}
  and the lower one follows from
  \[ \d_r \phi(s,y) \gtrsim \tau^{1/2} \sum_{j \le j(i)} a_j \approx
  \tau^{1/2} \sum_{j=0}^{j(i)} \e_{ij} \sim \e_i \tau^{1/2}
  \]
  in $A_{ij}$ with $j \ge j(i)$ where we use $\e_{ij(i)} \approx \e_i$.
  The assertion \eqref{phicon} follows from immediate bounds on second
  derivatives of the $\phi_j$.
\end{proof}

Our aim in this section is to prove $L^2$ Carleman estimates for the
variable coefficient operator $P$ with the exponential weight
\[ e^{\psi(-\frac{\ln t}4, \frac{2x} {t^{1/2}})}e^{-\frac{|x|^2}{8t}}.
\] Here
\[ \psi(s,y) = h(s) + \delta_2 \phi(s,y). \] where $\delta_2$ is a
small constant and $h$ and $\phi$ are as in in Lemma \ref{h} and
\ref{phi}.

The calculations are involved.  For a first orientation we outline the
key part of the argument for the constant coefficient heat equation.
Using the change of coordinates of Section \ref{sl2flat} we transform
the problem to weighted estimates for the operator $ P_0= \d_s + H$
and the exponential weight $e^{\psi(s,y)}$.  This translates to
obtaining bounds from below for the conjugated operator
\[
\dtpsio = e^{\psi(s,y)} P_0 e^{-\psi(s,y)}.
\]

\begin{lemma}
  Let $\tau$ be large enough.  Let $\psi$ be as in \eqref{psi} with
  $h,\phi$ as in the above two Lemmas~\ref{h},\ref{phi} with $\delta_2
  \ll 1$ .  Then the operator $\dtpsio$ satisfies the bound from below
  \[
  \|(h'')^\frac12 v \|^2 + \delta_2\tau^{-1} ( \|a_{int}^2 \nabla
  v\|^2 + \| a_{\perp}^2 \nabla_{\perp} v\|^2) \lesssim \|\dtpsio
  v\|_{L^2}^2
  \]
  for all functions $v$ supported in $\{|y|^2 \leq 9 \tau\}$ where the
  weights $a_{int}$, $a_\perp$ are defined by
  \[
  a_{int}^4 = \e_{ij} (1+|y|)^{-1} \tau^\frac32, \quad a_{\perp}^4 =
  1_{j \ge j(i)} \e_{i} (1+|y|)^{-1} \tau^\frac32 \qquad \text{in }
  A_{ij}.
  \]
\end{lemma}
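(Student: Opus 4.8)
The plan is to obtain the lower bound by conjugating $P_0 = \d_s + H$ with the exponential weight and analyzing the resulting operator $P_{0,\psi}$ through its selfadjoint and skewadjoint parts, following the scheme of Proposition~\ref{bell} but now keeping track of the extra $y$-convexity encoded in $\phi$. First I would compute $P_{0,\psi} = e^\psi(\d_s+H)e^{-\psi} = \d_s - H + (\d_s\psi) + |\nabla\psi|^2 - \Delta\psi - 2\nabla\psi\cdot\nabla$, and split it into its skewadjoint part $A = \d_s - 2\nabla\psi\cdot\nabla - (\Delta\psi)$ (note $2\nabla\psi\cdot\nabla + \Delta\psi$ is skewadjoint) and its selfadjoint part $S = -H + \d_s\psi + |\nabla\psi|^2 - 2\Delta\psi$ (rearranging so that the symmetric remainder is correct). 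Expanding $\|P_{0,\psi}v\|^2 = \|Sv\|^2 + \|Av\|^2 + \langle [S,A]v,v\rangle$ reduces everything to showing that the commutator $[S,A]$ is positive, modulo the contributions already controlled by $\|Sv\|^2$.

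The main term in the commutator is the classical positive commutator for the $\d_s$ direction, which produces $h''$ (and $\delta_2\d_s^2\phi$, which is lower order by \eqref{phitime}) — this gives the $\|(h'')^{1/2}v\|^2$ term, exactly as in Proposition~\ref{bell}, using property (2) of Lemma~\ref{h} to stay away from the spectrum of $H$ when $h''$ is small. The genuinely new contribution comes from the commutator of $-H = \Delta - |y|^2$ with the first-order part $2\delta_2\nabla\phi\cdot\nabla$: the commutator $[\Delta, \nabla\phi\cdot\nabla]$ produces a positive term of the form $2\,\d^2\phi(\nabla v,\nabla v)$ plus lower order, and $[|y|^2,\nabla\phi\cdot\nabla] = -2y\cdot\nabla\phi$ is controlled. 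Since $\phi$ is spherically symmetric, $\d^2\phi$ has eigenvalue $\d_r^2\phi$ in the radial direction and $r^{-1}\d_r\phi$ in the angular directions. By the convexity estimate \eqref{phicon}, $(1+|y|)\d_r^2\phi \approx \e_{ij}\tau^{1/2}$, so the radial part gives $\delta_2 \e_{ij}(1+|y|)^{-1}\tau^{1/2}|\nabla v|^2$, which is $\delta_2\tau^{-1} a_{int}^4 |\nabla v|^2$ since $a_{int}^4 = \e_{ij}(1+|y|)^{-1}\tau^{3/2}$. By the monotonicity estimate \eqref{phimon}, for $j \ge j(i)$ we have $r^{-1}\d_r\phi \approx \e_i(1+|y|)^{-1}\tau^{1/2}$, giving the angular term $\delta_2\tau^{-1} a_\perp^4 |\nabla_\perp v|^2$ with $a_\perp^4 = 1_{j\ge j(i)}\e_i(1+|y|)^{-1}\tau^{3/2}$.

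The remaining work is to show that the cross terms and error terms do not destroy this positivity. The dangerous interactions are: the commutator of $\d_s\psi$ with $A$, which produces terms like $\d_s\nabla\psi\cdot\nabla v$ — here \eqref{phibound} gives $|D_y\d_s\phi| \lesssim \e_i\tau^{1/2}$ on unit $s$-intervals, and $h'$ contributes nothing since $h$ depends only on $s$; the commutator of $|\nabla\psi|^2$ with $\d_s$, controlled again by \eqref{phitime}, \eqref{phibound}; and the self-commutators of $\nabla\psi\cdot\nabla$ with $\Delta\psi$, which are bounded using the third-derivative bounds in \eqref{phibound}. Also one must verify that the ``bad'' region $|y| \approx \sqrt\tau$, where $\phi$ loses convexity and $\chi$ cuts off, is absorbed by the $h''$ term: there $s$-convexity takes over via property (3) of Lemma~\ref{h}, $h'' \gtrsim \e_i\tau$, and the support restriction $|y|^2 \le 9\tau$ together with $\delta_2 \ll 1$ lets one absorb all $\phi$-related errors of size $O(\e_i\tau)$ into $\|Sv\|^2 + \|(h'')^{1/2}v\|^2$. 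I expect the main obstacle to be precisely this bookkeeping near $|y|\approx\sqrt\tau$ and at the threshold $j = j(i)$: one has to check that the transition between ``$y$-convexity dominates'' and ``$s$-convexity dominates'' is seamless, which is exactly what part \eqref{e5} of Lemma~\ref{lepsilon} was designed to guarantee (the relation $\e_{ij(i)}\approx\e_i$), so the estimate should close, but the error analysis is delicate.
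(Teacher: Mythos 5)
Your proposal takes essentially the same route as the paper: decompose $P_{0,\psi}$ into its selfadjoint and skewadjoint parts, expand $\|P_{0,\psi}v\|^2 = \|Sv\|^2 + \|Av\|^2 + \langle[S,A]v,v\rangle$, and extract the positivity from $\psi_{ss}$ (giving $h''$) and from the Hessian $\psi_{yy}$ paired against $\nabla v$, splitting the Hessian of the radial function $\psi$ into $\psi_{rr}$ and $\psi_r/r$ to produce the $a_{int}$ and $a_\perp$ weights, while treating $4\psi_y\psi_{yy}\psi_y$, $-4y\psi_y$, $4\psi_y\psi_{sy}$, $-\Delta^2\psi$ as signed or negligible via \eqref{phitime}, \eqref{phibound} and the smallness of $\delta_2$. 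One small slip worth noting: your selfadjoint part should read $S = -H + \psi_s + |\nabla\psi|^2$, with no $-2\Delta\psi$ term, since $\Delta\psi$ sits entirely in the skewadjoint piece $-2\nabla\psi\cdot\nabla - \Delta\psi$; this doesn't affect the structure of the argument, which matches the paper's.
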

\begin{proof}
  We decompose $\dtpsio$ into its selfadjoint and its skewadjoint part
  \[
  \dtpsio = L_{0,\psi}^r + L_{0,\psi}^i
  \]
  where
  \begin{equation}
    L_{0,\psi}^r := -\Delta_y + y^2 - \psi_s - \psi_y^2, \qquad
    L_{0,\psi}^i := \d_s + \psi_y  \d + \d \psi_y.
    \label{lopsi}\end{equation}
  Expansion of the norm gives
  \begin{equation}
    \|(L_{0.\psi}^r + L_{0,\psi}^i) v\|^2_{L^2} = \|L_{0,\psi}^r  v\|^2_{L^2} + \|L_{0,\psi}^i  v\|^2_{L^2}
    + \langle [L_{0,\psi}^r,L_{0,\psi}^i]v,v\rangle
    \label{cep1}\end{equation}
  The conclusion of the lemma follows from the commutator bound
  \begin{equation}
    \langle [L^r_{0,\psi},L^i_{0,\psi} ] v,v\rangle \gtrsim \|
    (h'')^\frac12 v \|^2 + \delta_2\tau^{-1} ( \|a_{int}^2 \nabla v\|^2 + \|
    a_{\perp}^2 \nabla_{\perp} v\|^2)
    \label{com0}\end{equation}

  The commutator is explicitly computed
  \[ [L_{0,\psi}^r,L_{0,\psi}^i] = \psi_{ss} + 4 \psi_y\psi_{yy}
  \psi_y - 4 \d \psi_{yy} \d - 4 y \psi_y + 4\psi_{y} \psi_{sy} -
  \Delta^2 \psi
  \]
  Since $\delta_2 \ll1$ the first term has size $h''(s)$.  The second
  one is nonnegative since $\psi$ is convex for $|y|^2 < 9\tau$.

  The Hessian of the radial function $\psi$ can be written in the form
  \begin{equation}
    \psi_{yy} = \psi_{rr} \frac{y}{|y|}
    \otimes \frac{y}{|y|} + \frac{\psi_r}{r} \left( \id -\frac{y}{|y|}
      \otimes \frac{y}{|y|}\right)
    \label{psirr}\end{equation}
  One can see that the radial and angular derivatives carry different
  weights.  Our construction of $\phi$ guarantees that
  \[
  \psi_{rr} \lesssim \frac{\psi_r}{r}, \qquad \psi_{yy} \gtrsim
  \psi_{rr} \id
  \]
  hence the weight $\psi_{rr}$ can be used for all derivatives.  For
  the size of the two weights we have
  \[
  \psi_{rr} \approx a_{int}^4, \qquad \frac{\psi_r}{r} \gtrsim
  a_\perp^4
  \]
  This gives the last two terms in \eqref{com0}.

  It remains to see that the remaining terms in the commutator are
  negligible compared to the first term on the right hand side of
  \eqref{com0}.  For this we use the bound \eqref{phibound} to
  conclude that in $A_{ij}$ we have
  \[
  | - 4 y \psi_y + 4\psi_{y} \psi_{sy} - \Delta^2 \psi| \lesssim
  \delta_2 \e_i \tau \lesssim \delta_2 h''
  \]
\end{proof}

To switch to operators with variable coefficients it is convenient to
extend the weights to the full space and to regularize them.
Precisely we shall assume that
\begin{equation}
\begin{split}
  a_{int}^4(s,y) &\approx  \e_i \tau
 \qquad  \qquad \qquad   \text{ in }A_{ij} \text{ if } |y|^2 \ge \tau
  \\
  a_{int}^4(s,y) &\approx \e_{ij} (1+|y|)^{-1}  \tau^\frac32
  \quad   \text{ in }A_{ij} \text{ if } |y|^2 \le \tau. 
\end{split}
\end{equation}
Observe that the two cases above match since $\e_i \approx \e_{ij}$ in
the region where $ y^2 \approx \tau$. We also introduce a modification
$a$ of $a_{int}$ which is used to include the effect of the spectral
gap in regions where we have very little convexity:
\begin{equation}
\begin{split}
  a^4(s,y) &\approx 1+ \e_i \tau
 \qquad  \qquad \qquad   \text{ in }A_{ij} \text{ if } |y|^2 \ge \tau
  \\
  a^4(s,y) &\approx 1+ \e_{ij} (1+|y|)^{-1}  \tau^\frac32
  \quad   \text{ in }A_{ij} \text{ if } |y|^2 \le \tau. 
\end{split}
\end{equation}
Finally we choose $a_\perp$ with the properties
\begin{equation}
\begin{split}
  \supp a_\perp \subset&  \bigcup \{ A_{ij}:\  j(i) -1 \leq j \leq
  \frac12 \ln \tau \}  
\\
  a^4_\perp(s,y) \lesssim &  \e_i (1+|y|)^{-1} \tau^\frac32
  \quad  \text{ in }   A_{ij}
  \\
  a^4_\perp(s,y) \approx & \e_{i} (1+|y|)^{-1}  \tau^\frac32
  \quad   \text{in } A_{ij} \text{ if }  j(i)  \leq j \leq
  \frac12 \ln \tau -1
\end{split}
\end{equation}

The bounds for the weights from above are assumed to remain true after
applying powers of the differential operators $\partial_s$,
$\partial_y$ and $ y \partial_y$ to them.

Consider now a the more general class of operators $P$ with real
variable coefficients given by \eqref{pgen}.  We repeat the change of
coordinates and write in the $(s,y)$ coordinates:
\[
4 e^{-4s} P = - \frac{\d}{\d s} - 2 y\frac{\d}{\d y} + \d_i g^{ij}
\d_j + y_i d^{ij} \d_j + \d_i d^{ij} y_j + y_i e^{ij} y_j 
\]
This further leads to
\[
4e^{-(n+4)s-\frac{y^2}{2}} P e^{ ns+\frac{y^2}{2}}= - \tilde P
\]
where $\tilde P$ is given by
\[
\begin{split}
  \tilde P = & \ \frac{\d}{\d s} - \partial_i g^{ij} \partial_j -
 y_i  (g^{ij} - 2\delta^{ij}+2d^{ij}+e^{ij}) y_j \\ & 
- y_i(g^{ij} -\delta^{ij}+d^{ij})
  \partial_j - \d_i(g^{ij} -\delta^{ij}) y_j
\end{split}
\]
We rewrite it in the generic form
\[
 \tilde P =  P_0 - \d d\d -
 y d y - y d \d -
  \d d y
\]
with $P_0 = \partial_s+H$.

To simplify as much as possible the proof of the main $L^2$ Carleman
estimate we introduce a stronger condition on the regularity of the
coefficients:
\begin{equation}
  \begin{split} 
    |d| + \jy (|d_y| + \tau^{-\frac12} |d_{yy}| + \tau^{-1}
    |d_{yyy}| + \tau^{-\frac12} |d_s| ) & \lesssim \delta_1 \e_{ij}
    \qquad \text{in } A_{ij}
    \\
   |d| + \jy (|d_y| + \tau^{-\frac12} |d_{yy}| + \tau^{-1}
    |d_{yyy}| + \tau^{-\frac12} |d_s| )  &\lesssim \delta_1
  \end{split}
  \label{regg}\end{equation}
This improved regularity will be gained later on by regularizing the
coefficients.  We are now in the position to formulate the Carleman
estimate.

\begin{proposition}
  Let $\tau$ be large enough and $\delta_1 \ll \delta_2 \ll 1$.  Let
  $\psi$ be as in \eqref{psi} with $h,\phi$ as in
  Lemmas~\ref{h},\ref{phi}.  Assume that the coefficients $g-\id$, $d$
  and $e$ satisfy \eqref{regg}. Then the following $L^2$ Carleman
  estimate holds for all functions $u$ supported in $\{y \leq 9\tau\}$:
  \begin{equation}
    \begin{split} 
      \delta_2^\frac12\left(\sum_{j=0,1,2}  \tau^{-\frac{j}2 }
\|a^2 e^{\psi} D^j u\| +
     \tau^{-\frac12}  \| a_{\perp}^2 e^{\psi} D_{\perp} u\|\right) 
      \lesssim \| e^{\psi} \Pt u\|.
    \end{split}
    \label{cpf}\end{equation}
  \label{ppsiflat}\end{proposition}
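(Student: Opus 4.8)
The plan is to follow the same conjugation-and-positive-commutator scheme used above for the constant coefficient heat operator, treating the variable coefficient contributions $-\partial d\partial - ydy - yd\partial - \partial dy$ in $\tilde P$ as perturbations which are controlled by the gain from the positivity of the commutator $[L_{0,\psi}^r, L_{0,\psi}^i]$. First I would conjugate by the Gaussian, passing to $\tilde P$ and then to $\Pt_\psi = e^{\psi}\Pt e^{-\psi}$, and split $\Pt_\psi = L_\psi^r + L_\psi^i$ into its selfadjoint and skewadjoint parts. Writing $L_\psi^r = L_{0,\psi}^r + R^r$, $L_\psi^i = L_{0,\psi}^i + R^i$ where $R^r, R^i$ collect the $d$-terms (with the obvious convention that in $R^i$ one symmetrizes/antisymmetrizes appropriately), expansion of $\|e^\psi \Pt u\|^2 = \|(L^r_\psi + L^i_\psi)u\|^2$ gives the three terms $\|L_\psi^r u\|^2 + \|L_\psi^i u\|^2 + \langle [L_\psi^r, L_\psi^i] u, u\rangle$. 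The core quantity is again the commutator, which now equals $[L_{0,\psi}^r, L_{0,\psi}^i]$ plus cross terms $[L_{0,\psi}^r, R^i] + [R^r, L_{0,\psi}^i] + [R^r, R^i]$.

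The main estimate needed is then the commutator bound
\[
\langle [L_\psi^r, L_\psi^i] u, u\rangle + C\|L_\psi^r u\|^2 + C \|L_\psi^i u\|^2 \gtrsim \delta_2 \Big( \sum_{j=0,1,2} \tau^{-j}\|a^2 D^j u\|^2 + \tau^{-1}\|a_\perp^2 D_\perp u\|^2 \Big),
\]
after which \eqref{cpf} follows by Cauchy--Schwarz (absorbing the $\|L^r_\psi u\|^2 + \|L^i_\psi u\|^2$ on the right into $\|e^\psi \Pt u\|^2$). The leading term $[L_{0,\psi}^r, L_{0,\psi}^i]$ gives $\|(h'')^{1/2}u\|^2 + \delta_2\tau^{-1}(\|a_{int}^2 \nabla u\|^2 + \|a_\perp^2 \nabla_\perp u\|^2)$ from the previous lemma; the extra $\|(h'')^{1/2}u\|^2 \gtrsim \tau^{-1}\|u\|^2$ piece, together with the spectral gap $h'' + \mathrm{dist}(h',\N) > \frac14$ from Lemma~\ref{h}, upgrades $a_{int}$ to $a$ (this is exactly the role of the $+1$ in the definition of $a^4$, handling regions with little convexity), and the second-derivative bound $\tau^{-2}\|a^2 D^2 u\|^2$ is recovered from $\|L^r_\psi u\|$ plus the already-controlled lower-order terms, just as $\|Hw\| \lesssim \|(-H+h')w\| + h'\|w\|$ was used in Proposition~\ref{bell}. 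The perturbative cross terms are where the regularity hypothesis \eqref{regg} enters: each $d$-term carries a factor $\delta_1 \e_{ij}$ (or $\delta_1$ globally), and one checks term by term using \eqref{phibound}, \eqref{phitime}, \eqref{atime} and the slow variation of $\e_{ij}$ that the resulting contributions are bounded by $\delta_1$ times the positive terms on the right, hence absorbable since $\delta_1 \ll \delta_2$; the factor $\jy$ in \eqref{regg} is precisely tuned to match the $(1+|y|)$ weights appearing in $\psi_{rr} \approx a_{int}^4$ and $\psi_r/r \gtrsim a_\perp^4$.

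The hard part will be the bookkeeping for the cross terms $[R^r, L_{0,\psi}^i]$ and $[L_{0,\psi}^r, R^i]$: these involve commuting differential operators with variable coefficients against $\psi$, producing many terms with different $y$-weights, and one must verify that none of them escapes the budget provided by $\delta_2 \tau^{-1}\|a^2 \nabla u\|^2$, $\delta_2 \tau^{-1}\|a_\perp^2 \nabla_\perp u\|^2$, $\|(h'')^{1/2}u\|^2$, and the zeroth-order $\tau^{-1}\|u\|^2$ from the spectral gap. The terms of the form $\partial d \partial$ are the most delicate because, after two integrations by parts and commutation with $\psi_y \partial$, they can a priori generate a full second-order expression $\|\langle y\rangle^{-1} \delta_1 \e_{ij} \tau^{1/2}|D u|\cdot|D u|\|$ whose weight $\delta_1 \e_{ij}(1+|y|)^{-1}\tau^{1/2}$ must be dominated by $\delta_2 \tau^{-1} a_{int}^4 = \delta_2\e_{ij}(1+|y|)^{-1}\tau^{1/2}$ — which works exactly because $\delta_1 \ll \delta_2$, but only if the $\langle y\rangle$ powers from \eqref{regg} line up correctly, including in the outer region $|y|^2 \gtrsim \tau$ where the weights flatten to $\e_i\tau$. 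A secondary technical point is handling the cutoff in $\phi$ near $|y| \approx \tau^{1/2}$: the support restriction $\{y \le 9\tau\}$ on $u$ keeps everything in the region where $\psi$ is genuinely convex, so no boundary terms from the transition zone survive, but one should note this explicitly.
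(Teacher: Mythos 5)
Your proposal follows essentially the same route as the paper: conjugate, decompose $P_\psi$ into self-adjoint and skew-adjoint parts, expand $\|P_\psi u\|^2$ to extract the commutator, bound $[L^r_\psi,L^i_\psi]$ by the flat commutator $[L^r_{0,\psi},L^i_{0,\psi}]$ plus $d$-cross-terms controlled via \eqref{regg} and $\delta_1\ll\delta_2$, recover the $D^2$ term from $\|L^r_\psi u\|$ by an elliptic estimate, and invoke the spectral gap of $h$ to upgrade $a_{int}$ to $a$ where $h''$ is small. The paper merely organizes this into three labelled steps (commutator bound with $a_{int}$, elliptic upgrade, spectral-gap upgrade) and splits the cross-commutators slightly differently, but the content is the same.
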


\begin{proof}
 After conjugation
  \[
  P_\psi := e^{\psi(s,y)} \Pt e^{-\psi(s,y)}
  \]
  we decompose $\dtpsi$ into its selfadjoint and its skewadjoint part
  \[
  P_\psi = L^r_\psi + L^i_\psi
  \]
  which for $y^2 < 9 \tau$  can be
  expressed in the generic form (see also \eqref{lopsi}):
  \[
  L^r_\psi = L^r_{0,\psi} +\partial d \partial +
   \tau d 
  \]
  \[
  L^i_\psi = L^i_{0,\psi} + \tau^\frac12 (d
  \d_j + \d_j  d )
  \]
with $d$ satisfying \eqref{regg}.
  Then \eqref{cpf} follows from
  \begin{equation}
\sum_{j=0,1,2} \tau^{-j} ( \delta_2 \|a_{int}^2 D^j v\|^2 + 
\langle  h''\rangle^\frac12 D^j v\|^2) + \delta_2 \tau^{-1}\|
      a_{\perp}^2 \nabla_{\perp} v\|^2  \lesssim \| \Ptpsi
      v\|^2.
      \label{cpfpsi}
  \end{equation}
  The proof will consist of three steps.

  {\bf Step 1:} First we show that for $v$ supported in $\{|y|^2 \leq
  9 \tau\}$ we have
  \begin{equation} \label{step1}
    \begin{split} 
      \frac{ \delta_2}{\tau} ( \|a_{int}^2 \nabla v\|^2\! +\! \| a_{\perp}^2
      \nabla_{\perp} v\|^2) \!+\! \| (h'')^\frac12 v \|^2\! +\! \| L^r_\psi
      v\|^2 \lesssim \| \Ptpsi v\|^2 \!+  \delta_1 \| a_{int}^2 v
      \|^2
    \end{split}
  \end{equation}

  We compute
  \[
  \| P_\psi v\|_{L^2}^2 = \| L^r_\psi v\|_{L^2}^2+\| L^i_\psi
  v\|_{L^2}^2 + \langle [L^r_\psi,L^i_\psi ] v, v \rangle
  \]
  We expand the commutator
  \[
  [L^r_\psi,L^i_\psi ]= [L^r_{0,\psi},L^i_{0,\psi} ] + [\partial d
  \partial + \tau d ,L^i_{0,\psi} ] + \tau^\frac12 [L^r_{\psi},d \d_j
  + \d_j d ]
  \]
  The main contribution in \eqref{step1} comes from the first
  commutator, for which we use \eqref{com0} to obtain the terms on the
  left side of \eqref{step1}.

  The second commutator is estimated by
  \[
  |\langle [M^r,L^i_{0,\psi} ] v,v\rangle| \lesssim \delta_1( \|
  a_{int}^2 v\|^2+ \tau^{-1}\|a_{int}^2 \nabla v\|^2)
  \]
  and the second term on the right is negligible since $\delta_1 \ll
  \delta_2$.  Indeed, we write
  \[
  [\partial d \partial + \tau d,L^i_{0,\psi} ] = - \partial_k q^{kl} \partial_l +
  r
  \]
  where the coefficients $q$, $r$ have the generic form
  \[
  q = d_s + \psi_y d_y + \psi_{yy} d +  d \psi_{yy}, \qquad 
    r = \partial d \partial \Delta \psi + \tau  ( d_s + \psi_y d_y)
  \]
  Using the bounds \eqref{regg} for $d$ and \eqref{phibound} for
  $\phi$ we estimate
  \[
  |q| \lesssim \delta_1 \tau^{-1} a_{int}^4, \qquad |r| \lesssim
  \delta_1 a_{int}^4.
  \]

 Finally, the third commutator is estimated in a similar fashion.
We write it in the form
\[
\tau^\frac12 [L^r_{\psi},d \d_j + \d_j d ] = - \partial_k q^{kl} \partial_l + r
\]
where the coefficients $q$, $r$ have the generic form
\[
q = \tau^\frac12(d_y + d d_y)
, \qquad r =\tau^\frac12( \Delta d_y + \partial d \partial d_y + 
d \psi_{ys} + d \psi_{yy} \psi_y  +\tau d d_y )
\]
 Using  \eqref{regg}  and \eqref{phibound} 
 we  obtain the same bounds for $q$ and $r$ as in the previous case.
This  concludes the proof of \eqref{step1}.

  {\bf Step 2:} We use an elliptic estimate to show that for functions
  $v$ supported in $\{|y|^2 \leq 9 \tau\}$ we have
  \begin{equation} \label{step2}
   \delta_2\!\left(\sum_{j=0}^2
\tau^{-j} \|a_{int}^2 D^j v\|^2 \!+\! \tau^{-1} \| a_{\perp}^2
      D_{\perp} v\|^2\!\right) \!\!+\! \| (h'')^\frac12 v \|^2\! +\! \| L^r_\psi
      v\|_{L^2}^2 \lesssim \| \Ptpsi v\|^2 
  \end{equation}

  The elliptic bound
  \[ \|D^2 v\| + \tau \|v\| \lesssim \tau^\frac12 \|D v\|+ \|(-\Delta
  -h'(s))v\|
  \]
  can easily proven by a Fourier transform. It implies
  \[
  \|D^2 v\| + \tau \|v\| \lesssim \tau^\frac12 \|D v\| + \|(H -h'
  (s))v\| + \| y^2 v\|,
  \]
  We can replace $H -h' (s)$ by $L^r_{\psi}$ due to the pointwise
  estimate
  \[
  | (L^r_{\psi}-(H -h' (s))) v| \lesssim \delta_1(|D^2 v| +
  \tau^\frac12|Dv| + \tau |v|)
  \]
  Then \eqref{step2} follows from \eqref{step1}.

  {\bf Step 3:} Here we use the spectral gap condition to improve our
  bound when $h'' \ll 1$ and show that \eqref{step2} implies \eqref{cpf}.
  It suffices to show that if $h''(s) < \frac18$ then
  \[
  \|v\|_{L^2} + \tau^{-1} \Vert D^2 v \Vert\lesssim \| L^r_\psi
  v\|_{L^2}
  \]
  Indeed, let $s \in [i,i+1]$ so that $h''(s) < \frac18$. Then
  $h^\prime$ has a positive distance from the integers.  Also $\e_i
  \lesssim 1$ which implies that at time $s$ we must have
  \[
  |g -\id| \lesssim \delta_1 \tau^{-1}, \qquad | Dg| \lesssim \delta_1
  \tau^{-1}, \qquad |\psi_r| \lesssim \delta_2 \tau^{-\frac12}.
  \]
  Hence we may think of $ L^r_\psi$ as a small perturbation of $H -
  h'(s)$ and compute
  \[
  \begin{split}
    \Vert v \Vert + \tau^{-1} \Vert D^2 v \Vert \lesssim \Vert
    (H-h'(s)) v \Vert \lesssim \Vert L_r v \Vert + (\delta_1 +
    \delta_2^2) \Vert v \Vert + \delta_1 \tau^{-1} \Vert D^2 v \Vert
  \end{split}
  \]
where the last two terms on the right are negligible compared to the 
left hand side. The proof of the proposition is concluded.

\end{proof}

We want to reformulate the previous result in a more symmetric
fashion.  To do this we weaken the estimates slightly by using a
coarser partition of the space.  We distinguish three cases for $i$
corresponding to the value of $j(i)$ in Lemma~\ref{lepsilon} (v).

\begin{definition} We define the partition $B_{ij}$ as follows. 
\label{bij} 
\begin{enumerate}
\item If $j(i) = 0$ (which corresponds to $\e_{i} \gtrsim
  \tau^{-\frac12}$)   we set
  \[
  B_{ij} = A_{ij}, \qquad b \approx a,\ b_\perp \approx a_{\perp}
  \]
\item If $0 < j(i) < [\ln \tau/2 +2]$ (which corresponds to
  $\tau^{-1} \lesssim \e_{i} \lesssim \tau^{-\frac12}$) we set
  \[
  B_{ij} = A_{ij}, \qquad b \approx a , \qquad b_\perp = a_\perp \qquad 
j \geq j(i)
  \]
  respectively
  \[
  B_{i0} = \bigcup_{j < j(i)} A_{ij}, \qquad b  \approx
  a_{|A_{ij(i)}} \qquad b_\perp =0\text{ on }
  B_{i0}
  \]
\item If $j(i)=[\ln \tau/2 +2]$ (which corresponds to
  $\tau^{-1} \lesssim \e_{i} $) we set
  \[
  B_{i0} = \bigcup_{j=0}^{[\ln \tau/2] +2}
 A_{ij}, \qquad b= 1, \quad b_\perp=0 \text{ on
  } B_{i0.}
  \]
\end{enumerate}
\end{definition} 
Heuristically the definition of the $B_{ij}$ partition is motivated 
by the fact that in regions $A_{ij}$ with $j < j(i)$ the weight $\phi$
is ineffective, i.e. it changes by at most $O(1)$. Thus the 
convexity there is useless, and instead we rely directly on 
localized bounds for the Hermite operator.

Since the $\varepsilon_{ij}$ are slowly varying $b \lesssim a $ and
$b_\perp \lesssim a_\perp$ and we may replace the $a$'s by $b$'s
in the above proposition.

To provide some bounds on the size of $b$ and $b_\perp$ we introduce a
function $1 \leq r(s) \leq \tau^\frac12$ which is smooth and slowly
varying on the unit scale in $s$ so that
\[
r(s) \approx e^{j(i)}   \qquad s \in [i,i+1]
\]
This describes the region where $b$ is tapered off and $b_\perp = 0$. 
Precisely,  consider two cases corresponding to the three cases above.

(1) If $r(s) \approx 1$ then we have the bounds
\begin{equation}
\begin{split}
M \tau (1+r)^{-\frac32} \lesssim  b^4(r,s) &\lesssim M \tau (1+r)^{-1} \\
 b_\perp^4(r,s) &\lesssim M \tau (1+r)^{-1}
\end{split}
\label{b1}\end{equation}
where the parameter $M \geq 1$ is defined by $M \approx \e(s) \tau^\frac12$.

(23) If $r(s) \gg 1$ then 
\begin{equation}
\begin{split}
 \tau r(s)^{-\frac12} (r(s)+r)^{-\frac32}  \lesssim b^4(r,s) & \lesssim 
 \tau r(s)^{-1} (r(s)+r)^{-1} \\
b^4_\perp(r,s) & \lesssim  \tau r(s)^{-1} (r(s)+r)^{-1}
\end{split}
\label{b23}\end{equation}
with approximate equality when $r \lesssim r(s)$ and 
approximate equality on the right when $r = \tau^\frac12$.

By slightly changing $b$ and $b_\perp$ we may and do assume that the
functions $b$ and $b_\perp$ are smooth with controlled derivatives.
Thus $b$ and $b_\perp$ are smooth on the unit scale in $s$ and on the
dyadic scale in $y$, and their derivatives satisfy the bounds
\begin{equation}
  |{b}_s|+ (r_s+r)|{b}_{r}|+(r_s+r)^2 |{b}_{rr}|
  \lesssim {b}, \qquad r^2 < 9 \tau
 \label{db} \end{equation}
and
  \begin{equation}
  |{b_\perp}_s|+ (r_s+r)|{b_\perp}_{r}|+(r_s+r)^2 |{b_\perp}_{rr}|
  \lesssim {b}_\perp + b  \qquad r^2 < 9 \tau
 \label{dbperp} \end{equation}
In addition we have 
\begin{equation}
\supp b_{\perp r} \subset \{r > r_s\}
\label{dbsupp}\end{equation}

Using the functions $b$ and $b_\perp$ we define the 
Banach space $X_2^0$ with norm
\[
\Vert v \Vert_{X_2^0}^2 = \Vert b v \Vert_{L^2}^2 + \tau^{-1/2} \Vert
b_\perp D_\perp^{\frac12} v \Vert_{L^2}^2
\]
Then the symmetrized version of Proposition~\ref{ppsiflat} has the
form

\begin{proposition}
  Assume that the coefficients of $P$ satisfy \eqref{regg}. Let $\psi$ be
  as in \eqref{psi} with $h,\phi$ as in Lemmas~\ref{h},\ref{phi}.
  Then the following $L^2$ Carleman estimate holds for all functions
  $u$ supported in $\{y \leq 9\tau\}$:
  \begin{equation} \label{fce} \Vert e^{\psi(s,y)} u \Vert_{X_2^0}
    \lesssim \Vert e^{\psi(s,y)} Pu \Vert_{(X_2^0)^*}
  \end{equation}
  \label{curvecor}
\end{proposition}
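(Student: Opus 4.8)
The plan is to deduce Proposition~\ref{curvecor} from Proposition~\ref{ppsiflat} by a duality argument. As in Section~\ref{sl2flat} and in the discussion preceding Proposition~\ref{ppsiflat} I would pass, via the change of coordinates and the Gaussian conjugation, to the operator $\Pt = \partial_s + H - \partial d\partial - ydy - yd\partial - \partial dy$; writing $v = e^{\psi(s,y)}u$ and $P_\psi = e^{\psi}\Pt e^{-\psi} = L^r_\psi + L^i_\psi$ (with $L^r_\psi, L^i_\psi$ as in the proof of Proposition~\ref{ppsiflat}), the estimate \eqref{fce} is equivalent to $\|v\|_{X_2^0}\lesssim\|P_\psi v\|_{(X_2^0)^*}$ for $v$ supported in $\{|y|^2\le 9\tau\}$. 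Since the $\e_{ij}$, and hence $b$ and $b_\perp$, are slowly varying we have $b\lesssim a$, $b_\perp\lesssim a_\perp$, so the replacement of $a,a_\perp$ by $b,b_\perp$ in Proposition~\ref{ppsiflat} is legitimate; and since $b\gtrsim 1$ (by \eqref{b1}, \eqref{b23} and case~(3) of Definition~\ref{bij}) one has $\|v\|_{X_2^0}\lesssim\|v\|_\star$, where $\|\cdot\|_\star$ abbreviates the left hand side of \eqref{cpf} with $a,a_\perp$ replaced by $b,b_\perp$. Thus Proposition~\ref{ppsiflat} already gives the estimate with $\|P_\psi v\|_{L^2}$ in place of $\|P_\psi v\|_{(X_2^0)^*}$. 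As $b\gtrsim 1$ forces $X_2^0\hookrightarrow L^2\hookrightarrow(X_2^0)^*$, the one remaining point --- and the place a new argument is needed --- is to \emph{upgrade the forcing norm from $L^2$ to $(X_2^0)^*$}.

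I would carry this out by rerunning the positive commutator computation of Proposition~\ref{ppsiflat} in ``pairing form''. Pairing $P_\psi v$ against the skew-adjoint part $L^i_\psi v$ and against a family of $0$-order skew-adjoint angular multipliers $Q$ of the type used in Section~\ref{resolvent}, one has the exact identities
\[
\tfrac12\langle[L^r_\psi,L^i_\psi]v,v\rangle=\operatorname{Re}\langle P_\psi v,L^i_\psi v\rangle-\|L^i_\psi v\|_{L^2}^2,\qquad\langle[L^r_\psi,Q]v,v\rangle=2\operatorname{Re}\langle P_\psi v,Qv\rangle+\langle\{L^i_\psi,Q\}v,v\rangle.
\]
The commutator lower bound \eqref{com0}, extended to variable coefficients exactly as in Step~1 of Proposition~\ref{ppsiflat} (the extra commutators producing only errors $\lesssim\delta_1(\|a_{int}^2v\|^2+\tau^{-1}\|a_{int}^2\nabla v\|^2)$), produces the first-order weighted terms on the left, while the multipliers $Q$ are chosen, following Section~\ref{resolvent}, so that $\langle[L^r_\psi,Q]v,v\rangle$ reproduces $\tau^{-1/2}\|b_\perp|D_\perp|^{1/2}v\|^2$ up to errors absorbed by the first-order terms. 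The elliptic estimate of Step~2 and the spectral gap argument of Step~3 then recover the full norm $\|v\|_\star$ and absorb the $\delta_1$-errors, using $\delta_1\ll\delta_2$. On the right, each pairing is bounded by $\|P_\psi v\|_{(X_2^0)^*}$ times the $X_2^0$-norm of the multiplier; using $b\lesssim a$, the bounds \eqref{phibound} on $\phi$, the coefficient bounds \eqref{regg}, the derivative bounds \eqref{db}, \eqref{dbperp} and the support property \eqref{dbsupp} of $b,b_\perp$, and the $X_2$-boundedness of $Q$ established in Section~\ref{resolvent}, one checks that the multipliers are controlled in $X_2^0$ by $\|v\|_\star$ \emph{with no loss in $\tau$}; this $\tau$-uniformity is the delicate point and is exactly where the balance of convexity scales built into $h$ and $\phi$ (Lemmas~\ref{h},~\ref{phi}) is used. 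Cauchy--Schwarz and absorption then give $\|v\|_\star\lesssim\|P_\psi v\|_{(X_2^0)^*}$, hence the claim.

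I would also record the elementary observation behind the angular term: since $b_\perp$ is a radial function of $y$ it commutes with $|D_\perp|$ and $|D_\perp|^{1/2}$, so $\|b_\perp|D_\perp|^{1/2}v\|_{L^2}^2=\langle b_\perp^2|D_\perp|v,v\rangle\le\|b_\perp^2|D_\perp|v\|_{L^2}\|v\|_{L^2}$; together with $\tau^{-1/2}\|b_\perp^2 D_\perp v\|_{L^2}\lesssim\|v\|_\star$ and $\|v\|_{L^2}\lesssim\|bv\|_{L^2}\le\|v\|_{X_2^0}$, this is what lets the angular part of $\|v\|_{X_2^0}$ be treated as above (and justifies $\|v\|_{X_2^0}\lesssim\|v\|_\star$ used above).

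\textbf{Main obstacle.} As in Proposition~\ref{ppsiflat}, the real difficulty is the bookkeeping: everything must be run region by region over the coarse partition $B_{ij}$ of Definition~\ref{bij}, distinguishing the convex regions $A_{ij}$ with $j\ge j(i)$ (where $\phi$ supplies genuine convexity, $b_\perp\not\equiv0$, and the commutator with $\phi$ drives the estimate) from the cores $B_{i0}$ (where $\phi$ changes by only $O(1)$, $b_\perp=0$, $b$ is tapered, and one relies instead on the localized $L^2$ and spectral-gap bounds for $H$ from Sections~\ref{sl2flat}--\ref{resolvent}). Keeping all commutators of $L^r_\psi,L^i_\psi,Q$ with $b,b_\perp,|D_\perp|^{1/2}$ under control on the unit $s$-scale and dyadic $y$-scale, and arranging that no power of $\tau$ is lost in passing from the $L^2$ pairing bounds to the $X_2^0/(X_2^0)^*$ duality, is the crux.
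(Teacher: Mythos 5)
Your proposal takes a genuinely different route from the paper, and there is a gap in it.

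The paper's proof does not rerun the positive-commutator argument. It instead writes \eqref{fce1} as $\|Qv\|_{L^2}\lesssim\|Q^{-1}P_\psi v\|_{L^2}$ with $Q$ the spherical pseudodifferential operator of symbol $q(r,\lambda)=(b^4+r^{-2}\tau^{-1}b_\perp^4\lambda^2)^{1/4}$, sets $w=Q^{-1}v$, and applies the already-proved \eqref{cpf} to $w$; the whole content is then the commutator bound \eqref{qqcom} for $Q^{-1}[Q,P_\psi]$. Crucially the paper observes that this commutator is \emph{not} small enough near $j\approx j(i)$, even for the constant-coefficient $P_{0,\psi}$, and remedies this by replacing $Q$ with a modified operator $Q_\delta$ (symbol $q+b_\delta$, with $b_\delta$ built from a regularized $r_\delta(s)$) satisfying $q\le q_\delta\lesssim\delta^{-1}q$. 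That modification is the key idea of the proof and your proposal does not contain it.

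The pairing strategy you describe has a concrete problem at the step you label as the one requiring a new argument. To pass from the $L^2$ forcing norm to $(X_2^0)^*$ you write $\operatorname{Re}\langle P_\psi v,L^i_\psi v\rangle\le\|P_\psi v\|_{(X_2^0)^*}\|L^i_\psi v\|_{X_2^0}$ and assert that $\|L^i_\psi v\|_{X_2^0}\lesssim\|v\|_\star$ ``with no loss in $\tau$.'' But $L^i_\psi v=\partial_s v+\psi_y\partial v+\partial\psi_y v+\tau^{1/2}(d\partial+\partial d)v$, and the $X_2^0$-norm of $L^i_\psi v$ contains $\|b\,\partial_s v\|_{L^2}$. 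The norm $\|\cdot\|_\star$ (the left-hand side of \eqref{cpf} with $a,a_\perp$ replaced by $b,b_\perp$) controls $\tau^{-j/2}\|b^2 D_y^j v\|$ for $j=0,1,2$ and $\tau^{-1/2}\|b_\perp^2 D_\perp v\|$, but it gives no control of $\partial_s v$. Within the proof of Proposition~\ref{ppsiflat} the time derivative is controlled only through $\|L^i_\psi v\|_{L^2}^2$, which in turn is bounded by $\|P_\psi v\|_{L^2}^2$ — exactly the forcing norm one is trying to improve. So the pairing estimate does not close: the multiplier you need bounded in $X_2^0$ cannot be so bounded independently of $\|P_\psi v\|$, and the argument as written is circular. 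To salvage a pairing-type argument you would have to pair against a multiplier whose $X_2^0$-norm is genuinely controlled by $\|v\|_\star$; the paper's choice $w=Q_\delta^{-1}v$ and the resulting reduction to a commutator bound is the device that accomplishes precisely this, and the $\delta$-modification is what makes the commutator small uniformly in $\tau$, including in the transition regions $j\approx j(i)$ that you flag as the crux but do not resolve.
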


% We note that the bound \eqref{cpf} can be formally written as
% \[
% \| L v \|_{L^2} \lesssim \|P_\psi v\|_{L^2}, \qquad v = e^\psi u
% \]
% where $L$ is the elliptic selfadjoint operator
% \[ 
% L = b^2 +a_e^2 y^2 - \tau^{-1} \nabla  (b^2+ a_e^2) \nabla + ( b^4- \tau^{-1}
% |y|^{-2} b_{\perp}^4 \Delta_{\mathbb{S}^{n-1}})^{1/2} 
%  \]
% On the other hand \eqref{fce} naively says that
% \[
% \| L^{\frac12} v \|_{L^2} \lesssim \|L^{-\frac12} 
% P_\psi v\|_{L^2}.
% \]
% Hence to prove the theorem one roughly needs to conjugate 
% the result of Proposition~\ref{ppsiflat} by $L^\frac12$. This can be
% done at the level of pseudodifferential calculus but the symbol
% classes and the computations are messy. Consequently, in the proof
% below we break this global  argument in several simpler pieces.

\begin{proof}
Conjugating with respect to the exponential weight, the bound
\eqref{fce} is rewritten in the form
\begin{equation} \label{fce1} \Vert v \Vert_{X_2^0}
    \lesssim \Vert P_\psi v \Vert_{(X_2^0)^*}
  \end{equation}
Observing that 
\[
 D_{\perp}^2 = - y^{-2} \Delta_{\mathbb{S}^{n-1}}
\]
we introduce the operator 
\[
Q = Q(|y|,(-\Delta_{\mathbb{S}^{n-1}})^{\frac12}), \qquad q(r,\lambda)
= (b^4(r) + r^{-2} \tau^{-1} b_\perp^4(r) \lambda^2)^\frac14
\]
 Then the inequality \eqref{fce1} can be written as
  \[
  \Vert Q v \Vert_{L^2} \lesssim \Vert Q^{-1} P_\psi v \Vert_{L^2}
  \]
  whereas inequality \eqref{cpf} implies
  \[
  \Vert Q^2 w \Vert_{L^2} \lesssim \Vert P_\psi w \Vert_{L^2}.
  \]

  Hence it is natural to apply \eqref{cpf} to the function $ w =
  Q^{-1} v$, which solves
  \[
  \dtpsi w = Q^{-1} \dtpsi v + Q^{-1} [Q,\dtpsi] w
  \]
 Thus \eqref{fce1} would follow provided that 
the commutator term is small,
\[
  \| Q^{-1} [Q,\dtpsi] w \|_{L^2} \ll
\|b^2 w\|_{L^2} + \tau^{-1/2} \| b^2 
  \nabla w\|_{L^2} + \tau^{-1} \| b^2 D^2_y w \|_{L^2}
\]
Unfortunately a direct computation shows that the smallness fails
when $j$ is close to $j(i)$ even  in the flat case, i.e. with $P_\psi$
replaced by 
\[
P_{0,\psi} = \d_s -\Delta +y^2 -\psi_s - \psi_y^2
\]

To remedy this we introduce an additional small parameter $\delta$ and
use it to define a modification $Q_\delta$ of $Q$. 
We modify $r(s)$ to $r_\delta(s)$ defined by
\[
r_\delta(s)^{-2} = \delta^8 r(s)^{-2} + \delta^2 \tau^{-1} 
\]
and use it to define the function 
\[
b_\delta(r,s)^4 =  \delta^{-12} \tau (r^2 +r_\delta(s)^2)^{-1} 
\]
We can still compare it with $b$,
\[
b_\delta(r,s)^4 \lesssim \delta^{-4} b(r,s)^4
\]
Its usefulness lies in the fact that it is larger than $b$ exactly in
the region where the commutator term above is not small.

The modification $Q_\delta$ of $Q$ has symbol
\[
q_\delta(r,s,\lambda) = q(r,s,\lambda) + b_\delta(r,s)= 
(b^4(r,s) + r^{-2} \tau^{-1}
b_\perp^4(r,s) \lambda^2)^\frac14 + b_\delta(r,s)
\]
which satisfies
\begin{equation}
q \leq q_\delta \lesssim \delta^{-1} q
\label{qdeltabd}\end{equation}
We claim that it satisfies the bound
\begin{equation}
  \| Q_\delta^{-1} [Q_\delta,\dtpsi] w \|_{L^2} \lesssim 
(\delta + c(\delta) \delta_1)\sum_{j=0,1,2}
\tau^{-\frac{j}2} \|b^2 D^j w\|_{L^2} 
\label{qqcom}\end{equation}
Suppose this is true. Then we fix $\delta$ sufficiently small, and for
$\delta_1$ small enough we apply \eqref{cpf} to $w = Q_\delta^{-1} v$.
By \eqref{qqcom} he commutator term in the equation for $w$ can be
neglected, and we obtain
\[
\| Q^2 w\|_{L^2} \lesssim \|Q_\delta^{-1} P_\psi v\|_{L^2}
\]
which by \eqref{qdeltabd} implies that
\[
 \Vert Q v \Vert_{L^2} \lesssim \delta^{-1} \Vert Q^{-1} P_\psi v \Vert_{L^2}
\]
It remains to prove \eqref{qqcom}.  

I. We first calculate the commutator in the flat case, i.e. with
$P_\psi$ replaced by $P_{0,\psi}$.  Due to the spherical symmetry the
only contribution comes from the radial part of the Laplacian and the
$s$ derivative. Hence using polar coordinates we compute
 \[
  Q_\delta^{-1} [Q_\delta,P_{0,\psi}] =  Q_\delta^{-1} \Big( Q_{\delta
    rr} +
  \frac{n-1}r Q_{\delta r} + 2 Q_{\delta r} \d_r  - Q_{\delta s} \Big)
  \]
  Then is suffices to verify that on the symbol level we have
\begin{equation}
  |q_{\delta rr}|+ r^{-1} |q_{\delta r}|+|q_{\delta s}| +\tau^{1/2}
  |q_{\delta r}| \lesssim \delta q_\delta b^2
  (1+\tau^{-1} r^{-2} \lambda^2)^\frac12
\label{qrrqr}\end{equation}

I.(1). We begin with the $q$ component of $q_\delta$. Using \eqref{db} and 
\eqref{dbperp} one obtains
\[
 |q_{ rr}|+ r^{-1} |q_{ r}|+|q_{s}| +\tau^{1/2}
  |q_{ r}| \lesssim  (r(s)+r)^{-1} \tau^{\frac12} q
\]
Thus it remains to show that
\[
(r(s)+r)^{-1} \tau^{\frac12} q \lesssim \delta q_\delta b^2
  (1+\tau^{-1} r^{-2} \lambda^2)^\frac12
\]
Optimizing with respect to $\lambda$ it suffices to consider 
the cases $\lambda = 0$ respectively  $\lambda = r \tau^\frac12$, where the
above inequality becomes
\[
(r(s)+r)^{-1} \tau^{\frac12} (b+b_\perp)  \lesssim \delta (b_\delta +
b) b^2
\]
or equivalently
\[
 (b+b_\perp)  b_1^2 \lesssim \delta (b_\delta + b) b^2
\]
which is true since by \eqref{b1} and \eqref{b23} we have $b_\perp b_1
\lesssim b^1$ while  $b_1 \lesssim \delta b_\delta$.

I.(2). Next we consider the $b_\delta$ component of $q_\delta$, for which it suffices 
to prove that
\begin{equation}
|b_{\delta rr}|+ r^{-1} |b_{\delta r}|+|b_{\delta s}| +\tau^{1/2}
  |b_{\delta r}| \lesssim \delta b_\delta b^2
\label{bdelta}\end{equation}

I.(2).(a). For the $s$ derivative we compute 
\[
\frac{b_{\delta s}}{b_\delta} = \frac{(r_\delta^2(s))_s}{r^2 +r_\delta^2(s)}
\]
therefore we want to show that
\[
(r_\delta^2(s))_s \lesssim \delta (r^2 +r_\delta^2(s)) b^2
\]
We optimize the right hand side with respect to $r$. The minimum is attained
when  $r^2 = \min\{  r_\delta^2(s),\tau\}$.
 We need to consider
two cases:

I.(2).(a).(i). If $ r_\delta(s) \lesssim \tau^{\frac12}$ then
$r_\delta^2(s) \approx \delta^{-8} r^2(s)$ and $\tau > \delta^{-8}
r^2(s)$. Hence using the estimate from below in \eqref{b1} and
\eqref{b23} we obtain $b^4(r_\delta) \gtrsim \delta^{-2}$.
 Then the above bound for $r = r_\delta$  follows since $|(r_\delta(s)^{-2})_s| \lesssim
r_\delta(s)^{-2}$.

I.(2).(a).(ii). If $r _\delta(s) \gtrsim \tau^\frac12$ then
by\footnote{the equality holds on the right when $r^2=\tau$}
\eqref{b23} we evaluate $b^2(\tau^\frac12) \approx \tau^{\frac14}
r(s)^{-\frac12}$.  Then the above bound becomes
\[
\delta^8 (r(s)^{-2})_s  \lesssim \delta r_\delta^{-2}   \tau^{\frac14} r(s)^{-\frac12}
\]
Since $ |(r(s)^{-2})_s| \lesssim r(s)^{-2}$ it suffices to show that
\[
\delta^8  r(s)^{-2}  \lesssim \delta r_\delta^{-2}   \tau^{\frac14} r(s)^{-\frac12}
\]
The worst case is $r(s)^2 = \delta^6 \tau$, $r_\delta(s)= \delta^{-2}
\tau$ when it is verified directly.

I.(2).(b). For the $r$ derivatives the last term is the worst. Since
\[
\frac{b_{\delta r}}{b_\delta} = \frac{2r}{r^2 +r_\delta^2(s)}
\]
we want to show that
\[
\tau^{\frac12} r  \lesssim \delta (r^2 +r_\delta^2(s)) b^2
\]
Optimizing with respect to $r$ the worst case is when $r^2 = \min\{  r_\delta^2(s),\tau\}$.

I.(2).(b).(i). If $ r_\delta^2(s) \lesssim \tau$ then $r_\delta(s)\approx
\delta^{-4} r(s)$ therefore for $r = r_\delta(s)$ the above
relation becomes
\[
\tau^{\frac12}  \lesssim \delta^{-3} r(s) b^2(r_\delta(s))
\]
which follows from the bound from below in \eqref{b1} and \eqref{b23}.

I.(2).(b).(ii). If $ r_\delta^2(s) \gtrsim \tau$ then as before we
evaluate $b^2(\tau^\frac12) \approx \tau^{\frac14} r(s)^{-\frac12}$
and rewrite the above bound as
\[
\tau  \lesssim \delta r_\delta^2(s)   \tau^{\frac14} r(s)^{-\frac12}
\]
The right hand side is smallest either when $r_\delta(s)=\tau^\frac12$ 
and $r(s) = \delta^4 \tau^\frac12$ or when $r_\delta(s)=\delta^{-1} \tau^\frac12$ 
and $r(s) =  \tau^\frac12$. In both cases the inequality is easily verified.

II.  Now we deal with the general case, which we treat as a
perturbation.  Since we do not care about the dependence of the
constants on $\delta$ to keep the notations simple we include
$b_\delta$ in $b$ and work with $Q$ instead of $Q_\delta$.  Thus in
the computations below we allow the implicit constants to depend on
$\delta$.

 Suppose that $A$ is a pseudodifferential operator of order $1$ and
  let $\eta$ be any Lipschitz function. Then
  \begin{equation}
 \| [A,\eta] f \|_{L^2} \lesssim \Vert f
    \Vert_{L^2}.
  \end{equation}  
  We write
  \[
 {q}(\lambda ) = b + (b^4 + r^{-2} b_\perp^4 \lambda^2 /\tau
  )^{\frac14} -b =: b + {q_1}(\lambda).  
\] 
Even though $q_1$ has order $\frac12$, we treat it as an
operator of order $1$ and estimate
\[
\sup_\lambda \langle \lambda\rangle^{k-1} |{q_1}^{(k)}(\lambda)|
\lesssim \frac{b_\perp^2}{r b\tau^{\frac12}}
\] 
Hence, for each $r$ we obtain the bound on the sphere  $S^{n-1}$
  \begin{equation}
    \Vert [Q,\eta] f \Vert_{L^2} \lesssim  \frac{b_\perp^2}{r  b\tau^{\frac12}}
\Vert \eta \Vert_{Lip(S^{n-1})}  \Vert f \Vert_{L^2}. 
\label{comS0}  \end{equation}
As a consequence, it also follows that
 \begin{equation}
    \Vert [Q,\eta \nabla_\theta^j] f \Vert_{L^2} \lesssim  
\frac{b_\perp^2}{r  b\tau^{\frac12}}
\Vert \eta \Vert_{Lip(S^{n-1})}  \Vert \nabla_\theta^j f \Vert_{L^2}. 
\label{com12}  \end{equation}
where $\nabla_\theta$ stands for the vector fields $x_i
\partial_j-x_j \partial_i$ generating the tangent space of $S^{n-1}$.

To use these bounds we write the difference $P_\psi -P^0_\psi$
in polar coordinates,
\[
P_\psi -P^0_\psi = P_\theta^0 \partial_r^2 + P_\theta^1 \partial_r 
+ P_\theta^2
\]
where $P_\theta^j$ are spherical differential operators of order $j$.
Modulo zero homogeneous coefficients which are polynomials in $x
r^{-1}$ we can write
\[
P_\theta^0 = d, \qquad P_\theta^1 = d r^{-1} \nabla_\theta+
\tau^\frac12 d + d_y 
\]
\[
 P_\theta^2 = d r^{-2} \nabla_\theta^2 +
( \tau^\frac12 d +d_y ) r^{-1} \nabla_\theta+
\tau d + \tau^\frac12 d_y
\]
where $d$ stands for coefficients satisfying \eqref{regg}.
In the support of $b_\perp$ we have $a \approx b$ therefore
our regularity assumptions on $d$ show that for fixed $r$ we have
\[
\| d \|_{L^\infty} + \| d \|_{Lip(S^{n-1})} \lesssim \delta b^4 \tau^{-1}
\]
The coefficients involving $d_y$ satisfy better Lipschitz bounds and are
neglected in the sequel.

We expand the commutator
\[
[Q,\dtpsi] = \sum_{j=0,1,2} [Q,P_\theta^j] \partial_r^{2-j} +
P_\theta^0 (Q_{rr} + 2Q_r \partial_r) + P_\theta^1 Q_r
\]

Using the  trivial $b^{-1}$ bound for $Q^{-1}$ and \eqref{com0},
\eqref{com12} we estimate the first term,
\begin{eqnarray*}
\sum_{j=0,1,2} \| Q^{-1} [Q,P_\theta^j] \partial_r^{2-j} w\|_{L^2}
\lesssim b^{-1}  \delta b^4 \tau^{-1}  \frac{b_\perp^2}{r
  b\tau^{\frac12}}\sum_{j=0,1,2} 
 \tau^{1-\frac{j}2} \| D^j w\|_{L^2} 
\end{eqnarray*}
This is bounded by the right hand side in \eqref{qqcom}
since
\[
b_\perp^2 \lesssim r \tau^\frac12
\]
The second term in the commutator is estimated by
\[
\| Q^{-1}P_\theta^0 (Q_{rr} +
2Q_r \partial_r) w\|_{L^2} \lesssim \delta 
( \|Q_{rr} w\|_{L^2}  + \|Q_r \partial_r w\|_{L^2})
\]
This is bounded by the right hand side in \eqref{qqcom}
provided that
\[
|q_{rr}| + \tau^{-\frac12} |q_r| 
\lesssim b^2(1+\tau^{-\frac12} r^{-1}
\lambda)
\]
which follows from \eqref{qrrqr}. The third term in the commutator 
is treated similarly. This concludes the proof of the proposition.
\end{proof}

To conclude our study of the $L^2$ Carleman estimates 
we need to also pay some attention to elliptic estimates.
The conjugated operator $P_\psi$ is elliptic in the region 
$\{ y^2 +\xi^2 \geq 4 \tau \}$. Precisely, in this region we 
have the symbol bound
\[
|L_\psi^r (s,y,\xi)| \gtrsim y^2 +\xi^2
\]
Consequently, we can improve our estimates in this region.
We consider a smooth symbol $a_e(y,\xi)$  with the following
properties
\[
\begin{split}
\supp a_e &\subset \{ y^2 +\xi^2 \geq 8 \tau\}
\\
a_e(y,\xi) & = (y^2 +\xi^2)^\frac12 \qquad  \text{ in }\ \{y^2 +\xi^2 \geq 9 \tau\}
\end{split}
\]
We define the space $X_2$ with norm
\begin{equation} \label{X2} 
\| v\|_{X_2}^2 = \| v\|_{X_2^0}^2 + \| a_e^w(y,D) v \|^2
\end{equation}
The dual space $X_2^*$ has norm
\begin{equation}\label{X2*}
\|f\|_{X_2^*}^2 = \inf \{ \|f_1\|_{(X_2^0)^*}^2 +\|f_2\|^2; \ f = f_1+
a_e^w(y,D) f_2 \}
\end{equation} 
We note that due to the elliptic bound for high frequencies,
we also have the dual bounds
\begin{equation}
\tau^{-\frac12} \| b D v\| \lesssim \|v\|_{X^2}, \qquad 
\|\nabla f\|_{X_2^*} \lesssim \|b^{-1} f\|
\label{bd}\end{equation}

Then our final $L^2$ Carleman estimate is

\begin{theorem}
 Assume that the coefficients of $P$ satisfy \eqref{regg}. Let $\psi$ be
  as in \eqref{psi} with $h,\phi$ as in Lemmas~\ref{h},\ref{phi}.
  Then the following $L^2$ Carleman estimate holds for all functions
  $u$ for which the right hand side is finite:
  \begin{equation} \label{fcemain} 
\Vert e^{\psi(s,y)} u \Vert_{X_2}
    \lesssim \Vert e^{\psi(s,y)} Pu \Vert_{X_2^*}
  \end{equation}
\label{core}\end{theorem}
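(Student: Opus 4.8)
The plan is to upgrade the symmetrized $L^2$ Carleman estimate of Proposition~\ref{curvecor}, which is already formulated with the $X_2^0$ norm, to the full $X_2$ norm defined in \eqref{X2} by incorporating the elliptic high-frequency gain. The key observation is that we have two complementary pieces of information: Proposition~\ref{curvecor} controls $\Vert e^\psi u\Vert_{X_2^0}$, and separately, in the region $\{y^2+\xi^2 \gtrsim \tau\}$ the conjugated operator $P_\psi = L_\psi^r + L_\psi^i$ is elliptic in the sense that $|L_\psi^r(s,y,\xi)| \gtrsim y^2+\xi^2$ there (using $\delta_1,\delta_2 \ll 1$ and the fact that in this region the weight contributions $\psi_s, \psi_y^2$ are dominated by $y^2+\xi^2$). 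So the extra term $\Vert a_e^w(y,D) v\Vert$ should be recoverable from $\Vert L_\psi^r v\Vert$, which in turn is controlled by the right-hand side of the Step-1/Step-2 estimates inside the proof of Proposition~\ref{ppsiflat}.

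The steps, in order: First I would conjugate as before, $v = e^\psi u$, so that \eqref{fcemain} becomes $\Vert v\Vert_{X_2} \lesssim \Vert P_\psi v\Vert_{X_2^*}$, and recall from Proposition~\ref{curvecor} (after inserting the $Q_\delta$ reduction already carried out there) that $\Vert v\Vert_{X_2^0} \lesssim \Vert P_\psi v\Vert_{(X_2^0)^*}$. Second, I would extract from the proof of Proposition~\ref{ppsiflat}, specifically inequality \eqref{step2}, that $\Vert L_\psi^r v\Vert_{L^2}$ and $\sum_{j\le 2}\tau^{-j}\Vert a_{int}^2 D^j v\Vert^2$ are bounded by $\Vert P_\psi v\Vert^2$; hence also by the dual $X_2^*$ norm after the usual duality/$Q$-conjugation bookkeeping. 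Third, I would run a microlocal elliptic parametrix argument: let $\Lambda$ be a zero-order operator with symbol supported in $\{y^2+\xi^2 \ge 8\tau\}$ and equal to $1$ on $\{y^2+\xi^2 \ge 9\tau\}$; on the support of $a_e$ one has $|L_\psi^r|\gtrsim y^2+\xi^2 \gtrsim \tau$, so the pseudodifferential calculus (with $\tau^{1/2}$ as the large parameter, symbols smooth on the unit scale in $s$ and on the relevant scales in $y$, using \eqref{regg} for the coefficient regularity) produces a parametrix $E$ with $E L_\psi^r = \Lambda + R$ where $R$ is lower order and bounded on $L^2$. This yields $\Vert a_e^w(y,D) v\Vert \lesssim \Vert L_\psi^r v\Vert + \Vert v\Vert_{L^2} \lesssim \Vert P_\psi v\Vert + (\text{lower order})$, and the lower-order terms are absorbed by $X_2^0$ on the left. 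Fourth, I would handle the dual statement: the $X_2^*$ norm on the right splits $f = f_1 + a_e^w f_2$, and on the $a_e^w f_2$ piece one integrates by parts against the elliptic parametrix, moving $a_e^w$ onto the test function, so that pairing $\langle P_\psi v, a_e^w f_2\rangle$ is controlled by $\Vert a_e^w(y,D) v\Vert \,\Vert f_2\Vert$ modulo commutators, closing the estimate. Finally, combine the $X_2^0$ bound and the $a_e^w$ bound to get \eqref{fcemain}; the hypothesis "for all $u$ for which the right hand side is finite" is handled by a routine density/truncation argument reducing to compactly supported $u$ supported in $\{y^2 \le 9\tau\}$, using that outside that region ellipticity alone (no convexity) suffices and that the weight $e^\psi$ decays there.

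The main obstacle I expect is the microlocal elliptic estimate with the nonsmooth coefficients in \eqref{regg}: the symbol of $L_\psi^r$ is only Lipschitz in $y$ (and H\"older-type in $s$ through the $d_s$ bound), so one cannot use the standard smooth symbol calculus directly. The fix is the same as throughout the paper: since we only need ellipticity on the region $\{y^2+\xi^2 \ge 8\tau\}$ where the symbol is comparable to $\tau$, and the coefficient oscillation is controlled by $\delta_1 \e_{ij} \ll 1$ times the same scale, one writes $L_\psi^r = (-\Delta+y^2-h'(s)) + (\text{perturbation})$, inverts the constant-coefficient elliptic part by Fourier transform on each time slice (already done in Step~2), and treats the perturbation as a bounded error using the pointwise bound $|(L_\psi^r - (H-h'(s)))v| \lesssim \delta_1(|D^2 v| + \tau^{1/2}|Dv| + \tau|v|)$ from Step~2, which on the elliptic frequency support is $\lesssim \delta_1 |a_e^w v|$ and hence absorbable. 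A secondary technical point is keeping track that the $Q_\delta$-conjugation from the proof of Proposition~\ref{curvecor} commutes acceptably with $a_e^w(y,D)$ — but $Q_\delta$ is localized to $\{y^2 < 9\tau\}$ in its nontrivial part and is a zero-order operator in the spherical variables, so its commutator with the high-frequency elliptic cutoff is lower order by the calculus, and this causes no real difficulty.
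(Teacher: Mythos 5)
Your overall plan is correct and matches the paper's: combine the $X_2^0$ Carleman bound from Proposition~\ref{curvecor} (where the convexity of $\psi$ is doing the work) with elliptic estimates in the phase-space region $\{y^2+\xi^2 \gtrsim \tau\}$ to capture the $a_e^w$ part of the $X_2$ norm. Your identification of the main technical difficulty — the symbol of $L_\psi^r$ is only Lipschitz in $y$, so one cannot run a naive smooth symbol calculus — and your fix — compare with the constant-coefficient operator $H-h'(s)$ and treat the difference as a bounded perturbation of size $\delta_1$ — are both exactly right and mirror the paper. Your parametrix formulation for the elliptic region ($EL_\psi^r=\Lambda+R$) is a legitimate alternative to the paper's positive energy pairing $\langle(\chi_e^w)^2 v, P_\psi v\rangle$.

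Where the proposal has a genuine gap is the localization. Proposition~\ref{curvecor} and the Step~1/Step~2 estimates from the proof of Proposition~\ref{ppsiflat} are only proved for $v$ supported in $\{y^2\le 9\tau\}$, while Theorem~\ref{core} applies to all $u$. The closing claim that you can ``reduce to compactly supported $u$ supported in $\{y^2\le 9\tau\}$ by a routine density/truncation argument'' fails for two reasons: (i) the elliptic cutoff $a_e$ is a phase-space cutoff, so for $v$ spatially supported in $\{y^2<9\tau\}$ the component $a_e^w(y,D)v$ is still nontrivial (high $\xi$), and the commutator $[a_e^w,\chi]$ with a spatial truncation is not negligible; (ii) the weight $e^\psi$ does \emph{not} decay for $|y|>3\tau^{1/2}$ — the $\phi$ component is supported in $\{|y|\le 2\tau^{1/2}\}$, but $h(s)$ is independent of $y$, so outside the support of $\phi$ the weight is simply $e^{h(s)}$ with no decay. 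What the paper actually does, and what you need, is a genuine microlocal decomposition using overlapping pseudodifferential cutoffs $\chi_i(x,D)$ (interior, symbol supported in $\{y^2+\xi^2\le 7\tau\}$) and $\chi_e^w(x,D)$ (exterior, supported in $\{y^2+\xi^2\ge 4\tau\}$), together with the commutator bounds $\Vert b^{-1}[\chi_i,P_\psi]v\Vert\lesssim \tau^{-\frac14}\Vert bv\Vert+\Vert\chi_e v\Vert$ and $\Vert[\chi_e^w,P_\psi]v\Vert\lesssim\tau^{\frac18}\Vert bv\Vert$, plus the mapping bound $\Vert\chi_i f\Vert_{(X_2^0)^*}\lesssim\Vert f\Vert_{X_2^*}$. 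These commutator estimates are the ingredient that glues the interior Carleman estimate and the exterior elliptic estimate; without them the argument does not close.
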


\begin{proof}

  We first prove the result using the stronger assumption \eqref{regg}
  on the coefficients. After conjugation  we have to show that 
 \begin{equation} \label{fcemainv} 
\Vert v \Vert_{X_2^0}
    \lesssim \Vert  P_\psi v \Vert_{X_2^*}
  \end{equation}

  We consider two overlapping smooth cutoff symbols $\chi_i=
  \chi_i(y^2+\xi^2)$ and $\chi_e= \chi_\e(y^2+\xi^2)$.  The interior
  one $\chi_i$ is supported in $ \{ y^2 +\xi^2 \leq 7 \tau\}$ and
  equals $1$ in $\{ y^2 +\xi^2 \geq 6 \tau\}$.  The exterior one
  $\chi_e$ is supported in $ \{ y^2 +\xi^2 \geq 4 \tau\}$ and equals
  $1$ in $\{ y^2 +\xi^2 \leq 5 \tau\}$.  We need the following bounds
  for $\chi_i$ and $\chi_e$:

\begin{lemma}
a) The operator $\chi_i(x,D)$ satisfies the bound
\begin{equation}
\|\chi_i(x,D) f \|_{(X_2^0)^*}  \lesssim \| f\|_{X_2^*}
\label{x2x20}\end{equation}

b) The operators $\chi_i(x,D)$ and $\chi_e^w(x,D)$ satisfy the 
following commutator estimates: 
\begin{equation}
\| b^{-1} [\chi_i(x,D),P_\psi] v\|  \lesssim \tau^{-\frac14} \| b v\|
+ \| \chi_e v\|
\label{chiicom}\end{equation}

\begin{equation}
\|  [\chi_e^w,P_\psi] v\|  \lesssim \tau^{\frac18} \| b v\|
\label{chiecom}\end{equation}
\end{lemma}

\begin{proof}
a) By duality the bound \eqref{x2x20} is equivalent to
\[
\|\chi_i(x,D) v \|_{X_2} \lesssim \|v\|_{X_2^0}
\]
We have 
\[
\|a_e(x,D) \chi_i(x,D) v \| \lesssim \tau^{-N} \|f\|
\]
since  the supports of the symbols $(1-\chi_e(x,\xi))$ and
$a_e(x,\xi)$ are $O(\tau^\frac12)$ separated. Then it remains to show
that
\[
\|\chi_i(x,D) v \|_{X_2^0} \lesssim \|v\|_{X_2^0}
\]
which is fairly straightforward and is left for the reader.
% or, by interpolation, 
% \[
% \| (b^4+ b_\perp^4 r^{-2}
% \Delta_{S^{n-1}})   (1-\chi_e(x,D)) v\| \lesssim 
% \| (b^4 + \tau^{-1} b_\perp^4 r^{-2}
% \Delta_{S^{n-1}}) v\|
% \]
% Since $b$ is slowly varying and temperate with respect to the
% euclidean metric we easily obtain
% \[
% \| b^4   (1-\chi_e(x,D)) v\| \lesssim \|b^4 v\|
% \]
% For the spherical laplacian, after commuting, it remains to show that
% \[
% \|\tau^{-1} [b_\perp^4 r^{-2}, \chi_e(x,D)] \Delta_{S^{n-1}})v\|
% \lesssim \|b^4 v\|
% \]
% But this is also straightforward since we can bound the symbol
% $r(x,\xi)$ of $\tau^{-1} [b_\perp^4 r^{-2}, \chi_e(x,D)]
% \Delta_{S^{n-1}})$, as well as its derivatives by
% \[
% |r(x,\xi)| \lesssim \tau^{-\frac12} (b^4+b_\perp^4) (1+r)^{-1}
% \lesssim \tau^{-\frac12} b^4
% \] 

b) We now consider the bound \eqref{chiicom}. Commute first
$\chi_i$ with $\partial_s +H- h'(s)$. We have
\[
 [\chi_i(x,D), \partial_s +H- h'(s)]= [\chi_i(x,D),H]
\]
Since $\chi_e = 1$ in the support of $\nabla_{x,\xi} \chi_i$ and the
Poisson bracket of $\chi_i$ and $x^2+\xi^2$ vanishes, by standard pdo
calculus we obtain
\[
\|  [\chi_i(x,D), \partial_s +H- h'(s)] v\| \lesssim \|\chi_e v\| +
\tau^{-N} \| v\|
\]

The difference $P_\psi - (\partial_s +H- h'(s))$ can be 
expressed in the form 
\[
P_\psi - (\partial_s +H- h'(s)) = \partial g \partial +
\tau^{\frac12}( g \partial +\partial g) + \tau g
\]
where the function $g$ satisfies the bounds
\[
|g| + \langle y\rangle |g_y| + \langle y\rangle |g_{yy}| \lesssim \e_i 
\]
These lead to an estimate for fixed $s \in [i,i+1]$, 
\[
 \| b^{-1} [\chi_i(x,D),P_\psi - (\partial_s +H- h'(s))] v \|\lesssim \e_i
 \tau^\frac12 \| \langle y\rangle^{-1} b^{-1} v \|
\]
Then \eqref{chiicom} follows since
\[
\e_i \tau^\frac12  \langle y\rangle^{-1} \lesssim \tau^{-\frac14} b^2
\]

Finally, the proof of the estimate \eqref{chiecom} is similar but
simpler.

\end{proof}

We continue with the proof of the proposition.  For the nonelliptic
part we apply \eqref{fce} to the function $\chi_i(x,D)v$ which is
supported in $\{y^2 < 9\tau\}$. This gives
\[
\|\chi_i(x,D)v\|_{X_2^0} \lesssim \| \chi_i(x,D)P_\psi
v\|_{(X_2^0)^*} + \| [  \chi_i(x,D),P_\psi] v\|_{L^2}
\]
For the first term on the right we use the bound \eqref{x2x20} while
for the second we use \eqref{chiicom}.  This yields
 \begin{equation}
\| \chi_i(x,D))v\|_{X_2^0} \lesssim \| P_\psi
v\|_{X_2^*} + \tau^{-\frac14}   \|bv\| +\|\chi_e v\|
\label{none}\end{equation}

On the other hand for the estimate in the elliptic region
we compute
\begin{equation}
\langle (\chi_e^w)^2 v, P_\psi v \rangle = \langle \chi_e^w v,
L_\psi^r \chi_e^w v\rangle + \langle \chi_e^w v, [\chi_e^w,P_\psi^r] v
\rangle 
\label{ellreg}\end{equation}
For the first term we split $L_\psi^r$ into $H-h'$ plus a
perturbation. Using pointwise bounds for the coefficients of $P_\phi$
we obtain
\[
| L_\psi^r v - (H-h') v| \lesssim \delta_1( (\tau+y^2) |v| +
\tau^\frac12 |Dv| + |D^2 v|)
\]
which shows that
\[
 \langle \chi_e^w v, L_\psi^r \chi_e^w v\rangle =  \langle \chi_e^w v,
 (H-h') \chi_e^w v\rangle + O(  \delta_1 \langle \chi_e^w v, (H+\tau) \chi_e^w v\rangle)
\]
The symbol of $H-h'$ is elliptic in the support of $\chi_e$,
therefore a standard elliptic argument yields
\[
 \langle \chi_e^w v, (H+\tau) \chi_e^w v\rangle \lesssim  \langle \chi_e^w v,
 (H-h') \chi_e^w v\rangle + C \tau^{-N} \|v\|^2
\]
for a large constant $C$. This further gives 
\[
\langle \chi_e^w v, (H+\tau) \chi_e^w v\rangle \lesssim  \langle \chi_e^w v, L_\psi^r \chi_e^w v\rangle
+ C \tau^{-N} \|v\|^2
\]

Returning to \eqref{ellreg}, we obtain
\[
c \langle \chi_e^w v, (H+\tau) \chi_e^w v\rangle \leq  - \langle
(\chi_e^w)^2 v, P_\psi v \rangle  + \langle \chi_e^w v, [\chi_e^w,P_\psi^r] v
\rangle + C \tau^{-N} \|v\|^2
\]
We use \eqref{chiecom} and then the Cauchy-Schwartz inequality to
obtain
\[
\langle \chi_e^w v, (H+\tau) \chi_e^w v\rangle \lesssim \|
(H+\tau)^{-\frac12} P_\psi v\|^2 + \tau^{-\frac34} \|bv\|^2  
\]
The first term on the right is properly controlled  due to the straightforward estimate
\[
\| (H+\tau)^{-\frac12}  f\| \lesssim \|f\|_{X^*_2}
\]
Hence combining the above inequality with \eqref{none} we obtain
\[
\| \chi_i^(x,D))v\|_{X_2^0} + \| (H+\tau)^\frac12 \chi_e^w(x,D) v\|
\lesssim \|P_\psi v\|_{X_2^*} + \tau^{-\frac34} \|bv\| +
\|\chi_e^w(x,D) v\|
\]
The last two terms on the right are negligible compared to the left hand
side, therefore we obtain
\begin{equation}
\|v\|_{X_2^0} + \|  (H+\tau)^\frac12 \chi_e^w(x,D) v\|
\lesssim \|P_\psi v\|_{X_2^*}
\end{equation}
Then \eqref{fcemainv} follows since $\chi_e=1$ in the support of $a_e$.

It remains to show that the assumption \eqref{regg} on the
coefficients for \eqref{curvecor} can be replaced by the weaker
condition \eqref{g}. This is a direct consequence of \eqref{bd}
combined with the following regularization result:

\begin{lemma}
Let  $d$ be a function which satisfies  \eqref{g}.
Then there is an approximation  $g_1$ of it satisfying \eqref{regg}
so that
\[
| g - g_1| \lesssim b^2 \tau^{-1}
\]
\end{lemma}

\begin{proof}
First we transfer \eqref{g} to the $(s,y)$ coordinates.
A short computation yields the equivalent form
\begin{equation}
\|d\|_{L^\infty(A_{ij})} + e^j \|d\|_{Lip_y(A_{ij})} +
\|g\|_{C_t^{m_{ij}}(A_{ij})} \lesssim \e_{ij}
\label{gg1}\end{equation}
where the new continuity modulus $\tilde m_{ij}$ is given by
\[
\tilde m_{ij}(\rho) = \rho + e^{-\frac{2j}3} \rho^\frac13
\]

Within $A_{ij}$ we regularize $d$ in $y$  on the $\delta y = \tau^{-\frac12}$
scale and in $s$ on the $\delta s = e^{\frac{j}2} \tau^{-\frac34}$ scale,
\[
d_1 = S_{ < \tau^{\frac12}}(D_y) S_{ < e^{\frac{j}2}
  \tau^{-\frac34}}(D_s) d
\]
These localized regularizations are assembled together 
using a partition of unit corresponding to $A^{ij}$.
In $A_{ij}$ we compute
\[
|d  - d_1| \lesssim \e_{ij}( e^{-j} \delta y + m_{ij}(\delta s))
\approx \e_{ij}( e^{-j} \tau^{-\frac12} + e^{-\frac{j}{2}} \tau^{-\frac14})
\lesssim \e_{ij}^\frac12 e^{-\frac{j}2} \tau^{-\frac14} \lesssim b^2 \tau^{-1}
\]
while
\[
|\partial_s g_1| \lesssim \e_{ij} \frac{m_{ij}(\delta s)}{\delta s}
\approx \e_{ij} e^{-j} \tau^{\frac12}  
\]
The bounds for higher order derivatives of $g_1$ follow trivially due 
to the frequency localization.
\end{proof}
\end{proof}

\section{$L^p$ Carleman estimates for variable coefficient 
operators}

The variable coefficient counterpart of Proposition~\ref{plpflat} uses
the more convex weights constructed in Section~\ref{convex}. For
convenience we write it in the $(s,y)$ coordinates. Let $\tau >> 1$ and  $\mathcal{B}(\tau)$ be as in \eqref{parB1},\eqref{log} and \eqref{parB2}. 

We define the function space $X$ through its norm 
\begin{equation} 
\Vert v \Vert_{X}:= 
\|v\|_{X_2}+ \Vert v \Vert_{l^2(\mathcal{B}(\tau);L^\infty_t L^2_x)}
+ \Vert v \Vert_{l^2(\mathcal{B}(\tau);L^p_t L^q_x)}
\end{equation} 
where $(p,q)$ is an arbitrary  Strichartz pair, with $X_2$ as defined 
in \eqref{X2}.

Its (pre)dual space has the norm 
\begin{equation} 
\begin{split} 
\Vert f \Vert_{X^*}=  
\inf_{f= f_1+f_2+f_3} \|f_1\|_{X_2^*} + \|f_2\|_{L^1L^2} +
\|f_3\|_{L^{p'} L^{q'}}
\end{split} 
\end{equation} 
Then we have the following improvement of Theorem \ref{core}.

\begin{theorem}
There exists $\psi$ as in \eqref{psi} with $h$ and $\phi$ as in Lemma \ref{h} 
and \ref{phi}. 
   Then the  following  estimate
  holds for all compactly supported sufficiently regular functions $u$. 
\begin{equation}
\Vert e^\psi u \Vert_X \lesssim \Vert e^\psi \tilde P u \Vert_{X^*} 
\end{equation}
\label{mainys}\end{theorem}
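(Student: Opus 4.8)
The plan is to upgrade the $L^2$ Carleman estimate of Theorem~\ref{core} to the full mixed-norm space $X$ by a perturbative argument built on the flat parametrix bounds of Proposition~\ref{flatlp2h} (rewritten in the $(s,y)$ coordinates as bounds for $K$), together with Wolff's weight-osculation construction to absorb the gradient potentials $W_{1,2}$. First I would fix the weight $\psi = h + \delta_2\phi$ with $h,\phi$ as in Lemmas~\ref{h} and~\ref{phi}, and conjugate: writing $v = e^\psi u$ the claim becomes $\|v\|_X \lesssim \|P_\psi v\|_{X^*}$, where $P_\psi = e^\psi \tilde P e^{-\psi}$ and one must also keep track of the conjugated gradient terms $e^\psi(W_1\nabla + \nabla W_2)e^{-\psi}$, which under conjugation pick up extra factors of $\psi_y \sim \tau^{1/2}$ but remain controlled in the relevant mixed norms by the summability assumption \eqref{W}. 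The $L^2$ component $\|v\|_{X_2}\lesssim\|P_\psi v\|_{X_2^*}$ is exactly Theorem~\ref{core}, so the real work is the $l^2(\mathcal{B}(\tau); L^\infty L^2 \cap L^p L^q)$ piece.

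The key steps, in order: (1) \emph{Localize} to the sets $B_{ij}$ of the partition $\mathcal{B}(\tau)$. On each such set the weight $\psi$, the metric $g$, and the coefficients $d$ are, after rescaling to the unit parabolic cube, small perturbations of the flat constant-coefficient heat operator with the polynomial weight $t^{-\tau}$, precisely because the construction of $\phi$ in Section~\ref{convex} puts enough convexity ($\e_{ij}$-worth) into $A_{ij}$ exactly where the coefficient regularity \eqref{g} allows it. (2) On each localized piece apply the flat parametrix bound \eqref{flatlp2h}: writing $P_{0,\psi} = \partial_s - \Delta + y^2 - \psi_s - \psi_y^2$ and using $K$ as a right parametrix, one gets $\|v\|_{L^\infty L^2 \cap L^p L^q \cap L^2 X_2} \lesssim \|P_{0,\psi}v\|_{L^1 L^2 + L^{p'}L^{q'} + L^2 X_2^*}$ on the localized scale, with the localized $L^2$ norm on the right side absorbed against the strong $L^2$ Carleman estimate of Theorem~\ref{core}. (3) \emph{Reassemble}: sum the squared localized estimates over $(i,j)\in\mathcal{B}(\tau)$; the commutator errors from the cutoffs are of lower order thanks to the $X_2$-norm (which contains the localized $L^2$ bounds $b^2 D^j v$ with the right $\tau$-weights) and the slow variation of $b, b_\perp, r(s)$; the overlap of the cover is bounded so $l^2$-orthogonality is only lost by a constant. (4) \emph{Absorb $V$ and the lower-order/perturbative terms} $\partial d\partial$, $\tau d$, $\tau^{1/2}(d\partial + \partial d)$ using Hölder's inequality together with the smallness in \eqref{V3}, \eqref{regg} (equivalently \eqref{g} after the regularization lemma), and the fact that the $X_2$-norm dominates $\tau^{-j/2}\|b^2 D^j v\|$. (5) Finally, \emph{incorporate the gradient potentials}: since $L^p$ Carleman estimates cannot hold for arbitrary $W \in L^{n+2}$ with a fixed weight, follow Wolff \cite{780.35015} and \cite{MR2001m:35075} and choose $\phi$ (hence $\psi$) depending on $\tau$, $u$ and $W$ by an iterative osculation: start with the weight from Lemma~\ref{phi}, and on the dyadic regions where $\|W\|_{L^{n+2}}$ is large, tilt $\phi$ slightly (within the class allowed by Lemma~\ref{phi}, using up the remaining $l^1(\mathcal{A}(\tau))$ budget in \eqref{W}) so that $e^{\psi}W\nabla v$ gets absorbed; the flexibility of the weight class and the summability \eqref{W} guarantee the iteration converges and the cumulative distortion of $\phi$ stays within the bounds \eqref{phitime}--\eqref{phicon} and \eqref{hprime}--\eqref{weight}.

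The main obstacle will be Step (5) combined with Step (3): making the Wolff osculation compatible with the \emph{two-parameter} ($i,j$) geometry and with the delicate hierarchy of the $X_2$-norm. In the elliptic case of \cite{MR2001m:35075} one osculates a one-parameter family of weights over dyadic shells; here the weight lives over the much finer parabolic partition $A_{ij}$, the admissible convexity is constrained separately in the radial direction (bounded by $\e_{ij}$) and in $s$ (bounded by $\e_i$), and the threshold $j(i)$ marks where angular convexity runs out — so one must verify that the $W$-driven perturbations of $\phi$ respect \eqref{phimon} and \eqref{phicon} simultaneously with $j$ ranging up to $\tfrac12\ln\tau$, while only spending the finite $l^1$ budget once. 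The secondary difficulty is bookkeeping: the $X$ and $X^*$ norms mix $L^2$, $L^\infty L^2$, $L^p L^q$ and the weighted localized $L^2$ pieces, and one must check at each localization/reassembly step that the error terms land in the dual space with a genuinely small constant (powers of $\delta_1$, $\delta_2$, $\delta$, or negative powers of $\tau$) so that they can be absorbed — this is routine but voluminous, and is where the precise $\tau$-weights built into $b$, $b_\perp$, $a_e$ earn their keep.
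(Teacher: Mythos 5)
The central difficulty with the proposal is that it proves the wrong theorem. Theorem~\ref{mainys} concerns the operator $\tilde P$ alone, which in the $(s,y)$ coordinates is the conjugate of the parabolic operator $P$ from \eqref{pgen}; it contains no potential $V$ and no gradient potentials $W_{1,2}$. Your Steps~(4) and~(5) — absorbing $V$ and running Wolff's osculation for $W$ — are therefore not part of this proof at all. In particular, the difficulty you isolate as "the main obstacle" (making the osculation compatible with the two-parameter $(i,j)$ geometry and the threshold $j(i)$) is precisely the content of the subsequent section and of the theorem containing \eqref{jk1}, not of this one. The existential phrasing "there exists $\psi$" in Theorem~\ref{mainys} is there only because the weight must be tailored to the coefficient regularity $(\e_{ij})$ from Lemma~\ref{lepsilon} and chosen so that the partition $(B_{ij})$ refines $\mathcal{B}(\tau)$; it does not depend on $u$ or on a gradient potential. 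The $u$- and $W$-dependent choice is a strictly later step.

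Within what remains (your Steps~(1)--(3)), the skeleton is in the right direction but the localization scale is off. You localize to the sets $B_{ij}$ of $\mathcal{B}(\tau)$ and assert that there the coefficients are "small perturbations of the flat constant-coefficient heat operator." This is not the case: on the $B_{ij}$ scale the coefficients cannot be frozen, because the error from replacing $g(s,y)$ by $g(s_0,y_0)$ would not be dominated by the $X_2$ norm. The proof requires a genuinely finer partition into sets $B_{ij}^k$ of sizes
\[
\delta s = b_{ij}^{-2}, \qquad \delta y = \tau^{1/2} b_{ij}^{-2}, \qquad \delta y^\perp = \tau^{1/2} b_{ij,\perp}^{-2},
\]
chosen to be simultaneously large enough that the cutoff commutators $[\chi_{ij}^k,P_\psi]$ map $X_2 \to X_2^*$ with a small constant, and small enough that freezing $g$, linearizing $h$ and $\phi$, diagonalizing the principal part and shifting $y$ reduces the operator on $B_{ij}^k$ to $-\d_s + H - \tau$. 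These are the minimal sets on which the $L^2$ Carleman estimate still localizes, and identifying this scale is the crux of the argument. Finally, the paper packages the reassembly not as a direct $l^2$ summation but as the construction of a parametrix $T$ with $\|Tf\|_X \lesssim \|f\|_{X^*}$ and $\|P_\psi Tf - f\|_{X_2^*} \lesssim \|f\|_{X^*}$ (Proposition~\ref{param}), combined with Theorem~\ref{core} and a duality step using the parametrix for $P_\psi^*$ to upgrade $X_2$-control of $w$ to control in $L^\infty L^2 \cap L^p L^q$; your Step~(3) gestures at this but does not supply the mechanism.
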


The relation between $\psi$ and the partition $\mathcal{B}(\tau)$
remains a bit mysterious at this level. If we replace it by the empty
partition then the statement remains true for all $\psi$ with $h$ and
$\phi$ as in Lemma \ref{h}. The same is true for a partition into time
slices of size $1$. The convexity properties of $\phi$ allow a
localization to the finer partition $B_{ij}$ as in \ref{bij} (and, as
we shall soon see, to an even finer partition). q It is possible to
choose $\phi$ and $h$ so that the partition $(B_{ij})$ is finer than
the one defined by $\mathcal{B}(\tau)$. We assume in the sequel that 
$\psi$ has been chosen with these properties.

\begin{proof} 
As usual  this is equivalent to proving a bound from below 
for the conjugated operator,
\begin{equation}
\Vert v \Vert_X \lesssim \Vert  P_\psi v \Vert_{X^*} 
\end{equation} 

The main step in the proof is to produce a parametrix for $P_\psi$.
The key point is that the parametrix is allowed to have a fairly large
$L^2$ error. This is because $L^2$ errors can be handled by
Theorem~\ref{core}.  The advantage in having a large $L^2$ error is
that it permits to localize the parametrix construction to relatively
small sets, on which we can freeze the coefficients and eventually
reduce the problem to the case of the Hermite operator. The properties
of the parametrix are summarized in the following

\begin{proposition}
  a) Under the assumptions of the theorem there exists a parametrix
  $T$ for $P_\psi$ with the following properties:
\begin{equation} 
\Vert Tf \Vert_{X} \lesssim \Vert f \Vert_{X^*} 
\label{Tb}\end{equation}
and
\begin{equation} \label{TP} \Vert P_\psi T f -f \Vert_{X_2^*} \lesssim
  \Vert f \Vert_{X^*}
\end{equation} 

b) The same result holds with $P_\psi$ replaced by $P_\psi^*$.
\label{param}\end{proposition}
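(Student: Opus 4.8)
The plan is to build $T$ by patching together explicit local parametrices. First I would fix, for the given $\tau$ and weight $\psi$, a smooth partition of unity $1=\sum_\nu\chi_\nu$ in the $(s,y)$ variables subordinate to a cover of $[0,8\tau^{-1})\times B(0,1)$ that refines the partition $(B_{ij})$ of Definition~\ref{bij}: each $\chi_\nu$ is smooth on the unit scale in $s$ and on a scale $\ell_\nu\approx\min\{r(s),\langle y\rangle,\tau^{1/2}\}$ in $y$, chosen small enough that on $\mathrm{supp}\,\chi_\nu$ the coefficients $d$ of $\tilde P$ oscillate by $O(\delta_1\e_{ij})$ (using \eqref{regg}) and $\phi$ agrees with its first order Taylor polynomial up to $O(\delta_1\e_{ij}\tau)$ in all the norms that enter (using \eqref{phibound}). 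After freezing the coefficients and the Hessian of $\phi$ on such a set, and performing an affine change of variables together with a modulation, $P_\psi$ becomes, modulo a controllably small error, a constant coefficient model operator $P_\nu^{\mathrm{mod}}$: in the region $y^2+\xi^2\lesssim\tau$ this is $\partial_s+H_\nu-\mu_\nu$ with $H_\nu$ a rotated, rescaled Hermite operator and $\mu_\nu\in\R$ away from its spectrum, and in the region $y^2+\xi^2\gtrsim\tau$ it is a constant coefficient elliptic operator (where Theorem~\ref{core} already provides the elliptic gain built into $X_2$).

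For the local parametrix $K_\nu$ I would take the exact inverse of $P_\nu^{\mathrm{mod}}$ in the elliptic region, and in the Hermite region the operator $K$ of Section~\ref{slpflat} transported back, which satisfies $P_\nu^{\mathrm{mod}}K_\nu=I$ exactly and obeys the sharp bound \eqref{flatlp2h}, i.e. $L^1L^2+L^{p'}L^{q'}+L^2X_2^*\to L^\infty L^2\cap L^pL^q\cap L^2X_2$, with the localization parameters $R,d$ read off from the position of $\mathrm{supp}\,\chi_\nu$. Setting $T=\sum_\nu\tilde\chi_\nu K_\nu\chi_\nu$ with $\tilde\chi_\nu$ a slight fattening of $\chi_\nu$, the bound \eqref{Tb} then follows by summing these local estimates: the $X_2$ component is the Carleman norm of Theorem~\ref{core} and is assembled as in Section~\ref{convex}, while the $l^2(\mathcal B(\tau);L^\infty L^2)$ and $l^2(\mathcal B(\tau);L^pL^q)$ components follow from the local Strichartz-type bounds together with the almost orthogonality of the pieces — the model kernels $K_\nu(s)$ decay exponentially in $|s|$ and, combined with the localized $L^2$ bounds for the Hermite projectors from Section~\ref{resolvent}, give Schur-summable off-diagonal decay. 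This is precisely the purpose of the $l^2(\mathcal B(\tau))$ structure of $X$.

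For the reproducing property \eqref{TP} I would use $P_\nu^{\mathrm{mod}}K_\nu=I$ and $\sum_\nu\tilde\chi_\nu\chi_\nu=1$ to write
\[
P_\psi Tf-f=\sum_\nu\Big(\tilde\chi_\nu\big(P_\psi-P_\nu^{\mathrm{mod}}\big)K_\nu\chi_\nu f+[P_\psi,\tilde\chi_\nu]K_\nu\chi_\nu f\Big).
\]
The first family of terms carries the factor $\delta_1\e_{ij}$ from the freezing (on $\mathrm{supp}\,\tilde\chi_\nu$, where $a\approx b$) and the Taylor remainder from \eqref{phibound}, hence is small; the commutator terms are one order lower, are supported away from $\mathrm{supp}\,\chi_\nu$, and are controlled using the off-diagonal decay of $K_\nu$. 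The decisive point — flagged in the statement — is that both must be estimated only in $X_2^*$, not in $X$; so one does not sum a Neumann series in $X$ and does not need more than boundedness for the genuinely $L^2$-sized contributions. Summation over $\nu$ then converges in $X_2^*$ with the bound $\lesssim\|f\|_{X^*}$, using the $l^1$-summability of $(\e_{ij})$ over $\mathcal A(\tau)$ from \eqref{alphaij}, the slow variation of the weights, and the same almost orthogonality used for \eqref{Tb}.

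Part (b) follows from the identical construction applied to the transpose: $P_\psi^*=e^{-\psi}\tilde P^*e^{\psi}$, where $\tilde P^*$ equals $-\partial_s+H$ plus perturbations whose coefficients again satisfy \eqref{regg}; after the time reversal $s\mapsto-s$ one is back to an operator of the form $\partial_s+H$ with admissible lower order terms, and $-\psi$ is reflected to a weight again of the form \eqref{psi} (convex $h$, $h'\in[\tau,2\tau]$, admissible $\phi$), so the argument transfers verbatim. The main obstacle I expect is the bookkeeping at the threshold $j=j(i)$, where the spatial convexity of $\phi$ degenerates, the weights $b$ and $b_\perp$ transition and one must bring in the auxiliary $b_\delta$ and $Q_\delta$ from the proof of Proposition~\ref{curvecor}, the natural $y$-localization scale is $\approx r(s)$ rather than $1$, and the Hermite model (accurate for $|y|\ll\tau^{1/2}$) has to be matched to the elliptic model (accurate for $y^2+\xi^2\gtrsim\tau$) — all while preserving the almost orthogonality needed for \eqref{Tb} and the $X_2^*$ control of the errors in \eqref{TP}.
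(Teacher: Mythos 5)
Your overall strategy matches the paper's: build $T=\sum T_\nu\chi_\nu$ from local parametrices obtained by freezing the coefficients and invoking the flat Hermite parametrix $K$ from Section~\ref{slpflat}, and exploit the fact that errors need only be bounded in $X_2^*$ rather than in $X$. You also correctly identify that the errors split into a freezing error and a commutator error, and that part (b) goes through the same machinery. The difficulty, though, is that you have the partition scales wrong, and they are the technical heart of the proof. The paper's cells $B_{ij}^k$ have \emph{three} distinct scales when $\e_i\gtrsim\tau^{-1}$, none of them unit: $\delta s\approx b_{ij}^{-2}$, $\delta y\approx\tau^{1/2}b_{ij}^{-2}$, and $\delta y^\perp\approx\tau^{1/2}b_{ij,\perp}^{-2}$, all depending on $\e_{ij}$ and $\e_i$. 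Your cover is smooth on the unit scale in $s$ and on the purely geometric scale $\ell_\nu\approx\min\{r(s),\langle y\rangle,\tau^{1/2}\}$ in $y$, which is generically too coarse (e.g.\ freezing $h'$ over a unit $s$-interval produces an error $\sim\e_i\tau$, which is not $\lesssim b_{ij}^2$ unless the cell is thinner) and makes no distinction between the radial and angular directions; that distinction is needed because the $X_2$ norm weights $\nabla$ by $b$ but $\nabla_\perp$ by $b_\perp$, which are very different near the threshold.

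The reason this matters is that the scale is pinned down from \emph{two} sides, and you only engage the ``fine'' side (coefficient oscillation). The ``coarse'' side is the commutator estimate $[\chi_{2B},\partial_s-H+\tau]:X_2\to X_2^*$, which is where the paper actually fixes $\delta s$, $\delta y$, $\delta y^\perp$: one needs $|\chi_s|\ll b_{ij}^2$, $|\chi_y|\ll\tau^{-1/2}b_{ij}^2$, $|\chi_{y\perp}|\ll\tau^{-1/2}b_{ij,\perp}^2$, which is a direct translation of the $X_2$, $X_2^*$ weights (via the bound \eqref{bd}). Your appeal to ``off-diagonal decay of $K_\nu$'' does not replace this: the kernel $K(s)=\sum_\lambda\Pi_\lambda e^{s(\tau-\lambda)}1_{s(\tau-\lambda)<0}$ decays in $s$ but has no useful spatial localization, so the commutator term $[P_\psi,\tilde\chi_\nu]K_\nu\chi_\nu f$ is not controlled by kernel decay; it is controlled precisely by choosing the cell dimensions so that the derivative bounds on $\tilde\chi_\nu$ match the $X_2\to X_2^*$ weights. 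Finally, a small point on part (b): reflecting $s\mapsto -s$ sends $h$ to $-h(-\cdot)$, which is concave, so the reflected weight is not literally of the form of Lemma~\ref{h}; this is harmless here since $h$ is linearized on each cell, but the claim that the reflected weight is ``again of the form \eqref{psi} (convex $h,\dots$)'' should be replaced by the observation that the parametrix construction only uses the linearized weight and the regularity bounds \eqref{regg}, both of which are symmetric under time reversal.
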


We first use the proposition to conclude the proof of the Theorem.
Let
\[
P_\psi v = f+g, \qquad \|f\|_{X_2^*} + \|g\|_{L^1L^2+L^{p'} L^{q'}}
\approx \|P_\psi v\|_{X^*}
\]
With $T$ as in part (a) of the proposition we set
\[
w = v - T g, \qquad P_\psi w = f + g - P_\psi T g 
\]
By \eqref{Tb} we can bound $Tg$ in $X$, therefore it suffices
to bound $w$ in $X$. On the other hand by \eqref{TP}
we obtain
\[
\|  P_\psi w\|_{X_2^*} \lesssim \|f\|_{X_2^*} + \|g - P_\psi T g \|_{X_2^*}
\lesssim \|  P_\psi v\|_{X^*}
\]
It remains to show that 
\[
\|w\|_X \lesssim \| P_\psi w\|_{X_2^*}
\]
By Theorem~\ref{core} we can estimate the $X_2$ norm of $w$ and
replace this with the weaker bound
\begin{equation}
  \|w\|_{L^\infty L^2 \cap L^p L^q} \lesssim 
\|w\|_{X_2}+ \| P_\psi w\|_{X_2^*}
\label{wx2}\end{equation}
This is proved using a duality argument and the parametrix $T$ for
$P_\psi^*$ given by part (b) of the proposition. For $f \in
X^*$ we write
\[
\begin{split} 
\langle w,f\rangle =& \langle w,  P_\psi^* T f\rangle + \langle w,
f-P_\psi^* T f\rangle
\\
= &  \langle P_\psi w,  T f\rangle + \langle w,
f-P_\psi^* T f\rangle
\end{split}
\]
Using both \eqref{Tb} and \eqref{TP} with $P_\psi$ replaced by 
$P_\psi^*$ we obtain
\[
| \langle w,f\rangle| \lesssim (\|  P_\psi w\|_{X_2^*} + \|w\|_{X^2})
\| f \Vert_{X^*} 
\]
and \eqref{wx2} follows. This concludes the proof of  Theorem \ref{mainys}.
\end{proof} 

It remains to prove the Proposition \ref{param}.

\begin{proof}[Proof of Proposition~\ref{param}.]
The strategy for the proof is simple: On sufficiently small sets we can
approximate the problem by one with constant coefficients and the 
properties of the parametrix follow from Section \ref{resolvent}. 
We use a partition of unity to construct a global parametrix from 
local ones. We obtain $L^2$ errors from 
\begin{enumerate} 
\item Commuting cutoff functions with the operator. Hence the partition has to
  be sufficiently coarse.  
\item Approximating the variable coefficient operator by constant coefficient
  operators.
Hence the partition has to be sufficiently fine.  
\end{enumerate} 

To elaborate on this we define the notion of a local parametrix: 

\begin{definition}[Local parametrix] 
Given a convex set $B$ we call $T$ a ($B$-) local parametrix for $P_\psi$
if for all $f$ supported in $B$ 
\begin{equation} 
\Vert Tf \Vert_{X} \lesssim \Vert f \Vert_{X^*}, 
\label{Tb1}\end{equation}
\begin{equation} \label{TP1} \Vert P_\psi T f -f \Vert_{X_2^*} \lesssim
  \Vert f \Vert_{X^*}
\end{equation} 
and $Tf$ is supported in $2B$.
\end{definition} 

If $T$ is a parametrix and $\eta$ is supported on $2B$, $\eta=1$ on $B$ 
then $\psi T$ is a local parametrix, but with constants depending on the 
commutator of $P_\psi$ and $\eta$. Vice verse, 
if  $(B_j)$ is a covering, $(\eta_j)$ a subordinate partition of $1$ 
and $T_j$ are local parametrices  then 
\[ T = \sum_j T_j \eta_j \]
is a global parametrix, because \eqref{Tb1} is obtained by summation, and 
\[ P_\psi \sum_j T_j (\eta_j f)- f = \sum_j (P_\psi T_j \eta_j f-\eta_j f \]
 provided 
\[ \sum_j \Vert \eta_j f \Vert^2_{X^*} \lesssim \Vert f \Vert^2_{X^*} \]
and its adjoint 
\[ \Vert u \Vert_{X}^2 \lesssim \sum_j \Vert \eta_j u \Vert_{X}^2. \]
This is obvious for the $L^2$ part and has to be checked for the other
part.  This strategy of constructing local parametrices leads, if it
is possible, to estimates which are stronger than in Proposition
\ref{param} and Theorem \ref{mainys}, because we may replace the
function space $X$ by $l^2 X(B_j)$ respectively. $l^2 X^*(B_j)$.

 In the first part of the proof we study the localization, and in the
 second part we provide the local parametrices.
 
\subsection{Localization scales.} Here we introduce a localization scale
which is finer than the $B_{ij}$ partition of the space, and show that
it suffices to construct the parametrix in each of these smaller sets.
Precisely, the sets $B_{ij}^k$ introduced below are the smallest sets
to which one can localize the $L^2$ estimates for the operator
$P_\psi$. The choice of their size is not yet apparent at this point,
but will become clear in the very last step of the proof, where we
estimate the commutator of $P_\psi$ with cutoff functions on such
sets. We consider three cases depending on the size of $\e_i$.

\begin{enumerate} 
\item  If $\e_i \leq \tau^{-1}$ then we use $B_{i0}$ as it is.
\item  If $\tau^{-1} \leq \e_i \leq \tau^{-\frac12}$ then we partition
the set $B_{i0}$ into time slices $B_{i0}^k$ of thickness 
\[
\delta s =
b_{i0}^{-2}.
\]
\item  If $\tau^{-1} \leq \e_i$ and $j \neq 0$ then we partition $B_{ij}$
into subsets $B_{ij}^k$ which have the time scale, radial scale and 
angular scale given by 
\[
\delta s = b_{ij}^{-2}, \qquad \delta y = \tau^{\frac12} b_{ij}^{-2},
\qquad \delta y^\perp  = \tau^{\frac12}   b_{ij,\perp}^{-2}
\]
\end{enumerate}

This gives a decomposition of the space 
\[
\R \times \R^n = \bigcup B_{ij}^k
\]
We also consider a subordinated partition of unity
\[
1 = \sum \chi_{ij}^k
\]
Suppose that in each set $B_{ij}^k$ we have a parametrix 
$T_{ij}^k$ satisfying \eqref{Tb} and \eqref{TP}. Then we define
the global parametrix $T$ by
\[
T = \sum T_{ij}^k  \chi_{ij}^k
\]

We have by an iterated application of Minkowski's inequality  
\[
\| \chi_{ij}^k f\|_{l^2 (L^1L^2+L^{p'} L^{q'})} \lesssim
\|f\|_{L^1L^2+L^{p'} L^{q'}}
\]
and the dual bound
\[
\| \sum T_{ij}^k  \chi_{ij}^k f\|_{L^\infty L^2 \cap L^p L^q}
\lesssim \|T_{ij}^k  \chi_{ij}^k f\|_{l^2(L^\infty L^2 \cap L^p L^q)}.
\]
Hence  \eqref{Tb} for $T$ would follow if we proved that
\[
\| \sum v_{ij}^k \|_{X_2} \lesssim \| v_{ij}^k\|_{l^2 X_2}
\]
where $v_{ij}^k =  T_{ij}^k  \chi_{ij}^k f$ are supported in
$B_{ij}^k$. This is trivial by orthogonality for the $L^2$ component
of the $X_2$ norm. It is also straightforward for the elliptic part 
since the kernel of the operator in the following sense:
Let $\chi_0 \in C^\infty(\R)$ be supported in $[-3/2,3/2]$, identically $ 
1$ in $[-1,1]$. We define $\chi_e (x,\xi) = \chi_0( 5- (x^2+\xi^2)/\tau^2) $.  
Then 
\[
\| \sum \chi_e^w v_{ij}^k \|_{X_2} \lesssim \| \chi_e^w v_{ij}^k\|_{l^2 X_2}
\]
and the adjoint estimate holds since the kernel of 
 $\chi_e^w$  is rapidly decreasing
beyond the $\tau^{-\frac12}$ scale, which is much smaller than the 
smallest possible spatial size for $B_{ij}^k$, namely $\delta y
\gtrsim \tau^{-\frac14}$.

It remains to consider the angular part of the $X_2$ norm, which is
best described using the spherical multiplier $Q$ appearing in
Proposition~\ref{curvecor}.  The symbol of $Q$ is smooth with respect
to $\lambda$ on the $\tau^\frac12 r b^2 b_\perp^{-2}$ scale therefore
its kernel is rapidly decaying on the angular scale $\delta \theta =
\tau^{-\frac12} r^{-1} b^{-2} b_\perp^2$ which corresponds to $\delta
y_\perp = \tau^{-\frac12} b_\perp^2 b^{-2}$.  But by \eqref{e3} and
\eqref{ij0} this can be no larger than $\tau^{-\frac38}$ which is
again much smaller than the smallest possible spatial size for
$B_{ij}^k$. Thus orthogonality arguments still apply.

Then we have
\[
[\dt,  \chi^{ij}] u =  2\chi^{ij}_y    u_y + \chi^{ij}_{yy} u +
\chi^{ij}_s   u
\]
We claim that the right hand side is negligible in the estimate.
For this it suffices to verify that 
\[
|\chi^{ij}_y| \ll \tau^{-\frac12} b_{ij}^2, \qquad
|\chi^{ij}_{yy}|  \ll b_{ij}^2,
\qquad |\chi^{ij}_s| \ll b_{ij}^2
\]
The last relation is trivial. For the first two we consider three cases.

\begin{enumerate} 
\item  If $j=0$ and $\e_j < C \tau^{-1}$ then we need no spatial
truncation.  We are allowed to truncate at $|y| > C \tau^{\frac12}$ 
to separate the elliptic region, though.
\item  If $j=0$ and $\e_j > C \tau^{-1}$ 
then 
\[
|\chi^{ij}_y| \lesssim e^{-j(i)}, \qquad |\chi^{ij}_{yy}| \lesssim
e^{-2j(i)}
\]
while
\[
b_{i0}^4 = a_{ij(i)}^4 = \e_{ij(i)} \tau^\frac32 e^{-j(i)} \approx C
e^{-2j(i)} \tau
\]
\item  Otherwise, 
\[
|\chi^{ij}_y| \lesssim e^{-j}, \qquad |\chi^{ij}_{yy}| \lesssim
e^{-2j}
\]
while 
\[
b_{ij}^4 = a_{ij}^4 = \e_{ij} \tau^\frac32 e^{-j} \gtrsim C
e^{-2j(i)} \tau
\]
\end{enumerate}

The results can be summarized by saying that it suffices to construct 
local parametrices in the sets $B_{ij}^k$.

\subsection{Freezing coefficients}

Our first observation is that restricting the result in the
proposition to a single region $B_{ij}^k$ allows us to freeze the
weights $b$, $b_\perp$ in the $X_2$ norms.

Next we are interested in freezing the coefficients of $P$.  We
consider the same three cases as above:

\subsubsection{The case $\e_i \le \tau^{-1}$} 
 In this case we are localized to 
\[
B_{i0} = [i,i+1] \times \R^n
\]
and we have 
\[
b_{i0} \approx 1, \qquad \e_{ij} \lesssim \tau^{-1}
\]
By \eqref{gbound} the second relation  leads to
\[
|d| \lesssim \tau^{-1}
\]
Then using also \eqref{bd} we can estimate the terms involving $d$ in
the expression \eqref{regg} for $P_\psi$,
\begin{equation}
\| \d d \d v\|_{X_2^*} + \tau \|d v\|_{X_2^*} + \tau^\frac12 
\| (d \d + \d d)v\|_{X_2^*} \lesssim \|v\|_{X_2}
\label{eld}\end{equation}
Hence without any restriction of generality we can assume that
$d=0$ in $P_\psi$, which corresponds to taking $g = I_n$.

We also observe that in this case we have 
\[
|\phi| \lesssim 1, \qquad |\phi_y| \lesssim \tau^{-\frac12}
\]
Then we can also drop the $\phi$ component of $\psi$.
Finally, since
\[
|h_{ss}| \lesssim 1
\]
we can replace $h$ by its linearization at some point in the
corresponding $s$ region.

{\em Conclusion: It suffices to prove the result when $d=0$,
  $\psi(y,s) = \tau s$.}

We note that the separation of $\tau$ from integers is no longer
needed due to the localization to unit $s$ intervals.

\subsubsection{The case $\tau^{-1} \le \e_i$}
 In this case we are localized to a region of the form
\[
B_{i0}^k = [s_0,s_0+e^{j(i)} \tau^{-\frac12}] \times B(0,e^{j(i)})
\]
and we have 
\[
b_{i0}^4 \approx \tau e^{-2j(i)}, \qquad \e_{ij} \lesssim e^{-j(i)} \tau^{-\frac12}
\]
The second relation  leads to
\[
|d| \lesssim e^{-j(i)} \tau^{-\frac12}
\]
Then \eqref{eld} is still valid, so we can assume again that $d=0$ in
$P_\psi$.

We also observe that in this case we have 
\[
|\phi| \lesssim 1, \qquad |\phi_y| \lesssim e^{-j(i)}
\]
Then we can also drop the $\phi$ component of $\psi$.
Finally, since
\[
|h_{ss}| \lesssim e^{-j(i)} \tau^{-\frac12}
\]
we can replace $h$ by its linearization at some point in the
corresponding $s$ region.

{\em Conclusion: It suffices to prove the 
result when $d=0$, $\psi(y,s) = \tau s$.}

\subsubsection{ The case $\tau^{-1} \le \e_i$} 
  In this case we are localized to a region of the form
\[
B_{ij}^k = [s_0,s_0+\tau^{-\frac34} e^\frac{j}2 \e_{ij}^{-\frac12}]
\times B(y_0,\tau^{-\frac14} e^\frac{j}2 \e_{ij}^{-\frac12}), \qquad
|y_0| \approx e^{j}
\]
and we have 
\[
b_{ij}^2 \approx \e_{ij}^{\frac12} \tau^{\frac34} e^{-\frac{j}2}  
\]
Using \eqref{gbound} it follows that 
\[
 |g(s,y) - g(s_0,y_0)| \lesssim \e_{ij}^\frac12 \tau^{-\frac14} e^\frac{j}2
\]
Arguing as before, this allows us to freeze $d$ within $B_{ij}^k$.
However, we note that we are no longer allowed to replace 
$d$ by $0$.

Next we turn our attention to the weight function $\psi$.
First we have 
\[
|h_{ss}| \lesssim \e_j \tau \lesssim \e_{ij} \tau^\frac32 e^{-j}
\]
which allows us to replace $h$ by its linearization in $s$ at $s_0$.

Secondly, we claim that we can replace $\phi$ by its linearization
at $y_0$. In the radial direction we have weaker localization
but a stronger bound
\[
|\phi_{rr}| \lesssim \e_{ij} \tau^{\frac12} e^{-j}
\]
 In the transversal direction we have better localization but a weaker
bound,
\[
|\phi_{yy}| \lesssim \e_{i} \tau^{\frac12} e^{-j}.
\]
The first bound allows us to obtain the relation
\[
|\phi_y^2(y,s) - \phi_y^2(y_0,s_0)| \lesssim  \e_{ij}^{-\frac12}
\tau^{\frac34} e^{-\frac{j}2}  \approx b_{ij}^2
\]
Using also the second bound we can write
\[
(\phi_y(y,s) - \phi_y(y_0,s_0)) \partial_y = \nu_r \partial_r + \nu_\perp
\partial_\perp
\]
where the coefficients $\nu_r$ and $\nu_\perp$ are smooth on the
$B_{ij}^k$ scale and satisfy the bounds
\[
|\nu_r| \lesssim  \tau^{-\frac12}  b_{ij}^2, \qquad |\nu_r| \lesssim  
\tau^{-\frac12}  b_{ij,\perp}^2
\]

{\em Conclusion: It suffices to prove the 
result when $d=g(s_0,y_0)$, $\psi(y,s) = \tau s + c y$, $|c| \lesssim
\e_i \tau^{\frac12}$.}

\subsubsection{Additional simplification in the highly localized case}

Given the above simplifications we need to work with a constant
coefficient operator $P_\psi$ which has the form
\begin{eqnarray*}
P_\psi &=& - \d_t + H -\tau + \d d \d +  c \d, \qquad |d| \lesssim
\e_{ij},
\quad |c| \leq \e_i
\end{eqnarray*}
We diagonalize the second order part with  a linear change of variables 
to obtain
\[
P_\psi =- \d_t + H -\tau +  c \d  + O(\e_{ij}) y^2  
\]  
We can freeze the last term at $y_0$ and add it into $\tau$.
To deal with $c$ we make the change of variable 
\[
y \to y- (s-s_0) c
\]
Then our operator becomes
\[
\Ptpsi = - \d_t +   \Delta  -  (y-c(s-s_0))^2 + \tau   
\]  
and the $s-s_0$ terms are negligible due to the $s$ localization.

{\em Conclusion: We can assume without any restriction in generality  that $g=\id$ and $\psi = \tau s$.}

\subsection{The localized parametrix.} 
We begin with the global parametrix $K$ constructed in
Section~\ref{slpflat}. Then we define the parametrix $T_B$ in $B$ by
\[
T_B = \chi_{2B} K, \qquad B=B_{ij}^k
\]
and show that it satisfies \eqref{Tb1} and \eqref{TP1}.

The $L^p$ part of  \eqref{Tb1} follows directly from
\eqref{flatlp2h}. It remains to prove the $X_2$ part,
\[
\|T_B f\|_{X_2} \lesssim \|f\|_{L^1L^2+L^{p'}
  L^{q'}}
\]
The elliptic part of the $X_2$ bound, namely
\[
\|a_e^w(x,D)  \chi_{2B} K f\|_{L^2} \lesssim \|f\|_{L^1L^2+L^{p'}
  L^{q'}},
\]
is obtained by an argument which is similar to the one beginning with
\eqref{ellreg}.

For the rest we consider two cases.

i) If $j = 0$ then $B$ is a ball, and we can use \eqref{flatlp2h}
directly with $R=d$.

ii) If $j > 0$  then $B$ is contained in a sector $B \subset B_{R,d}$
but may be shorter than $R$. This is why we can use 
\eqref{flatlp2h} for the angular part of the $X_2$ norm, but
not for the $L^2$ part. However, the $L^2$ part can be always 
obtained by taking advantage of the time localization,
\[
\| b^{ij} T_B f\|_{L^2} \lesssim \| T_B f\|_{L^\infty L^2} \lesssim
\|f\|_{L^1L^2+L^{p'} L^{q'}}
\]

It remains to consider the error estimate \eqref{TP1}. We have
\[
f - (\d_s-H+\tau)T_B f = [\chi_{2B},\d_s-H+\tau] K f =
 [\chi_{2B},\d_s-H+\tau]\chi_{4B} K f
\]
But arguing as above $\chi_{4B} K_0 f$ satisfies the same
$X_2$ bound as $\chi_{2B} K_0 f$.
Hence it suffices to show that
\[
 [\chi_{2B},\d_s-H+\tau] : X_2 \to X_2^*
\]
This is where the dimensions of the set $B$ are essential; they are
chosen to be minimal so that the above property holds.
We have
\[
\begin{split}
 [\chi_{2B},\d_s-H+\tau] &= -  \d_s \chi_{2B} + (\d_y \chi_{2B}) \d_y + 
\d_y (\d_y \chi_{2B}) 
\\& \hspace{-25pt} =  -  \d_s \chi_{2B} + (\d_r \chi_{2Br}) \d_r + 
\d_r (\d_r \chi_{2Br}) + (\d_\perp\chi_{2By}) \d_\perp + 
\d_\perp (\d_\perp\chi_{2B}) 
\end{split}
\]
For the first factor we use the bound
\[
| \d_s\chi_{2B} | \lesssim b_{ij}^{2}
\]
For the radial derivatives of $\chi_{2B}$ we combine \eqref{bd}
with 
\[
 |\d_r \chi_{2B} | \lesssim \tau^{-\frac12}  b_{ij}^{2}
\]
Finally, for the angular derivatives we use the angular $H^\frac12$
norm in $X_2$ and the bound
\[
 |\d_r \chi_{2B} | \lesssim \tau^{-\frac12}  b_{ij\perp}^{2}
\]

\end{proof}

\section{The gradient term}

In this section we consider the full problem, i.e. involving also the
gradient potential $W$. Ideally one might want to have a stronger
version of Theorem~\ref{mainys} which includes additional bounds for
the gradient, more precisely for
\[
\| e^{\psi(s,y)} \nabla u\|_{L^2}
\]
But such bounds cannot hold, for this would imply that one can improve
the $L^p$ indices in a restriction type theorem. To overcome this
difficulty we proceed as in \cite{MR2001m:35075}, using Wolff's osculation Lemma.
Wolff's idea is that by varying the weight one can ensure
concentration in a sufficiently small set, in which the gradient 
potential term is only as strong as the potential term.
Thus we still obtain a one parameter family of Carleman estimates,
but with the weight depending not only on the parameter but also on 
the function we apply the estimate to. 

Given a gradient potential $W$ satisfying \eqref{W}, we first readjust
the parameters $\e_{ij}$, $\e_i$ constructed in Lemma~\ref{lepsilon}
in order to insure that we have the additional condition
\[
\| W\|_{L^{n+2}(A_i^\tau)} \ll \e_i
\]
Then we begin with the spherically symmetric weights $\psi$ constructed in
Section~\ref{convex} and modify them as follows:
\begin{equation}
\Psi(s,y) = \psi(s,y) + \delta k(s,y)
\end{equation}
where the perturbation $k$ is supported in $\{ |y| \leq 9 \tau \}$
and is subject to the following conditions:
\begin{equation}
|\d_s^\alpha \d_y^\beta \d_\perp^\beta k(s,y)  | \lesssim  
\e_i  \tau^{1 -\frac{\alpha}2} \qquad s \in [i,i+1] 
\label{kbd}\end{equation}
Here $\delta$ is a sufficiently small parameter. 

In order to prove the strong unique continuation result in the 
presence of the gradient potential  $W$ we need
the following modification of Theorem~\eqref{mainys}:

\begin{theorem}
  Assume that \eqref{g} holds. Then for each $\tau > 0$ and 
$W$ subject to  
\[
 \|W\|_{L^{n+2}(A_i^\tau)} \leq \e_i
\]
and each function $u$ vanishing of infinite order at $(0,0)$ and
$\infty$ there exists a perturbation $k$ as in \eqref{kbd} so that
\begin{equation}
  \label{jk1}
 \|e^{\Psi}  u\|_{X} + 
\|e^{\Psi} W \nabla  u\|_{X^*} + \|e^{\Psi}  \nabla (W u)\|_{X^*} + 
\tau^\frac12 \|e^{\Psi} W  u\|_{X^*}
\lesssim \|e^{\Psi(x)}  \tilde P u\|_{X^*} 
\end{equation}
\end{theorem}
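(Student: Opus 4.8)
The plan is to run Wolff's osculation argument, using Theorem~\ref{mainys} as the black box that provides uniform Carleman estimates for the spherically symmetric weights $\psi$. The starting point is the observation that the conclusion of Theorem~\ref{mainys} already handles the principal part $\tilde P$; what is missing is the absorption of the three gradient/potential terms $W\nabla u$, $\nabla(Wu)$ and $\tau^{1/2}Wu$ on the left-hand side. By H\"older's inequality and the Sobolev/Strichartz structure built into the $X$, $X^*$ norms (the $l^2(\mathcal{B}(\tau),L^pL^q)$ components), each of these terms is controlled by $\|W\|_{l^?(\mathcal{B}(\tau),L^{n+2})}\|e^\Psi u\|_X$ provided the exponential weights on both sides match up to the commutator $e^\Psi\nabla e^{-\Psi}=\nabla+\nabla\Psi$, whose bad part is $\delta\nabla k$ of size $\delta\e_i\tau^{1/2}$, which is exactly the size of a typical $\nabla\psi$ term and hence harmless. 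So, pointwise on each $A_i^\tau$, the obstruction to absorption is that $\|W\|_{L^{n+2}(A_i^\tau)}\le\e_i$ is not small enough: we only have $\e_i\in l^1$, not $\e_i\ll1$, so naive summation over $i$ loses.

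The resolution, following \cite{780.35015} and \cite{MR2001m:35075}, is that the perturbation $k$ is chosen depending on $u$ and $W$ so that $e^\Psi u$ concentrates, in each dyadic region, on a subset small enough that the $L^{n+2}$ mass of $W$ on that subset is genuinely small (of size $\delta^{\text{something}}$ rather than $\e_i$). Concretely, I would: (i) fix $\tau$ and the readjusted $\e_{ij},\e_i$ with $\|W\|_{L^{n+2}(A_i^\tau)}\ll\e_i$; (ii) set up the compactness/continuity argument — consider the map that sends an admissible perturbation $k$ (in the convex, weak-$*$ compact set cut out by \eqref{kbd}) to a weighted mass distribution of $e^{\psi+\delta k}u$ over the localization cells $B_{ij}^k$; (iii) invoke Wolff's osculation lemma to produce a $k$ for which this distribution is suitably spread out, i.e.\ in each cell the fraction of the mass is comparable to the reciprocal of the number of cells, so that after applying the localized Carleman estimate cell by cell and using H\"older with the small local $L^{n+2}$ norm of $W$, the gradient terms are absorbed with a net small constant. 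The key analytic input is that Theorem~\ref{mainys} holds with the same constant for \emph{every} $\psi+\delta k$ in the admissible family — this is why \eqref{kbd} is calibrated to the same scales $\e_i\tau$, $\e_i\tau^{1/2}$ that govern $h''$ and $\phi_{rr}$: adding $\delta k$ only perturbs the pseudoconvexity inequalities of Section~\ref{convex} by a factor $1+O(\delta)$, hence does not destroy \eqref{fcemain} or the parametrix bounds of Proposition~\ref{param}.

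More precisely, the steps I would carry out are: first, record that for any fixed admissible $k$, the estimate
\[
\|e^\Psi u\|_X \lesssim \|e^\Psi \tilde P u\|_{X^*} + \|e^\Psi W\nabla u\|_{X^*} + \|e^\Psi\nabla(Wu)\|_{X^*} + \tau^{1/2}\|e^\Psi W u\|_{X^*}
\]
follows from Theorem~\ref{mainys} applied to $\Psi$ in place of $\psi$ (checking that $h+\delta k$, $\phi+\delta k$ still satisfy the hypotheses of Lemmas~\ref{h},\ref{phi} after relabeling constants); second, estimate each of the last three terms on $B_{ij}^k$ by H\"older, $\|e^\Psi W v\|_{X^*(B_{ij}^k)}\lesssim \|W\|_{L^{n+2}(B_{ij}^k)}\|e^\Psi u\|_{X(B_{ij}^k)}$ and the analogue with a gradient, losing at most a harmless $\delta$-dependent factor from $\nabla k$; third, and this is the crux, use Wolff's lemma to choose $k$ so that
\[
\sum_{i,j,k} \|W\|_{L^{n+2}(B_{ij}^k)}^{?}\,\|e^\Psi u\|_{X(B_{ij}^k)}^2 \ll \|e^\Psi u\|_X^2,
\]
which is possible because the total number of relevant cells in each dyadic region is controlled and the local $L^{n+2}$ norms sum to $\ll\e_i$ with $\sum_i\e_i\lesssim1$; fourth, absorb. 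The main obstacle is precisely the third step: making the osculation argument interact correctly with the two-parameter cell structure $B_{ij}^k$ (rather than the single dyadic-in-$t$ decomposition of the elliptic case) and verifying that the family of weights $\psi+\delta k$ genuinely yields a \emph{uniform} Carleman constant — i.e.\ that the perturbation bounds \eqref{kbd} are exactly matched to the margins in Sections~\ref{convex} and the parametrix construction, so that no smallness is lost when $k$ is allowed to vary. Everything else is H\"older's inequality and bookkeeping.
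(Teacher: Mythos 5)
Your high-level strategy matches the paper's (uniform Carleman for the whole family of perturbed weights $\Psi=\psi+\delta k$, then a Wolff-type pigeonhole to pick $k$), but there is a genuine gap in the analytic step you treat as routine, and your set-up of the osculation is at odds with how the paper actually runs it.

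The gap: you assert that each of $\|e^\Psi W\nabla u\|_{X^*}$, $\|e^\Psi\nabla(Wu)\|_{X^*}$, $\tau^{1/2}\|e^\Psi Wu\|_{X^*}$ is bounded via H\"older by $\|W\|_{L^{n+2}}\|e^\Psi u\|_X$, modulo a harmless commutator with $\nabla\Psi$. This fails: the space $X$ does \emph{not} control $\nabla(e^\Psi u)$ -- that is the entire difficulty with gradient potentials, and it is why a fixed-weight Carleman estimate with a $\|W\nabla u\|$ term on the left cannot hold. The paper's Step~4 is precisely the missing ingredient: it decomposes $W=W_{low}+W_{high}$ using a Hermite-frequency cutoff on the phase-space region $\{y^2+\xi^2\lesssim\tau\}$. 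For $W_{high}$ the gradient term is controlled for \emph{every} admissible $k$, by splitting $v=e^\Psi u$ into regions where one can use either the elliptic $a_e^w$ part of the $X_2$ norm (which does control first derivatives at high Hermite frequency), or a smoothing gain from mismatched frequency localizations. Only after this reduction is $\nabla(W_{low}u)$ rewritten as $W_{low}\nabla u+(\nabla W_{low})u$ with $\|\nabla W_{low}\|_{L^{n+2}}\lesssim\tau^{1/2}\|W\|_{L^{n+2}}$, absorbing the third term into the last, and only then can H\"older with $\|W\|_{L^{(n+2)/2}}$ be applied, because the remaining input $\chi_i^1(x,D)(e^\Psi u)$ is frequency-localized so that $(\nabla,\tau^{1/2})\chi_i^1(x,D)$ is bounded by $\tau^{1/2}$ on $L^{2(n+2)/n}$, which \emph{is} part of the $X$ norm. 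Without this decomposition, your "second step" simply does not go through.

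On the osculation itself: you try to make Wolff's lemma interact with the two-parameter cell family $B_{ij}^k$, and you flag this as the main obstacle. In the paper this obstacle does not arise because the Wolff argument runs over a much coarser object: a sparse union $I=\bigcup I_i$ of unit-length $s$-intervals chosen by a pigeonhole so that $F_\psi$ concentrates on $I$, and on each $I_i$ one applies Wolff's lemma to the measure $1_{3I_i}F_\psi$ with the parameter box $B_i=\delta\e_i([-\tau,\tau]\times B(0,\tau^{1/2}))$. The admissible perturbations are not arbitrary functions satisfying \eqref{kbd}: they are the specific family $k=\sum_i\e_i(100\chi_{3\tau I_i}+\chi_{2I_i}\chi_{|y|^2\le\tau}(b_iy+\sigma_i(s-i)))$, where the large constant jump on a dilate of $I_i$ forces the re-concentration on $3I$ and the linear terms $(b_i,\sigma_i)$ are exactly the exponential-tilting parameters that Wolff's lemma selects. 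The output is a collection of convex sets $E_i^{k(i)}$ with $\|W\|_{L^{(n+2)/2}(E_i^{k(i)})}\lesssim\e_i^{1/(n+2)}\tau^{-1/2}$, which is the smallness that closes the estimate. The $B_{ij}^k$ cells appear in the proof of the parametrix (Proposition~\ref{param}), not in the osculation step; trying to pigeonhole over them would require controlling far too many cells and would not give the needed $|E_i^k|^{-1}$ lower bound. So besides the H\"older gap above, your set-up of the Wolff application is the wrong one, and the "main obstacle" you identify is an artifact of that choice rather than a feature of the argument.

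What you do have right, and this is the correct starting point: the perturbation scales in \eqref{kbd} are calibrated so that $P_\Psi$ has the same generic form as $P_\psi$ with coefficients still satisfying \eqref{regg}, hence the $L^2$ estimate \eqref{cpf}, its symmetrized form, and Theorem~\ref{mainys} all hold uniformly over the family of perturbed weights. This is the paper's Steps~1--3 verbatim.
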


Here and in the sequel we will omit indices for $W$. 
After returning to the $(x,t)$ coordinates and taking \eqref{W} into
account this implies Theorem~\ref{carleman}. The reader
should note that the choice of $\phi$ depends on both $u$ and $W$.
This is essential since for fixed $\phi$ (\ref{jk1}) cannot hold
uniformly for all $u$ and $W$.

\begin{proof}
Up to a point the proof follows the steps which were  discussed
in detail before. We outline the main steps:

{\bf STEP 1:} Show that the $L^2$ Carleman estimate \eqref{cpf}
holds with $\psi$ replaced by $\Psi$ {\em for all} perturbations $k$
as in \eqref{kbd}. The new conjugated operator $P_\Psi$ is obtained
from $P_\psi$ after conjugating with respect to the weight
$e^{k(y,s)}$. This adds a few extra components to the selfadjoint 
and skewadjoint parts,
\[
L_\Psi^r = L_\psi^r + k_y^2 + k_s + 2 k_y(\psi_y +d)
\]
\[
L_\Psi^i = L_\psi^i - k_y(1+d) \partial  - \partial k_y(1+d)
\]
Observing that we can write 
\[
k_y^2 + k_s + 2 k_y(\psi_y +d) = \tau d,  \qquad k_y(1+d) =
\tau^\frac12 d
\]
with $d$ as in \eqref{regg} we conclude that the conjugated operator
$P_\Psi$ retains the same form as $P_\psi$, therefore the proof 
of \eqref{cpf} rests unchanged.

{\bf STEP 2:} Show that the symmetric $L^2$ Carleman estimate
\eqref{cpf} holds with $\psi$ replaced by $\Psi$ {\em for all}
perturbations $k$ as in \eqref{kbd}. Since $P_\Psi$ has the same form 
as $P_\psi$, this argument is identical.

{\bf STEP 3:} Show that the symmetric mixed $L^2\cap L^p$ Carleman
estimate in Theorem~\ref{mainys} holds with $\psi$ replaced by $\Psi$
{\em for all} perturbations $k$ as in \eqref{kbd}. Since $P_\Psi$ has
the same form as $P_\psi$, this argument is also identical.

{\bf STEP 4:} Decompose $W$ into a low and a high Hermite-frequency
part, 
\[
W =  W_{low} + W_{high}, \qquad W_{low}= \chi_i^1(x,D) W 
\]
where the smooth symbol $\chi_i^1(x,\xi)$ is supported 
in $\{ x^2 +\xi^2 \leq 81 \tau \}$ and equals $1$ in the region
$ \{ x^2 +\xi^2 \leq 64 \tau \}$. Then we show that the 
high frequency part of $W$ satisfies the desired estimates
 {\em for all} perturbations $k$ as in \eqref{kbd}, namely
\begin{equation}
  \|e^{\Psi} W_{high} \nabla v\|_{X^*} + \|\nabla W_{high} v\|_{X^*}
  +\tau^\frac12\|e^{\Psi} W_{high} v\|_{X^*} \lesssim
 \| e^{\Psi} u\|_{X} \|W\|_{L^{n+2}}
\label{Whigh}\end{equation}
After conjugation this becomes
\[
\| W_{high} \nabla v\|_{X^*} + \|\nabla W_{high} v\|_{X^*}+
\tau^\frac12\| W_{high} v\|_{X^*} \lesssim \| v\|_{X} \|W\|_{L^{n+2}}
\]
We only consider the first term on the left. The second one is
equivalent by duality, and the third one is similar but simpler.  We
divide $v$ into two components,
\[
v = (1-\chi_e^1) v + \chi_e^1 v
\]
where the smooth symbol $\chi_e^1(x,\xi)$ is supported 
in $\{ x^2 +\xi^2 \geq 9 \tau \}$ and equals $1$ in the region
$ \{ x^2 +\xi^2 \geq 10  \tau \}$. 

For the high frequency component of $v$ we use the $H^1$ part 
of the $X^2$ norm to estimate 
\[
\| W_{high} \nabla  \chi_e^1 v\|_{L^{\frac{2(n+2)}{n}}} \lesssim
\|W_{high}\|_{L^{n+2}} \|\nabla  \chi_e^1 v\|_{L^2} \lesssim 
\|W\|_{L^{n+2}} \|v\|_{X}
\]
For the low frequency component of $v$ it is still possible to
estimate directly the high frequency of the output,
\[
\begin{split}
\| \chi_e^1 (W_{high} \nabla  (1-\chi_e^1) v) \|_{H^{-1}} \lesssim&
\tau^{-\frac12}
\| W_{high} \nabla  (1-\chi_e^1) v\|_{L^2} 
\\ \lesssim& \|W_{high}\|_{L^{n+2}}  \tau^{-\frac12}  \| \nabla  (1-\chi_e^1)
v\|_{L^{\frac{2(n+2)}n}} \\ \lesssim & \|W\|_{L^{n+2}}   \|
v\|_{L^{\frac{2(n+2)}n}} 
\end{split}
\]

Finally, the last remaining part has a much better $L^2$ estimate,
\[
\|(1-\chi_e^1) (W_{high} \nabla  (1-\chi_e^1) v)\|  \lesssim 
\tau^{-N} \|W\|_{L^{n+2}}    \| v\|
\]
which is due to the unbalanced frequency localizations of the two
factors.

Due to the estimate \eqref{Whigh}, it suffices to prove \eqref{jk1} with
 $W$ replaced by $W_{low}$. This allows us to replace the 
term $\nabla (W_{low} v)$ by
\[
\nabla (W_{low} v) = W_{low} \nabla v +  (\nabla W_{low})  v 
 \]
where we can estimate 
\[
\| \nabla W_{low} \|_{L^{n+2}} \lesssim \tau^\frac12 \|W\|_{L^{n+2}}
\]
Hence without any restriction in generality we can drop the third 
term in \eqref{jk1} and show that we can choose the perturbation $k$
so that 
\begin{equation}
  \label{jk2}
  \|e^{\Psi} W \nabla  u\|_{X^*}  + 
  \tau^\frac12 \|e^{\Psi} W  u\|_{X^*}
  \lesssim \|e^{\Psi(x)}  \tilde P u\|_{X^*} 
\end{equation}

{\bf STEP 5:} Show that, given $u$ and $W$, {\em we can choose } the
perturbation $k$ so that \eqref{jk2} holds. At this stage we no longer
need the full $X^*$ norm for the $W$ terms, it suffices instead to
consider the $L^\frac{2(n+2)}{n+4}$ norm. Begin with the unperturbed
integral
\[
\int_\R \int_\R^n F_\psi dx dt, \qquad F_\psi = |e^\psi W \nabla
u|^\frac{2(n+2)}{n+4} + |\tau^\frac12 e^\psi W
u|^\frac{2(n+2)}{n+4}
\]
We can select a subset $I$ of $\R$ consisting of time intervals of
length $1$ with unit separation at least $8$ so that
\[
\int_\R \int_\R^n  F_\psi dx dt \lesssim \int_I \int_\R^n  F_\psi dx dt
\]

By a small abuse of notation we label 
\[
I = \bigcup_{i \in \mathcal I}  I_i, \qquad  I_i \subset [i-1,i+1]
\]
We define a family of perturbations $k$ depending on parameters $b_i$,
$\sigma_i$ by
\[
\begin{split} 
k(y,s) = &  \sum_{i \in \mathcal I} \e_i \left(
100 \chi_{3 \tau I_{i}}  + \chi_{2 I_{i}} \chi_{|y|^2\le \tau}  (b_i y
+ \sigma_i(s-i))\right),
\\ &  \qquad |b_i| \leq \tau^\frac12, \ \ |\sigma_i| \leq \tau
\end{split} 
\]
Due to the choice of the intervals $I_i$ it is easy to see that after
changing the weight $\psi$ to $\Psi$ we retain the concentration
to a dilate of $I$,
\[
 \int_\R \int_{\R^n}  F_\Psi dx dt \lesssim \int_{3I} \int_{\R^n}  F_\Psi dx dt
\] 

The choice of the parameters $b_i$, $\sigma_i$ can be  made 
independently for each $i$. We consider two cases.

i) Suppose $\e_i \lesssim \tau^{-\frac12}$. Then the choice of the
parameters is irrelevant since in $3I_i$ we can estimate
\[
\begin{split}
\| e^\psi W \nabla u\|_{L^\frac{2(n+2)}{n+4}} 
+ \tau^\frac12 \| e^\psi W u\|_{L^\frac{2(n+2)}{n+4}} 
& \lesssim \|W\|_{L^{n+2}} ( \| e^\Psi \nabla u\|_{L^2} 
+ \tau^\frac12 \| e^\Psi  u\|_{L^2} )
\\ &  \lesssim ( \tau^{-1/2} \| e^\Psi \nabla u\|_{L^2} 
+  \| e^\Psi  u\|_{L^2} )
\\ &  \lesssim \|u\|_{X_2}
\end{split}
\]
 
ii)  Suppose $\e_i \gg \tau^{-\frac12}$. Then we need to choose 
the parameters $b_i$, $\sigma_i$ in a favorable manner. 
This choice is made using Wolff's Lemma:

\begin{lemma}[Wolff's Lemma~\cite{780.35015}]
Let $\mu$ be a measure in $\R^n$ and $B$ a convex set. Then one
can find $b_k \in B$ and disjoint convex sets $E_k \subset \R^n$
so that the measures $e^{xb_k}\mu$ are concentrated in $E_k$,
\[
\int_{E_k}  e^{xb_k} d\mu \gtrsim \frac12 \int_{E_k}  e^{xb_k} d\mu 
\]
and
\[
\sum |E_k|^{-1} \gtrsim |B|
\]
\end{lemma}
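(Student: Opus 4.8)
The plan is to pass to the log-Laplace transform of $\mu$, exploit the change of variables furnished by its gradient, and then extract the sets $E_k$ by a Vitali-type covering argument adapted to the (variable) anisotropy of the measure. By inner regularity and monotone convergence we may assume $\mu$ is a finite, compactly supported positive measure, normalized to be a probability measure; after convolving with an arbitrarily small Gaussian — to be removed at the end by a limiting argument — we may also assume $\mu$ is not supported in a hyperplane; and we take $B$ to be a cube. Put
\[
F(b)=\log\int_{\R^n}e^{\langle x,b\rangle}\,d\mu(x),
\]
an entire strictly convex function, and for $b\in\R^n$ let $\mu_b=e^{-F(b)}e^{\langle x,b\rangle}\mu$, a probability measure with barycenter $m(b):=\nabla F(b)$ and covariance $\Sigma(b):=\nabla^2F(b)\succ 0$. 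Since $m=\nabla F$ is a $C^1$ diffeomorphism onto its image, for every Borel $S\subset\R^n$ one has the change-of-variables identity $|m(S)|=\int_S\det\Sigma(b)\,db$.

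For each $b$ let $E(b)=\{x:\langle x-m(b),\Sigma(b)^{-1}(x-m(b))\rangle\le 2n\}$ be the associated Chebyshev ellipsoid. By the multidimensional Chebyshev inequality $\mu_b(E(b))\ge 1/2$, that is $\int_{E(b)}e^{\langle x,b\rangle}\,d\mu\ge\tfrac12\int_{\R^n}e^{\langle x,b\rangle}\,d\mu$, while $|E(b)|=c_n\det\Sigma(b)^{1/2}$ with $c_n$ depending only on $n$. Thus any pairwise disjoint subfamily $\{E(b_k)\}_{b_k\in B}$ automatically satisfies the required concentration, and the whole problem reduces to arranging $\sum_k|E(b_k)|^{-1}\gtrsim_n|B|$.

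To achieve this, decompose $B$ into countably many Borel pieces $B_\alpha$ on each of which $\Sigma(b)$ is essentially constant as a matrix — both $\det\Sigma$, discretized dyadically, and the eigendirections — and fix a representative $\Sigma_\alpha$. On $B_\alpha$ all the ellipsoids $E(b)$ are, up to a bounded linear distortion, translates of one fixed ellipsoid of volume $\approx\det\Sigma_\alpha^{1/2}$, each containing its own center $m(b)\in m(B_\alpha)$, and $|m(B_\alpha)|\approx\det\Sigma_\alpha\,|B_\alpha|$. Applying the Vitali covering lemma to $\{E(b)\}_{b\in B_\alpha}$ — legitimate since, after the linear map normalizing $\Sigma_\alpha$ to the identity, these are essentially balls of one size — produces a disjoint subfamily $\{E(b_k)\}_{k\in I_\alpha}$ whose threefold dilates cover $m(B_\alpha)$, so that $\#I_\alpha\cdot\det\Sigma_\alpha^{1/2}\gtrsim_n\det\Sigma_\alpha\,|B_\alpha|$ and hence
\[
\sum_{k\in I_\alpha}|E(b_k)|^{-1}=\#I_\alpha\,(c_n\det\Sigma_\alpha^{1/2})^{-1}\gtrsim_n|B_\alpha| .
\]
Summing over $\alpha$ gives $\sum_k|E(b_k)|^{-1}\gtrsim_n|B|$.

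The point requiring care — and the main obstacle — is that ellipsoids selected in different pieces $B_\alpha$ need not be mutually disjoint. This is repaired by a second greedy pass through the full collection $\{E(b_k)\}$, ordered by increasing volume, retaining an ellipsoid whenever it meets none retained so far and charging a discarded one to a retained ellipsoid of no larger volume that it meets. The volume lost in this charging is summable precisely because, within a fixed shape class, the ellipsoids are honest homothets, so arbitrarily long thin ellipsoids cannot accumulate against a fixed one; the contributions organize into a convergent geometric series in scale summing to $\lesssim_n|E|^{-1}$ per retained $E$. Making the shape–scale discretization, the Vitali extraction, and this final thinning simultaneously compatible is the delicate combinatorial heart of the argument, and is carried out in detail in \cite{780.35015}.
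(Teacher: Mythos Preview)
The paper does not prove this lemma; it is stated with a citation to Wolff \cite{780.35015} and invoked as a black box in Step~5 of the gradient-potential argument. Your proposal is a correct outline of Wolff's original proof --- the log-Laplace transform $F$, the barycenter map $\nabla F$ with Jacobian $\det\nabla^2 F$, Chebyshev ellipsoids, and the Vitali-type selection --- and you rightly flag the cross-shape disjointness as the delicate point deferred to \cite{780.35015}.
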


We apply the lemma for the measures 
\[
d \mu_i = 1_{3I_i} F_\psi
\]
In our case we have 
\[
B_i = \delta \e_i \left( [-\tau,\tau]\times B(0,\tau^\frac12)\right),
\qquad 
|B_i| \approx \e_i^{n+1} \tau^\frac{n+2}{2}
\]
Hence we can find parameters $b_i^k$ and $\sigma_i^k$ and convex 
sets $E_i^k \subset 3I_i \times B(0,3\tau^\frac12)$  so that 
the corresponding measures $F_{\Psi_k}$ are concentrated in $E_i^k$
with
\[
\sum |E_i^k|^{-1} \gtrsim  \e_i^{n+1} \tau^{\frac{n+2}2}
\]
At the same time we have
\[
\sum \int_{E_i^k} |W|^{n+2} dxdt \lesssim \e_i^{n+2}
\]
Hence we can choose $k$ so that 
\[
\int_{E_i^k} |W|^{n+2} dxdt \lesssim \e_i\tau^{-\frac{n+2}2} 
|E_i^k|^{-1}
\]
which by Holder's inequality leads to
\begin{equation}
\| W\|_{L^{\frac{n+2}2} (E_i^k) } \lesssim \e_i^\frac{1}{n+2} \tau^{-\frac12}
\label{esmall}\end{equation}
Denoting this index $k$ by $k(i)$  we can write
\[
\begin{split}
  \| e^\Psi W (\nabla,\tau^\frac12) u\|_{l^2 L^\frac{2(n+2)}{n+4}}
  &\lesssim \| e^\Psi W (\nabla,\tau^\frac12) u\|_{l^2_i
    L^\frac{2(n+2)}{n+4}(I_i)} 
\\ &\lesssim \| e^\Psi W
  (\nabla,\tau^\frac12) u\|_{l^2_i L^\frac{2(n+2)}{n+4}(E_i^{k(i)})} 
\\ &\lesssim \|  W
  (\nabla,\tau^\frac12) (e^\Psi u)\|_{l^2_i L^\frac{2(n+2)}{n+4}(E_i^{k(i)})} 
\end{split}
\]
Decomposing the function $v = e^\Psi u$ into low and high frequencies
we further estimate
\[
\begin{split}
  \| e^\Psi W (\nabla,\tau^\frac12) u\|_{l^2 L^\frac{2(n+2)}{n+4}}
  \lesssim & \
\|  W
  (\nabla,\tau^\frac12)  (1- \chi_{i}^1(x,D))  (e^\Psi u)\|_{l^2_i
    L^\frac{2(n+2)}{n+4}(I_i)} 
 \\ & + 
  \|  W
  (\nabla,\tau^\frac12) \chi_{i}^1(x,D)  (e^\Psi u)\|_{l^2_i
    L^\frac{2(n+2)}{n+4}(E_i^{k(i)})}
\end{split}
\]
The first term on the right is estimated as in Step 4,
\[
\begin{split}
\|  W(\nabla,\tau^\frac12)  (1- \chi_{i}^1(x,D)) &  (e^\Psi u)\|_{l^2_i
    L^\frac{2(n+2)}{n+4}(I_i)} 
\\ \lesssim &\ \| W\|_{l^\infty L^{n+2}} \|  (1- \chi_{i}^1(x,D)) (e^\Psi
u)\|_{H^1} \\ \lesssim &\ \| W\|_{l^\infty L^{n+2}} \| e^\Psi
u\|_{X}
\end{split}
\]
It is only for the second term on the right that we need to use
\eqref{esmall}:
\[
\begin{split}
&  \|  W
  (\nabla,\tau^\frac12)  \chi_{i}^1(x,D)  (e^\Psi u)\|_{l^2_i
    L^\frac{2(n+2)}{n+4}( E_i^{k(i)})} 
 \\ &\lesssim \|W\|_{l^\infty_i L^{\frac{n+2}2}(E_i^{k(i)})} \| 
  (\nabla,\tau^\frac12) \chi_i^1(x,D) (e^\Psi u)\|_{l^2_i L^\frac{2(n+2)}{n+4}(E_i^{k(i)})}
\\ &\lesssim  \| e^\Psi  u\|_{l^2 L^\frac{2(n+2)}n} 
\\ &\lesssim  \| e^\Psi  u\|_X
\end{split}
\]
The proof of the Theorem is concluded.
\end{proof}

\appendix

\section{The change of coordinates}
\label{coordinates}
Suppose that the coefficients $g$ satisfy \eqref{g}. In this section
we verify that we can change coordinates so that \eqref{g} and
\eqref{g1} are both satisfied.  Due to the anisotropic character of
the equation we must leave the time variable unchanged and consider
changes  of coordinates which have the form
\[ 
(t,x) \to (s,y), \qquad s=t, y = \chi(t,x). 
\]
The expression for the operator $P$ in the new coordinates
is
\[
P = \d_t + \d_k \tilde{g}^{kl}(t,y) \d_l + \frac{x_k}t \tilde{d}^{kl} \d_l 
\]
where the new coefficients $\tilde{g}$, $\tilde d$ are computed using
the chain rule,
\[
\tilde g^{kl} = \frac{\d \chi_l}{\d x_i}  g^{ij}  \frac{\d \chi_k}{\d
  x_j}, \qquad  
\frac{x_k}t \tilde{d}^{kl} = \frac{\d \chi_l}{\d t} 
 - \left[ D \chi^{-1}_{lm}   (\d^2_{x_m x_i} \chi_l) \right] 
 g^{ij} \frac{d\chi_l}{\d x_j}
\]
There is a price to pay for this, namely in the new coordinates we
obtain lower order terms which cannot be treated perturbatively.
Instead we obtain coefficients $\tilde d^k$ which have the same
regularity and size as $g_1-\id$.

The Lipschitz condition \eqref{g} ensures that $g$ has a limit at
$(0,0)$ so we assume that $g$ is continuous.  After a linear change of
coordinates we may and do choose $g$ with $g(0,0)=\id$. Again by
\eqref{g} this implies
\begin{equation} \label{perg} 
 |g(t,x) -\id | \ll 1.  
\end{equation}

\begin{proposition}\label{coordi} 
  Let $g$ be a metric which satisfies \eqref{g} with $g(0,0)=\id$.
  Then there is change of coordinates $(t,y) = (t,  \chi(t,x))$ 
which is close to the identity
\begin{equation} \label{dev} 
 \Vert  \d_x \chi - \id \Vert_{L^\infty} \ll 1
\end{equation} 
and has regularity
\begin{equation} 
  \sup_\tau 
  \Vert (t+x^2)^{-1/2} (t \d_t)^\alpha ((t+x^2)^{1/2} \d_x)^\beta
  \chi \Vert_{ l^1(A(\tau);L^\infty)} \ll 1, \quad 2 \leq 2\alpha + |\beta|
  \leq 4
\label{chireg}\end{equation}
so that in the new coordinates both functions $\tilde g$ and $\tilde d$
satisfy \eqref{g}, while $\tilde g -\id$ and $\tilde d$ satisfy
\eqref{g1}.
\end{proposition}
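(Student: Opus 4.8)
The plan is to construct $\chi$ in two stages: first regularize $g$, then straighten it out to first order by an explicit near-identity map. Let $\alpha_{ij}$ be the coefficient from \eqref{gbound}, so that $\|\alpha\|_{l^1(\mathcal A(\tau))}\le1$ for every $\tau$ (only the Lipschitz and H\"older parts of \eqref{g} will actually be used); it is convenient to also record the geometrically averaged sequence $\tilde\alpha_{ij}=\sum_{j'\le j}e^{-(j-j')}\alpha_{ij'}$, which dominates $\alpha$ and still satisfies $\|\tilde\alpha\|_{l^1(\mathcal A(\tau))}\lesssim1$. Exactly as in the regularization lemma at the end of Section~\ref{convex}, I would mollify $g$ in $x$ on the scale $(t+|x|^2)^{1/2}$ and in $t$ on the dyadic scale $\approx t$, patched with a partition of unity subordinate to the $A_{ij}$, to obtain a smooth $g_\sharp$ with $|g-g_\sharp|\lesssim\alpha_{ij}$ in $A_{ij}$ and $|(t+|x|^2)^{p/2}t^{q}\partial_x^p\partial_t^q g_\sharp|\lesssim\alpha_{ij}$ there for all $p+q\ge1$ (the H\"older-in-time bound in \eqref{g} is precisely what gives $|t\partial_t g_\sharp|\lesssim\alpha_{ij}$). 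Since $g(0,0)=\id$ we also get $|g_\sharp-\id|\ll1$, so $m:=g_\sharp^{-1/2}-\id$ is a smooth symmetric field with the same bounds.

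\emph{Straightening.} Were $g_\sharp^{-1/2}$ a Jacobian we could take $D_x\chi=g_\sharp^{-1/2}$ and obtain $\tilde g=(D_x\chi)g(D_x\chi)^T=\id+g_\sharp^{-1/2}(g-g_\sharp)g_\sharp^{-1/2}=\id+O(\alpha_{ij})$ in $A_{ij}$; the obstruction is the curl of $m$. Instead I would set $\chi(t,x)=x+\chi_1(t,x)$ with the homotopy formula $\chi_1^a(t,x)=\int_0^1 m^{ab}(t,sx)x_b\,ds$, so that $\chi_1(t,0)=0$ and $D_x\chi_1=m+E$, where $E$ is a radial average of the curl of $m$. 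The point is that $|\partial_x m|\lesssim\alpha_{ij}(t+|x|^2)^{-1/2}$ carries the scale-invariant weight, so the integral defining $E$ converges geometrically along each ray and $|E|+\langle y\rangle|\partial_x E|\lesssim\tilde\alpha_{ij}$ in $A_{ij}$; hence $\tilde g=\id+O(\alpha_{ij})+O(\tilde\alpha_{ij})$, which is \eqref{g1}, and differentiating $\chi_1$ — where any term carrying $m$ undifferentiated comes with a compensating power of $|x|\approx(t+|x|^2)^{1/2}$ — produces \eqref{chireg} together with the Lipschitz and H\"older parts of \eqref{g} for $\tilde g$. Smallness of $D_x\chi-\id$ is immediate from $|m|+|E|\ll1$, so \eqref{dev} holds, $\chi(t,\cdot)$ is a near-identity diffeomorphism, and it carries each $A_{ij}$ into a bounded union of neighbouring blocks, which is harmless for all the $l^1$ norms.

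\emph{The lower order term.} Under $(t,x)\mapsto(t,\chi)$ the operator $\partial_t+\partial_k g^{kl}\partial_l$ becomes $\partial_t+\partial_b\tilde g^{bc}\partial_c+\tilde b^b\partial_b$ with $\tilde b^b=\partial_t\chi_b-(J^{-1})_{mc}(\partial^2_{x_mx_i}\chi_c)g^{ij}J_{bj}$, $J=D_x\chi$, and we must write $\tilde b^b=\tfrac{x_k}{t}\tilde d^{kb}$ with $\tilde d$ obeying \eqref{g} and \eqref{g1}. Now $\chi(t,0)=0$ forces $\partial_t\chi(t,0)=0$, but $\partial^2_x\chi(t,0)$ need not vanish; so I would add to $\chi$ a quadratic-in-$x$ term cut off to $\{|x|\lesssim t^{1/2}\}$, with the finitely many coefficients $\partial^2_x\chi(t,0)$ chosen so that the displayed contraction vanishes at $x=0$. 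By construction this correction and its scale-invariant derivatives are $\lesssim\alpha_{i0}$ on $\bigcup_{j\lesssim1}A_{ij}$ and zero elsewhere, so the straightening step is undisturbed. With $\tilde b^b(t,0)=0$, Taylor expanding at $x=0$ (using also $\partial_t\chi_b=x_k\int_0^1(\partial_t m)^{bk}(t,sx)s\,ds$ for the first piece) factors $\tilde b^b=\tfrac{x_k}{t}\tilde d^{kb}$, and one checks in $A_{ij}$ that $|\tilde d|\lesssim\tilde\alpha_{ij}$ from the $\partial_t\chi$ contribution and $|\tilde d|\lesssim e^{-2j}\tilde\alpha_{ij}$ from the $\partial^2_x\chi$ contribution, the prefactor $\tfrac{x_k}{t}$ exactly balancing the powers of $|x|$ and $t$ in each case; differentiating these expressions supplies the Lipschitz and H\"older parts of \eqref{g}, and summing over $\mathcal A(\tau)$ gives \eqref{g} and \eqref{g1} for $\tilde d$.

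\emph{Main obstacle.} The heart of the matter is the straightening step: $\chi$ must simultaneously straighten $g$ well enough that $\tilde g-\id$ is pointwise $l^1(\mathcal A(\tau))$-summable — which forces $D_x\chi$ close to $g_\sharp^{-1/2}$, hence the curl correction $E$ — and manufacture a first-order coefficient of the rigid form $\tfrac{x_k}{t}\tilde d^{kb}$ with bounded $\tilde d$, which forces $\tilde b^b$ to vanish on the axis, hence the localized quadratic correction. The delicate point is to check that these two corrections coexist without spoiling one another and, above all, that $E$ and all the error terms are genuinely $l^1(\mathcal A(\tau))$-summable uniformly in $\tau$ rather than merely bounded (which is all that a crude integration from the origin would yield); this rests entirely on the scale-invariant weight $(t+|x|^2)^{1/2}$ built into the Lipschitz bound of \eqref{g}.
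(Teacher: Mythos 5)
Your proposal is correct in its broad strokes, but it is a genuinely different construction from the one in the paper, and it is worth comparing the two.

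The paper's proof builds $\chi$ directly as a piecewise-linear gluing: it picks base points $(t_i,x_{ij})\in A_{ij}$, sets $\chi_{ij}=g^{-1/2}(t_i,x_{ij})$ (a constant matrix on each block), and assembles $\chi(t,x)=\sum\eta_{ij}(t,x)\chi_{ij}x$ with a partition of unity subordinate to the $A_{ij}$. All the error terms — $D_x\chi-\id$, $D_x^2\chi$, $\partial_t\chi$ — then come out as sums of $\nabla\eta_{ij}\cdot(\chi_{ij}-\chi_{i_0j_0})$, and the $l^1(\mathcal A(\tau))$ summability is read off \emph{directly} from the summability of the pairwise differences $|g(t_i,x_{ij})-g(t_{i'},x_{i'j'})|$ across neighboring blocks (equation \eqref{diffg}). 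One bonus of this construction that you have to work harder to replicate: since only $\eta_{i0}$ is nonzero near $x=0$ in each time slice, $\chi(t,\cdot)$ is \emph{automatically linear near the axis}, so $\partial_x^2\chi(t,0)=0$ for free and the lower-order coefficient $\tilde b$ vanishes at $x=0$ without any further work. Your route instead regularizes $g$ first to get $g_\sharp$ at the natural parabolic scales, sets $m=g_\sharp^{-1/2}-\id$, and integrates $m$ along rays via the Poincar\'e homotopy formula $\chi_1^a(t,x)=\int_0^1 m^{ab}(t,sx)x_b\,ds$, so that $D_x\chi_1=m+E$ with $E$ a radial average of the curl of $m$; you then must add a cut-off quadratic correction near the axis to kill $\partial_x^2\chi(t,0)$, precisely because the homotopy does not do this automatically. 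Both constructions work. The paper's is more elementary and the $l^1$ bookkeeping is cleaner because the increments are literally the ones appearing in hypothesis \eqref{g}; yours is more systematic (it would transfer to other geometries) but pushes the summability through a convolution estimate — your $\tilde\alpha_{ij}=\sum_{j'\le j}e^{j'-j}\alpha_{ij'}$ — plus a separate check that the quadratic correction and the homotopy integral coexist without spoiling each other's bounds.

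Two smaller remarks. First, your appeal to "the regularization lemma at the end of Section~\ref{convex}" is a bit loose: that lemma mollifies at the $\tau$-dependent scales $\tau^{-1/2}$ in $y$ and $e^{j/2}\tau^{-3/4}$ in $s$, whereas here the constructed $\chi$ must be $\tau$-independent; the $\tau$-free smoothing scales $(t+|x|^2)^{1/2}$ in $x$ and $t$ in $t$ that you actually use do work, but this is a different mollification and the bounds need to be rederived rather than cited. Second, your estimate for $E$ — that the ray integral converges geometrically and lands in $\tilde\alpha_{ij}$ — is asserted rather than shown; it is true (the weight $(t+|x|^2)^{-1/2}$ on $\partial_x m$ exactly compensates the factor $x_b$ and the arc length in each annulus, giving $\sum_{j'\le j}e^{j'-j}\alpha_{ij'}$), but since you flag this as the heart of the matter it deserves the computation.
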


\begin{proof} 
Consider the covering of the $[0,2] \times B(0,2) = \cup A_{ij} $ with
an associated smooth partition of unity $\eta_{ij}$. We can assume 
that the functions $\eta_{ij}$ satisfy
\begin{equation}
|\partial_{t}^\alpha \partial_x^\beta \eta_{ij}| \lesssim c_{\alpha
  \beta}
t^{-\alpha} (t+x^2)^{-\frac{\beta}2}
\label{etareg}\end{equation}

We choose the points 
\[ 
(t_i,x_{ij})= (e^{-4i},e^{-2i+j}) \in A_{ij}.
\]
and insure that $\eta_{ij} = 1$ near $(t_i,x_{ij})$. By \eqref{g} we have
\begin{equation} \label{diffg} 
\sup_\tau \sum_{(i,j) \in \mathcal{A}(\tau)} 
| g(t_i , x_{ij})-g(t_i,x_{i,(j-1)})| + |g(t_i,x_{ij})-g(t_{i+1},x_{(i+1),j})|
\ll 1. 
\end{equation}

Within a fixed set $A_{ij}$ we consider the linear map  defined 
by the matrix 
\[ 
\chi_{ij} = g^{-1/2}(t_i, x_{ij}). 
\] 
It transforms the coefficients at $(t_i,x_{ij})$ to the identity and
has the desired properties within $A_{ij}$. We assemble the maps
defined by $\chi_{ij}$ using the partition of unity,
\[ \chi(t,x) = \sum \eta_{ij}(t,x) \chi_{ij} x. \] 
Then 
\[ \nabla \chi(t,x)- \id = \sum (\nabla \eta_{ij}) \chi_{ij} x +
\eta_{ij} (\chi_{ij}-\id), 
\] 
Let $(t,x)\in A_{i_0,j_0}$. Since $\sum
\nabla \eta_{ij}=0$ we have
\[ 
\nabla_x \chi (t,x)-\id  
= \sum_{ \eta_{ij}(t,x) > 0} 
\nabla_x \eta_{ij}(t,x)  ( \chi_{ij} -\chi_{i_0,j_0}) 
+ \eta_{ij} (\chi_{ij}-\id). 
\] 
The first term on the right hand is small by \eqref{diffg} (for
$\chi_{ij}$) and the second one by \eqref{perg} therefore the smallness of
$\nabla \chi -\id$ follows.

For the second order spatial derivatives we write
\[
\begin{split}  D^2_x \chi(t,x)  = &  \sum D^2 \eta_{ij}(t,x)  \chi_{ij} x + 
2 D_x \eta_{ij}(t,x)  \chi_{ij} 
\\ = & 
\sum D_x^2 \eta_{ij}(t,x) (\chi_{ij} - \chi_{i_0,j_0})x 
+ 2 D_x \eta_{ij} (\chi_{ij} - \chi_{i_0,j_0}). 
\end{split} 
\]
Hence by \eqref{etareg} and \eqref{diffg} (again for $\chi_{ij}$) we obtain 
\[ \Vert (|x|^2+t)^{1/2} D_x^2 \chi \Vert_{l^1(A(\tau); L^\infty)} \ll 1. \]
Also
\[ 
\d_t  \chi (t,x)  
= \sum \d_t \eta_{ij} ( \chi_{ij} - \chi_{i_0j_0})x 
\]
gives the desired bound for the time derivative. A similar computation
yields the bound for the higher order derivatives in \eqref{chireg}.

Consider now the new metric $\tilde g$. Since both $D \chi$ and $(D
\chi)^{-1}$ are Lipschitz on the dyadic scale with $l^1(A(\tau))$
summability, from \eqref{g} for $g$ we easily obtain \eqref{g} for
$\tilde g$. In addition, our construction insures that
$\tilde g (t_i,x_{ij}) =\id$. This in turn leads to the bound
\[
\| \tilde g-I_n\|_{L^\infty(A_{ij})} \lesssim  \Vert \tilde g \Vert_{\lip_x(A_{ij})}
+ \|\tilde g\|_{C^{m_{ij}}_t(A_{ij})}
\]
which shows that \eqref{g} for $\tilde g$ implies \eqref{g1}
for $\tilde g-\id$.

It remains to consider the lower order terms. From $\d_t \chi$ we
obtain coefficients $\tilde d$ of the form
\[
\tilde d = t \sum \partial_t \eta_{ij} \chi_{ij} 
\]
Within $A_{i_0,j_0}$ this gives
\[
\tilde d = t \sum \partial_t \eta_{ij}(t,x) (\chi_{ij} -\chi_{i_0 j_0} )
\]
The functions $ t \partial_t \eta_{ij}(t,x)$ are bounded and smooth on
the dyadic scale, while the $l^1(A(\tau))$ summability comes from the
$\chi_{ij} -\chi_{i_0 j_0} $ factor due to \eqref{diffg}. Hence both 
\eqref{g} and \eqref{g1} are satisfied.

The contribution of $\partial_x^2 \chi$ to $\tilde d$ has the form
\[
\tilde d  = \frac{x t}{x^2} (\d_x \chi)^{-1} (\d_x^2 \chi)  g
\]
There is no singularity at $x=0$ since $\chi$ is linear in $x$ for
$x^2 \ll t$. Then from \eqref{dev} and \eqref{chireg} we obtain
\[
|\tilde d| \lesssim \frac{t}{x^2}
\]
with added $l^1(A(\tau))$ summability inherited from $\d_x^2 \chi$.
This is better than \eqref{g1}, and in effect this term can be
included in $W$ and treated perturbatively. 
The bound \eqref{g1} is also easy to obtain from the similar bounds 
for $g$ and derivatives of $\chi$.

\end{proof}

%\bibliographystyle{plain}

%\bibliography{uc}

\begin{thebibliography}{10}

\bibitem{MR946281}
Giovanni Alessandrini and Sergio Vessella.
\newblock Local behaviour of solutions to parabolic equations.
\newblock {\em Comm. Partial Differential Equations}, 13(9):1041--1058, 1988.

\bibitem{MR2022375}
Giovanni Alessandrini and Sergio Vessella.
\newblock Remark on the strong unique continuation property for parabolic
  operators.
\newblock {\em Proc. Amer. Math. Soc.}, 132(2):499--501 (electronic), 2004.

\bibitem{MR0092067}
N.~Aronszajn.
\newblock A unique continuation theorem for solutions of elliptic partial
  differential equations or inequalities of second order.
\newblock {\em J. Math. Pures Appl. (9)}, 36:235--249, 1957.

\bibitem{MR0140031}
N.~Aronszajn, A.~Krzywicki, and J.~Szarski.
\newblock A unique continuation theorem for exterior differential forms on
  {R}iemannian manifolds.
\newblock {\em Ark. Mat.}, 4:417--453 (1962), 1962.

\bibitem{MR0000334}
T.~Carleman.
\newblock Sur un probl\`eme d'unicit\'e pur les syst\`emes d'\'equations aux
  d\'eriv\'ees partielles \`a deux variables ind\'ependantes.
\newblock {\em Ark. Mat., Astr. Fys.}, 26(17):9, 1939.

\bibitem{MR1637972}
Xu-Yan Chen.
\newblock A strong unique continuation theorem for parabolic equations.
\newblock {\em Math. Ann.}, 311(4):603--630, 1998.

\bibitem{MR2198840}
L.~Escauriaza, F.~J. Fern{\'a}ndez, and S.~Vessella.
\newblock Doubling properties of caloric functions.
\newblock {\em Appl. Anal.}, 85(1-3):205--223, 2006.

\bibitem{MR1769727}
Luis Escauriaza.
\newblock Carleman inequalities and the heat operator.
\newblock {\em Duke Math. J.}, 104(1):113--127, 2000.

\bibitem{MR1971939}
Luis Escauriaza and Francisco~Javier Fern{\'a}ndez.
\newblock Unique continuation for parabolic operators.
\newblock {\em Ark. Mat.}, 41(1):35--60, 2003.

\bibitem{MR1871351}
Luis Escauriaza and Luis Vega.
\newblock Carleman inequalities and the heat operator. {II}.
\newblock {\em Indiana Univ. Math. J.}, 50(3):1149--1169, 2001.

\bibitem{MR2001174}
F.~J. Fernandez.
\newblock Unique continuation for parabolic operators. {II}.
\newblock {\em Comm. Partial Differential Equations}, 28(9-10):1597--1604,
  2003.

\bibitem{MR686819}
Lars H{\"o}rmander.
\newblock Uniqueness theorems for second order elliptic differential equations.
\newblock {\em Comm. Partial Differential Equations}, 8(1):21--64, 1983.

\bibitem{MR1481433}
Lars H{\"o}rmander.
\newblock {\em The analysis of linear partial differential operators. {IV}},
  volume 275 of {\em Grundlehren der Mathematischen Wissenschaften [Fundamental
  Principles of Mathematical Sciences]}.
\newblock Springer-Verlag, Berlin, 1994.
\newblock Fourier integral operators, Corrected reprint of the 1985 original.

\bibitem{MR865834}
David Jerison.
\newblock Carleman inequalities for the {D}irac and {L}aplace operators and
  unique continuation.
\newblock {\em Adv. in Math.}, 62(2):118--134, 1986.

\bibitem{MR87a:35058}
David Jerison and Carlos~E. Kenig.
\newblock Unique continuation and absence of positive eigenvalues for
  {S}chr\"odinger operators.
\newblock {\em Ann. of Math. (2)}, 121(3):463--494, 1985.
\newblock With an appendix by E. M. Stein.

\bibitem{MR1319486}
G.~E. Karadzhov.
\newblock Riesz summability of multiple {H}ermite series in {$L\sp p$} spaces.
\newblock {\em C. R. Acad. Bulgare Sci.}, 47(2):5--8, 1994.

\bibitem{MR2001m:35075}
Herbert Koch and Daniel Tataru.
\newblock Carleman estimates and unique continuation for second-order elliptic
  equations with nonsmooth coefficients.
\newblock {\em Comm. Pure Appl. Math.}, 54(3):339--360, 2001.

\bibitem{MR2094851}
Herbert Koch and Daniel Tataru.
\newblock Dispersive estimates for principally normal pseudodifferential
  operators.
\newblock {\em Comm. Pure Appl. Math.}, 58(2):217--284, 2005.

\bibitem{MR2140267}
Herbert Koch and Daniel Tataru.
\newblock {$L\sp p$} eigenfunction bounds for the {H}ermite operator.
\newblock {\em Duke Math. J.}, 128(2):369--392, 2005.

\bibitem{MR1024191}
Fang-Hua Lin.
\newblock A uniqueness theorem for parabolic equations.
\newblock {\em Comm. Pure Appl. Math.}, 43(1):127--136, 1990.

\bibitem{MR0106347}
Sigeru Mizohata.
\newblock Unicit\'e du prolongement des solutions pour quelques op\'erateurs
  diff\'erentiels paraboliques.
\newblock {\em Mem. Coll. Sci. Univ. Kyoto. Ser. A. Math.}, 31:219--239, 1958.

\bibitem{MR1387458}
Chi-Cheung Poon.
\newblock Unique continuation for parabolic equations.
\newblock {\em Comm. Partial Differential Equations}, 21(3-4):521--539, 1996.

\bibitem{MR871574}
Jean-Claude Saut and Bruno Scheurer.
\newblock Unique continuation for some evolution equations.
\newblock {\em J. Differential Equations}, 66(1):118--139, 1987.

\bibitem{MR91m:35051}
C.~D. Sogge.
\newblock A unique continuation theorem for second order parabolic differential
  operators.
\newblock {\em Ark. Mat.}, 28(1):159--182, 1990.

\bibitem{MR930395}
Christopher~D. Sogge.
\newblock Concerning the {$L\sp p$} norm of spectral clusters for second-order
  elliptic operators on compact manifolds.
\newblock {\em J. Funct. Anal.}, 77(1):123--138, 1988.

\bibitem{MR91d:35037}
Christopher~D. Sogge.
\newblock Oscillatory integrals and unique continuation for second order
  elliptic differential equations.
\newblock {\em J. Amer. Math. Soc.}, 2(3):491--515, 1989.

\bibitem{MR91k:35068}
Christopher~D. Sogge.
\newblock Strong uniqueness theorems for second order elliptic differential
  equations.
\newblock {\em Amer. J. Math.}, 112(6):943--984, 1990.

\bibitem{MR958904}
S.~Thangavelu.
\newblock Summability of {H}ermite expansions. {I}, {II}.
\newblock {\em Trans. Amer. Math. Soc.}, 314(1):119--142, 143--170, 1989.

\bibitem{780.35015}
T.H. Wolff.
\newblock {A property of measures in $\bf R{}\sp N$ and an application to
  unique continuation.}
\newblock {\em Geom. Funct. Anal.}, 2(2):225--284, 1992.

\bibitem{MR0101395}
Hidehiko Yamabe.
\newblock A unique continuation theorem of a diffusion equation.
\newblock {\em Ann. of Math. (2)}, 69:462--466, 1959.

\end{thebibliography}

\end{document}